\theoremstyle{plain}
\newtheorem{thm}{Theorem}[section]
\newtheorem{lem}[thm]{Lemma}
\newtheorem{prop}[thm]{Proposition}
\newtheorem{cor}[thm]{Corollary}
\newtheorem{question}[thm]{Question}
\newtheorem{claim}[thm]{Claim}
\newtheorem{thmintro}{Theorem}
\newtheorem*{exintro}{Examples}
\newtheorem*{defnintro}{Definition}
\theoremstyle{definition}
\newtheorem*{remintro}{Remark}
\newtheorem{defn-alt}[thm]{Definition}
\newtheorem{example-alt}[thm]{Example}
\newtheorem{examples-alt}[thm]{Examples}
\newtheorem{rem-alt}[thm]{Remark}
\newtheorem*{rem-altstern}{Remark}
\newtheorem{nota-alt}[thm]{Notation}
\newtheorem*{acknow}{Acknowledgements}
\newenvironment{defn}    
{
	\pushQED{\qed}\begin{defn-alt}}
	{\popQED\end{defn-alt}}
\newenvironment{example}    
{
	\pushQED{\qed}\begin{example-alt}}
	{\popQED\end{example-alt}}
\newenvironment{examples}    
{
	\pushQED{\qed}\begin{examples-alt}}
	{\popQED\end{examples-alt}}
\newenvironment{rem}    
{
	\pushQED{\qed}\begin{rem-alt}}
	{\popQED\end{rem-alt}}
\newcommand{\N}{\mathbb{N}}
\newcommand{\IN}{\mathbb{N}}
\newcommand{\Z}{\mathbb{Z}}
\newcommand{\IZ}{\mathbb{Z}}
\newcommand{\R}{\mathbb{R}}
\newcommand{\IR}{\mathbb{R}}
\newcommand{\C}{\mathbb{C}}
\newcommand{\IC}{\mathbb{C}}
\newcommand{\cO}{\mathcal{O}}
\newcommand{\cP}{\mathcal{P}}
\newcommand{\cF}{\mathcal{F}}
\newcommand{\cX}{\mathcal{X}}
\newcommand{\cY}{\mathcal{Y}}
\newcommand{\cA}{\mathcal{A}}
\newcommand{\cB}{\mathcal{B}}
\newcommand{\cU}{\mathcal{U}}
\newcommand{\cV}{\mathcal{V}}
\newcommand{\cK}{\mathcal{K}}
\newcommand{\cL}{\mathcal{L}}
\newcommand{\combcompb}[2]{{\overline{#1}\vphantom{#1}^{#2}}}
\newcommand{\EG}{\underline{EG}}
\newcommand{\HX}{H\! X}
\newcommand{\CX}{C\! X}
\newcommand{\HA}{H\! A}
\newcommand{\KX}{K\! X}
\newcommand{\EX}{E\! X}
\newcommand{\LocComp}{\mathbf{LCH}}
\newcommand{\sigLocComp}{\boldsymbol\sigma\mathbf{LCH}}
\newcommand{\Comp}{\mathbf{CH^2}}
\newcommand{\sigComp}{\boldsymbol\sigma\mathbf{CH^2}}
\newcommand{\Cech}{\v{C}ech }
\newcommand{\neighbsub}[1]{\stackrel{#1}{\Subset}}
\newcommand{\proofsubdivisor}[1]{\emph{#1}}
\DeclareMathOperator{\im}{im}
\DeclareMathOperator{\id}{id}
\DeclareMathOperator{\cd}{cd}
\DeclareMathOperator{\diam}{diam}
\DeclareMathOperator{\dist}{dist}
\DeclareMathOperator{\Var}{\operatorname{Var}}
\DeclareMathOperator{\codim}{codim}
\DeclareMathOperator{\asdim}{asdim}
\DeclareMathOperator{\supp}{supp}
\newcommand{\diag}{\mathrm{diag}}
\newcommand{\free}{{\mathrm{free}}}
\newcommand{\ER}{{E\! R}}
\newcommand{\AR}{{A\! R}}
\newcommand{\CAT}{\mathrm{CAT}}
\newcommand{\EZ}{\mathit{EZ}}
\newcommand{\lf}{\mathit{lf}}
\numberwithin{equation}{section} 
\author{
Alexander Engel\thanks{
Fakult{\"a}t f{\"u}r Mathematik, Universit{\"a}t Regensburg, 93040 Regensburg, GERMANY\newline
alexander.engel@mathematik.uni-regensburg.de}
\and
Christopher Wulff\thanks{
Mathematisches Institut, Georg-August-Universität Göttingen, Bunsenstr.~3-5, D-37073 Göttingen, GERMANY
\newline
christopher.wulff@mathematik.uni-goettingen.de}
}
\title{Coronas for properly combable spaces}
\date{}
\begin{document}

\maketitle

\begin{abstract}
This paper is a systematic approach to the construction of coronas (i.\,e.~Higson dominated boundaries at infinity) of combable spaces.
We introduce three additional properties for combings: properness, coherence and expandingness. 
Properness is the condition under which our construction of the corona works.
Under the assumption of coherence and expandingness, attaching our corona to a Rips complex construction yields a contractible $\sigma$-compact space in which the corona sits as a $Z$-set.
This results in bijectivity of transgression maps, injectivity of the coarse assembly map and surjectivity of the coarse co-assembly map. 
For groups we get an estimate on the cohomological dimension of the corona in terms of the asymptotic dimension.
Furthermore, if the group admits a finite model for its classifying space $BG$, then our constructions yield a $Z$-structure for the group.
\end{abstract}

\tableofcontents

\section{Introduction}

Boundaries at infinity became an indispensable tool for the investigation of non-positively curved spaces. Important examples are the visual boundaries of $\CAT(0)$-spaces (Eberlein--O'Neill \cite{eberlein_oneill}, Bridson--Haefliger \cite{bridson_haefliger}) and the Gromov boundary of hyperbolic spaces (Gromov \cite{gromov_hyperbolic_groups}).

In the above mentioned examples the boundary has an important property: in the case of $\CAT(0)$-spaces, the space together with the boundary becomes contractible. And in the case of hyperbolic spaces, if we attach the boundary to suitable Rips complexes of them, then the resulting compact spaces are also contractible. This was used by Bestvina--Mess \cite{bestvina_mess} to start, in the case of hyperbolic groups, the investigation of homological properties of the boundary in relation to the homological properties of the group itself.

Bestvina \cite{bestvina} initiated the systematic study of boundaries of groups, where he introduced $Z$-structures for this purpose (a generalization of the above discussed structures in the $\CAT(0)$ and hyperbolic case). These are contractible spaces $X$ (for technical reasons he demanded them to be Euclidean retracts) on which the group acts properly, cocompactly and freely and such that $X$ admits a compactification so that the resulting compact space is also contractible.\footnote{One can generalize the notion of a $Z$-structure by dropping the freeness assumption for the action and requiring $X$ to be only an absolute retract, see Dranishnikov \cite{dranish_BM}.} For certain reasons one also needs the $Z$-property for the boundary at infinity, i.e., that one can homotope the boundary instantly off into the space $X$.

These $Z$-structures became more important when it was realized that an equivariant version of them (where the group action on $X$ extends continuously to the boundary) enables one to deduce the Novikov conjecture for the group and related injectivity results for other isomorphism conjectures (Roe \cite{roe_index_coarse}, Carlsson--Pedersen \cite{carlsson_pedersen,carlsson_pedersen_2}, Carlsson--Pedersen--Vogell \cite{carlsson_pedersen_vogell} and Farrell--Lafont \cite{farrell_lafont}).

In \cite[Section 3.1]{bestvina} Bestvina writes \emph{``There seems to be no systematic method of constructing boundaries of groups in general [...]''} and this question was phrased again by Farrell--Lafont \cite[Remark 1]{farrell_lafont}. The goal of this paper is to present a construction method for such compactifications based upon proper combings.

\begin{acknow}
Both authors were supported by the DFG Priority Programme \emph{Geometry at Infinity} (SPP 2026, EN 1163/3-1 and WU 869/1-1 \emph{Duality and the coarse assembly map}). The first named author was further supported by the SFB 1085 \emph{Higher Invariants} and the Research Fellowship EN 1163/1-1 \emph{Mapping Analysis to Homology}, both also funded by the DFG. The second author was supported by the program of postdoctoral scholarships of the Universidad Nacional Aut\'onoma de M\'exico (UNAM).

We heartily thank Mladen Bestvina, Martin Bridson, Elia Fioravanti, Thomas Haettel, Ursula Hamenstädt, Robert Howlett, Ralf Meyer, Krishnendu Khan, Narutaka Ozawa, Alessandro Sisto, Jacek \'{S}wi{\c{a}}tkowski, Rufus Willett and Jianchao Wu for answering our questions and giving helpful comments. Furthermore, we thank the participants of the MFO workshop ``Nonpositively curved complexes'' (NPCC) 2021 for the discussions after giving a talk about the results of this paper.

Finally, the first named author is very grateful for the hospitality of the UNAM, in whose inspirational environment the foundations of this paper were laid out.
\end{acknow}

\subsection{Proper combings and combing compactifications}

Let $X$ be a coarse space.
\begin{defnintro}[{cf.~Definition \ref{def_combing}}]
 By a \emph{combing} on $X$ starting at a point $p\in
X$ we mean a map
\[H\colon X\times\N\to X\,,\qquad (x,n)\mapsto H_n(x)\coloneqq H(x,n)\]
such that\begin{enumerate}
\item\label{nbv} $H(x,0)=p=H(p,n)$ for all $x\in X$ and $n\in\N$,
\item\label{nbgfdc} for each bounded subset $K\subset X$ there is
an $N\in\N$ such that for all $n\geq N$ and $x\in K$ we have $H(x,n)=x$, and
\item\label{hztredfg} $H$ is a controlled map.
\end{enumerate}
\end{defnintro}
One should regard $H(x,-)$ as a path from the point $p$ at time $0$ to the point~$x$ which eventually becomes constant (the latter is encoded in the Point~\ref{nbgfdc} of the definition). Point~\ref{hztredfg}, i.e., controlledness of $H$, says that the step-size from $H(x,n)$ to $H(x,n+1)$ is uniformly bounded and that we have the so-called fellow-traveling-property (i.e., paths to nearby points $x$ and $y$ stay uniformly close to each other). If $G$ is a finitely generated group endowed with the usual coarse structure, then this notion of a combing is equivalent to the usual notion used in the geometric group theory literature (they are sometimes called synchronous combings or also bounded combings).

We want to use the combing to define a corona for the space. The usual approach used in the literature (which up to now only works for quasi-geodesic combings with further restrictive properties) is to consider quasi-geodesic rays starting at a fixed point and going to infinity, or alternatively---as an approximation to such rays---sequences of quasi-geodesic segments starting at a fixed point which are uniformly close to each other and whose lenghts go to infinity.
Then one defines an equivalence relation on these rays or sequences of segments (they are equivalent if they stay uniformly close) and its equivalence classes constitute the point-set model for the boundary at infinity. It remains to define the topology and to show that one has created a compact space.

Our approach is, though in spirit essentially the same, in the concrete implementation radically different: if $X$ is a proper topological coarse space, we define in Definition~\ref{def_corona_compactification} the commutative $C^\ast$-algebra $C_H(X)$ to consist of all complex-valued, bounded, continuous functions $f \colon X \to \IC$ satisfying two properties:
\begin{itemize}
\item $f$ has vanishing variation, and \item $H_n^\ast f \xrightarrow{n \to \infty} f$.
\end{itemize}
Since this $C^\ast$-algebra is commutative, it has a Gelfand dual $\combcompb{X}{H}$ (also called the maximal ideal space or the character space of $C_H(X)$). Since $C_H(X)$ is unital, $\combcompb{X}{H}$ is compact. We want this to be our compactification of $X$, which is the case if and only if $C_0(X) \subset C_H(X)$, but this requires an additional property from $H$ (cf.~Definition~\ref{defn_proper_combing}):
\begin{itemize}
\item $H$ is called \emph{proper} if for any bounded subset $K\subset X$ there is a bounded subset
$L\subset X$ and an $N\in\N$ such that $H_n^{-1}(K)\subset L$ for all $n\geq N$.
\end{itemize}

The above procedure automatically provides us also the topology of the corona and that it is a compactification of the original space $X$. In case $X$ was a proper metric space, the combing compactification will be metrizable by a straight-forward and short argument (Lemma~\ref{lem:metrizability}).

\begin{figure}[ht]
\centering
\begin{tabularx}{\textwidth}{|l|X|}
\hline
\textbf{Space} & \textbf{Corona}\\
\hline
Cones &\\
\quad open cone over compact $B$ & the base space $B$ itself\\
\quad foliated cone over $(M,\mathcal{F})$ & Hausdorffization of space of leaves $M/\mathcal{F}$\\
\quad warped cone of $G \curvearrowright M$ & Hausdorffization of orbit space $M/G$\\
\hline
Coarsely convex spaces & Fukaya--Oguni's boundary\\
\quad $\CAT(0)$ space & visual boundary\\
\quad hyperbolic space & Gromov boundary\\
\quad systolic complex & Osajda--Przytycki's boundary\\
\quad injective metric spaces & Descombes--Lang's boundary\\
\quad hierarchically hyperbolic spaces & Durham--Hagen--Sisto's boundary ??\footnotemark \\
\hline
\end{tabularx}
\caption{Examples of properly combable spaces and their coronas}
\label{fig_examples_combable_spaces}
\end{figure}
\footnotetext{See the discussion in Example \ref{examples_coarsely_convex}.\ref{examples_coarsely_convex_iii}.}

\subsection{Coherent and expanding combings}

As explained earlier at the beginning of the introduction, our goal is to get a contractible combing compactification $\combcompb{Y}{H}$ in which the corona sits as a so-called $Z$-set. For this we first need that $Y$ itself is contractible, otherwise there is no reason to expect that $\combcompb{Y}{H}$ will be contractible.

Given any discrete, proper metric space $X$, an easy and functorial way of getting a contractible space out of $X$ is to consider the full Rips complex $\cP(X)$. (For indiscrete spaces one first has to pass to a so-called discretization $Z\subset X$. See Section~\ref{sec_coronas_and_coarse_cohomology_theories} for the details on this procedure.) One first defines $P_R(X)$ for $R \geq 0$ to be the simplicial complex whose simplices are exactly those whose vertices are at most $R$ apart from each other. For $R \leq S$ we obviously have an inclusion $P_R(X) \subset P_S(X)$ and hence we could consider $\cP(X)$ as the corresponding colimit. Due to technical reasons we will actually keep the information of the whole system $(P_R(X))_{R \in \IN}$ and consider $\cP(X)$ as a so-called $\sigma$-locally compact space. If $X$ is combable, then the space $\cP(X)$ is contractible as a $\sigma$-locally compact space (\cite[Theorem 10.6]{Wulff_CoassemblyRingHomomorphism}), which means that in particular for each $R$ there will be an $S$ such that $P_R(X)$ is contractible in $P_S(X)$.

Given a proper combing $H$ on $X$, it is straightforward to construct proper combings on $P_R(X)$ for all $R\geq 0$, which we also denote by $H$, such that each of the combing coronas $\partial_HP_R(X)$ is canonically isomorphic to $\partial_HX$ and such that we have inclusions of subspaces $\combcompb{X}{H}\subset \combcompb{P_R(X)}{H}\subset\combcompb{P_S(X)}{H}$ for all $0\leq R\leq S$. The sequence of all $\combcompb{P_R(X)}{H}$ defines a so-called $\sigma$-compact space $\combcompb{\cP(X)}{H}$ containing $\cP(X)$ and such that $\combcompb{\cP(X)}{H}\setminus\cP(X)=\partial_HX$.

We want that $\combcompb{\cP(X)}{H}$ is contractible. For this we need that the combing $H$ has two additional properties (cf.~Definiton~\ref{defn_proper_combing}, again):
\begin{itemize}
\item $H$ is called \emph{coherent} if 
\[E_{coh}\coloneqq \{(H_m\circ H_n(x),H_m(x))\mid x\in X,\,m,n\in\N,m\leq n\}\]
is an entourage, i.\,e.~$H_m\circ H_n$ and $H_m$ are $E_{coh}$-close for all $m\leq n$.
\item $H$ is called \emph{expanding} if there is an entourage $E_{exp}$ of $X$ such that for every entourage $E$ of $X$ and every $n\in\N$ there exists a bounded subset $K_{E,n}\subset X$ such that $H_n(E[x])\subset E_{exp}[H_n(x)]$ for all $x\in X\setminus K_{E,n}$.
\end{itemize}

It turns out that coherent combings are automatically proper (Lemma~\ref{lem_coherent_is_proper}) and therefore the combing corona and compactification of coherently combable spaces exist. The main technical theorem of the present paper is the following.

\begin{thmintro}[Theorem~\ref{thm:RipsCompactificationContractible}]
Let $(X,d)$ be a proper discrete metric space equipped with an expanding and coherent combing $H$.

Then for every $R>0$ exists $S>R$ such that $\combcompb{P_R(X)}{H}$ is contractible in $\combcompb{P_S(X)}{H}$. Even more, the $\sigma$-compact space $\combcompb{\cP(X)}{H}$ itself is contractible. Further, the corona $\partial_H X$ sits in $\combcompb{\cP(X)}{H}$ as a $Z$-set.
\end{thmintro}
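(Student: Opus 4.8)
The plan is to construct an explicit contraction of the compactified Rips complex using the combing $H$ itself, and to verify the $Z$-set property by pushing the corona off into the interior along the same combing. The fundamental idea is that the combing $H\colon X\times\N\to X$ provides, for each $x$, a path from the basepoint $p$ to $x$, and that this should extend to a contraction of the whole compactified space $\combcomp{\cP(X)}^H$ onto the point $p$. The two extra hypotheses — coherence and expandingness — are precisely what is needed to make this extension continuous up to and on the corona.

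\medskip

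First I would set up the combing on the Rips complexes. Since $H$ is controlled, it moves vertices of $P_R(X)$ to vertices of $P_{R'}(X)$ for some $R'\geq R$ depending on $R$ and the control; extending affinely over simplices, $H_n$ induces maps $P_R(X)\to P_{R'}(X)$, and the fellow-traveling property guarantees these are well-defined simplicial-to-affine maps compatible with the inclusions $P_R(X)\subset P_S(X)$. This is the ``straightforward'' construction promised in the excerpt, giving combings on each $P_R(X)$ with $\partial_H P_R(X)\cong\partial_H X$. The key point is that, because each $\combcomp{P_R(X)}^H$ is the Gelfand dual of $C_H(P_R(X))$, a map is continuous on the compactification exactly when it pulls back functions of vanishing variation satisfying $H_n^\ast f\to f$ to functions of the same type; so all continuity claims reduce to checking behavior of such functions.

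\medskip

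Next I would build the homotopy. Reparametrizing the discrete time $n$ continuously (e.g. via a map $[0,1]\to\N\cup\{\infty\}$ or interpolating between consecutive $H_n$ and $H_{n+1}$ along simplices), I would define a homotopy $\Phi\colon \combcomp{P_R(X)}^H\times[0,1]\to\combcomp{P_S(X)}^H$ with $\Phi(\,\cdot\,,0)=\mathrm{id}$ and $\Phi(\,\cdot\,,1)\equiv p$, running the combing backwards in time from $x$ to $p$. On the interior of the Rips complex continuity and the fact that this lands in some fixed larger $P_S(X)$ follow from properties (1)–(3) of a combing together with coherence, which ensures that iterating or composing the combing does not escape control (this is where $E_{coh}$ enters: $H_m\circ H_n$ stays $E_{coh}$-close to $H_m$, so the traced-out homotopy stays within a controlled, hence bounded-in-$R$, neighborhood). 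Expandingness is what I expect to be the \textbf{main obstacle}: I must show the homotopy extends continuously across the corona $\partial_H X$, and that it does the right thing there (fixing the corona at time $0$ and sweeping everything inward for $t>0$). The expanding condition says that outside a bounded set, $H_n$ distorts the entourage $E$ into a fixed entourage $E_{exp}$; translated through the Gelfand picture, this is exactly the statement that pulling back a vanishing-variation function along $H_n$ again has vanishing variation \emph{uniformly}, so that the homotopy's pullback of $C_H$-functions stays in $C_H$ and varies continuously in $t$ up to the boundary. Verifying this compatibility — that $\Phi^\ast$ preserves vanishing variation and the condition $H_n^\ast f\to f$, with the requisite uniformity as $t\to 0$ near $\partial_H X$ — is the technical heart; the expanding hypothesis is engineered to supply precisely the uniform control needed here.

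\medskip

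Finally, the $Z$-set statement. A closed subset $\partial_H X\subset\combcomp{\cP(X)}^H$ is a $Z$-set if it can be instantly homotoped off itself into the complement $\cP(X)$. Using the same combing-homotopy $\Phi$ but now starting the clock at small positive time, I would show that for $t>0$ the image $\Phi(\,\cdot\,,t)$ already lies in the interior $\cP(X)$ — indeed by property (2) of the combing ($H_n(x)=x$ eventually on bounded sets, and more relevantly $H_n$ pushes boundary directions to honest interior points for finite $n$) the corona is moved off itself immediately. Thus $\Phi$ restricted to $\partial_H X\times[0,1]$ witnesses a homotopy that equals the inclusion at $t=0$ and maps into $\cP(X)$ for all $t>0$, which is exactly the $Z$-set condition. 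Since $\combcomp{\cP(X)}^H$ is $\sigma$-compact and (by the contraction just built) contractible, and the corona sits as a $Z$-set, the three asserted conclusions — contractibility of each $\combcomp{P_R(X)}^H$ inside some $\combcomp{P_S(X)}^H$, contractibility of the whole $\sigma$-compact space, and the $Z$-set property — all follow from the single homotopy $\Phi$ analyzed at the three relevant parameter ranges.
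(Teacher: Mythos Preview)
Your high-level strategy is right — use the combing to build a contraction — and you correctly flag expandingness as the crux of extending across the corona. But there is a genuine gap in the mechanism.

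The problem is that $H_n$ maps $X$ into the \emph{discrete} set $X$, and there is no reason the map $x\mapsto H_n(x)$ should extend \emph{continuously} to the corona $\partial_H X$. Your Gelfand-duality sketch (``pullback preserves vanishing variation uniformly, hence continuity'') does not produce such an extension: given a corona point, you still have to say what point of $X$ (or $P_S(X)$) it is sent to at time $n$, and no choice will be continuous. The paper handles this by first defining a \emph{standard extension} $\overline{H}_n$ on corona points via accumulation points of $H_n(x_k)$ along sequences $x_k\to x$; this extension is only \emph{pseudo-continuous} (preimages of balls of a fixed radius $R_{pc}$ are neighborhoods). Proving even pseudo-continuity is the technical heart of the argument and uses coherence and expandingness \emph{together} in a delicate Urysohn-style construction of a function in $D_H(X)$ separating $x$ from the complement of $\overline{H}_n^{-1}(B_{R_{pc}}(\overline{H}_n(x)))$; it is not a direct ``pullback preserves $C_H$'' statement.

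Once pseudo-continuity is in hand, the paper uses it to build open covers of $\combcomp{X}^H$ and subordinate partitions of unity, and defines the actual contraction $H^R$ as a partition-of-unity average of the $\overline{H}_n(y)$'s. This smearing is what converts the merely pseudo-continuous $\overline{H}$ into a genuinely continuous homotopy, and it is also why the image lands in $P_S(X)$ for the specific $S=2R_{pc}+S_R+R_1$. Your proposal is missing both the standard-extension/pseudo-continuity step and the partition-of-unity smoothing; without them the homotopy you describe is not well-defined on the corona, let alone continuous there. (Your account of the $Z$-set property is fine once the contraction exists, since the paper's $H^\infty$ indeed maps $\combcomp{\cP(X)}^H\times[0,1)$ into $\cP(X)$.)
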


In Section \ref{sec_coronas_and_coarse_cohomology_theories} we will introduce a suitable notion of (co-)homology theories $E_*$ ($E^*$, respectively) for $\sigma$-locally compact spaces which allow us to define the coarse (co-)homology $\EX_*(X)$ ($\EX^*(X)$, respectively) as the $E$-(co-)homology of $\cP(X)$. Important special cases are the locally finite $K$-homology $K_*^{\lf}(-)$, the $K$-theory functor $K^*_c(-,D)$ with compact supports and coefficients in any $C^*$-algebra $D$ and the Alexander--Spanier cohomology $\HA_c^*(-,M)$ with compact supports and coefficients in an abelian group $M$. The coarsification of the latter is isomorphic to Roe's coarse cohomology.
For such theories, the above theorem leads to the following results:

\begin{thmintro}[Sections~\ref{sec_coarse_theories_transgression_maps}, \ref{sec_roes_coarse_cohomology} and \ref{sec_coarse_assembly_coassembly}]\label{thm_intro_transgression}
Let $X$ be a proper metric space equipped with an expanding and coherent combing $H$.

Then the transgression maps
\begin{align*}
\EX_{\ast}(X) & \to \tilde{E}_{\ast-1}(\partial_H X)\\
\tilde{E}^{*-1}(\partial_HX) & \to \EX^*(X)
\end{align*}
are isomorphisms. 

Furthermore, the analytic coarse assembly map
\[\mu\colon \KX_*(X)\to K_*(C^*X)\]
is injective, and the analytic coarse co-assembly map
\[\mu^*\colon \tilde K_{1-*}(\mathfrak{c}(X;D))\to\KX^*(X;D)\]
with an arbitrary coefficient $C^*$-algebra $D$
is surjective.
\end{thmintro}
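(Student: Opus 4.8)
The plan is to treat all three assertions as consequences of the main technical theorem (Theorem~\ref{thm:RipsCompactificationContractible}) harvested through standard $Z$-set boundary arguments; the geometry of $H$ enters only through that theorem. For the transgression maps I would work with the pair $(\combcomp{\cP(X)}^H, \partial_H X)$ in the $\sigma$-(co)homology theory $E$, carrying out the argument levelwise over the Rips filtration $(P_R(X))_R$ and then passing to the $\sigma$-colimit (resp.\ limit). The contractibility clause of Theorem~\ref{thm:RipsCompactificationContractible} (each $P_R$ being contractible inside some $P_S$) gives $\tilde E_\ast(\combcomp{\cP(X)}^H)=0$ and $\tilde E^\ast(\combcomp{\cP(X)}^H)=0$, so the connecting homomorphisms in the long exact sequence of the pair,
\begin{align*}
E_\ast(\combcomp{\cP(X)}^H,\partial_H X) &\xrightarrow{\ \cong\ } \tilde E_{\ast-1}(\partial_H X),\\
\tilde E^{\ast-1}(\partial_H X) &\xrightarrow{\ \cong\ } E^\ast(\combcomp{\cP(X)}^H,\partial_H X),
\end{align*}
are isomorphisms. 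It then remains to identify the relative groups with the coarse (co)homology: because $\partial_H X$ sits as a $Z$-set, its open complement is $\cP(X)$, and one has $E_\ast^{\lf}(\cP(X))\cong E_\ast(\combcomp{\cP(X)}^H,\partial_H X)$ together with the dual statement $E^\ast_c(\cP(X))\cong E^\ast(\combcomp{\cP(X)}^H,\partial_H X)$. Since the left-hand sides are by definition $\EX_\ast(X)$ and $\EX^\ast(X)$, the resulting composites are exactly the transgression maps, which are therefore isomorphisms.

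For surjectivity of the co-assembly map I would specialise the cohomological transgression to $E^\ast=K^\ast(-;D)$, obtaining $\KX^\ast(X;D)\cong\tilde K^{\ast-1}(\partial_H X;D)$. Higson domination of the corona, namely $C_H(X)\subset C_h(X)$, equivalently the surjection $\nu X\twoheadrightarrow\partial_H X$ of the Higson corona onto the combing corona, induces a $\ast$-homomorphism $C(\partial_H X;D)\to\mathfrak{c}(X;D)$ into the reduced stable Higson corona, hence a natural map $\tilde K^{\ast-1}(\partial_H X;D)=\tilde K_{1-\ast}(C(\partial_H X;D))\to\tilde K_{1-\ast}(\mathfrak{c}(X;D))$ (the degrees match because $K_n(C(Y;D))=K^{-n}(Y;D)$). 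A diagram chase should show that composing this map with $\mu^\ast$ reproduces the transgression isomorphism above; since the latter is onto, $\mu^\ast$ is surjective. Finally, injectivity of the plain assembly map follows by invoking the Emerson--Meyer duality between coarse assembly and co-assembly: surjectivity of the coarse co-assembly map (applied with $D=\mathbb{C}$) forces $\mu\colon\KX_\ast(X)\to K_\ast(C^\ast X)$ to be injective.

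I expect the genuine work, and the main obstacle, to lie not in any single estimate but in the two gluing identifications that connect topology to analysis. The first is the $Z$-set identification $E^{\lf}_\ast(\cP(X))\cong E_\ast(\combcomp{\cP(X)}^H,\partial_H X)$ (and its compactly supported cohomological dual) carried out consistently in the $\sigma$-locally compact category, which requires careful colimit/limit bookkeeping across the Rips filtration and the verification that the boundary being a $Z$-set makes the complement behave like the interior for $E^{\lf}$ and $E^\ast_c$. The second is the commutativity of the diagram expressing $\mu^\ast$ as the transgression precomposed with the Higson-domination map, i.e.\ matching the analytically defined co-assembly map with the purely topological transgression; reconciling the two definitions at the level of the reduced stable Higson corona is where one must be most careful. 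Once these two compatibilities are in place, the contractibility and $Z$-set content supplied by Theorem~\ref{thm:RipsCompactificationContractible} does all the remaining heavy lifting.
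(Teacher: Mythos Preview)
Your treatment of the transgression maps and of the co-assembly surjectivity is essentially the paper's own argument. One simplification: the identification $E_\ast^{\lf}(\cP(X))\cong E_\ast(\combcomp{\cP(X)}^H,\partial_H X)$ (and its cohomological dual) needs no $Z$-set input whatsoever. It is literally the strong excision axiom built into the definition of a generalized Steenrod theory in Section~\ref{sec:Steenrod}: the relative groups depend only on the complement $\combcomp{\cP(X)}^H\setminus\partial_H X=\cP(X)$, and the induced functor on $\sigLocComp$ is by definition $E_\ast^{\lf}$ (resp.\ $E^\ast_c$). So the first ``obstacle'' you flag is not an obstacle at all. The contractibility of $\combcomp{\cP(X)}^H$ from Theorem~\ref{thm:RipsCompactificationContractible} is the only nontrivial input, exactly as you say; this is Lemma~\ref{lem_transgressionisoforcontractible}.

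For the co-assembly map, the paper does precisely what you propose (Theorem~\ref{thm_coasssemblysurjective}): map the short exact sequence $0\to C_0(\cP(Z))\otimes D\to C(\combcomp{\cP(Z)}^H)\otimes D\to C(\partial_H X)\otimes D\to 0$ into the defining sequence for $\mathfrak{c}(\cP(Z);D)$, and compare connecting homomorphisms. So your second ``obstacle'' is real but routine.

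The genuine gap is your argument for injectivity of the assembly map. Surjectivity of the co-assembly map does \emph{not} formally imply injectivity of the assembly map via Emerson--Meyer duality. What Emerson--Meyer provide (in the bounded geometry case, with $D=\C$) is a \emph{pairing} compatible with the two maps, but that pairing is not known to be perfect, so one cannot read off injectivity of $\mu$ from surjectivity of $\mu^\ast$. The paper instead argues directly (Theorem~\ref{thm243reds}): by \cite[Remark~12.3.8]{higson_roe} there is a homomorphism $t\colon K_\ast(C^\ast X)\to\widetilde K_{\ast-1}(\partial_H X)$ such that the composite $t\circ\mu$ equals the homological transgression map. Since the latter is an isomorphism by the first part, $\mu$ has a left inverse and is therefore injective. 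This is the same ``factor the transgression through the target'' trick you use for co-assembly, just run on the homological side; you should replace your duality appeal by this factorisation.
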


\begin{remintro}
One of the currently most general results about the analytic coarse assembly map being an isomorphism is due to Yu \cite{yu_embedding_Hilbert_space} who proved that this is the case for any space admitting a coarse embedding into a Hilbert space.
Mendel and Naor \cite{mendel_naor} constructed a proper $\CAT(0)$ space which contains a quasi-isometrically embedded expander and hence does not admit any coarse embedding into a Hilbert space.
This shows that our setup from Theorem~\ref{thm_intro_transgression} encompasses spaces that are not already taken care of by Yu's result.

\end{remintro}

The question now is how general the notions of coherence and ex\-pan\-ding\-ness are. As far as coherence goes, it seems to be a generic property of combings. Though we construct an artificial example of a proper combing which is not coherent, all combings occuring in nature are coherent. Especially, all combings from Figure~\ref{fig_examples_combable_spaces} are coherent. In the case of finitely generated groups, quasi-geodesically combable groups are properly combable but we can not say anything about coherence. What we have is that if the quasi-geodesic combing of a group is induced by an automatic structure, then it will be coherent.

Expandingness is a much more serious issue. As the examples of foliated and warped cones show, expandingness can interestingly fail. Furthermore, it turns out that most of the known automatic structures actually provably do not give expanding combings in general. But nevertheless, there is quite a big plethora of examples of expandingly combably spaces, resp.~groups:

\begin{exintro}[Sections~\ref{sec545342ewrtte} and \ref{secknjb4rermnmnmn}]
The following spaces have expanding and coherent combings:
\begin{itemize}
\item open cones over compact base spaces, and
\item coarsely convex spaces (e.g., hyperbolic and $\CAT(0)$ spaces, systolic complexes, and `nice' (proper and finite-dimensional) injective metric spaces and hierarchically hyperbolic spaces.).
\end{itemize}
\end{exintro}

\subsection{Groups with coherent and expanding combings}

Specializing our results to groups, we get the following:

\begin{thmintro}[Corollarys~\ref{cor23452342}, \ref{cor345243}, \ref{cor_cdRG_asdim}]
Let $G$ be a finitely generated group equipped with a coherent and expanding combing $H$ and let $R$ be a ring.

Then we have the isomorphism
\[H^{\ast+1}(G;R[G]) \cong \tilde{H}^\ast(\partial_H G;R).\]
Furthermore,
\[\cd_R(G) < \infty  \implies  \cd_R(G) - 1 = \max\{n\mid \tilde{H}^n(\partial_H G;R) \not= 0\}\]
and hence, in the case $\cd_R(G) < \infty$, the estimate $\cd_R(G) \le \asdim(G) + 1$.
\end{thmintro}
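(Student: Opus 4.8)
The plan is to deduce the final theorem from the two preceding boxed results (the transgression isomorphisms and the main $Z$-set theorem) by specializing everything to a group $G$ acting on itself, using Alexander--Spanier cohomology with compact supports as the coefficient theory $E^*$. First I would record that for the coarse cohomology theory built from $\HA_c^*(-,R)$, the coarsification $\HX^*(G;R)$ recovers Roe's coarse cohomology, which for a group with the word metric is known to compute $H^*(G;R[G])$ (the group cohomology with coefficients in the group ring); this is the standard identification between coarse cohomology of $\uG$ and cohomology of $G$ with $R[G]$-coefficients. Feeding this into the transgression isomorphism $\tilde E^{*-1}(\partial_H X)\to \EX^*(X)$ with $X=\uG$ and $E^*=\HA_c^*(-,R)$ yields directly
\[
H^{*+1}(G;R[G]) \cong \tilde H^*(\partial_H G;R),
\]
which is the first displayed formula. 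The bookkeeping here is a degree shift in the transgression map and the precise statement of the coarsification isomorphism, so I would want those two cited results lined up carefully.

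The second claim, under the hypothesis $\cd_R(G) < \infty$, rests on the classical fact that for a group of finite cohomological dimension $\cd_R(G)=d$ one has $H^d(G;R[G])\neq 0$ and $H^n(G;R[G])=0$ for $n>d$ (this is Bieri--Eckmann / Brown duality-type input; finite $\cd_R$ groups are $FP$ and the top-degree cohomology with $R[G]$ coefficients is nonzero). Combining this with the isomorphism above, we read off that $\tilde H^n(\partial_H G;R)=0$ for $n > d-1$ and $\tilde H^{d-1}(\partial_H G;R)\neq 0$, which is exactly $\cd_R(G)-1 = \max\{n \mid \tilde H^n(\partial_H G;R)\neq 0\}$. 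So this step is essentially a translation of known group-cohomological vanishing/non-vanishing through the already-established isomorphism.

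For the final inequality $\cd_R(G)\le \asdim(G)+1$, the idea is to bound the top nonvanishing reduced cohomology of the corona by its covering (or cohomological) dimension, and then bound that dimension by the asymptotic dimension of $G$. Concretely, $\max\{n \mid \tilde H^n(\partial_H G;R)\neq 0\}$ is at most the cohomological dimension of the compact space $\partial_H G$, so I would need an estimate of the form $\dim(\partial_H G)\le \asdim(G)$. This is where the $Z$-set structure and the Rips-complex description of $\combcomp{\cP(G)}^H$ should enter: the corona is the boundary of a contractible $\sigma$-compact space assembled from Rips complexes $P_R(G)$, and the asymptotic dimension controls the covering dimension of these complexes at infinity, hence of the corona sitting as a $Z$-set.

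The main obstacle I expect is precisely this last dimension estimate $\dim(\partial_H G)\le\asdim(G)$: it is the only place where a genuinely geometric/dimension-theoretic argument is needed rather than a formal consequence of the transgression isomorphism, and it requires relating the asymptotic dimension of $G$ to the covering dimension of a Higson-type corona. I would handle it by using a bounded cover of $G$ realizing $\asdim(G)=k$ to build, for each scale $R$, a controlled partition-of-unity / nerve argument on $P_R(X)$ whose induced cover of the corona has multiplicity $\le k+1$, then passing to the limit over $R$; the $Z$-set and contractibility properties from the main technical theorem guarantee the limit behaves well. The degree shifts (why $\asdim+1$ rather than $\asdim$) should come out naturally once the transgression degree shift and the top-cohomology identification are both tracked consistently.
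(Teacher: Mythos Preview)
Your approach to the first two claims matches the paper's. The isomorphism $H^{*+1}(G;R[G])\cong \tilde H^*(\partial_H G;R)$ is obtained exactly as you say: Roe's identification $\HX^*(G;R)\cong H^*(G;R[G])$ together with the transgression isomorphism for Alexander--Spanier cohomology (which is an isomorphism because $\combcomp{\cP(G)}^H$ is contractible). For the second claim the paper is slightly more careful than you are: the implication $\cd_R(G)=\max\{n:H^n(G;R[G])\neq 0\}$ when $\cd_R(G)<\infty$ requires $G$ to be of type $FP_\infty$, and the paper invokes Alonso's theorem that combable groups are $FP_\infty$ to justify this. You should record that step explicitly.

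Your approach to the final inequality, however, is a genuine detour and contains a gap. You propose to deduce $\cd_R(G)\le\asdim(G)+1$ by first bounding the cohomological (or covering) dimension of the corona by $\asdim(G)$ and then reading the inequality off from the second claim. But the estimate $\cd_R(\partial_H G)\le\asdim(G)$ is itself a nontrivial theorem in the paper (the analogue of the Bestvina--Mess argument), and it is only established there for rings $R$ whose additive group is finitely generated or for fields, not for arbitrary $R$. So your route does not prove the corollary for general $R$ as stated. The paper's argument is far simpler and avoids the corona altogether: one uses the basic vanishing $\HX^k(G;R)=0$ for $k>\asdim(G)+1$ (which follows directly from the existence of an anti-\v{C}ech system of dimension $\asdim(G)$), combines it with the isomorphism $\HX^*(G;R)\cong H^*(G;R[G])$, and concludes immediately that $H^k(G;R[G])=0$ for $k>\asdim(G)+1$. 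Under the hypothesis $\cd_R(G)<\infty$ this gives $\cd_R(G)\le\asdim(G)+1$. The asymptotic dimension enters through coarse cohomology vanishing, not through the dimension of the corona.
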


Section~\ref{sec45342345} is solely devoted to the proof of the following general result on the cohomological dimension of the combing corona:
\begin{thmintro}[Corollary~\ref{corrttrtrrt} and Theorem~\ref{thm_cdleqasdim}]
Let $G$ be a finitely generated group equipped with a coherent, expanding combing $H$ and let $G$ be of finite asymptotic dimension. Let $R$ be  either a unital ring whose underlying additive group structure is finitely generated or a field.

Then we have
\[\cd_R(\partial_H G) + 1 = \max\{k\mid \HX^k(G;R) \not= 0\}.\]
Consequently, we have the estimate\footnote{Our result does not always produce the best possible estimate. Guilbault--Moran \cite{guilbault_moran} prove $\dim(Z) + 1 \le \asdim(G)$, where $Z$ is the boundary of any $Z$-structure for $G$ (see Sections \ref{intro_section_Z_structures} \& \ref{secnm0987678bmvmvmv}).}
\[\cd_R(\partial_H G) \le \asdim(G).\]
\end{thmintro}

To state our final result, we first need one more definition. Let $H$ be a combing on a finitely generated group $G$.
\begin{itemize}
\item We say that $H$ is \emph{coarsely equivariant} if for all $g \in G$ the maps $\lambda_g \circ H$ and $H \circ (\lambda_g \times \id_\IN)$ are close, where $\lambda_g$ is left multiplication on $G$ by $g$.
\end{itemize}

If $H$ is coarsely equivariant, then the action of $\lambda_g$ extends to a continuous action on $\combcompb{\cP(G)}{H}$ by $\sigma$-homeomorphisms.

\begin{thmintro}[Theorem~\ref{thm_SNCF}, Remark~\ref{rem_split_injectivity} and Theorem~\ref{thm_dual_Dirac}]
Let $G$ be a group admitting a finite classifying space $BG$ and an expanding, coherent and coarsely equivariant combing $H$.

Then the analytic assembly map $RK_*(BG) \to K_*(C_r^* G)$ is split-injective, and hence the strong Novikov conjecture holds for $G$.

Further, we also get split injectivity results for the assembly maps for the non-connective versions of algebraic $K$-theory, $L$-theory and $A$-theory.

Moreover, $G$ admits a $\gamma$-element and both the coarse and equivariant co-assembly maps for $G$ are isomorphisms.
\end{thmintro}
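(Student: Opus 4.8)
The plan is to recognize the hypotheses as producing an equivariant, Higson-dominated $Z$-structure and then to feed this into the established descent and controlled-algebra machinery. Since $BG$ is finite, $G$ is torsion-free and of type $F$, so a cocompact finite-dimensional model of $EG$ sits inside the Rips system, and by coarse equivariance each $\lambda_g$ extends to $\combcomp{\cP(G)}^H$ by $\sigma$-homeomorphisms. Theorem~\ref{thm:RipsCompactificationContractible} then tells us that $\combcomp{\cP(G)}^H$ is contractible with the corona $\partial_H G$ sitting as a $Z$-set, and the compactification is metrizable by Lemma~\ref{lem:metrizability}. The decisive extra input is that $C_H(G)$ consists of vanishing-variation functions: this Higson domination is exactly the statement that translates $gK$ of a compact set $K$ eventually leave every compact set and shrink near the corona, i.e.\ that the compactification is \emph{small at infinity} in the sense required for continuous control. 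First I would package these facts into a genuine equivariant $Z$-structure on a finite model of $EG$, using that $\partial_H P_R(G)\cong\partial_H G$ for all $R$ so that the $Z$-set and contractibility data of the directed Rips system descend to a single cocompact model.

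Given this structure, split injectivity of the analytic assembly map $RK_*(BG)\to K_*(C_r^*G)$ follows from the Carlsson--Pedersen method \cite{carlsson_pedersen,carlsson_pedersen_vogell}: the small-at-infinity compactification supplies a continuously controlled section of the forget-control map, which, combined with the descent principle \cite{roe_index_coarse} and the injectivity of the coarse assembly map established above, splits the assembly map; the strong Novikov conjecture is then immediate. Running the same equivariant $Z$-structure through the continuously controlled algebraic $K$-, $L$-, and $A$-theory frameworks yields the analogous split injectivity statements for the non-connective algebraic assembly maps, the finite-dimensionality coming from finiteness of $BG$ and the splitting again from the small-at-infinity condition.

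For the last assertion I would construct a dual-Dirac element directly from the contractible, Higson-dominated compactification. The corona furnishes a dual-Dirac morphism $\eta$ whose composition with the Dirac morphism $d$ is a $\gamma$-element $\gamma=d\circ\eta$; contractibility of $\combcomp{\cP(G)}^H$ forces $\gamma$ to act as the identity, so $G$ admits a $\gamma$-element. Surjectivity of the coarse co-assembly map is already known (it is part of the results quoted above), and the dual-Dirac element provides the missing one-sided inverse, upgrading surjectivity to an isomorphism; the equivariant co-assembly isomorphism then follows from the standard comparison with the coarse co-assembly under finite $BG$ together with $\gamma=1$.

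The main obstacle is the passage between the $\sigma$-compact combing compactification and the honest finite-dimensional cocompact equivariant compactification that the classical splitting theorems presuppose: one must verify that the contractibility and $Z$-set conclusions of Theorem~\ref{thm:RipsCompactificationContractible}, which are formulated for the whole directed Rips system, genuinely assemble into an equivariant $Z$-structure on a single model of $EG$. Intertwined with this is the need to check, uniformly in the $G$-action, that vanishing variation of $C_H(G)$ delivers precisely the continuous-control (small-at-infinity) hypothesis, so that both the controlled section splitting the assembly map and the dual-Dirac construction are actually available.
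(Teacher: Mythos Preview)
Your overall strategy matches the paper's: build an equivariant $Z$-structure (an $EZ$-structure) out of the combing compactification and then invoke the existing descent and controlled-algebra theorems of Roe, Carlsson--Pedersen(--Vogell), and Emerson--Meyer. You also correctly isolate the one genuine technical point, namely the passage from the $\sigma$-compact space $\combcomp{\cP(G)}^H$ to an honest compactification of a single finite-dimensional model of $EG$. The paper resolves this exactly where you expect trouble, via the boundary-swapping results of Section~\ref{secnm0987678bmvmvmv} (Lemma~\ref{lem_boundaryswapping}, Theorem~\ref{thm13243wer}, Corollary~\ref{cor243terwe}, Remark~\ref{rem24356545435459}): one pulls the combing back along a $G$-equivariant homotopy equivalence $EG\to\cP(G)$ and checks that the contraction and $Z$-set properties transfer, yielding a genuine equivariant $Z_\ER$-structure $(\combcomp{EG}^H,\partial_HG)$. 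With that in hand the paper simply cites \cite[Theorem~10.8]{roe_index_coarse} for the strong Novikov conjecture and \cite{carlsson_pedersen,carlsson_pedersen_2,carlsson_pedersen_vogell} for the algebraic split-injectivity results.

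There is, however, one substantive error in your last paragraph. You assert that contractibility of $\combcomp{\cP(G)}^H$ forces the $\gamma$-element to act as the identity, and then deduce the co-assembly isomorphisms from $\gamma=1$. The paper does \emph{not} claim $\gamma=1$; it only claims that a $\gamma$-element \emph{exists}, citing \cite[Theorem~14]{EM_gamma}. Existence of $\gamma$ (rather than $\gamma=1$) is already enough: it gives split-injectivity of the analytic assembly map, and Emerson--Meyer show directly that the mere existence of a $\gamma$-element implies that the equivariant co-assembly map \cite[Theorem~13(d)]{EM_gamma} and the coarse co-assembly map \cite[Corollary~34]{EM_descent} are isomorphisms. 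Proving $\gamma=1$ would amount to the Baum--Connes conjecture for $G$, which is strictly stronger than what is asserted or needed here, and contractibility of the compactification does not by itself yield it.
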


Main examples of groups admitting coherent and expanding combings are groups which are coarsely convex spaces. Now in general there is no guarantee that such a structure on the group will be coarsely equivariant. But for the five main examples of coarsely convex structures on groups (the one coming from hyperbolicity or if the group acts geometrically on a $\CAT(0)$-cubical complex, a systolic complex an injective metric space, or a hierarchically hyperbolic space) one can show that the combing will be coarsely equivariant. Therefore we get the following examples (torsion-freeness is to ensure a finite $BG$ in these cases), where we use that Helly groups are an interesting class of groups acting geometrically on an injective metric space \cite{helly_groups}:

\begin{exintro}
Groups satisfying the assumptions of the above theorem are, e.g., torsion-free hyperbolic, $\CAT(0)$ cubical, systolic Helly and hierarchically hyperbolic groups.
\end{exintro}

\subsection{\texorpdfstring{$Z$}{Z}-structures for groups}
\label{intro_section_Z_structures}

Using the full Rips complex to construct a contractible space $\combcompb{\cP(X)}{H}$ in which the corona is a $Z$-set has the problem that $\cP(X)$ is neither finite-dimensional nor metrizable. So results of, e.g., Bestvina \cite{bestvina}, Guilbault--Tirel \cite{guilbault_tirel}, Moran \cite{moran_published} or Guilbault--Moran \cite{guilbault_moran_Z} are not applicable. But in the literature about $Z$-structures one often finds boundary swapping results, and in Section~\ref{secnm0987678bmvmvmv} we will prove such a result also in our setting: it is possible to put the combing corona $\partial_H G$ on an space $X$ with better point-set topological properties (like being a Euclidean retract) than $\cP(G)$. This results in the construction of $Z$-structures for groups using combings.

\begin{thmintro}[Corollary~\ref{cor243terwe}]
Let $G$ be a group equipped with an expanding and coherent combing~$H$ and let $G$ admit a $G$-finite model for $\EG$.

\begin{enumerate}
\item Then $\big(\combcompb{\,\EG\;}{H},\partial_H G \big)$ is a $Z_\ER$-structure for $G$.
\item If $G$ is torsion-free, then $(\combcompb{EG}{H},\partial_H G)$ is a $Z_\ER^\free$-structure for $G$.
\end{enumerate}
\end{thmintro}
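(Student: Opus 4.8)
```latex
\textbf{Proof proposal.}
The plan is to reduce the claim to the main technical theorem (Theorem~\ref{thm:RipsCompactificationContractible}) by a boundary-swapping argument: the corona $\partial_H G$ has already been shown to be a $Z$-set in the contractible $\sigma$-compact space $\combcomp{\cP(G)}^H$, and what remains is to transport this $Z$-set structure from the oversized space $\cP(G)$ onto the point-set-topologically well-behaved space $\EG$ furnished by the hypothesis of a $G$-finite model. The key inputs are thus threefold: the contractibility and $Z$-set statement of Theorem~\ref{thm:RipsCompactificationContractible}; the boundary-swapping result announced for Section~\ref{secnm0987678bmvmvmv}; and the comparison of the two contractible $G$-spaces $\EG$ and $\cP(G)$. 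Since both $\EG$ (a $G$-finite model) and $\cP(G)$ (contractible by combability) are models for the universal proper $G$-space, there is a proper $G$-homotopy equivalence between them, and the plan is to use this equivalence together with the boundary swapping to equip $\combcomp{\,\EG\;}^H$ with the corona $\partial_H G$ as a $Z$-set.

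First I would verify that the combing $H$ on $G$ induces, in the manner indicated for the Rips complexes, a proper combing on the $G$-finite model $\EG$ whose corona is again canonically $\partial_H G$, so that $\combcomp{\,\EG\;}^H$ makes sense and decomposes as $\EG \sqcup \partial_H G$ as a set. Next I would assemble the ingredients of a $Z_\ER$-structure in the sense of Bestvina/Dranishnikov: one must check (i) that $\combcomp{\,\EG\;}^H$ is a compact (metrizable) Euclidean retract, (ii) that $\combcomp{\,\EG\;}^H$ is contractible, (iii) that $\EG$ is an open dense subset, and (iv) that the boundary $\partial_H G$ satisfies the $Z$-set condition (instant homotopy off into $\EG$), with all data compatible with the $G$-action, yielding the equivariance condition of a $Z_\ER$-structure. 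Property~(i) is where the finiteness of the $BG$/$\EG$ model does the essential work, as it is precisely the hypothesis that the underlying space is finite-dimensional and metrizable and hence a Euclidean retract, which $\cP(G)$ itself fails to be; the metrizability of the corona is supplied by Lemma~\ref{lem:metrizability}.

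For the contractibility (ii) and the $Z$-set property (iv), I would transport them from $\combcomp{\cP(G)}^H$ via the boundary-swapping machinery: because $\EG$ and $\cP(G)$ are $G$-homotopy equivalent through the boundary $\partial_H G$ fixed, the $Z$-set condition and contractibility are homotopy-theoretic properties that pull back across the swap. Concretely, an instant homotopy pushing $\partial_H G$ off into $\cP(G)$ gets conjugated through the equivalence into an instant homotopy off into $\EG$, and contractibility of $\combcomp{\,\EG\;}^H$ follows from contractibility of $\combcomp{\cP(G)}^H$ together with the fact that the inclusion of the corona is the same in both compactifications. For part~(2), the torsion-free case, one uses that a torsion-free $G$ admits a free $G$-finite model, so $\EG = EG$ and the action on the interior is free, upgrading the structure to a $Z_\ER^\free$-structure.

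The hard part, I expect, will be the boundary swapping itself: establishing that the identification $\partial_H P_R(G) \cong \partial_H G \cong \partial_H \EG$ of coronas is compatible enough with the $G$-homotopy equivalence $\EG \simeq \cP(G)$ to guarantee that the compactifications $\combcomp{\,\EG\;}^H$ and $\combcomp{\cP(G)}^H$ glue along a \emph{common} boundary in a way that transports the $Z$-set property, and in particular doing this \emph{equivariantly} and while preserving the Euclidean-retract structure of $\combcomp{\,\EG\;}^H$. The delicate point is that the standard $Z$-set swapping theorems in the literature (Bestvina, Guilbault--Tirel, Moran, Guilbault--Moran) are not directly applicable here precisely because $\cP(G)$ is neither finite-dimensional nor metrizable, so one cannot simply quote them; instead one must carry out the swap within the $\sigma$-compact framework developed in this paper, verifying by hand that the homotopies defining the $Z$-set condition extend continuously across $\partial_H G$ and remain compatible with the $\sigma$-structure and the $G$-action.
```
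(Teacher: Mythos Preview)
Your proposal is essentially the paper's approach: pull back the combing to $\EG$, use that both $\EG$ and $\cP(G)$ are models for the universal proper $G$-space to obtain a $G$-equivariant homotopy equivalence, and then invoke the boundary-swapping result of Section~\ref{secnm0987678bmvmvmv} (Lemma~\ref{lem_boundaryswapping} and Theorem~\ref{thm13243wer}) to transport the contractibility and $Z$-set property from $\combcomp{\cP(G)}^H$ to $\combcomp{\,\EG\;}^H$; the torsion-free case is exactly $\EG=EG$ with free action. One small correction: the definition of a $Z_\ER$-structure (Definition~\ref{defn_Zstructures}) does \emph{not} require the $G$-action to extend to the compactification, so there is no ``equivariance condition'' to verify here---that would be an $\EZ$-structure, which only enters under the additional hypothesis of coarse equivariance of $H$ (Remark~\ref{rem24356545435459}).
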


Combining this corollary with the above mentioned results of Bestvina, Guilbault, Tirel and Moran, we get the following computation, resp.\ estimate for the covering dimension of the combing corona:
\begin{thmintro}[Corollary~\ref{cor_coronadimension}]
Let $H$ be an expanding and coherent combing on a finitely generated, discrete group $G$.
\begin{enumerate}
\item If $G$ admits a finite model for its classifying space $BG$, then
\[\dim(\partial_H G) + 1 = \cd(G).\]
\item If $G$ admits a $G$-finite model for $\EG$, then
\[\dim(\partial_H G) < \underline{\smash{\mathrm{gd}}}(G),\]
where the $\underline{\smash{\mathrm{gd}}}(G)$ denotes the least possible dimension of a $G$-finite model for the classifying space for proper actions $\EG$.
\end{enumerate}
\end{thmintro}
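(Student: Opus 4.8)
The plan is to feed the $Z$-structures produced by Corollary~\ref{cor243terwe} into the classical dimension theory of $Z$-boundaries, so that the whole argument becomes a short combination of our own results with external ones. The essential gain from the $Z$-structure is that the corona becomes a \emph{finite-dimensional} compact metrizable $Z$-set, which it is not when realised inside the Rips-complex model $\combcomp{\cP(G)}^H$.

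For part~(1) I would first observe that a finite model for $BG$ forces $G$ to be torsion-free with $\cd(G)<\infty$ and $EG=\EG$. Corollary~\ref{cor243terwe}(2) then provides a $Z_\ER^\free$-structure $(\combcomp{EG}^H,\partial_H G)$; in particular $\combcomp{EG}^H$ is a Euclidean retract and $\partial_H G$ is a compact metrizable $Z$-set in it, hence finite-dimensional. This lets me pass freely between covering dimension and integral cohomological dimension via Alexandroff's theorem, $\dim(\partial_H G)=\cd_{\Z}(\partial_H G)$. The group transgression isomorphism with $R=\Z$ gives $\tilde H^{k}(\partial_H G;\Z)\cong H^{k+1}(G;\Z[G])$, and since $G$ is of type $F$ the group $H^{n}(G;\Z[G])$ is non-zero exactly in the top degree $n=\cd(G)$; thus the top non-vanishing reduced cohomology of $\partial_H G$ sits in degree $\cd(G)-1$. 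Therefore $\cd_{\Z}(\partial_H G)=\cd(G)-1$ and hence $\dim(\partial_H G)+1=\cd(G)$.

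For part~(2) I would run the construction on a $G$-finite model for $\EG$ realising the minimal dimension $\underline{\smash{\mathrm{gd}}}(G)$. Corollary~\ref{cor243terwe}(1) supplies a $Z_\ER$-structure $(\combcomp{\EG}^H,\partial_H G)$ whose total space is the $Z$-compactification of a $\underline{\smash{\mathrm{gd}}}(G)$-dimensional complex. Since a $Z$-set is nowhere dense it cannot realise the ambient dimension, so the strict drop $\dim(\partial_H G)\le \underline{\smash{\mathrm{gd}}}(G)-1$ holds; this is precisely the boundary-dimension estimate for $Z$-structures of Guilbault--Tirel~\cite{guilbault_tirel} and Moran~\cite{moran} (compare also Guilbault--Moran~\cite{guilbault_moran_Z} and Bestvina~\cite{bestvina}), giving $\dim(\partial_H G)<\underline{\smash{\mathrm{gd}}}(G)$.

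The step I expect to be the main obstacle is the strict inequality in part~(2): one must ensure that attaching the corona does not raise the dimension of the $\underline{\smash{\mathrm{gd}}}(G)$-dimensional model and that the $Z$-set property genuinely forbids $\partial_H G$ from occupying the top dimension. In the manifold case this is an invariance-of-domain phenomenon, but for general Euclidean retracts it is subtler, and it is exactly the technical content isolated in the cited $Z$-boundary dimension theorems. Accordingly the real work reduces to verifying that our $Z_\ER$-structure satisfies their hypotheses; a minor secondary point is justifying the identity $\dim=\cd_{\Z}$ in part~(1), which again rests on the finite-dimensionality supplied by the $Z_\ER$-structure rather than by the Rips-complex compactification.
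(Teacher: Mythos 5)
Your part (2) is correct and is essentially the paper's own argument: apply Corollary~\ref{cor243terwe}(1) to a $G$-finite model of $\EG$ realising $\underline{\smash{\mathrm{gd}}}(G)$, and quote the dimension drop $\dim(Z)<\dim(X)$ for $Z$-sets in compact metrizable spaces (Bestvina--Mess \cite{bestvina_mess}, Guilbault--Tirel \cite{guilbault_tirel}, Moran \cite{moran}); nothing more is needed there.

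Part (1), however, contains a genuine gap. Once $\partial_H G$ is known to be finite-dimensional, Alexandroff's theorem does give $\dim(\partial_H G)=\cd_{\Z}(\partial_H G)$, but $\cd_{\Z}$ is by definition (Definition~\ref{def_dimensions}) the supremum of $n$ with $\HA^n(\partial_H G,A;\Z)\neq 0$ over \emph{all closed} subsets $A\subset \partial_H G$, not the top degree of non-vanishing absolute reduced cohomology. Your transgression computation controls only the absolute groups $\widetilde{\HA}^{k}(\partial_H G;\Z)\cong H^{k+1}(G;\Z[G])$, and absolute cohomology bounds $\cd_{\Z}$ only from below: a compact metric space can have vanishing reduced cohomology in every positive degree and still have large cohomological dimension (the interval, or any disk $D^n$, for which $\HA^n(D^n,\partial D^n;\Z)\neq 0$). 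So your chain of reasoning proves $\dim(\partial_H G)+1\geq\cd(G)$, while the reverse inequality --- vanishing of $\HA^n(\partial_H G,A;\Z)$ for all $n\geq\cd(G)$ and all closed $A$ --- is left unaddressed. This local-to-global step (relative classes on the boundary being detected by global ones) is exactly where the cocompact $G$-action must be used, and it is the actual content of Bestvina's theorem that $\dim(Z)+1=\cd(G)$ for any $Z_\ER^\free$-structure \cite[Theorem 1.7]{bestvina}, which is what the paper invokes at this point. That the step is not free can also be seen inside the paper itself: its route to such dimension formulas without citing Bestvina (Theorem~\ref{thm345zrtewqd} and Corollary~\ref{corrttrtrrt}) requires the additional hypothesis of finite asymptotic dimension and a delicate uniform-triviality argument. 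The repair is immediate: feed the $Z_\ER^\free$-structure produced by Corollary~\ref{cor243terwe}(2) directly into \cite[Theorem 1.7]{bestvina}, as the paper does. (A minor further slip: $H^n(G;\Z[G])$ need not vanish below the top degree --- for $\Z^2\ast\Z$ both $H^1$ and $H^2$ are non-zero --- but since your argument only uses the top non-vanishing degree, this is harmless.)
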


\section{Properly combable spaces and their compactifications}
\label{sec_properly_combable_spaces}

In Section~\ref{sec_proper_combings} we recall basic notions like coarse spaces and combings, and then define the fundamental notion of a proper combing which enables us to define in Section~\ref{sec_combing_corona} the combing compactification and corona. In the Section~\ref{sec_proper_combings} we also discuss the additional properties of coherence and expandingness whose purpose will not be seen until Section~\ref{sec_contr_compact}.

\subsection{Proper, coherent and expanding combings}
\label{sec_proper_combings}

Coarse spaces were introduced by John Roe. An introduction to this topic is \cite[Chapter 2]{roe_lectures_coarse_geometry}. We will recall only the basic notions which are needed in order to understand this paper.

\begin{defn}[Coarse spaces and maps]\mbox{}
\begin{itemize}
\item Let $X$ be a set.
\begin{itemize}
\item A \emph{coarse structure} on $X$ is a choice of subsets of $X \times X$ containing the diagonal of $X$ and which is closed under taking finite unions, compositions, inverses and subsets.\footnote{If $E$ and $F$ are subsets of $X \times X$, then the inverse of $E$ is $E^{-1} \coloneqq  \{(y,x) \mid (x,y) \in E\}$ and the composition of $E$ and $F$ is $E \circ F \coloneqq  \{(x,y) \mid \exists z \in X \text{ with } (x,z) \in E \ \& \ (z,y) \in F\}$.}
\item Elements of the coarse structure are either called \emph{controlled sets} of $X \times X$ or called \emph{entourages} of $X$.
\item The set $X$ equipped with a coarse structure is called a \emph{coarse space}.
\end{itemize}
\item Let $X$ be a coarse space and let $B$ be a subset of $X$. We call $B$ \emph{bounded} if $B \times B$ is controlled.
\item Let $X$, $Y$ be coarse spaces and $f\colon X \to Y$ a map of the underlying sets. Then $f$ is called
\begin{itemize}
\item \emph{controlled}, if $(f \times f)(U)$ is controlled for every entourage $U$ of $X$,
\item \emph{proper}, if $f^{-1}(B)$ is bounded for every bounded set $B$ of $Y$, and
\item \emph{coarse}, if it is controlled and proper.
\end{itemize}
\item Two maps $f,g\colon X \to Y$ are called \emph{close} (to each other) if $(f \times g)(\diag_X)$ is an entourage of $Y$.\qedhere
\end{itemize}
\end{defn}

\begin{rem}
Two comments are in order to the above definition:
\begin{enumerate}
\item Coarse structure are not always required to contain the diagonal, especially in early publications on this topic. In these days coarse structures containing the diagonal are called unital. See \cite[End of Section~2]{higson_pedersen_roe} for an example of a naturally occuring non-unital coarse structure. We are using here the nowadays common convention to only consider coarse structures containing the diagonal. Note that topological coarse spaces (Definition~\ref{defn2435r23we} below) necessarily must be unital.

Note that usually, when choosing entourages for an argument in a proof, we almost always implicitly assume that the chosen entourage contains the diagonal. Otherwise it happens that, e.g., the set $E[x]$ defined below in \eqref{eq_ball_entourage} does not contain the point $x$ itself, which is counter-intuitive.
\item Our definition of bounded subsets does not necessarily define a bornology, i.e., finite unions of bounded subsets need not be bounded. But in this paper all occuring coarse spaces (e.g., spaces admitting combings) will be so-called coarsely connected which implies that we actually do get a bornology \cite[Proposition 2.19]{roe_lectures_coarse_geometry}. Furthermore note that we are not using the more general bornological coarse spaces, where the bornology is not necessarily induced from the coarse structure, but only required to be compatible with it. See Bunke--Engel \cite{buen} for a thorough treatment of coarse homology theories on bornological coarse spaces.\qedhere
\end{enumerate}
\end{rem}

\begin{example}\label{ex_metric_is_coarse}
Basic examples of coarse spaces are provided by metric spaces. If $(X,d)$ is a metric space, then the natural coarse structure on it is generated by the entourages $E_r \coloneqq  \{(x,y) \in X\times X\mid d(x,y) \le r\}$ for all $r > 0$. The notion of a bounded subset (in the sense of coarse spaces) coincides in this case with the notion of a bounded subset in the usual metric sense.
\end{example}

In this paper we will be concerned with coarse spaces which are properly combable. We will recall the notion of a combing in the following definition. Properness is a property that a combing can have and it will be defined in Definition~\ref{defn_proper_combing}.

\begin{defn}\label{def_combing}
Let $X$ be a coarse space. By a \emph{combing} on $X$ starting at a point $p\in
X$ we mean a map
\[H\colon X\times\N\to X\,,\qquad (x,n)\mapsto H_n(x)\coloneqq H(x,n)\]
such that
\begin{enumerate}
\item\label{4532r} $H(x,0)=p=H(p,n)$ for all $x\in X$ and $n\in\N$,
\item\label{453987ztwert} for each bounded subset $K\subset X$ there is
an $N\in\N$ such that for all $n\geq N$ and $x\in K$ we have $H(x,n)=x$, and
\item\label{654terfd} $H$ is a controlled map.
\end{enumerate}

Given a combing $H$ on a coarse space $X$, we call the pair $(X,H)$ a \emph{combed coarse space}.

If the space can be equipped with a combing then it is called \emph{combable}.
\end{defn}

\begin{rem}
\label{rem_combingdefinition}
\begin{enumerate}[(a)]
\item \label{rem_enum_combedspacemorphisms}From time to time there will be the need to compare two different combed spaces. To this end, note that the combed spaces can be made into a category by defining a morphism $\alpha\colon (X,H_X)\to (Y,H_Y)$ to be a coarse map (or equivalence class of coarse maps, if one wishes) $\alpha\colon X\to Y$ such that $\alpha\circ H_X$ is close to $H_Y\circ(\alpha\times\id_{\N})$.

If $\alpha:X\to Y$ is a coarse equivalence and one of the coarse spaces $X$ or $Y$ is equipped with a combing, then it is straightforward to construct a combing on the other space such that $\alpha$ becomes a morphism between combed coarse spaces. 

\item \label{rem_enum_combingvscoarsecontraction}The second author defined coarse contractions in \cite[Definition~10.1]{Wulff_CoassemblyRingHomomorphism} with the sides $0$ and $\infty$ interchanged, i.\,e.\ with $H(x,0)=x$ for all $x\in X$ and for each  bounded subset $K\subset X$ an $N\in\N$ such that $H(x,n)=p$ for all $x\in K$ and $n\geq N$.

In order to unify the notions of combings and coarse contractions, one can even broaden the definition of coarse contractions to maps $H\colon X\times \Z\to X$ which converge to the identity for $n\to-\infty$ and to the constant map $p$ for $n\to\infty$ in the sense of Property \ref{453987ztwert} of the definition and which satisfies the other properties of the definition up to the appropriate adjustments. We remark that the theory presented in \cite{Wulff_CoassemblyRingHomomorphism} works perfectly even for this more general notion of coarse contractions.\qedhere
\end{enumerate}
\end{rem}

The theory which we are going to present does not work for general combed spaces. Instead, we will need to introduce the following additional properties. The first one will allow us to construct the combing compactification, and the second and third property will be used later to construct contractions of the compactification.

We will need the following notation: if $E$ is an entourage of the coarse space $X$ and $x \in X$ a point, then
\begin{equation}\label{eq_ball_entourage}
E[x] \coloneqq  \{y \in X\mid (y,x) \in E\}\, .
\end{equation}
This is the coarse analogue of the ``ball of radius $E$ around $x$''. Analogously we can define $E[A] \coloneqq  \{y \in X\mid \exists {x \in A} \text{ with } (y,x) \in E\}$ for any subset $A \subset X$.
In the special case of an entourage $E_r$ in a metric space (cf.~Lemma~\ref{ex_metric_is_coarse}) we obtain the closed $r$-ball $B_r(x)=E_r[x]$ and the $r$-neighborhood $B_r(A)=E_r[A]$.

\begin{defn}\label{defn_proper_combing}
Let $H\colon X\times\N\to X$ be a combing on a coarse space $X$.
\begin{itemize}
\item $H$ is called \emph{proper} if for any bounded subset $K\subset X$ there is a bounded subset
$L\subset X$ and an $N\in\N$ such that $H_n^{-1}(K)\subset L$ for all $n\geq N$.
\item $H$ is called \emph{coherent} if 
\[E_{coh}\coloneqq \{(H_m\circ H_n(x),H_m(x))\mid x\in X,\,m,n\in\N,m\leq n\}\]
is an entourage, i.\,e.~$H_m\circ H_n$ and $H_m$ are $E_{coh}$-close for all $m\leq n$.
\item $H$ is called \emph{expanding} if there is an entourage $E_{exp}$ of $X$ such that for every entourage $E$ of $X$ and every $n\in\N$ there exists a bounded subset $K_{E,n}\subset X$ such that $H_n(E[x])\subset E_{exp}[H_n(x)]$ for all $x\in X\setminus K_{E,n}$.
\end{itemize}

The pair $(X,H)$ is called a \emph{properly, coherently, resp.\ expandingly combed space}, if $H$ is a proper, coherent or expanding combing on $X$, respectively.

If $X$ can be equipped with a proper, coherent or expanding combing, then it is called \emph{properly, coherently or expandingly combable space}, respectively.

If $X$ is a metric space and if we have $E_{coh}\subset E_{R_{coh}}$ or $E_{exp}\subset E_{R_{exp}}$ for the entourage with respect to $R_{coh}\geq 0$ or $R_{exp}>0$ (recall Example~\ref{ex_metric_is_coarse}), then we say that  $H$ is \emph{$R_{coh}$-coherent} or \emph{$R_{exp}$-expanding}, respectively.
\end{defn}

We have the following basic relation:

\begin{lem}\label{lem_coherent_is_proper}
A coherent combing $H$ is always proper.
\end{lem}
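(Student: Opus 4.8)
The plan is to prove properness constructively: given a bounded set $K\subset X$, I will produce an explicit bounded $L$ and a threshold $N$ with $H_n^{-1}(K)\subseteq L$ for all $n\ge N$. The natural candidate is $L:=E_{coh}[K]$, which is bounded because $E_{coh}$ is an entourage (coherence) and entourages carry bounded sets to bounded sets. Two preliminary observations feed the argument. First, controlledness (Property~\ref{654terfd}) applied to the pairs $((x,m),(x,m+1))$ yields a single (symmetric, without loss of generality) entourage $E_{step}$ with $(H_m(x),H_{m+1}(x))\in E_{step}$ for all $x\in X$, $m\in\N$; this bounds the size of a single time-step. Second, I fix the bounded set $W:=(E_{step}\circ E_{coh})[K]$ and use eventual constancy (Property~\ref{453987ztwert}) to choose $N\in\N$ with $H_n(w)=w$ for every $w\in W$ and every $n\ge N$.

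The subtlety is that the obvious use of coherence is too weak: from $y:=H_n(x)\in K$ one only gets, via $H_N(y)=y$ and $E_{coh}$-closeness of $H_N\circ H_n$ and $H_N$, that $H_N(x)\in E_{coh}[K]$, which controls $H_N(x)$ and not $x$ itself. The real content of the lemma is that coherence prevents a combing path from lingering inside $K$ and only afterwards escaping to a distant endpoint. I therefore argue by contradiction. Assume $n\ge N$ and $y=H_n(x)\in K$ but $x\notin L$; since $y\in E_{coh}[K]=L$ we have $x\ne y$, so $x$ has not yet settled at time $n$ and its settling time $N_x$ (from Property~\ref{453987ztwert} for the bounded set $\{x\}$) satisfies $N_x>n$. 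Because $(H_n(x),y)=(y,y)\in E_{coh}$ while $(H_{N_x}(x),y)=(x,y)\notin E_{coh}$, there is a first time $n'\in(n,N_x]$ at which the path leaves $E_{coh}[y]$. Put $z:=H_{n'}(x)$. Its predecessor lies in $E_{coh}[y]$ and the step bound then gives $(z,y)\in E_{step}\circ E_{coh}$, that is $z\in W$.

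Now the three hypotheses collide on the single point $z$. On one hand $z\in W$ and $n\ge N$ force $H_n(z)=z$. On the other hand, coherence applied to the pair $(x,n')$ at outer time $m=n\le n'$ gives $(H_n(z),y)=(H_n(H_{n'}(x)),H_n(x))\in E_{coh}$, hence $z=H_n(z)\in E_{coh}[y]$, contradicting the choice of $n'$ as the first exit time from $E_{coh}[y]$. Thus no such $x$ exists and $H_n^{-1}(K)\subseteq L$ for all $n\ge N$. The step I expect to be the genuine obstacle is exactly the extraction of the intermediate point $z$: one must find a point on the outgoing part of the path that is simultaneously close enough to $K$ to be absorbed into the bounded set $W$—so that eventual constancy turns it into a fixed point of $H_n$—yet already outside $E_{coh}[y]$, which is what produces the contradiction. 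Seeing that the first exit time from $E_{coh}[y]$ achieves both, and that this is precisely the place where controlledness, eventual constancy and coherence must be balanced against one another, is the crux; the surrounding entourage bookkeeping is routine.
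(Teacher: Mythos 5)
Your proof is correct and follows essentially the same route as the paper's: both build a bounded enlargement of $K$ from the coherence entourage composed with the one-step entourage, wait until the combing restricts to the identity on it, and then locate a crossing point of the combing path (your first exit from $E_{coh}[y]$, the paper's last visit to $E\circ E_{coh}[K]$) which coherence forces back toward $K$, producing the contradiction. The only differences are cosmetic: argument by contradiction versus contrapositive, and first-exit versus last-exit bookkeeping.
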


\begin{proof}
The set $E\coloneqq \{(H(x,n+1),H(x,n))\mid  x\in X,n\in\N\}$ is an entourage by controlledness of $H$.

Given a bounded subset $K\subset X$, we define the subset $L\coloneqq (E\cup\diag_X)\circ (E_{coh}\cup\diag_X)[K]$ which is again bounded. Let $N\in\N$ be such that $H_m$ is the identity on $L$ for all $m\geq N$. We claim that $H_m(X\setminus L)\subset X\setminus K$ for all $m\geq N$.

To this end, let $x\in X\setminus L$. If $H_m(x)\notin L$, then $H_m(x)\notin K$.
If $H_m(x)\in L$, however, then there is $n\geq m$ such that $H_n(x)\in L$ but $H_{n+1}(x)\notin L$. By~our choice of $L$, this implies $H_n(x)\notin E_{coh}[K]$. As $H_m$ is the identity on $L$ we even have $H_m(H_n(x))=H_n(x)\notin E_{coh}[K]$, which implies $H_m(x)\notin K$ by definition of $E_{coh}$.
\end{proof}

Properness, coherence and expandingness are invariant under coarse equivalences, as the following lemma applied to coarse equivalences and their coarse inverses shows.

\begin{lem}\label{lem_AdditionalPropertiesCoarselyInvariant}
Let $(X,H_X)$ and $(Y,H_Y)$ be two combed coarse spaces and let $\alpha\colon X\to Y$ be a coarse equivalence onto its image $Z=\im(\alpha)\subset Y$ such that the map $\alpha\circ H_X$ is close to $H_Y\circ(\alpha\times\id_{\N})$.

Then properness, coherence or expandingness of $H_Y$ implies the respective property of $H_X$.
\end{lem}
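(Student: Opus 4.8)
The plan is to transport each of the three properties across the coarse equivalence $\alpha\colon X\to Y$ by using a coarse inverse $\beta\colon Z\to X$ and the compatibility hypothesis $\alpha\circ H_X\sim H_Y\circ(\alpha\times\id_\N)$ together with its consequence for $\beta$. First I would record the standing facts: since $\alpha$ is a coarse equivalence onto $Z$, there is a coarse inverse $\beta$ with $\beta\circ\alpha$ close to $\id_X$ and $\alpha\circ\beta$ close to $\id_Z$; moreover $\alpha$ and $\beta$ are controlled and proper, so they send entourages to entourages and bounded sets to bounded sets and have bounded preimages of bounded sets. The key translation lemma I would use throughout is that if two maps are close, their compositions with controlled maps remain close, and closeness is preserved under the operations defining a coarse structure. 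In particular, from $\alpha\circ H_X\sim H_Y\circ(\alpha\times\id_\N)$ one deduces $H_X\sim\beta\circ H_Y\circ(\alpha\times\id_\N)$ by precomposing with $\alpha$ (up to the closeness $\beta\circ\alpha\sim\id_X$), which is the identity that lets me pull structure back from $Y$ to $X$.

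For \textbf{properness}, suppose $H_Y$ is proper and let $K\subset X$ be bounded. Then $\alpha(K)$ is bounded in $Y$, so there is a bounded $L'\subset Y$ and $N'\in\N$ with $(H_Y)_n^{-1}(\alpha(K))\subset L'$ for all $n\geq N'$. The plan is to show $(H_X)_n^{-1}(K)$ is eventually contained in the bounded set $\beta(L')$ up to an entourage-thickening coming from the closeness of $(H_X)_n$ with $\beta\circ(H_Y)_n\circ\alpha$ and the closeness $\beta\circ\alpha\sim\id_X$. Concretely, if $(H_X)_n(x)\in K$, then $\alpha((H_X)_n(x))\in\alpha(K)$, and using the closeness $\alpha\circ(H_X)_n\sim(H_Y)_n\circ\alpha$ one gets that $(H_Y)_n(\alpha(x))$ lies in a fixed entourage-neighbourhood of $\alpha(K)$; since a bounded set thickened by an entourage is still bounded, properness of $H_Y$ forces $\alpha(x)$ into a bounded set for large $n$, whence $x$ lies in the bounded preimage under $\alpha$. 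Taking $L$ to be this bounded set finishes the argument.

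For \textbf{coherence}, the goal is to show $E_{coh}^X=\{((H_X)_m\circ(H_X)_n(x),(H_X)_m(x))\}$ is an entourage of $X$, given that the analogous $E_{coh}^Y$ is an entourage of $Y$. The natural route is to relate $(\alpha\times\alpha)(E_{coh}^X)$ to $E_{coh}^Y$ up to entourages. Applying $\alpha$ to both coordinates and repeatedly invoking $\alpha\circ H_X\sim H_Y\circ(\alpha\times\id_\N)$ (iterated once to handle the double composition $H_m\circ H_n$), I would show that $(\alpha\times\alpha)(E_{coh}^X)$ is contained in $F\circ E_{coh}^Y\circ F'$ for suitable entourages $F,F'$ of $Y$ built from the relevant closeness entourages; hence $(\alpha\times\alpha)(E_{coh}^X)$ is an entourage of $Y$. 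Then, because $\beta$ is controlled and $\beta\circ\alpha\sim\id_X$, applying $\beta\times\beta$ and using $(\beta\circ\alpha)$-closeness recovers $E_{coh}^X$ up to entourages of $X$, so $E_{coh}^X$ is an entourage. The one subtlety here is the iterated composition: since $(H_X)_m\circ(H_X)_n$ involves two applications of $H_X$, I must apply the compatibility closeness twice, picking up two controlled-thickenings, but controlledness of all maps keeps these within entourages.

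For \textbf{expandingness}, let $E_{exp}^Y$ be the entourage witnessing expandingness of $H_Y$; I would propose $E_{exp}^X:=(\beta\times\beta)(\text{thickening of }E_{exp}^Y)$ as the candidate entourage for $X$. Given an entourage $E$ of $X$ and $n\in\N$, push forward to the entourage $\alpha(E):=(\alpha\times\alpha)(E)$ of $Y$; expandingness of $H_Y$ supplies a bounded $K'_{\alpha(E),n}\subset Y$ with $(H_Y)_n(\alpha(E)[y])\subset E_{exp}^Y[(H_Y)_n(y)]$ for $y\notin K'$. The plan is to set $K_{E,n}:=\alpha^{-1}(\text{bounded thickening of }K')$, which is bounded by properness of $\alpha$, and then for $x\notin K_{E,n}$ to chase the inclusion $(H_X)_n(E[x])\subset E_{exp}^X[(H_X)_n(x)]$ by translating through $\alpha$, applying the $Y$-inclusion at the point $\alpha(x)$, and translating back through $\beta$, absorbing all the closeness discrepancies into the thickenings defining $E_{exp}^X$. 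The main obstacle I anticipate is precisely this bookkeeping in the expanding case: the definition quantifies over all entourages $E$ with a single fixed $E_{exp}$, and the ``for all $x$ outside a bounded set'' clause interacts delicately with the closeness entourages, which are only controlled (not bounded); I must verify that the finitely many entourage-thickenings accumulated in passing $X\to Y\to X$ can all be absorbed into one fixed $E_{exp}^X$ independent of $E$ and $n$, which works because the closeness entourages of $\alpha\circ H_X\sim H_Y\circ(\alpha\times\id_\N)$ and of $\beta\circ\alpha\sim\id_X$ are fixed entourages not depending on $E$ or $n$.
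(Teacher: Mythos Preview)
Your proposal is correct and follows essentially the same approach as the paper: define a coarse inverse $\beta$, fix the closeness entourages $E_H$ (for $\alpha\circ H_X\sim H_Y\circ(\alpha\times\id_\N)$) and $E_{\beta\alpha}$, and then in each of the three cases push data forward via $\alpha$, invoke the hypothesis on $H_Y$, and pull back via $\beta$, absorbing the finitely many fixed closeness-entourages into the bounded set (for properness) or into the target entourage (for coherence and expandingness). The paper carries out exactly these computations with explicit formulas---in particular, for coherence it packages your ``second controlled-thickening'' as the single entourage $E'_H:=\bigcup_{m}((H_Y)_m\times(H_Y)_m)(E_H)$ coming from controlledness of $H_Y$---but the logical structure is identical.
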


\begin{proof}
Let $\beta\colon Z\to X$ be a coarse inverse to $\alpha$ and define the entourages
\begin{align*}
E_{\beta\alpha}&\coloneqq \{(\beta\circ\alpha(x),x)\mid  x\in X\}\subset X\times X \,,
\\E_H&\coloneqq \{(H_Y(\alpha(x,n)),\alpha\circ H_X(x,n))\mid x\in X,n\in\N\}\subset Y\times Y \,.
\end{align*}

\proofsubdivisor{First, assume that $H_Y$ is proper.} Given a bounded subset $K\subset X$, then the subset $K'\coloneqq E_H[\alpha(K)]\subset Y$ is bounded and so there exists $N\in\N$ and a bounded subset $L'\subset Y$ such that $(H_Y)_n^{-1}(K')\subset L'$ for all $n\geq N$. Then
\begin{align*}
(H_Y)_n\circ \alpha((H_X)_n^{-1}(K))&\subset E_H[\alpha\circ (H_X)_n((H_X)_n^{-1}(K))]\subset E_H[\alpha(K)]=K'
\\\implies (H_X)_n^{-1}(K)&\subset \alpha^{-1}((H_Y)_n^{-1}(K'))\subset\alpha^{-1}(L')=:L
\end{align*}
for all $n\geq N$ and $L\subset X$ is bounded. This shows properness of $H_X$.

\proofsubdivisor{Second, let $H_Y$ be coherent.} We denote the coherence-entourage of $H_Y$ by $E_{Y,coh}$ and furthermore we define
\[E'_H\coloneqq \bigcup_{m\in\N}((H_Y)_m\times (H_Y)_m)(E_H)\,,\]
which is an entourage of $Y$ due to the controlledness of $H_Y$. Then we have for all $m,n\in\N$ with $m\leq n$ and all $x\in X$

\begin{align*}
((H_Y)_m\circ (H_Y)_n\circ\alpha(x),(H_Y)_m\circ\alpha(x))&\in E_{Y,coh}
\\
\Rightarrow ((H_Y)_m\circ\alpha\circ (H_X)_n(x),(H_Y)_m\circ\alpha(x))&\in (E'_H)^{-1}\circ E_{Y,coh}
\\
\Rightarrow (\alpha\circ(H_X)_m\circ (H_X)_n(x),\alpha\circ(H_X)_m(x))&\in (E_H)^{-1}\circ(E'_H)^{-1}\circ E_{Y,coh}\circ E_H
\end{align*}
Now $\tilde E\coloneqq ((E_H)^{-1}\circ(E'_H)^{-1}\circ E_{Y,coh}\circ E_H)\cap (Z\times Z)$ is an entourage of $Z$ and we have
\begin{align*}
(\beta\circ\alpha\circ(H_X)_m\circ (H_X)_n(x),\beta\circ\alpha\circ(H_X)_m(x))&\in (\beta\times\beta)(\tilde E)
\\\implies ((H_X)_m\circ (H_X)_n(x),(H_X)_m(x))&\in (E_{\beta\alpha})^{-1}\circ(\beta\times\beta)(\tilde E)\circ E_{\beta\alpha}
\end{align*}
showing that $H_X$ is coherent with coherence-entourage 
\[E_{X,coh}\subset (E_{\beta\alpha})^{-1}\circ(\beta\times\beta)(\tilde E)\circ E_{\beta\alpha}\,.\]

\proofsubdivisor{Last we treat the case that $H_Y$ is expanding.} We denote the expandingness-entourage of $H_Y$ by $E_{Y,exp}$. 
For every entourage $E$ of $X$ and every $n\in\N$ the expandingness condition on $H_Y$ yields a bounded subset $L_{E,n}\subset Y$ such that 
\[(H_Y)_n(((\alpha\times\alpha)(E))[y])\subset E_{Y,exp}[y]\]
for all $y\in Y\setminus L_{E,n}$, and we define $K_{E,n}\coloneqq \alpha^{-1}(L_{E,n})$. Then for all $x\in X\setminus K_{r,n}$ we have
\begin{align*}
\alpha\circ(H_X)_n(E[x])&\subset E_H^{-1}
[(H_Y)_n\circ\alpha(E[x])]
\\&=E_H^{-1}[(H_Y)_n(((\alpha\times\alpha)(E))[\alpha(x)])]
\\&\subset E_H^{-1}\circ E_{Y,exp}[\alpha(x)]
\\\implies (H_X)_n(E[x])&\subset E_{\beta\alpha}^{-1}[\beta\circ\alpha\circ(H_X)_n(E[x])]
\\&\subset E_{\beta\alpha}^{-1}[\beta(E_H^{-1}\circ E_{Y,exp}[\alpha(x)])]
\\&=E_{\beta\alpha}^{-1}\circ((\beta\times\beta)(E_H^{-1}\circ E_{Y,exp}))[\beta\circ\alpha(x)]
\\&\subset E_{\beta\alpha}^{-1}\circ((\beta\times\beta)(E_H^{-1}\circ E_{Y,exp}))\circ E_{\beta\alpha}[x]\,,
\end{align*}
and so $H_X$ is expanding with some expandingness-entourage 
\[E_{X,exp}\subset E_{\beta\alpha}^{-1}\circ((\beta\times\beta)(E_H^{-1}\circ E_{Y,exp}))\circ E_{\beta\alpha}\,.\qedhere\]
\end{proof}

The second part of the following lemma exhibits a strengthening of the properness condition on a proper combing which is always possible.

\begin{lem}\label{lem453ewr43}
Let $H$ be a combing of $X$. Then for every $n \in \IN$ the image of $H_n$ is a bounded subset of $X$.

If $H$ is in addition proper, then for every bounded subset $K \subset X$ exists an $N \in \IN$ such that $H_n^{-1}(K) = K$ for all $n \ge N$.
\end{lem}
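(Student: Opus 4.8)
The plan is to treat the two assertions separately; both reduce to the defining properties of a combing, with properness entering only in the second.

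For the first assertion I would argue by telescoping along the combing path. As in the proof of Lemma~\ref{lem_coherent_is_proper}, the set $E := \{(H(x,n+1),H(x,n)) \mid x \in X,\, n \in \N\}$ is an entourage of $X$ by controlledness of $H$ (Property~\ref{654terfd}). Fix $n \in \N$. For every $x \in X$ the chain $H(x,0), H(x,1), \dots, H(x,n)$ has consecutive terms $E$-close, so $(H_n(x), H(x,0)) \in E^{\circ n}$, the $n$-fold composite of $E$ with itself. Since $H(x,0) = p$ by Property~\ref{4532r}, this gives $H_n(X) \subseteq E^{\circ n}[p]$. As $E^{\circ n}$ is again an entourage and $\{p\}$ is bounded, the set $E^{\circ n}[p]$ is bounded (the image of a bounded set under an entourage is bounded), and hence so is $H_n(X)$.

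For the second assertion, fix a bounded $K \subset X$ and establish the two inclusions $K \subseteq H_n^{-1}(K)$ and $H_n^{-1}(K) \subseteq K$ separately for large $n$. The first inclusion is immediate from the eventual-identity Property~\ref{453987ztwert}: choose $N_1$ so that $H_n|_K = \id$ for all $n \ge N_1$; then $x \in K$ forces $H_n(x) = x \in K$, i.e.\ $x \in H_n^{-1}(K)$.

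The reverse inclusion is where properness is used, and it is the only genuinely delicate point. Properness, applied to $K$, yields a bounded set $L$ and an $N_2$ with $H_n^{-1}(K) \subseteq L$ for all $n \ge N_2$. The key observation is to apply the eventual-identity Property~\ref{453987ztwert} not to $K$ but to the larger bounded set $L$: pick $N_3$ with $H_n|_L = \id$ for all $n \ge N_3$. Then for $n \ge \max(N_2,N_3)$ and any $x \in H_n^{-1}(K)$ we have $x \in L$, whence $H_n(x) = x$; but $H_n(x) \in K$ by assumption, so $x \in K$. Taking $N := \max(N_1,N_2,N_3)$ then gives $H_n^{-1}(K) = K$ for all $n \ge N$. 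The obstacle to anticipate is precisely this upgrade from $H_n^{-1}(K) \subseteq L$ to $H_n^{-1}(K) \subseteq K$: invoking eventual identity on $K$ alone is tempting but insufficient, since a priori the preimage is only known to land in $L$.
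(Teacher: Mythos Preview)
Your proof is correct and follows essentially the same approach as the paper's. For the second assertion the arguments are identical: properness gives a bounded $L \supset H_n^{-1}(K)$, and eventual identity on $L$ forces $H_n^{-1}(K) \subset K$; you are simply more explicit about the reverse inclusion $K \subset H_n^{-1}(K)$.

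For the first assertion the paper takes a slightly shorter route: rather than telescoping along the entourage $E$, it observes in one stroke that controlledness of $H\colon X\times\N\to X$ makes $H_n$ close to the constant map $H_0\equiv p$ (since $\{((x,n),(x,0))\mid x\in X\}$ is an entourage of $X\times\N$), and any map close to a constant has bounded image. Your telescoping argument amounts to reproving this closeness step by step, which is fine but not necessary.
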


\begin{proof}
By Point~\ref{654terfd} of Definition~\ref{def_combing} the maps $H_n$ and $H_0$ are close to each other. But $H_0$ is constant, so $H_n$ must have bounded image.

If a bounded subset $K \subset X$ is given, then if $H$ is proper we know that there is a bounded subset $L \subset X$ and an $N^\prime \in \IN$ such that $H_n^{-1}(K) \subset L$ for all $n \ge N^\prime$. By Point~\ref{453987ztwert} of Definition~\ref{def_combing} there exists an $N^{\prime\prime} \in \IN$ such that $H_n|_L \equiv \id_L$ for all $n \ge N^{\prime\prime}$. Now $N \coloneqq  \max\{N^\prime,N^{\prime\prime}\}$ will do the job.
\end{proof}

\subsection{Combing compactification and corona}
\label{sec_combing_corona}

We will often encounter coarse spaces which are simultaneously topological spaces. The compatibility condition between a coarse structure and a topology is the following:

\begin{defn}\label{defn2435r23we}
Let the set $X$ be equipped both with a coarse structure and a topology which is Hausdorff.
\begin{itemize}
\item We call $X$ a \emph{topological coarse space} if there exists an open neighborhood of the diagonal in $X\times X$ which is also a coarse entourage.
\item We call a topological coarse space $X$ \emph{proper} if every bounded subset of $X$ is precompact (i.e., its closure is compact).
\end{itemize}
Note that a proper topological coarse space is necessarily locally compact and that the bounded subsets are exactly the ones which have compact closure.
\end{defn}

\begin{example}
If $X$ is a metric space, then the coarse structure naturally associated to $X$ (see Example \ref{ex_metric_is_coarse}) turns $X$ into a topological coarse space. Furthermore, $X$ is then a proper topological coarse space if and only if $X$ is proper as a metric space.
\end{example}

If we consider a combing on a topological coarse space, then a priori we do \emph{not} require it to be continuous.

Given a coarse space $X$, we denote by
\begin{itemize}
\item $D_b(X)$ the $C^*$-algebra of all bounded functions $f\colon X\to\C$ and by
\item $D_0(X)$ the sub-$C^*$-algebra of all those functions which vanish at infinity, i.\,e.\ for which for each $\varepsilon>0$ there is a bounded subset $K\subset X$ such that $\|f|_{X\setminus K}\|<\varepsilon$.
\end{itemize}

For each $f\in D_b(X)$ and each entourage $E$, the $E$-variation is the function $\Var_E(f)\in D_b(X)$ defined by
\[\Var_E(f)(x)\coloneqq \sup\big\{|f(y)-f(x)| \,\big|\, (y,x)\in E\big\}\,.\]
For the entourage $E_r$ in a metric space (cf.~Lemma~\ref{ex_metric_is_coarse}) we will simply write $\Var_r$ instead of $\Var_{E_r}$.

\begin{defn}
The function $f$ is said to have \emph{vanishing variation} if we have $\Var_E(f)\in D_0(X)$ for all entourages $E$. We denote the sub-$C^*$-algebra of $D_b(X)$ consisting of the functions of vanishing variation by $D_h(X)$. 
\end{defn}

If $X$ is a proper topological coarse space, then we denote by
\[C_0(X)\subset C_h(X)\subset C_b(X)\]
the respective sub-$C^*$-algebras of $D_0(X)\subset D_h(X)\subset D_b(X)$ consisting of all those functions which are additionally continuous.

The Higson corona of a coarse space $X$ is defined as the maximal ideal space of the unital, commutative $C^\ast$-algebra $D_h(X)/D_0(X)$ and the Higson compactification of a proper topological coarse space is the maximal ideal space of $C_h(X)$. In the latter case, the Higson corona is the boundary of the Higson compactification, because we have $D_h(X)/D_0(X)=C_h(X)/C_0(X)$, see Roe \cite[Top of Page 31]{roe_lectures_coarse_geometry}.

Using these function algebras, we now define the combing compactification and combing corona of properly combable (proper topological) coarse spaces.

\begin{defn}\label{def_corona_compactification}
Let $(X,H)$ be a properly combed coarse space.
\begin{itemize}
\item 
We denote by $D_H(X)$ the largest sub-$C^*$-algebra of $D_h(X)$ satisfying $H_n^*f\xrightarrow{n\to\infty}f$ for all $f\in D_H(X)$. 
\end{itemize}

This is a unital, commutative $C^\ast$-algebra containing $D_0(X)$. We call the maximal ideal space $\partial_HX$ of $D_H(X)/D_0(X)$ the \emph{combing corona} of $(X,H)$.

If $X$ is a proper topological coarse space, then we call the maximal ideal space $\combcompb{X}{H}$ of the unital, commutative $C^\ast$-algebra
\[C_H(X)\coloneqq D_H(X)\cap C_b(X)\]
the \emph{combing compactification} of $(X,H)$.
\end{defn}

By construction, $\partial_HX$ and $\combcompb{X}{H}$ are compact Hausdorff spaces and $\combcompb{X}{H}$ is a compactification of $X$.

Note that in this definition it was important to work with proper combings and not with general combings: if $H$ had not been proper, then the inclusions $D_0(X)\subset D_H(X)$ and $C_0(X)\subset C_H(X)$ would not necessarily hold meaning that $\combcompb{X}{H}$ would not necessarily be a compactification of $X$.

\begin{lem}
The combing corona is the boundary of the combing compactification, i.e., $\partial_HX=\combcompb{X}{H}\setminus X$.
\end{lem}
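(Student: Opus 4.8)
The plan is to deduce the asserted equality from a single isomorphism of quotient $C^*$-algebras, namely $D_H(X)/D_0(X)\cong C_H(X)/C_0(X)$, via Gelfand duality. Recall that $\combcomp{X}^H$ is the maximal ideal space of $C_H(X)$ and that $X$ embeds into it as the set of evaluation characters. Since $C_0(X)\subset C_H(X)$ and functions in $C_0(X)$ extend continuously by zero to the boundary, a character of $C_H(X)$ lies in $\combcomp{X}^H\setminus X$ if and only if it vanishes on the ideal $C_0(X)$; the evaluation character $\mathrm{ev}_x$ of a point $x\in X$ never does, as a bump function witnesses. By Gelfand duality the hull of $C_0(X)$ is precisely the maximal ideal space $M(C_H(X)/C_0(X))$, so $\combcomp{X}^H\setminus X$ is canonically homeomorphic to $M(C_H(X)/C_0(X))$. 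As $\partial_HX$ is by definition the maximal ideal space of $D_H(X)/D_0(X)$, it suffices to produce a natural isomorphism $C_H(X)/C_0(X)\xrightarrow{\cong}D_H(X)/D_0(X)$ compatible with these identifications.

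Such a map is induced by the inclusion $C_H(X)=D_H(X)\cap C_b(X)\hookrightarrow D_H(X)$, which descends to the quotients because $C_0(X)\subset D_0(X)$. For injectivity I would observe that $C_H(X)\cap D_0(X)\subset C_b(X)\cap D_0(X)=C_0(X)$, the reverse inclusion being clear; hence any class in $C_H(X)/C_0(X)$ mapping to $0$ is represented by a function lying in $C_H(X)\cap D_0(X)=C_0(X)$, so it is already zero.

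The substantial point is surjectivity: given $g\in D_H(X)$ I must find a \emph{continuous} representative of its class that still satisfies the combing condition $H_n^\ast f\to f$. Here I would invoke Roe's theorem that the inclusion $C_h(X)\hookrightarrow D_h(X)$ induces an isomorphism $C_h(X)/C_0(X)\cong D_h(X)/D_0(X)$ (the underlying smoothing argument averages $g$ against a partition of unity subordinate to a locally finite, uniformly bounded open cover, producing a continuous function $f$ of vanishing variation with $\lvert f-g\rvert$ dominated by $\Var_E(g)$, hence $g-f\in D_0(X)$). This yields $f\in C_h(X)$ with $g-f\in D_0(X)$. The key observation is that the combing condition is then inherited automatically: since $D_H(X)$ is a sub-$C^*$-algebra containing $D_0(X)$, and $g\in D_H(X)$ while $g-f\in D_0(X)\subset D_H(X)$, we obtain $f=g-(g-f)\in D_H(X)$. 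Because $D_H(X)\subset D_h(X)$, we have $C_h(X)\cap D_H(X)=C_b(X)\cap D_H(X)=C_H(X)$, so in fact $f\in C_H(X)$ and its class maps to that of $g$. This establishes surjectivity and completes the isomorphism.

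The main obstacle is exactly this surjectivity/smoothing step, but it is precisely where Roe's prior result does the heavy lifting. The only genuinely new content beyond the Higson case, namely that the continuous approximant respects the combing $H$, requires no further estimate involving $H$ itself: it follows formally from the fact that $D_H(X)$ is a linear subspace containing both $g$ and the correction term $g-f\in D_0(X)$. Thus the theorem reduces to Roe's statement together with the algebraic bookkeeping above.
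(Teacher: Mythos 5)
Your proof is correct and follows essentially the same route as the paper: both reduce the statement to the isomorphism $D_H(X)/D_0(X)=C_H(X)/C_0(X)$, invoke Roe's decomposition $D_h(X)=C_h(X)+D_0(X)$, and then use the same formal observation that the continuous representative $f=g-(g-f)$ lies in $D_H(X)$ because $D_0(X)\subset D_H(X)$, hence in $C_h(X)\cap D_H(X)=C_H(X)$. The only difference is that you spell out the Gelfand-duality bookkeeping (characters on the boundary are exactly those vanishing on $C_0(X)$) and the injectivity step $C_H(X)\cap D_0(X)=C_0(X)$, which the paper treats as immediate.
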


\begin{proof}
We have to show that $D_H(X)/D_0(X)=C_H(X)/C_0(X)$.

Every function $f\in D_H(X)$ is contained in $D_h(X)=C_h(X)+D_0(X)$, i.e., $f=g+h$ with $g\in D_0(X)$ and $h\in C_h(X)$. But we also know that we have $h=f-g\in D_H(X)$ and therefore $h\in D_H(X)\cap C_h(X)=C_H(X)$. Thus we get $D_H(X)=C_H(X)+D_0(X)$ and together with $C_0(X)=C_H(X)\cap D_0(X)$ we obtain $D_H(X)/D_0(X)=C_H(X)/C_0(X)$.
\end{proof}

The $C^\ast$-algebra $C_H(X)$ describes the topology of $\combcompb{X}{H}$. But if we are only interested in the topology at the corona, the $C^\ast$-algebra $D_H(X)$ is sufficient, as the following lemma shows.

\begin{lem}\label{lem:DHExtensions}
An arbitrary function $f\colon X\to \C$ admits an extension to $\combcompb{X}{H}$ with the extension being continuous at the points of the corona $\partial_HX$ if and only if $f\in D_H(X)$.
\end{lem}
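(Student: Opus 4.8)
The plan is to prove both implications by exploiting the fact that the topology of $\combcomp{X}^H$ near the corona is governed entirely by $C_H(X)$, together with the decomposition $D_H(X)=C_H(X)+D_0(X)$ established in the proof of the preceding lemma.

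For the \emph{if} direction I would start from $f\in D_H(X)$ and write $f=g+h$ with $g\in C_H(X)$ and $h\in D_0(X)$. Since $g$ lies in the algebra $C_H(X)$ defining $\combcomp{X}^H$, Gelfand duality provides a genuinely continuous extension $\bar g$ of $g$ to all of $\combcomp{X}^H$. The function $h$ I would extend by $\bar h:=0$ on $\partial_H X$: because $h$ vanishes at infinity, for every $\varepsilon>0$ the set $\{|h|\ge\varepsilon\}$ lies in a bounded, hence precompact, subset of $X$, whose closure in $\combcomp{X}^H$ is a compact subset of $X$ missing the corona. Its complement is then an open neighborhood of $\partial_H X$ on which $|\bar h|<\varepsilon$, which shows that $\bar h$ is continuous (with value $0$) at every corona point. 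Hence $\bar f:=\bar g+\bar h$ restricts to $f$ on $X$ and is continuous at the points of $\partial_H X$.

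For the \emph{only if} direction, suppose $f$ admits an extension $\bar f$ continuous at every corona point. A compactness argument---covering the compact corona by neighborhoods on which $\bar f$ is bounded---shows that $f$ is bounded outside a bounded set; since $f$ is bounded on bounded sets, this gives $f\in D_b(X)$. Recall that the defining condition already cuts out a $C^\ast$-subalgebra, so $D_H(X)=\{f\in D_h(X):\|H_n^\ast f-f\|_\infty\to 0\}$, and it remains to verify vanishing variation and the convergence $H_n^\ast f\to f$. Both follow from a single principle: if a net $x_\alpha$ leaves every bounded set with $x_\alpha\to\omega\in\partial_H X$, and $z_\alpha$ is a second net satisfying $g(x_\alpha)-g(z_\alpha)\to 0$ for all $g\in C_H(X)$, then $z_\alpha\to\omega$ as well, so continuity of $\bar f$ at $\omega$ forces $f(x_\alpha)-f(z_\alpha)\to 0$. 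For vanishing variation I apply this with $z_\alpha\in E[x_\alpha]$ for a fixed entourage $E$, where vanishing variation of each $g\in C_H(X)$ gives $g(x_\alpha)-g(z_\alpha)\to 0$; for the convergence condition I apply it with $z_\alpha=H_{n_\alpha}(x_\alpha)$ for indices $n_\alpha\to\infty$, where $\|H_{n_\alpha}^\ast g-g\|_\infty\to 0$ yields $g(H_{n_\alpha}(x_\alpha))-g(x_\alpha)\to 0$. In each case, assuming the relevant quantity does \emph{not} vanish produces, after passing to a subnet converging in the compact space $\combcomp{X}^H$, a contradiction; nets that stay in a bounded set cause no trouble, since there $H_n$ is eventually the identity by Lemma~\ref{lem453ewr43}.

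The main obstacle is the \emph{only if} direction, and within it the step identifying convergence to a corona point: the whole argument hinges on showing that $E$-close points, respectively $H_n$-shifted points, accumulate at the \emph{same} corona point, which is exactly where the two defining properties of $C_H(X)$---vanishing variation and $H_n^\ast g\to g$---enter. A secondary point requiring care is the boundedness of $f$, which continuity at the compact corona secures only away from a bounded set and which relies on $f$ being bounded on bounded sets.
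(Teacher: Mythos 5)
Your \emph{if} direction is exactly the paper's argument: the same decomposition $f=g+h$ with $g\in C_H(X)$ and $h\in D_0(X)$, the Gelfand extension of $g$, and the extension of $h$ by zero on the corona (you merely spell out why the zero extension is continuous at corona points). Your \emph{only if} direction, however, is a genuinely different route. The paper disposes of it in three lines: since the extension is continuous at every corona point, its restriction to the compact set $\partial_HX$ is continuous, so Tietze's extension theorem produces $g\in C(\combcomp{X}^H)\cong C_H(X)$ agreeing with $f$ on $\partial_HX$; then $h:=f-g$ restricts to a function vanishing at infinity, whence $f|_X\in C_H(X)+D_0(X)=D_H(X)$, reusing the decomposition a second time. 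You instead verify the two defining conditions of $D_H(X)$ from scratch, using that the topology of $\combcomp{X}^H$ is the weak topology induced by $C_H(X)$, so that two nets on which every $g\in C_H(X)$ asymptotically agrees must share their corona limit; combined with compactness of $\combcomp{X}^H$, the fact that a net eventually leaving every bounded set can only accumulate at corona points, and Lemma~\ref{lem453ewr43} for nets trapped in bounded sets, this argument is sound, and your observation that $\{f\in D_h(X):\|H_n^*f-f\|\to 0\}$ is already a $C^*$-algebra (hence equals $D_H(X)$) is correct and necessary. What the comparison buys: the paper's proof is shorter and needs no net gymnastics, while yours avoids Tietze entirely and makes visible precisely where vanishing variation and the condition $H_n^*g\to g$ govern the topology near the corona.

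One step of yours is false as written, although the defect is really inherited from the statement's generosity: ``$f$ is bounded on bounded sets'' does not hold for a genuinely arbitrary function on a non-discrete proper space. Indeed, take $X=\R$ with a proper combing and $f$ unbounded on a bounded interval and zero elsewhere; $f$ extends by $0$ continuously at all corona points, yet $f\notin D_b(X)\supset D_H(X)$, so the ``only if'' implication fails for such $f$. Note, though, that the paper's own proof has the same silent gap: the assertion $f-g\in D_0(X)$ requires $f-g$ to be \emph{bounded}, which again needs $f$ bounded on bounded sets. So the lemma should be read for bounded $f$ (or for discrete $X$, where bounded sets are finite and your claim is true); under that reading both your argument and the paper's are complete, and your handling of this point is no worse than the original.
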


\begin{proof}
For $f$ continuous, this is a consequence of the construction of $\combcompb{X}{H}$. 

If $f\in D_H(X)$, then we have seen in the proof of the previous lemma that $f=g+h$ with $g\in C_H(X)$ and $h\in D_0(X)$. As we have just mentioned, $g$ has a continuous extension to $\combcompb{X}{H}$, and $h$ can be extended to the compactification by setting $h|_{\partial_HX}=0$, and this extension is continuous at all points of the corona. Summing up these two extensions, we obtain the extension of $f$.

Conversely, if $f\colon\combcompb{X}{H}\to\C$ is continuous at all points of the corona, then we can find a continuous extension $g\in C(\combcompb{X}{H})$ of $f|_{\partial_HX}$ by Tietze's extension theorem. Then $h\coloneqq f-g$ restricts to a function in $D_0(X)$ and this implies that $f|_X\in C_H(X)+D_0(X)=D_H(X)$. 
\end{proof}

\begin{lem}\label{lem:metrizability}
Let $H$ be a proper combing on a proper metric space $X$.

Then its combing compactification $\combcompb{X}{H}$ is metrizable.
\end{lem}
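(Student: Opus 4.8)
The plan is to reduce metrizability of the compact Hausdorff space $\combcomp{X}^H$ to separability of the commutative unital $C^*$-algebra $C_H(X)$. By Gelfand duality, a compact Hausdorff space is metrizable if and only if its algebra of continuous functions is separable, and here that algebra is precisely $C_H(X)$. So it suffices to exhibit a countable subset of $D_b(X)$ whose sup-norm closure contains $C_H(X)$; once $C_H(X)$ is contained in a separable metric subspace it is itself separable, and we are done.

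The key observation is that for each $n\in\N$ the pullback $H_n^*f=f\circ H_n$ depends only on the restriction of $f$ to $Y_n:=\overline{\im H_n}$. By Lemma~\ref{lem453ewr43} the image $\im H_n$ is bounded, so $Y_n$ is compact (this is where properness of the metric space $X$ enters), and hence $C(Y_n)$ is separable. I would fix, for each $n$, a countable dense subset $\{g_{n,k}\}_{k\in\N}\subset C(Y_n)$.

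The approximation then runs as follows. Given $f\in C_H(X)$ and $\varepsilon>0$, first use the defining property $H_n^*f\xrightarrow{n\to\infty}f$ of $D_H(X)$ — which is convergence in the $C^*$-norm, i.e.\ uniform convergence — to choose $n$ with $\|f-H_n^*f\|<\varepsilon/2$. Next choose $g_{n,k}$ with $\|f|_{Y_n}-g_{n,k}\|_{C(Y_n)}<\varepsilon/2$. Since $\im H_n\subset Y_n$, this yields
\[
\|H_n^*f-g_{n,k}\circ H_n\|=\sup_{x\in X}\bigl|f(H_n(x))-g_{n,k}(H_n(x))\bigr|\le\|f|_{Y_n}-g_{n,k}\|_{C(Y_n)}<\varepsilon/2,
\]
and therefore $\|f-g_{n,k}\circ H_n\|<\varepsilon$. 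Hence $C_H(X)$ lies in the sup-norm closure of the countable set $\{g_{n,k}\circ H_n\mid n,k\in\N\}\subset D_b(X)$, giving the required separability.

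I do not expect a serious obstacle; the argument is genuinely short. The one point to watch is that the approximating functions $g_{n,k}\circ H_n$ need not lie in $C_H(X)$, and in fact need not even be continuous, because the combing $H_n$ is not assumed continuous. This is harmless: separability of $C_H(X)$ only requires that it sit inside the closure of a countable family within the ambient metric space $\bigl(D_b(X),\|{\cdot}\|\bigr)$, and subsets of separable metric spaces are separable. The only substantive inputs are thus the compactness of $Y_n$ (hence separability of $C(Y_n)$) and the uniform convergence $H_n^*f\to f$ built into the definition of $D_H(X)$; note that vanishing variation is not needed directly for this particular statement.
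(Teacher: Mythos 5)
Your proof is correct and is essentially the paper's own argument: the paper likewise reduces metrizability to separability of $C(\combcomp{X}^H)=C_H(X)$ and notes the containment $C_H(X)\subset\overline{\bigcup_{n\in\N}H_n^*(C(\overline{\im(H_n)}))}$, which is separable because each $\overline{\im(H_n)}$ is compact metric (boundedness of $\im(H_n)$ plus properness of $X$). Your $\varepsilon/2$-approximation merely spells out that containment explicitly, and your closing remark that the approximants need not be continuous (or lie in $C_H(X)$) is a correct and harmless observation that applies equally to the paper's version.
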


\begin{proof}
To prove this, we use the fact that a compact Hausdorff space $Y$ is metrizable if and only if the $C^*$-algebra $C(Y)$ is separable.

As $\overline{\im(H_n)}\subset X$ is a metric space for every $n\in\N$, the corresponding $C^*$-algebras $C(\overline{\im(H_n)})$ are separable and therefore also
\[C(\combcompb{X}{H})=C_H(X)\subset\overline{\bigcup_{n\in\N}H_n^*(C(\overline{\im(H_n)}))}\]
is separable. Hence $\combcompb{X}{H}$ is metrizable.
\end{proof}

\begin{lem}\label{lem24354terwr}
Let $(X,H_X)$ and $(Y,H_Y)$ be two properly combed coarse spaces and let $\alpha\colon X\to Y$ be a coarse map such that 
$\alpha\circ H_X$ is close to $H_Y\circ(\alpha\times\id_{\N})$, i.\,e.~a morphism between combed coarse spaces in the sense of Remark~\ref{rem_combingdefinition}.(\ref{rem_enum_combedspacemorphisms}).

Then $\alpha^*$ maps $D_{H_Y}(Y)$ to $D_{H_X}(X)$. Furthermore, if $\beta\colon X\to Y$ is another coarse map which is close to $\alpha$, then $\alpha^*$ and $\beta^*$ agree up to $D_0(X)$.
\end{lem}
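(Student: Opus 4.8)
The plan is to verify directly that $\alpha^* f = f\circ\alpha$ satisfies the two defining conditions of membership in $D_{H_X}(X)$ for each $f\in D_{H_Y}(Y)$, namely vanishing variation and the convergence $(H_X)_n^*\alpha^* f\to\alpha^* f$. A preliminary observation that I will use repeatedly is that pullback along a coarse map preserves vanishing at infinity: since $\alpha$ is proper, $\alpha^{-1}(K)$ is bounded whenever $K\subset Y$ is bounded, so $\alpha^*$ carries $D_0(Y)$ into $D_0(X)$.

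For vanishing variation, controlledness of $\alpha$ makes $(\alpha\times\alpha)(E)$ an entourage of $Y$ for each entourage $E$ of $X$, and the pointwise estimate $\Var_E(\alpha^* f)\le\alpha^*\big(\Var_{(\alpha\times\alpha)(E)}(f)\big)$ holds because every pair $(x,y)\in E$ maps to a pair $(\alpha(x),\alpha(y))$ controlled by $(\alpha\times\alpha)(E)$. Since $f\in D_h(Y)$, the right-hand side lies in $\alpha^*(D_0(Y))\subset D_0(X)$, so $\alpha^* f\in D_h(X)$.

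The heart of the argument is the convergence. Writing $E_H:=\{((H_Y)_n(\alpha(x)),\alpha((H_X)_n(x)))\mid x\in X,\,n\in\N\}$ — an entourage of $Y$ precisely because $\alpha$ is a morphism of combed spaces — I will insert the intermediate point $(H_Y)_n(\alpha(x))$ and estimate
\[
|f(\alpha((H_X)_n(x)))-f(\alpha(x))|\le\Var_{E_H}(f)\big((H_Y)_n(\alpha(x))\big)+\big|((H_Y)_n^* f-f)(\alpha(x))\big|.
\]
The second summand is bounded by $\|(H_Y)_n^* f-f\|_\infty$, which tends to $0$ because $f\in D_{H_Y}(Y)$. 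The first summand is the main obstacle: a priori $g:=\Var_{E_H}(f)\in D_0(Y)$ is only small outside some bounded set, and it is not obvious that $g\circ(H_Y)_n\circ\alpha$ is uniformly small. To control it I will fix $\varepsilon>0$, choose a bounded $K\subset Y$ with $g|_{Y\setminus K}<\varepsilon$, and split on whether $\alpha(x)\in K$. For $\alpha(x)\notin K$ I invoke the strengthened properness of Lemma~\ref{lem453ewr43}, giving $N$ with $(H_Y)_n^{-1}(K)=K$ for $n\ge N$, so that $(H_Y)_n(\alpha(x))\notin K$ and the first summand is $<\varepsilon$; for $\alpha(x)\in K$ the point $x$ lies in the bounded set $\alpha^{-1}(K)$, so Point~\ref{453987ztwert} of Definition~\ref{def_combing} makes $(H_X)_n$ the identity on $\alpha^{-1}(K)$ and $(H_Y)_n$ the identity on $K$ for large $n$, whence the first summand vanishes identically. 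Taking $n$ past both thresholds yields $\|(H_X)_n^*\alpha^* f-\alpha^* f\|_\infty<2\varepsilon$, proving $\alpha^* f\in D_{H_X}(X)$.

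Finally, for the closeness statement, closeness of $\alpha$ and $\beta$ means $E_{\alpha\beta}:=\{(\alpha(x),\beta(x))\mid x\in X\}$ is an entourage of $Y$. For $f\in D_{H_Y}(Y)\subset D_h(Y)$ the pointwise bound $|\alpha^* f-\beta^* f|\le\alpha^*\big(\Var_{E_{\alpha\beta}}(f)\big)$ holds, and since $f$ has vanishing variation the right-hand side lies in $\alpha^*(D_0(Y))\subset D_0(X)$; hence $\alpha^* f-\beta^* f\in D_0(X)$, so $\alpha^*$ and $\beta^*$ agree modulo $D_0(X)$.
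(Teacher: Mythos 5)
Your proof is correct and follows essentially the same route as the paper's: both compare $(H_X)_n^*\alpha^*f$ with $\alpha^*(H_Y)_n^*f$ via the vanishing variation of $f$ over the closeness entourage $E_H$, use properness of $H_Y$ to push this comparison outside the bounded set where the variation may be large, and dispose of the bounded region with the eventual-identity property of the combings, while the second statement is in both cases immediate from vanishing variation. The only immaterial difference is that you split on $\alpha(x)\in K$ using the strengthened properness of Lemma~\ref{lem453ewr43}, whereas the paper splits on the larger set $\alpha^{-1}(L)$ obtained from raw properness of $H_Y$.
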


\begin{proof}
Clearly, $\alpha^*D_{H_Y}(Y)\subset \alpha^*D_h(Y)\subset D_h(X)$, because $\alpha$ is a coarse map. 

Let $f\in D_{H_Y}(Y)$ and $\varepsilon>0$. Since $\alpha\circ H_{X,n}$ is $E$-close to $H_{Y,n}\circ\alpha$ for some entourage $E$ of $Y$ and $f$ is of vanishing variation, there is a bounded subset $K\subset Y$ such that $\Var_E(f)$ has norm less than $\varepsilon$ outside of $K$. Because $H_Y$ is proper, we know that there exists a bounded subset $L\subset Y$ and an $N\in\N$ such that $H_Y(Y\setminus L\times\{N,N+1,\dots\})\subset Y\setminus K$.
Now
\begin{align*}
\|H_{X,n}^* & \alpha^*f -\alpha^*f\|\\
& \leq \|H_{X,n}^*\alpha^*f-\alpha^*H_{Y,n}^*f\|+\|\alpha^*H_{Y,n}^*f-\alpha^*f\|
\\&\leq\max\{\|H_{X,n}^*\alpha^*f-\alpha^* H_{Y,n}^* f\|_{\alpha^{-1}(L)}\,,\,\|H_{X,n}^*\alpha^*f-\alpha^* H_{Y,n}^* f\|_{X\setminus\alpha^{-1}(L)}\}
\\&\qquad +\|H_{Y,n}^*f-f\|
\end{align*}
is less than $\varepsilon$ for $n$ large enough, because the second term inside the maximum is less than $\varepsilon$ for $n\geq N$, the first is zero for $n$ big enough by Property~\ref{453987ztwert} of the combings $H_X$ and $H_Y$ as $\alpha^{-1}(L)$ and $\alpha(\alpha^{-1}(L))$ are bounded and the third converges to $0$ because $f\in D_{H_Y}(Y)$.

The second part of the statement is a trivial consequence of vanishing variation of $f$.
\end{proof}

The following corollary summarizes the functoriality properties of the combing compactification and combing corona.

\begin{cor}[Functoriality]\label{cor_coronafunctoriality}
\mbox{}
\begin{itemize}
\item Taking the combing corona is a functor from the category whose objects are properly combed spaces and morphisms are closeness classes of coarse maps compatible with the combings in the sense of Remark~\ref{rem_combingdefinition}.(\ref{rem_enum_combedspacemorphisms}) to the category of compact Hausdorff spaces and continuous maps.
\item Taking the combing compactification defines a functor from the category whose objects are properly combed proper topological coarse spaces and morphisms are continuous coarse maps satisfying the compatibility condition of Remark~\ref{rem_combingdefinition}.(\ref{rem_enum_combedspacemorphisms}) to the category of compact Hausdorff spaces and continuous maps.
\end{itemize}
\end{cor}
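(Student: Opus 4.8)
The plan is to reduce both statements to Gelfand duality combined with Lemma~\ref{lem24354terwr}. Recall that taking the maximal ideal space is a \emph{contravariant} equivalence from the category of unital commutative $C^*$-algebras (with unital $*$-homomorphisms) to the category of compact Hausdorff spaces (with continuous maps). Since $\partial_HX$ and $\combcomp{X}^H$ are by definition the maximal ideal spaces of $D_H(X)/D_0(X)$ and of $C_H(X)$, it suffices to attach to each morphism of combed spaces a unital $*$-homomorphism in the opposite direction between the relevant $C^*$-algebras, in a way compatible with identities and composition. The composite of this contravariant assignment with the contravariant spectrum functor then yields the desired \emph{covariant} functors.

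First I would treat the corona. Given a morphism $\alpha\colon(X,H_X)\to(Y,H_Y)$, Lemma~\ref{lem24354terwr} already supplies that $\alpha^*$ sends $D_{H_Y}(Y)$ into $D_{H_X}(X)$. The only additional point is that $\alpha^*$ descends to the quotients, i.e.\ that $\alpha^*(D_0(Y))\subset D_0(X)$; this is immediate from properness of the coarse map $\alpha$, since if $f$ vanishes outside a bounded $K\subset Y$ then $f\circ\alpha$ vanishes outside the bounded set $\alpha^{-1}(K)$. The pullback is visibly unital, so we obtain a unital $*$-homomorphism $\bar\alpha^*\colon D_{H_Y}(Y)/D_0(Y)\to D_{H_X}(X)/D_0(X)$, and applying the spectrum functor gives a continuous map $\partial_H\alpha\colon\partial_HX\to\partial_HY$. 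Well-definedness on closeness classes is precisely the second assertion of Lemma~\ref{lem24354terwr}: if $\beta$ is close to $\alpha$, then $\alpha^*$ and $\beta^*$ agree modulo $D_0(X)$, hence $\bar\alpha^*=\bar\beta^*$ and the induced maps on coronas coincide.

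Functoriality is then formal. One has $\id_X^*=\id$ on functions, and for a composable pair the contravariant identity $(\beta\circ\alpha)^*=\alpha^*\circ\beta^*$ holds; both pass to the quotient algebras, and the contravariant spectrum functor converts the second into $\partial_H(\beta\circ\alpha)=\partial_H\beta\circ\partial_H\alpha$. The compactification statement runs in exact parallel, but now with no passage to a quotient: for a \emph{continuous} coarse morphism $\alpha$, continuity guarantees that $\alpha^*$ carries $C_b(Y)$ into $C_b(X)$, so together with Lemma~\ref{lem24354terwr} it maps $C_{H_Y}(Y)=D_{H_Y}(Y)\cap C_b(Y)$ into $C_{H_X}(X)=D_{H_X}(X)\cap C_b(X)$ and is unital. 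The spectrum functor then yields a continuous map $\combcomp{X}^H\to\combcomp{Y}^H$, and functoriality again follows from the two contravariance identities. Because no quotient intervenes here, close maps generally induce different maps (they agree only over the corona), which is exactly why this second functor must be defined on honest continuous coarse maps rather than on closeness classes.

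The entire mathematical content sits in Lemma~\ref{lem24354terwr}; everything remaining is bookkeeping via Gelfand duality. Accordingly I do not expect any genuine obstacle, beyond the routine but error-prone task of keeping straight the two variances in play—pullback of functions and passage to the maximal ideal space—so that their composite produces functors that are covariant in $X$ as claimed.
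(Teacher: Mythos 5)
Your proposal is correct and follows essentially the same route as the paper: Lemma~\ref{lem24354terwr} plus the observation that properness sends $D_0(Y)$ into $D_0(X)$ yields the induced $*$-homomorphisms on $D_H(Y)/D_0(Y)$ (resp.\ on $C_H(Y)$), and Gelfand duality then produces the continuous maps on coronas and compactifications, with functoriality being formal. Your added remarks on the double contravariance and on why the compactification functor cannot be defined on closeness classes are correct elaborations of what the paper leaves implicit.
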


\begin{proof}
A map $\alpha$ as in the lemma maps $D_0(Y)$ to $D_0(X)$ by properness of $\alpha$. Hence, using Lemma \ref{lem24354terwr}, we have an induced $*$-homomorphism 
\[\alpha^*\colon D_H(Y)/D_0(Y)\to D_H(X)/D_0(X)\,,\]
which in turn induces a continuous map $\alpha_*\colon \partial_HX\to\partial_HY$.
This map only depends on the closeness class of $\alpha$ by the second part of Lemma \ref{lem24354terwr}.

For proper topological coarse spaces, a continuous coarse map $\alpha$ with the compatibility with the combings induces a $*$-homomorphism
\[\alpha^*\colon C_H(Y)=C_b(Y)\cap D_H(Y)\to C_H(X)=C_b(X)\cap D_H(X)\]
and hence a continuous map $\alpha_*\colon\combcompb{X}{H}\to\combcompb{Y}{H}$.

These two constructions clearly yield functors on the categories specified in the statement of the corollary.
\end{proof}

\begin{rem}
The compatibility condition in Lemma \ref{lem24354terwr} of $\alpha$ with the combings is crucial.

But on the other hand, let us mention \cite[Question 1.19]{bestvina_questions}: if $G$ is a hyperbolic group and $H$ a subgroup of $G$ and $H$ also happens to be hyperbolic, does the inclusion $H \to G$ extend to a continuous map $\partial H \to \partial G$ between the boundaries?\footnote{Note that by now this question was answered negatively by Baker--Riley \cite{baker_riley}.}

If $H$ is a so-called quasi-convexly embedded subgroup of $G$, the answer is yes (this is well-known and basic to prove). Now $H$ being quasi-convexly included in $G$ is exactly the condition that the inclusion $H \to G$ satisfies the compatibility condition from Lemma \ref{lem24354terwr} (here we use that hyperbolic spaces admit natural combings, see Section~\ref{secnb234trz}).
\end{rem}

\section{Examples}\label{sec_examples}
The examples in this section should supply a good understanding of the properties of properness, coherence and expandingness. Keeping these examples in mind should help the reader in understanding the proofs in the subsequent sections.

Properness and coherence are almost always satisfied in naturally appearing examples. Therefore, we start with two somewhat artificial examples to show that properness and coherence can actually fail.

\begin{example}\label{ex344wret}
Let us show that a combing does not necessarily have to be proper.
We consider $X = \IN$ equipped with the usual coarse structure coming from the metric on $\IN$. The map $H$ is defined in the following way:
\[H(x,n) \coloneqq 
\begin{cases}
0 & \text{ for } n \leq x\\
n - x & \text{ for } x\leq n\leq 2x\\
x & \text{ for } 2x \leq n
\end{cases}
\]
So the combing path to a point $x \in \IN$ first stays the time $x$ on $0$ and then moves with unit speed to $x$. One can quickly check that this defines a combing which is not proper.
\end{example}

\begin{example}\label{example_non_coherent}
We consider $X = \IN$ equipped with the usual coarse structure coming from the metric on $\IN$. A combing $H$ is defined by
\[H(x,n)\coloneqq 
\begin{cases}
n & \text{ for } 3n \leq x\\
4n-x & \text{ for } 2n\leq x\leq 3n\\
x & \text{ for } x \leq 2n
\end{cases}
\]
It is easily seen to be a proper combing, as $H_n(x)\geq \min\{x,n\}$ and therefore we have $H_n^{-1}(\{0,\dots,N\})\subset \{0,\dots,N\}$ for all $n\geq N+1$. But $H$ is not coherent, as the distance between $H_{4n}(12n)=4n$ and $H_{4n}(H_{5n}(12n))=H_{4n}(8n)=8n$ becomes arbitrarily large.
\end{example}

If one prefers a group as an example, just perform the combings of these two examples symmetrically on the two halfs of $\Z$.

In nature many occuring combings are quasi-geodesic, meaning that the combing paths are $(\lambda,k)$-quasi-geodesic segments for fixed $\lambda \ge 1$ and $k \ge 0$. Such combings are automatically proper. But let us first recall the notion of quasi-geodesics.
\begin{defn}
Let $(X,d)$ be a metric space and $\lambda \geq 1$, $k \geq 0$ be constants. A \emph{$(\lambda,k)$-quasi-geodesic} is a map $\gamma\colon I\to X$, where $I\subset \R$ is a closed connected subset, such that
\begin{equation}
\label{eq24354t423ret}
\lambda^{-1}\cdot |t-s|-k\leq d(\gamma(t),\gamma(s))\leq \lambda \cdot |t-s|+k
\end{equation}
for all $t,s\in I$.

We call $\gamma$ a \emph{$(\lambda,k)$-quasi-geodesic segment} if $I$ is a closed interval and a  \emph{$(\lambda,k)$-quasi-geodesic ray} if $I=\R_{\geq 0}$.
\end{defn}

\begin{lem}\label{lemretnmv34tr}
Let $X$ be a metric space and $H$ a quasi-geodesic combing on it.

Then $H$ is proper.
\end{lem}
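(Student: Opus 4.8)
The plan is to prove the sharp form of properness directly, namely that $L:=K$ already works: for every bounded $K$ there is an $N$ with $H_n^{-1}(K)\subseteq K$ for all $n\ge N$ (this is exactly the conclusion of Lemma~\ref{lem453ewr43}, so establishing it also verifies the definition of properness). The only ingredient needed is the \emph{left-hand} inequality of the quasi-geodesic condition \eqref{eq24354t423ret}, which forces a point lying late on a combing path to be far from the basepoint $p$.

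First I would fix a nonempty bounded set $K$ (the empty case being trivial) and put $r:=\sup_{y\in K}d(p,y)$. Since $K$ has finite diameter and distances in a metric space are finite, $r<\infty$. For each $x\in X$, Property~\ref{453987ztwert} of Definition~\ref{def_combing} applied to the bounded set $\{x\}$ yields a smallest time $T_x\in\N$ with $H(x,m)=x$ for all $m\ge T_x$; on its travelling phase $\{0,\dots,T_x\}$ the path $n\mapsto H(x,n)$ is a $(\lambda,k)$-quasi-geodesic segment with $H(x,0)=p$ and $H(x,T_x)=x$. Taking $s=0$ in \eqref{eq24354t423ret} then gives $d(H(x,n),p)\ge\lambda^{-1}n-k$ for all $0\le n\le T_x$.

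Next I would choose any integer $N>\lambda(r+k)$ and check that $H_n^{-1}(K)\subseteq K$ for $n\ge N$. Suppose $H(x,n)\in K$, so that $d(H(x,n),p)\le r$. If we had $n\le T_x$, the inequality above would force $d(H(x,n),p)\ge\lambda^{-1}N-k>r$, a contradiction; hence $n>T_x$, which gives $H(x,n)=x$ and therefore $x\in K$. Thus $L:=K$ and this $N$ do the job. I do not expect a genuine obstacle here: the proof is short, and the only point demanding care is to restrict the quasi-geodesic inequality to the travelling indices $\{0,\dots,T_x\}$, since beyond $T_x$ the path is constant and the lower bound of \eqref{eq24354t423ret} would otherwise fail. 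The clean split is precisely that large indices either still lie in the travelling phase (impossible once $n>\lambda(r+k)$, by the distance bound) or lie in the constant phase (in which case preimages of $K$ fall back into $K$).
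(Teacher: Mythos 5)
Your proof is correct and follows essentially the same route as the paper's: both rest on the lower quasi-geodesic bound $d(p,H_n(x))\ge\lambda^{-1}n-k$, valid on the travelling phase of each combing path, to conclude that late points on combing paths cannot lie in the bounded set $K$. The only difference is bookkeeping: the paper takes $L$ to be a large ball around $p$ (chosen so that points outside $L$ have travelling phase longer than $N$), whereas you observe that $L:=K$ already works, which is a slightly sharper (and equally valid) conclusion.
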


\begin{proof}
The claim follows from the fact that $d(p, H_n(x)) \ge 1/\lambda \cdot n - k$, where $\lambda,k > 0$ are the quasi-geodesicity constants (the estimate holds provided $n$ is not so large that $H_-(x)$ already became contant).

Concretely, given a bounded subset $K \subset X$, we define the number $N \in \IN$ to be $\lceil \lambda\cdot(\max_{x \in K} d(p,x) + k)\rceil + \lambda$ and than choose $L \subset X$ large enough such that all $n \in \IN_0$ with $n \le N$ are still in the domain of definition of the quasi-geodesic segments $H_-(y)$ for all $y \in X \setminus L$ (e.g., we can set $L$ as the ball of radius $\lceil \lambda\cdot N + k \rceil$ around $p$).
\end{proof}

\subsection{Open cones}
\label{sec545342ewrtte}

The most basic examples of coherently and expandingly combable spaces are open cones over compact base spaces. In fact, these are the motivating examples for most of the theory in this paper.

Let $(B,d)$ be a compact metric space and $\phi\colon \R_{>0}\to \R_{>0}$ a monotonously increasing function.
The \emph{open cone} of growth $\phi$ over $B$ is defined as the space $\cO_\phi(B)\coloneqq B\times \R_{\geq 0}/B\times \{0\}$ equipped with the largest metric $d_{\phi}$ such that 
\[d_{\phi}((x,s),(y,t))\leq |t-s|+\phi(\min\{s,t\})\cdot d(x,y)\]
for all $x,y\in B$ and $s,t>0$.

If $B$ is a compact manifold, $d$ comes from a Riemannian metric $g$ and $\phi$ is continuous,\footnote{There is no need for us to assume here that Riemannian metrics are smooth.} then the metric $d_{\phi}$ is coarsely equivalent to the metric induced by the Riemannian metric
\[g_\phi\coloneqq dt^2+(\phi\circ t)^2\cdot g\,,\]
where $t$ denotes the $\R_{\geq 0}$-coordinate of the singular manifold $\cO_\phi(B)$.

For $\phi=\id$ the resulting open is usually called the Euclidean cone, which we denote by just $\cO(B)$.

The open cone $\cO_\phi(B)$ has an obvious canonical coherent combing. In the case $\phi(t)\xrightarrow{t\to\infty}\infty$ it is $R_{exp}$-expanding for all $R_{exp}>0$, and if $\phi$ is bounded, then it is $R_{exp}$-expanding for any $R_{exp} > \diam(M) \cdot \sup \phi$.

\begin{lem}\label{lem2343rew}
Let $B$ be a compact metric space and let $X=\cO_\phi(B)$ the open cone over $B$ equipped with the canonical coherent combing $H$. Then
\begin{itemize}
\item $\combcompb{\cO_\phi(B)}{H}$ is the closed cone compactification $\mathcal{C}(B)=B\times [0,\infty]/B\times\{0\}$ of $\cO_\phi(B)$ if $\phi(t)\xrightarrow{t\to\infty}\infty$, and
\item $\combcompb{\cO_\phi(B)}{H}$ is the one-point compactification $\cO_\phi(B)^+$ if $\phi$ is bounded.
\end{itemize}
\end{lem}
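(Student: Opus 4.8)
The plan is to determine the commutative $C^*$-algebra $C_H(\cO_\phi(B))$ in each of the two cases and then recognize its Gelfand dual as the claimed compactification. Recall that $C_H(X) = D_H(X) \cap C_b(X)$ consists of the bounded continuous functions $f$ of vanishing variation satisfying $H_n^*f \to f$. The strategy is to identify exactly which such functions survive, using the explicit geometry of the canonical combing on the open cone. The canonical combing $H_n$ should retract a point $(x,s)$ radially toward the cone point, fixing it once $s \le n$ (roughly), so that $H_n(x,s) = (x, \min\{s, n\})$ up to reparametrization; this is what makes the combing coherent and accounts for the expandingness behavior governed by whether $\phi \to \infty$ or $\phi$ is bounded.

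First I would treat the case $\phi(t) \xrightarrow{t\to\infty} \infty$. Here the claim is that $\combcomp{\cO_\phi(B)}^H = \cC(B) = B\times[0,\infty]/B\times\{0\}$, so I must show $C_H(X) = C(\cC(B))$, i.e.\ that the surviving functions are exactly those that extend continuously to the closed cone with a well-defined limit $f(x,\infty)$ depending continuously on the base point $x\in B$. The key geometric input is that when $\phi(\min\{s,t\}) \to \infty$, the metric $d_\phi$ spreads the base-directions apart at large radius: two points $(x,s)$ and $(y,s)$ at the same large radius are far apart unless $d(x,y)$ is small. Consequently, the vanishing-variation condition does \emph{not} force the radial limits to be independent of $x$ — the base directions remain coarsely distinguishable at infinity — while it does force the limit along each fixed ray to exist. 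The condition $H_n^*f \to f$ is automatic here because $H_n$ eventually fixes any bounded set and $f$ has vanishing variation. I would verify that the radial limit function $x \mapsto f(x,\infty)$ is continuous and that conversely any element of $C(\cC(B))$ lies in $C_H(X)$, giving the desired isomorphism.

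Second I would treat the bounded case. When $\phi \le C$, the base-directions stay at uniformly bounded mutual distance $\le C\cdot\diam(B)$ for all radii, so any entourage eventually swallows all of $B\times\{s\}$ for large $s$. Vanishing variation then forces $f(x,s)$ to become independent of $x\in B$ as $s\to\infty$, and moreover to approach a single constant; equivalently, $f$ has a single limit at infinity. Hence $C_H(X)$ consists of functions with one limit at infinity, whose Gelfand dual is precisely the one-point compactification $\cO_\phi(B)^+$. This is where the stated $R_{exp}$-expandingness for $R_{exp} > \diam(B)\cdot\sup\phi$ enters, confirming that the whole sphere at infinity collapses to a point.

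The main obstacle I expect is the careful metric estimate pinning down the variation of $d_\phi$ at large radius — specifically showing in the unbounded case that $d_\phi((x,s),(y,s))$ grows like $\phi(s)\cdot d(x,y)$ (so distinct base directions stay coarsely separated) while radial distance grows only linearly, and in the bounded case that all base directions coalesce. Once these estimates are in hand, identifying $C_H(X)$ and passing to Gelfand duals is routine. A secondary technical point is verifying continuity of the radial limit function $x\mapsto f(x,\infty)$ in the unbounded case, which follows from uniform control of the variation on the compact base $B$.
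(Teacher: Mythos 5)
Your overall strategy --- compute $C_H(\cO_\phi(B))$ explicitly and pass to Gelfand duals --- is exactly the paper's, but there is a genuine error at the heart of both of your case analyses: you claim that the condition $H_n^*f \to f$ is automatic for bounded continuous functions of vanishing variation (``The condition $H_n^*f\to f$ is automatic here because $H_n$ eventually fixes any bounded set and $f$ has vanishing variation''), and correspondingly in the bounded case that vanishing variation alone forces $f$ to ``approach a single constant.'' Both claims are false. The convergence $H_n^*f\to f$ required in Definition~\ref{def_corona_compactification} is convergence in the $C^*$-norm, i.e.\ uniform convergence; the fact that $H_n$ eventually fixes bounded sets only gives locally uniform convergence. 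A concrete counterexample on any cone $\cO_\phi(B)$ is $f(x,s)=\sin(\log(1+s))$: since $d_\phi((x,s),(y,t))\geq|t-s|$, its variation over $r$-balls at radial coordinate $s$ is at most $r/(1+s-r)$, so $f$ is bounded, continuous and of vanishing variation; yet $\|H_n^*f-f\|=\sup_{s\ge n}|\sin(\log(1+n))-\sin(\log(1+s))|\geq 1$ for all $n$, and $f$ extends neither to the closed cone nor to the one-point compactification. If your claim were true, one would get $C_H(\cO_\phi(B))=C_h(\cO_\phi(B))$, so the combing compactification would be the Higson compactification --- which is vastly bigger than $\cC(B)$ (already for the Euclidean cone $\cO(S^1)=\R^2$ the Higson corona is a huge non-metrizable space, not $S^1$) --- contradicting the very statement you are proving.

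In fact the true situation is precisely the reverse of what you assert: when $\phi(t)\to\infty$ the canonical combing is $R_{exp}$-expanding for every $R_{exp}>0$, and it is the \emph{vanishing variation} condition that is automatic (this is Lemma~\ref{lem:automaticVV}); the condition $\|H_n^*f-f\|\to 0$ is the essential constraint that cuts the Higson algebra down to $C(\cC(B))$. The correct argument runs as follows: given $f\in C_H(\cO_\phi(B))$, $x\in B$ and $\varepsilon>0$, pick $n$ with $\|H_n^*f-f\|<\varepsilon/3$, which gives the \emph{uniform} radial estimate $|f(y,t)-f(y,n)|\le\varepsilon/3$ for all $y\in B$ and $t\ge n$; then use continuity of $f$ on the slice $B\times\{n\}$ to find a neighborhood $U\ni x$ with $|f(x,n)-f(y,n)|\le\varepsilon/3$ for $y\in U$, and conclude by the triangle inequality that $f$ extends continuously to $\cC(B)$. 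In the bounded case your first step is right --- vanishing variation collapses the slices $B\times\{s\}$ because their diameters are uniformly bounded --- but to get a single limit at infinity you must again invoke $\|H_n^*f-f\|\to 0$; vanishing variation alone does not exclude the oscillating counterexample above. So both halves of your proof need the radial-convergence condition reinstated as the main working hypothesis rather than dismissed as automatic.
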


\begin{proof}
Consider first the case $\phi(t)\xrightarrow{t\to\infty}\infty$.
The closed cone $\mathcal{C}(B)$ is a Higson dominated compactification of the open cone $\cO_\phi(B)$. Moreover, the restriction $f|_{\cO_\phi(B)}$ of any $f\in \mathcal{C}(B)$ clearly satisfies $H_n^*f|_{\cO_\phi(B)}\xrightarrow{n\to\infty}f|_{\cO_\phi(B)}$. These two properties show that the restriction map yields an inclusion of $C^*$-algebras $r\colon C(\mathcal{C}(B))\subset C_H(\cO_\phi(B))$.

To show surjectivity of $r$, we use that for every $f\in C_b(\cO_\phi(B))$ the functions $H_n^*f$ obviously extend continuously to the closed cone and hence lie in the image of $r$. So if $f\in C_H(\cO_\phi(B))$, then also $f=\lim_{n\to\infty}H_n^*f$ lies in the image of $r$.

In the case that $\phi$ is bounded, we first note that $C(\cO_\phi(B)^+)\subset C_H(\cO_\phi(B))$, as the latter clearly contains both $C_0(\cO_\phi(B))$ and the constant functions.
For the other direction, let $f\in C_H(\cO_\phi(B))$.
For any $\varepsilon>0$ we can choose $n\in\N$ such that $\|H_n^*f-f\|<\frac{\varepsilon}{3}$ and such that $\Var_{\|\phi\|\cdot\operatorname{diam}(B)}(f)(x,s)<\frac{\varepsilon}{3}$ for all $(x,s)\in B\times [n,\infty)$. Then for all $(x,s),(y,t)\in B\times [n,\infty)$ we have
\begin{align*}
|f(x,&s)-f(y,t)|\leq
\\&\leq |f(x,s)-f(x,n)|+|f(x,n)-f(y,n)|+|f(y,n)-f(y,t)|
\\& <\varepsilon
\end{align*}

Thus, $f$ can be extended to the one-point compactification.
\end{proof}

It is interesting to note that in the case $\phi(t)\xrightarrow{t\to\infty}\infty$ we did not use the vanishing variation condition on functions in $C_H(\cO_\phi(B))$ to prove surjectivity. An explanation for this phenomenon is provided by the following lemma.
\begin{lem}\label{lem:automaticVV}
Assume that $H$ is a combing on a proper metric space which is $R_{exp}$-expanding for every $R_{exp}>0$. Then every function $f\in C_b(X)$ satisfying $H_n^*f\xrightarrow{n\to\infty}f$ has vanishing variation. 
\end{lem}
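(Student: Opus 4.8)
The plan is to fix a metric entourage $E_r$ and show directly that $\operatorname{Var}_{E_r}(f)\in D_0(X)$. This suffices: every entourage of a metric coarse structure is contained in some $E_r$, and $\operatorname{Var}_{E}(f)\leq\operatorname{Var}_{E'}(f)$ whenever $E\subset E'$, so control of the $E_r$-variation at infinity controls the variation for all entourages. The whole argument will then rest on the three-term estimate
\[|f(x)-f(y)|\leq |f(x)-f(H_n(x))|+|f(H_n(x))-f(H_n(y))|+|f(H_n(y))-f(y)|\]
for points $y$ at distance at most $r$ from $x$, together with the right choice of $n$.

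So fix $\varepsilon>0$ and $r>0$. First I would use the hypothesis $H_n^*f\to f$, which is convergence in the sup-norm of $C_b(X)$, to pick an $N$ with $\|H_N^*f-f\|<\varepsilon/3$; this bounds the first and third terms of the estimate by $\varepsilon/3$ uniformly in $x$ and $y$, and the index $N$ is now frozen. Next I would exploit that, by Lemma~\ref{lem453ewr43}, the image of $H_N$ is bounded, hence $\overline{\im(H_N)}$ is compact in the proper metric space $X$, so the continuous function $f$ is uniformly continuous there. This yields a $\delta>0$ with $|f(a)-f(b)|<\varepsilon/3$ for all $a,b\in\overline{\im(H_N)}$ at distance at most $\delta$. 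Only after $\delta$ is determined do I invoke expandingness: since $H$ is $R_{exp}$-expanding for \emph{every} $R_{exp}>0$, in particular for $R_{exp}=\delta$, applying the expandingness condition to the entourage $E_r$ and the natural number $N$ produces a bounded set $K:=K_{E_r,N}$ with $H_N(E_r[x])\subset E_\delta[H_N(x)]$ for all $x\in X\setminus K$. Thus for $x\notin K$ and $y\in E_r[x]$ the images $H_N(x),H_N(y)$ both lie in $\overline{\im(H_N)}$ at distance at most $\delta$, whence the middle term is also $<\varepsilon/3$. Combining the three bounds gives $\operatorname{Var}_{E_r}(f)(x)\leq\varepsilon$ for all $x\notin K$, which is exactly the statement $\operatorname{Var}_{E_r}(f)\in D_0(X)$.

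The subtle point — and the reason the hypothesis demands $R_{exp}$-expandingness for every $R_{exp}$ rather than plain expandingness — is the order of the quantifiers. The contraction scale $\delta$ one needs from $H_N$ is dictated by the modulus of uniform continuity of $f$ on $\overline{\im(H_N)}$, and this compact set (hence $\delta$) only becomes available after $N=N(\varepsilon)$ has been fixed. A single fixed expandingness entourage $E_{exp}$ would merely contract $E_r$-balls down to a fixed scale $\diam(E_{exp})$, which need not lie below the $\delta$ forced by $f$; the ability to shrink below any prescribed $\delta$ is precisely what closes the estimate. I expect this dependence to be the main thing to get right, whereas the remaining steps (the triangle inequality, compactness, and the reduction to metric entourages) are routine.
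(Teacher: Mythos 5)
Your proposal is correct and follows essentially the same route as the paper's proof: fix $r$ and $\varepsilon$, choose $n$ with $\|H_n^*f-f\|<\varepsilon/3$, use uniform continuity of $f$ on the compact set $\overline{\im(H_n)}$ to get $\delta$, invoke expandingness with $R_{exp}=\delta$ to obtain the bounded exceptional set, and close with the three-term triangle inequality. Your remark on the quantifier order (that $\delta$ is only available after $n$ is fixed, which is why expandingness for every $R_{exp}>0$ is needed) is exactly the point the paper's argument exploits implicitly.
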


\begin{proof}
Let $r>0$ and $\varepsilon>0$. Choose $n\in\N$ such that $\|H_n^*f-f\|<\frac{\varepsilon}{3}$. As $f$ is uniformly continuous on the compact subset $\overline{\im(H_n)}$, we can find $\delta>0$ such that $|f(x)-f(y)|<\varepsilon$ whenever $x,y\in \overline{\im(H_n)}$ with $d(x,y)<\delta$.

We now apply the expandingness for $R_{exp}=\delta$ with $r,n$ as above to obtain a bounded subset $K\subset X$ such that for all $x\in X\setminus K$ and $y\in B_{r}(x)$ we have $d(H_n(x),H_n(y))<\delta$ $\implies |f(H_n(x))-f(H_n(y))|<\frac{\varepsilon}{3}$ $\implies |f(x)-f(y)|<\varepsilon$.

This implies $\Var_{r}f(x)<\varepsilon$ for $x\in X\setminus K$, and as $r,\varepsilon>0$ were arbitrary, the claim follows.
\end{proof}

\subsection{Foliated cones and warped cones}\label{subsubsec:examples:foliatedcones}

\begin{defn}[{\cite{Roe_Foliations}}]
Let $(M,\cF)$ be a foliated compact manifold and choose an arbitrary Riemannian metric $g$ on $M$. Denote by $g_N$ the normal component of the metric, i.\,e.~$g_N(v,w)\coloneqq g(Pv,Pw)$, where $P\colon TM\to N\cF$ is the orthogonal projection onto the normal bundle $N\cF$ of the foliation.
 The \emph{foliated cone} $\cO(M,\cF)$ is the singular manifold $M\times \R_{\geq 0}/M\times\{0\}$ equipped with the Riemannian metric
\[g_\mathcal{F}\coloneqq dt^2+g+t^2g_N\,,\]
where $t$ denotes the $\R_{\geq 0}$-coordinate.
\end{defn}
Foliated cones have nice functoriality properties and in particular the coarse structure on $\cO(M,\cF)$ induced by the metric $g_\cF$ does not depend on the choice of $g$ \cite[Theorem 3.3]{Wulff_Foliations}.

The foliated cone $\cO(M,\cF)$ admits a canonical coherent combing $H$, which is--as a point set map--exactly the same as the combing on the Riemannian cone.
Hence by Corollary \ref{cor_coronafunctoriality} the identity map
\[\cO(M)\to\cO(M,\cF)\]
induces continuous surjections
\[P\colon \mathcal{C}(M)=\combcompb{\cO(M)}{H}\to\combcompb{\cO(M,\cF)}{H}\,,\quad p\colon M=\partial_H\cO(M)\to\partial_H\cO(M,\cF)\,.\]

Let $f\in C(\partial_H\cO(M,\cF))$ and $F$ be a continuation of $f$ to $\mathcal{C}(M)$.
Then for any two points $x,y$ on the same leaf of $(M,\cF)$, the distance between $(x,t)$ and $(y,t)$ in $\cO(M,\cF)$ stays bounded  for all $t\in\R_{>0}$ and thus
\begin{align*}
(p^*f)(x)-(p^*f)(y)&=\lim_{t\to\infty}((P^*F)(x,t)-(P^*F)(y,t))
\\&=\lim_{t\to\infty}(F(x,t)-F(y,t))=0
\end{align*}
because of vanishing variation of $F|_{\cO(M,\cF)}$.
This implies that  $p$ is constant along leaves and thus factors through the space of leaves $M/\cF$ of the foliation, considered as a possibly non-Hausdorff quotient of $M$:
\[p\colon M\xrightarrow{p_1} M/\cF\xrightarrow{p_2}\partial_H\cO(M,\cF)\,.\]

\begin{claim}\label{claim:foliatedcone} The combing corona $\partial_H\cO(M,\cF)$ is the Hausdorffization of $M/\cF$, i.\,e.~every continuous map from $M/\cF$ into a compact Hausdorff space factors uniquely through $p_2$.
\end{claim}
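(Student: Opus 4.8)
The plan is to identify $p_2$ with the canonical map to the Hausdorffization by showing that $p_2^*\colon C(\partial_H\cO(M,\cF))\to C(M/\cF)$ is an isomorphism of $C^\ast$-algebras; by Gelfand duality this is exactly the asserted universal property. Indeed, a continuous map $g\colon M/\cF\to K$ into a compact Hausdorff space corresponds to a unital $\ast$-homomorphism $g^*\colon C(K)\to C(M/\cF)$, and composing with $(p_2^*)^{-1}$ produces a $\ast$-homomorphism $C(K)\to C(\partial_H\cO(M,\cF))$, i.e.\ a continuous $\bar g\colon\partial_H\cO(M,\cF)\to K$ with $\bar g\circ p_2=g$; uniqueness follows from surjectivity of $p_2$. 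Since $p_1^*$ identifies $C(M/\cF)$ with $\{f\in C(M)\mid f\text{ is constant along leaves}\}$ and $p^*=p_1^*\circ p_2^*$, it suffices to prove that $p^*\colon C(\partial_H\cO(M,\cF))\to C(M)$ is injective with image exactly the leaf-constant functions. Injectivity is immediate because $p$ is a continuous surjection of compact Hausdorff spaces, and the inclusion $\im(p^*)\subseteq\{\text{leaf-constant}\}$ is precisely the computation already carried out above. Thus the entire content is the reverse inclusion: every leaf-constant $f\in C(M)$ lies in $\im(p^*)$.

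To realize such an $f$, I would lift it to the cone by the radially constant formula $F(x,t):=\chi(t)f(x)$, where $\chi\colon\R_{\geq0}\to[0,1]$ is a cutoff with $\chi\equiv0$ near $0$ and $\chi\equiv1$ on $[2,\infty)$; the cutoff only serves to make $F$ continuous at the cone point and has no effect at infinity. Then $F\in C_b(\cO(M,\cF))$, and I claim $F\in C_H(\cO(M,\cF))$. The condition $H_n^*F\to F$ is easy: the canonical combing is radial, i.e.\ $\pi_M\circ H_n=\pi_M$, so $H_n^*F$ and $F$ can differ only through the value of $\chi$, hence only on the bounded region $\{t\leq2\}$, on which $H_n$ is eventually the identity by Property~\ref{453987ztwert} of Definition~\ref{def_combing}; thus $H_n^*F=F$ for $n$ large. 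Granting $F\in C_H(\cO(M,\cF))$, its continuous extension $\tilde f$ to $\combcomp{\cO(M,\cF)}^H$ restricts to a function on $\partial_H\cO(M,\cF)$, and extending $F$ along the Euclidean cone $\mathcal{C}(M)$ instead gives $F(x,\infty)=f(x)$; hence $P^*\tilde f|_M=f$, that is $p^*\tilde f=f$, which finishes the argument.

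The main obstacle is to verify that $F$ has vanishing $g_{\cF}$-variation. Fix $r,\varepsilon>0$; for heights $t\geq t_0\geq r+2$ one has $F(x,t)=f(x)$, so I must bound $|f(x)-f(y)|$ for all $(y,s)$ with $\dist_{g_{\cF}}((x,t),(y,s))\leq r$. A near-minimizing path realizing this distance stays at height $\geq t_0-2r-1$, and from $g_{\cF}=dt^2+g+t^2g_N$ one reads off that its projection to $M$ has leaf-length $\leq r+1$ but transverse ($g$-)length at most $(r+1)/\sqrt{1+(t_0-2r-1)^2}$, which tends to $0$ as $t_0\to\infty$. The remaining, purely foliation-theoretic, point is that a leaf-constant continuous $f$ is \emph{uniformly transversally continuous}: fixing a finite foliated atlas of the compact manifold $M$ and a Lebesgue number $\rho$ for it, I would subdivide the projected path into at most $(r+1)/\rho+1$ pieces, each lying in a single chart $P_i\times T_i$. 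On such a chart $f$ is constant on plaques, hence a uniformly continuous function of the transverse coordinate alone, so its variation along a piece is controlled by that piece's transverse length; summing the finitely many contributions bounds $|f(x)-f(y)|$ by $N\cdot\omega(\eta)$, with $N$ the (bounded) number of pieces, $\omega$ a common modulus of continuity, and $\eta=(r+1)/\sqrt{1+(t_0-2r-1)^2}$.

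This last bound is $<\varepsilon$ once $t_0$ is large, giving $\Var_{E_r}(F)(x,t)<\varepsilon$ for all large $t$, hence vanishing variation and $F\in C_H(\cO(M,\cF))$. I expect the transverse-continuity estimate — reconciling the bounded leaf-length with the vanishing transverse length through the foliation charts — to be the one genuinely technical step; everything else is formal manipulation of the function algebras and the already-established surjections $P$ and $p$.
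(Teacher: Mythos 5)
Your proposal is correct, and its global architecture coincides with the paper's proof: reduce via Gelfand duality to surjectivity of $p_2^*$, lift a leaf-constant $f$ to a radially constant function $F$ on the cone, note that the condition $H_n^*F\to F$ is essentially automatic for the radial combing, and concentrate all the work in the vanishing-variation estimate, which in both arguments rests on the $\sim 1/t$ decay of transverse length read off from $g_\cF=dt^2+g+t^2g_N$, a finite foliated atlas of the compact manifold $M$, and a Lebesgue-number subdivision of paths. The one place you genuinely diverge is the estimate inside a single chart: the paper straightens a short path $\gamma$ into a purely transverse path $\tilde\gamma$ with the same transverse coordinate by solving the holonomy ODE \eqref{eq:pathODE}, bounds the $g$-length of $\tilde\gamma$ by $\frac{2r}{t}$ via a norm bound on the holonomy isomorphisms, and then invokes uniform continuity of $p_1^*f$ on $(M,g)$; you instead project the path to the transverse factor $T_i$ of the chart and invoke uniform continuity of the function that $f$ induces on $T_i$. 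Your route avoids the ODE entirely and is somewhat more elementary (it uses only that leaf directions lie in the kernel of the transverse projection, so $D\pi_{T_i}(v)=D\pi_{T_i}(Pv)$), but it silently requires the same kind of compactness input: to convert the transverse $g_N$-length of a path piece into a displacement of the transverse coordinate you need a uniform bound $C$ on $\|D\pi_{T_i}\|$ restricted to the normal bundle over a finite atlas of relatively compact charts — this is precisely the counterpart of the paper's requirement that the holonomies $h_{uv}$ have norm less than $2$ on $B_{3r(p)}(p)$. With that constant made explicit, so that your final bound reads $N\cdot\omega(C\eta)$ rather than $N\cdot\omega(\eta)$, your argument is complete.
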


\begin{proof}
The claim is equivalent to the map $p_2^*\colon C(\partial_H\cO(M,\cF))\to C(M/\cF)$ being an isomorphism.

By construction, $p_2^*$ is injective since $p_2$ is surjective, so it remains for us to prove surjectivity of $p_2^*$.  Let $f\in C(M/\cF)$ and consider a function 
\[F\colon \cO(M)\to\C\,,\quad F(x,t)=(p_1^*f)(x)\qquad\forall t\geq 1\,.\]
We already know that $F\in C_H(\cO(M))$ and we will have to show that $F$ has vanishing variation with respect to $\cO(M,\cF)$, because then 
\[F\in C_H(\cO(M))\cap D_h(\cO(M,\cF))=C_H(\cO(M,\cF))\cong C\big(\combcompb{\cO(M,\cF)}{H}\big)\]
restricts to a preimage of $f$ under $p_2^*\colon C(\partial_H(\cO(M,\cF)))\to C(M/\cF)$.

The proof of this is quite simple for smooth $f$.
Unfortunately, we cannot simply approximate $p_1^*f$ by smooth functions, because it is unclear whether these smooth approximations will still be constant along the leaves or not. Instead, we shall exploit the uniform continuity of $p_1^*f$: there exists a function $\rho\colon \R_{>0}\to \R_{>0}$ satisfying $\rho(r)\xrightarrow{r\to 0}0$ such that $|(p_1^*f)(x)-(p_1^*f)(y)|<\rho(r)$ whenever $x$ and $y$ have a $g$-distance of at most $r$. 

Let $p\in M$ be a point and choose a foliation chart 
$\varphi\colon U\approx \R^{\dim\cF}\times\R^{\codim\cF}$ 
mapping $x\in U$ to $(0,0)$.
For points $u=\varphi^{-1}(x,z)$, $v=\varphi^{-1}(y,z)$,  this chart gives rise to a holonomy isomorphism $h_{uv}\colon N\cF_u\cong N\cF_v$.

Given a smooth path $\gamma\colon[0,1]\to U$, we would like to construct a smooth path $\tilde\gamma$ in $U$ with the same starting point and the same $\R^{\codim\cF}$-component as $\gamma$, but which satisfies $\frac{\partial}{\partial s}\tilde\gamma(s)\in N\cF_{\tilde\gamma(s)}$, and then estimate the $g$-length of $\tilde\gamma$ by the $(g+t^2g_N)$-length of $\gamma$.

Note that $\tilde\gamma$ is the solution to the ordinary differential equation 
\begin{equation}\label{eq:pathODE}
\frac{\partial}{\partial s}\tilde\gamma(s)=h_{\gamma(s),\tilde\gamma(s)}\circ P_{\gamma(s)}\left(\frac{\partial}{\partial s}\gamma(s)\right)\,.
\end{equation}
Recall that $P\colon TM\to N\cF$ denotes orthogonal projection.
The path $\tilde\gamma$ might not be defined on all of $[0,1]$, but only on $[0,a)$, where at $a$ it would leave $U$.

To avoid the latter case, we restrict our attention to rather short paths starting relatively close to $p$:
Let $r(p)>0$ be such that $B_{3r(p)}(p)\subset U$ and such that the $h_{uv}$ have norm less than $2$ whenever $u,v\in B_{3r(p)}(p)$. If $\gamma$ has starting point within $B_{r(p)}(p)$ and has $g$-length less than $r(p)$, then $\tilde\gamma$ will have $g$-length less than $2r(p)$ and thus stays in $U$ on all of $[0,1]$. 

Assume now that $\gamma$ has $(g+t^2g_N)$-length less than $r<r(p)$, so that in particular also the $g$-length is less than $r$. Then Equation \eqref{eq:pathODE} together with the norm estimate of the $h_{uv}$ implies that $\tilde\gamma$ has $g$-length less than $\frac{2r}{t}$ and we obtain the estimate
\[|(p_1^*f)(\gamma(0))-(p_1^*f)(\gamma(1))|=|(p_1^*f)(\tilde\gamma(0))-(p_1^*f)(\tilde\gamma(1))|<\rho\left(\frac{2r}{t}\right)\,.\]

We can now cover $M$ by finitely many balls $B_{r_{p_1}}(p_1)$, \dots, $B_{r_{p_k}}(p_k)$ (with respect to the metric $g$) and choose a Lebesgue number $r<\min\{r_{p_1},\dots, r_{p_k}\}$ (again, with respect to $g$).
Then for every two points $x,y\in M$ of $(g+t^2g_N)$-distance less than $r$ we have
$|(p_1^*f)(x)-(p_1^*f)(y)|<\rho\left(\frac{2r}{t}\right)$. From this it is straightforward to deduce that the $r$-variation of $F$ vanishes at infinity. As the foliated cone is a path metric space, this also implies that the $R$-variation of $F$ vanishes for all $R>0$. This 
finishes the proof of the claim.
\end{proof}

The canonical combing on a foliated cone is in general not expanding, as the following example shows.

\begin{example}
Let $\cF$ be the Kronecker foliation with irrational slope on the torus $T^2$. Assume that $H$ is $R_{exp}$-expanding for some number $R_{exp}>0$. Pick two points $x$ and $y$ on the same leaf $L\approx\R$ such that their leafwise distance $R$ is greater than $R_{exp}$. This implies that there is $n\in\N$ such that the points $(x,t)$ and $(y,t)$ in the foliated cone $\cO(T^2,\cF)$ have distance within the interval $(R_{exp},R]$ for all $t\geq n$. Now, no matter how big the bounded subset $K_{r,n}$ for $r>R$ is chosen, we will always find a $t\geq n$ such that $(x,t),(y,t)\notin K_{r,n}$. Then the distance between the points $(x,t)$ and $(y,t)$ is less than $r$, but the distance between  $(x,n)=H_n(x,t)$ and $(y,n)=H_n(y,t)$ is greater than $R_{exp}$, contradicting the expansion assumption.
\end{example}

For any compact manifold $M$ the two trivial foliations $\cF_0$ by single points as leaves and $\cF_1$ by the whole of $M$ as one single leaf have canonical coherent combings which are $R_{exp}$-expanding for any $R_{exp}>0$ in the case of $\cF_0$ and for any $R_{exp}>\diam(M)$ in the case of $\cF_1$. This is easily seen, since these two foliated cones are the open cones $\cO_\phi(M)$ of growth $\phi=\id$ and $\phi\equiv 1$, resp.

A very similar concept to foliated cones are warped cones:
\begin{defn}[cf.~\cite{roe_warped}]
Let $(M,d)$ be a compact metric space and $G$ a finitely generated group acting on $M$ by homeomorphisms. Let $S\subset G$ be a finite generating set of the group. Then the \emph{warped cone} $\cO_G(M)$ is as a topological space the same as the Euclidean cone $\cO(M)$, but we equip it with the largest metric $d_G$ which is less or equal to the metric $d_{\id}$ of $\cO(M)$ and satisfies $d_G((x,t),(g\cdot x,t))\leq 1$ for all $(x,t)\in\cO_G(M)$ and $g\in S$.
\end{defn}
For example, the warped cone of the $\Z$-action on $S^1$ which lets $1\in\Z$ act by multiplication with $e^{2\pi i\alpha}$ is coarsely equivalent to the Kronecker foliation with irrational slope $\alpha$.

\begin{lem}
If we equip the warped cone $\cO_G(M)$ with the canonical coherent combing, then its combing corona $\partial_H\cO_G(M)$ is the Hausdorffization of $M/G$.
\end{lem}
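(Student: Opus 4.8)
The plan is to mirror the proof of Claim~\ref{claim:foliatedcone} for foliated cones, with the $G$-orbits playing the role of the leaves. The identity map $\cO(M)\to\cO_G(M)$ is a coarse map: it is controlled because $d_G\leq d_{\id}$, and it is proper because the radial coordinate $t$ is $1$-Lipschitz for $d_G$ (group jumps preserve $t$ and continuous moves change $t$ at most as fast as their length), so that every $d_G$-bounded subset lies in some $M\times[0,T]$ and is therefore $d_{\id}$-bounded. Moreover it is compatible with the combings, since the canonical combing is radial and hence literally the same point-set map on both spaces. By Lemma~\ref{lem2343rew} we have $\partial_H\cO(M)=M$ and $\combcomp{\cO(M)}^H=\mathcal{C}(M)$, so Corollary~\ref{cor_coronafunctoriality} yields continuous surjections $P\colon\mathcal{C}(M)\to\combcomp{\cO_G(M)}^H$ and $p\colon M\to\partial_H\cO_G(M)$.

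First I would show that $p$ is constant on $G$-orbits. For $x\in M$ and $g\in G$ the points $(x,t)$ and $(gx,t)$ have $d_G$-distance at most the word length of $g$, uniformly in $t$; hence, exactly as in the foliated case, the limit computation using the vanishing variation of the continuation of any $f\in C(\partial_H\cO_G(M))$ gives $(p^*f)(x)=(p^*f)(gx)$. Therefore $p$ factors as $p=p_2\circ p_1$ through the quotient $p_1\colon M\to M/G$, and it remains to prove that $p_2^*\colon C(\partial_H\cO_G(M))\to C(M/G)$ is an isomorphism. Injectivity is immediate from surjectivity of $p_2$; for surjectivity I must, given a $G$-invariant $f\in C(M)=C(M/G)$, produce a preimage in $C_H(\cO_G(M))$.

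Following the foliated case, I would set $F(x,t):=(p_1^*f)(x)$ for $t\geq 1$ (extended continuously and boundedly for $t<1$). Since $\phi=\id\to\infty$, Lemma~\ref{lem2343rew} gives $F\in C_H(\cO(M))$, and the radial combing, which only decreases the height down to $n$, satisfies $H_n^*F=F$ for all $n\geq 1$ because $F$ depends only on the $M$-coordinate at heights $\geq 1$; thus $F$ trivially fulfils the combing-convergence condition on $\cO_G(M)$ as well. The whole problem therefore reduces to verifying that $F$ has vanishing variation with respect to the warped metric $d_G$; then $F\in C_H(\cO(M))\cap D_h(\cO_G(M))=C_H(\cO_G(M))$ restricts to the desired preimage of $f$ under $p_2^*$.

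The main obstacle is exactly this vanishing-variation statement, and here the warped-cone argument is simpler than the holonomy/ODE argument of the foliated case, because the symmetry is a genuine group action. Fix $r,\varepsilon>0$. Since $t$ is $1$-Lipschitz for $d_G$, a point at large height $t_0$ has the property that every $d_G$-path of length $\leq r$ starting there stays at heights $\geq t_0-r$. By Roe's description of the warped metric, such a path decomposes into finitely many continuous pieces separated by at most $\lceil r\rceil$ generator-jumps $(y,\tau)\mapsto(sy,\tau)$. The jumps do not change $F$, because $F(sy,\tau)=f(sy)=f(y)=F(y,\tau)$ by $G$-invariance; and each continuous piece is an honest path in $\cO(M)$ of $d_{\id}$-length $\leq r$, so its contribution to the difference of $F$-values at its endpoints is bounded by $\Var_r^{d_{\id}}(F)$ evaluated at its starting point, which is uniformly small at large heights because $F\in D_h(\cO(M))$. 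Summing over the at most $\lceil r\rceil$ pieces bounds $\Var_r^{d_G}(F)$ by $\lceil r\rceil$ times a quantity that tends to $0$ as the height tends to infinity. Since the bounded subsets of $\cO_G(M)$ are precisely those of bounded height, this shows $\Var_r^{d_G}(F)\in D_0(\cO_G(M))$ for every $r$, which completes the proof.
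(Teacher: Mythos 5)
Your proposal is correct and follows essentially the same route as the paper: reduce to surjectivity of $p_2^*$ exactly as in the foliated-cone claim, define $F(x,t)=(p_1^*f)(x)$ for $t\geq 1$, and verify vanishing variation with respect to the warped metric by splitting short $d_G$-paths into generator jumps (on which $F$ is constant by $G$-invariance) and continuous pieces (controlled by the vanishing $d_{\id}$-variation of $F$). The paper phrases this decomposition in entourage language, namely $E_R^{\cO_G(M)}\subset E_1\circ\dots\circ E_{2\lceil R\rceil+1}$ with each $E_i$ either $E_R^{\cO(M)}$ or the jump entourage $E_\Gamma$, but this is the same argument as your path-decomposition.
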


\begin{proof}
The proof is completely analogous to the proof of Claim \ref{claim:foliatedcone}, but even simpler. One obtains continuous maps
\[p\colon M\xrightarrow{p_1} M/G\xrightarrow{p_2}\partial_H\cO_G(M)\]
and needs to show that the injective map $p_2^*\colon C(\partial_H\cO_G(M))\to C(M/G)$ is also surjective. Given $f\in C(M/G)$, one considers a function 
\[F\colon\cO(M)\to\C\,,\quad F(x,t)=(p_1^*f)(x)\qquad \forall t\geq 1\,,\]
and the goal is to show that it has vanishing variation with respect to the metric on $\cO_G(M)$.
This is straight-forward: if $R>0$, then a rough estimate shows that the $R$-entourage $E_R^{\cO_G(M)}$ of $\cO_G(M)$ is contained in $E_1\circ\dots\circ E_{2\lceil R\rceil+1}$ where each $E_i$ is either the $R$-entourage $E_R^{\cO(M)}$ of $\cO(M)$ or
\[E_\Gamma\coloneqq \{((x,t),(g\cdot x,t))\mid (x,t)\in\cO(M),g\in S\cup S^{-1}\cup\{e\}\}\,.\]
The $E_\Gamma$-variation of $F$ vanishes constantly whenever $t\geq 1$, since $f$ is invariant under the $G$-action and the $E_R^{\cO(M)}$-variation of $F$ vanishes at infinity. Hence, the $E_R^{\cO_G(M)}$-variation of $F$ vanishes at infinity, too.  The claim now follows as the corresponding claim for foliated cones.
\end{proof}

\subsection{\texorpdfstring{$\CAT(0)$}{CAT(0)}-spaces}

Our reference for $\CAT(0)$-spaces is Bridson--Haeflieger \cite[Part II]{bridson_haefliger}.

\begin{defn}
A geodesic metric space $X$ is called a \emph{$\CAT(0)$-space} if the distance between any two points on a geodesic triangle in $X$ is less or equal than the distance between the corresponding points on a comparison triangle in the space $\R^2$.
\end{defn}

It follows immediately from the definition that for any two points $x,y$ in a $\CAT(0)$-space $X$ there is exactly one geodesic $\gamma_{xy}$ parametrized by path-length connecting them.

For any point $p\in X$, one can define
\[H\colon X\times\N\to X\,,\quad(x,n)\mapsto\begin{cases}\gamma_{px}(n)&n\leq d(p,x)\,,\\x&n\geq d(p,x)\,.\end{cases}\]
The following lemma is clear from looking at the comparison triangles in $\R^2$.

\begin{lem}
\label{lem_cat0combingproperties}
$H$ is a combing which is both $0$-coherent and $R_{exp}$-expanding for all $R_{exp}>0$.\qed
\end{lem}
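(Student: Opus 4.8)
The plan is to verify each of the three claimed properties directly from the geometry of $\CAT(0)$ comparison triangles, since the combing $H$ is built from geodesics $\gamma_{px}$. First I would establish that $H$ is a genuine combing in the sense of Definition~\ref{def_combing}: property \ref{4532r} is immediate since $\gamma_{px}(0)=p$ and $\gamma_{pp}$ is the constant path; property \ref{453987ztwert} holds because for $x$ in a bounded set $K$ we have $d(p,x)\le \diam(K\cup\{p\})$, so $H_n(x)=x$ once $n$ exceeds this bound; and controlledness (property~\ref{654terfd}) is precisely the statement that $H$ satisfies the fellow-traveling property with uniformly bounded step size. The step-size bound follows from the fact that $\gamma_{px}$ is parametrized by arc length, giving $d(H_n(x),H_{n+1}(x))\le 1$. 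The fellow-traveling bound is the classical convexity of the distance function in $\CAT(0)$-spaces: if $\sigma,\tau\colon[0,1]\to X$ are two geodesics, then $t\mapsto d(\sigma(t),\tau(t))$ is a convex function, which one reads off from the comparison triangle inequality applied to appropriate Euclidean models.

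Next I would prove $0$-coherence, i.e.\ that $H_m\circ H_n=H_m$ exactly (not merely up to a bounded entourage), which gives $E_{coh}\subset E_0=\diag_X$. The key observation is that $H_n(x)$ lies on the geodesic $\gamma_{px}$ at arc-length parameter $\min\{n,d(p,x)\}$ from $p$; crucially, the initial segment of $\gamma_{px}$ from $p$ to $H_n(x)$ is itself the unique geodesic $\gamma_{p,H_n(x)}$, by uniqueness of geodesics in $\CAT(0)$-spaces. Therefore applying $H_m$ to the point $H_n(x)$ moves along this same geodesic to arc-length $\min\{m,d(p,H_n(x))\}=\min\{m,\min\{n,d(p,x)\}\}$. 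For $m\le n$ this equals $\min\{m,d(p,x)\}$, which is exactly the parameter defining $H_m(x)$. Hence $H_m\circ H_n(x)=H_m(x)$ identically for all $m\le n$, so $E_{coh}=\diag_X$ and $H$ is $0$-coherent.

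Finally, for $R_{exp}$-expandingness for every $R_{exp}>0$, I would fix an entourage $E\subset E_r$ (it suffices to treat metric entourages) and $n\in\IN$, and estimate how $H_n$ distorts distances. If $d(x,y)\le r$, I want $d(H_n(x),H_n(y))<R_{exp}$ once $x$ lies outside a suitable bounded set $K_{E,n}$. The mechanism is again convexity of the distance function, but now combined with the fact that both geodesics $\gamma_{px},\gamma_{py}$ start at the common point $p$: the comparison triangle with vertices $p,x,y$ shows that the Euclidean model points at distance $n$ from the $p$-vertex are close when $x,y$ are far from $p$ and $d(x,y)\le r$, because two rays emanating from a common apex with small opening separate at a rate controlled by the angle, and that angle shrinks as the far endpoints recede. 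Concretely, taking $K_{E,n}$ to be a large ball around $p$ forces $d(p,x),d(p,y)$ large; on the corresponding Euclidean comparison triangle, the points at parameter $n$ on the two sides from $p$ have separation bounded by $n\cdot d(x,y)/\min\{d(p,x),d(p,y)\}$ up to lower-order terms, which is $<R_{exp}$ once the ball radius is large enough. The $\CAT(0)$ inequality then transfers this bound from $\R^2$ back to $X$.

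The main obstacle I anticipate is the expandingness estimate: one must argue carefully that it holds for \emph{every} $R_{exp}>0$, not just some fixed value, and the quantitative control of the comparison-triangle computation is where all the work sits. The coherence and combing axioms are essentially formal consequences of geodesic uniqueness and distance convexity and should be quick; expandingness requires the genuine quantitative statement that geodesics from a fixed basepoint fan out, so that nearby far-away endpoints have near-identical truncations at any fixed arc-length $n$. This is precisely where the hypothesis of a \emph{common} starting point $p$ for the combing is essential, and I would make sure the estimate exploits it rather than treating $\gamma_{px}$ and $\gamma_{py}$ as arbitrary geodesics.
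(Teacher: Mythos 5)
Your proof is correct and follows exactly the route the paper intends: the paper gives no written argument beyond the remark that the lemma is ``clear from looking at the comparison triangles in $\R^2$,'' and your proposal is precisely that comparison-triangle argument carried out in detail (uniqueness of geodesics giving $H_m\circ H_n=H_m$ exactly, hence $E_{coh}=\diag_X$, and the law-of-cosines estimate $d(H_n(x),H_n(y))\le n\,d(x,y)/\sqrt{d(p,x)\,d(p,y)}$ giving $R_{exp}$-expandingness for every $R_{exp}>0$ once $K_{E,n}$ is a large enough ball around $p$). All three verifications are sound, so there is nothing to fix.
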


For each $n\in\N$, the image $X_n\coloneqq \im(H_n)$ is the closed $n$-ball around $p$. We obtain an inverse system of bounded metric spaces
\begin{equation}\label{defeq_cat0invsys}
\ldots\xrightarrow{H_4|_{X_5}} X_4\xrightarrow{H_3|_{X_4}}X_3\xrightarrow{H_2|_{X_3}}X_2\xrightarrow{H_1|_{X_2}}X_1\xrightarrow{H_0|_{X_1}}X_0=\{p\}\,,
\end{equation}
and due to $0$-coherence the compositions $H_m|_{X_{m+1}}\circ H_{m+1}|_{X_{m+1}}\circ\dots\circ H_{n-1}|_{X_n}$ of consecutive arrows in this diagram are equal to $H_m|_{X_n}$ for all $m\leq n+1$.
\begin{defn}[{cf.~\cite[Definitions II.8.1, II.8.5, II.8.6]{bridson_haefliger}}]
The \emph{visual bordification} $\overline{X}$ of a complete $\CAT(0)$-space $X$ is defined as the inverse limit of the system \eqref{defeq_cat0invsys}.
\end{defn}
If $X$ is also proper, then $\overline{X}$ is a compact metric space which is a compactification of $X$, called the \emph{visual compactification}. 

\begin{prop}\label{prop_CATnull_boundary}
The combing compactification $\combcompb{X}{H}$ of a proper $\CAT(0)$-space $X$ coincides with the visual compactification $\overline{X}$.
\end{prop}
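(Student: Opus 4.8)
The plan is to reduce the statement to an equality of function algebras inside $C_b(X)$. Recall that a compactification of $X$ is determined by the unital $C^*$-subalgebra of $C_b(X)$ of functions extending continuously to it: for the combing compactification this algebra is $C_H(X)$ by definition, and for the visual compactification it is the image $A_{\mathrm{vis}}$ of the restriction map $C(\combcomp{X})\to C_b(X)$, which is isometric (hence injective) because $X$ is dense in $\combcomp{X}=\overline X$. Thus it suffices to prove $C_H(X)=A_{\mathrm{vis}}$, and I will do this by showing that both algebras coincide with $\overline{\bigcup_{n\in\N}H_n^*(C(X_n))}$, where $X_n=\im(H_n)$ is the closed $n$-ball around $p$.

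First I would identify $A_{\mathrm{vis}}$. Since $X$ is proper, each $X_n$ is compact and, by definition, $\overline X$ is the inverse limit of the system \eqref{defeq_cat0invsys}, with projections $\pi_n\colon\overline X\to X_n$. The pullbacks $\pi_n^*(C(X_n))$ form a unital, conjugation-closed subalgebra of $C(\overline X)$ that separates points of the inverse limit (distinct points differ in some coordinate $n$, where $C(X_n)$ already separates), so Stone--Weierstrass gives $C(\overline X)=\overline{\bigcup_n\pi_n^*(C(X_n))}$. Under the embedding $x\mapsto(H_m(x))_m$ of $X$ into $\overline X$ one has $\pi_n|_X=H_n$, hence $(\pi_n^*g)|_X=H_n^*g$; restricting to the dense set $X$ (an isometry) then yields $A_{\mathrm{vis}}=\overline{\bigcup_n H_n^*(C(X_n))}$.

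It remains to prove $C_H(X)=\overline{\bigcup_n H_n^*(C(X_n))}$. The inclusion ``$\subseteq$'' is precisely the computation already recorded in the proof of Lemma~\ref{lem:metrizability}: for $f\in C_H(X)$ we have $H_n^*f\to f$ and $H_n^*f=H_n^*(f|_{X_n})\in H_n^*(C(X_n))$. For the reverse inclusion, fix $g\in C(X_n)$; since geodesics in a $\CAT(0)$-space vary continuously with their endpoints, $H_n$ is continuous and $H_n^*g\in C_b(X)$. Now I would invoke $0$-coherence (Lemma~\ref{lem_cat0combingproperties}) in the form $H_n\circ H_m=H_n$ for all $m\geq n$, which gives $H_m^*(H_n^*g)=g\circ H_n\circ H_m=H_n^*g$ for all $m\geq n$, so the sequence $H_m^*(H_n^*g)$ is eventually constant and in particular converges to $H_n^*g$. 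Because $H$ is $R_{exp}$-expanding for every $R_{exp}>0$ (Lemma~\ref{lem_cat0combingproperties}), Lemma~\ref{lem:automaticVV} then guarantees that $H_n^*g$ has vanishing variation, whence $H_n^*g\in D_H(X)\cap C_b(X)=C_H(X)$. As $C_H(X)$ is closed, the whole closure is contained in it.

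Combining the two identifications gives $C_H(X)=A_{\mathrm{vis}}$, and therefore $\combcomp{X}^H=\overline X$. I expect the only genuinely delicate point to be the identification of $A_{\mathrm{vis}}$ with $\overline{\bigcup_n H_n^*(C(X_n))}$, i.e.\ making the Stone--Weierstrass argument on the inverse limit precise and verifying $\pi_n|_X=H_n$ (together with well-definedness of the embedding, which itself relies on $0$-coherence). By contrast, the comparison with $C_H(X)$ is essentially forced once one records the coherence identity $H_n\circ H_m=H_n$ for $m\geq n$ and appeals to the automatic vanishing variation of Lemma~\ref{lem:automaticVV}.
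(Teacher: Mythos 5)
Your proof is correct and is essentially the paper's own argument: both rest on the characterization of $C_H(X)$ as $\{f\in C_b(X)\mid H_n^*f\xrightarrow{n\to\infty}f\}$ (via Lemmas \ref{lem_cat0combingproperties} and \ref{lem:automaticVV}), on the $0$-coherence identity to show $H_n^*C(X_n)\subset C_H(X)$, and on the resulting equality $C_H(X)=\overline{\bigcup_n H_n^*C(X_n)}$. The only divergence is in the final step, and it is cosmetic: the paper identifies this algebra with the direct limit $\varinjlim_m C(X_m)$ of the dual of \eqref{defeq_cat0invsys} and invokes Gelfand duality to conclude $\combcomp{X}^H=\varprojlim_m X_m=\overline{X}$, whereas you compute the algebra of functions extending to $\overline{X}$ concretely, by Stone--Weierstrass on the inverse limit together with the isometry of restriction to the dense subset $X$ --- two equally valid ways of dualizing the same identification.
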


\begin{proof}
According to Lemma \ref{lem:automaticVV} in conjunction with the Lemma \ref{lem_cat0combingproperties}, we can characterize the $C^*$-algebra $C_H(X)$ as consisting of exactly all those functions $f\in C_b(X)$ satisfying $H_n^*f\xrightarrow{n\to\infty}f$. 

For any $m\in\N$ and $f'\in C(X_m)$ consider the function $f\coloneqq H_m^*f'\in C_b(X)$.
Because of $0$-coherence, we have $H_n^*f=H_n^*H_m^*f'=(H_m\circ H_n)^*f'=H_m^*f'=f$ for all $n\geq m$, implying that $f\in C_H(X)$. Therefore, pullback with $H_m$ yields a $*$-homomorphism $H_m^*\colon C(X_m)\to C_H(X)$. Again by the characterization of $C_H(X)$ we even have
\[C_H(X)=\overline{\bigcup_{m\in\N}H_m^*C(X_m)}\,.\]

On the other hand, the right hand side of this equation equals the direct limit $\varinjlim_{m\in\N}C(X_m)$ of the dual of \eqref{defeq_cat0invsys}, since each $H_m$ surjects onto $X_m$ and therefore all the maps $H_m^*\colon C(X_m)\to C_H(X)$ are injective.
Thus we have $C(\combcompb{X}{H})=\varinjlim_{m\in\N}C(X_m)$, and by duality $\combcompb{X}{H}=\varprojlim_{m\in\N}X_m=\overline{X}$.
\end{proof}

\begin{rem}
A generalization of $\CAT(0)$-spaces are so-called Busemann spaces, where one demands a convexity condition for geodesics in the space. Fukaya--Oguni \cite{fukaya_oguni_busemann} showed that the compactification of a proper Busemann space by its visual boundary is contractible and Higson dominated.

The natural combing on a Busemann space, defined analogously as the combing above for $\CAT(0)$-spaces, is again $0$-coherent and $R_{exp}$-expanding for all $R_{exp} > 0$. Hence the above proof of Proposition~\ref{prop_CATnull_boundary} goes through in the more general setting of proper Busemann spaces showing that the combing compactification coincides with the visual compactification. The definition of the visual compactification used by Fukaya--Oguni \cite[Paragraph after Definition 2.1]{fukaya_oguni_busemann} is the same as above in the $\CAT(0)$-case: its the inverse limit of balls of bigger and bigger radii.
\end{rem}

\subsection{Hyperbolic spaces}
\label{secnb234trz}

Hyperbolic metric spaces were introduced by Gromov \cite{gromov_hyperbolic_groups}. We refer to \cite[Chapter III.H]{bridson_haefliger} for a thorough exposition about these spaces.

\begin{defn}
Let $(X,d)$ be a complete geodesic metric space. It is called \emph{hyperbolic}, if there exists a $\delta \ge 0$ such that every geodesic triangle is $\delta$-slim, i.\,e.\ if $\triangle$ is a triangle in $X$ consisting of three length miminizing geodesic segments between three vertices, then each of the sides of $\triangle$ lies within a $\delta$-neighborhood of the union of the other two sides.
\end{defn}

We fix a base point $p\in X$.
An important tool for investigating hyperbolic metric spaces is the Gromov product, which is defined by
\[(x|y)\coloneqq \frac12(d(x,p)+d(y,p)-d(x,y))\,,\quad (x,y\in X)\,.\]
In fact, there is even an alternative definition of hyperbolicity in terms of this product, see \cite[Definition III.1.20 and subsequent remarks]{bridson_haefliger}. We will not need this alternative definition, but the Gromov product will become indespensible for defining and investigating the Gromov compactification and the Gromov boundary.

For our purposes, a convenient but non-standard definition of the Gromov compactification and the Gromov boundary is the following one. We refer to \cite[Proposition 2.1]{roe_hyperbolic} for why this definition is equivalent to others.
\begin{defn}\label{def:Gromovcompactification}
The \emph{Gromov compactification} $\overline{X}$ of $X$ is the maximal ideal space of the commutative $C^*$-algebra
consisting of all bounded continuous functions $f\colon X\to\C$ with the property that for all $\varepsilon>0$ there is $K>0$ such that for all $x,y\in X$ we have
\begin{equation}
\label{eq_gromovboundary}
(x|y)>K\implies|f(x)-f(y)|<\varepsilon\,.
\end{equation}
By the following lemma, this $C^*$-algebra contains $C_0(X)$ as an essential ideal, so $\overline{X}$ is indeed a compactification of $X$ and we define the \emph{Gromov boundary} as $\partial X\coloneqq \overline{X}\setminus X$.
\end{defn}
\begin{lem}
For any minimizing geodesic $\overline{xy}$ between $x,y\in X$ we have $d(\overline{xy},p)\geq (x|y)$. In particular $d(x,p)\geq (x|y)$ and $d(y,p)\geq (x|y)$.
\end{lem}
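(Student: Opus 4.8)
The plan is to prove the inequality pointwise: I will show that every point $z$ lying on the minimizing geodesic segment $\overline{xy}$ satisfies $d(z,p)\geq (x|y)$, and then take the infimum over all such $z$ to conclude $d(\overline{xy},p)\geq (x|y)$. The only structural input I need is that the segment is length-minimizing, which means that for any point $z$ on it we have the additivity
\[
d(x,z)+d(z,y)=d(x,y)\,.
\]

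The core computation is two applications of the triangle inequality from the base point $p$. For a point $z\in\overline{xy}$ I have $d(x,p)\leq d(x,z)+d(z,p)$ and $d(y,p)\leq d(y,z)+d(z,p)$, which I rearrange to $d(z,p)\geq d(x,p)-d(x,z)$ and $d(z,p)\geq d(y,p)-d(y,z)$. Adding the two gives
\[
2\,d(z,p)\geq d(x,p)+d(y,p)-\big(d(x,z)+d(z,y)\big)=d(x,p)+d(y,p)-d(x,y)=2(x|y)\,,
\]
where the middle equality is exactly the minimality of the geodesic and the last is the definition of the Gromov product. Dividing by $2$ yields $d(z,p)\geq (x|y)$ for every $z\in\overline{xy}$, and since $d(\overline{xy},p)=\inf_{z\in\overline{xy}}d(z,p)$ the first claim follows.

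The ``in particular'' statement is then immediate: the endpoints $x$ and $y$ themselves lie on $\overline{xy}$, so specializing $z=x$ (which makes $d(x,z)=0$) gives $d(x,p)\geq (x|y)$ directly, and symmetrically $z=y$ gives $d(y,p)\geq (x|y)$. There is no genuine obstacle in this argument; the one point worth emphasizing is that the additivity $d(x,z)+d(z,y)=d(x,y)$ is precisely where the hypothesis that $\overline{xy}$ is \emph{minimizing} enters, and the estimate would fail for an arbitrary path between $x$ and $y$.
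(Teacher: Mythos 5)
Your proof is correct and is essentially the same as the paper's: both apply the triangle inequality from $p$ to the two endpoints, add, and use the additivity $d(x,z)+d(z,y)=d(x,y)$ coming from minimality to recognize the Gromov product. The paper's version is just a one-line condensation of your computation, with the ``in particular'' clause left implicit via $z=x$ or $z=y$ exactly as you spell out.
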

\begin{proof}
For any $z\in\overline{xy}$ the triangle inequality implies \[d(z,p)\geq \frac12(d(x,p)-d(x,z)+d(y,p)-d(y,z))=(x|y)\,.\qedhere\]
\end{proof}

The next lemma defines the natural combing that we will use on hyperbolic spaces. Note that there are choices involved in the definition, but for any two choices the identity map of the space will be compatible with the combings in the sense of Remark~\ref{rem_combingdefinition}.(\ref{rem_enum_combedspacemorphisms}).

\begin{lem}\label{lem:hyperboliccontraction}
For any hyperbolic metric space $(X,d)$ and $p\in X$ there is a proper combing $H$ of $X$ starting at $p$. It is constructed by choosing for each $x\in X$ a minimizing geodesic $\gamma_x:[0,d(x,p)]\to X$ parametrized by path length between $p$ and $x$ and defining
\[H(x,n)\coloneqq \begin{cases}\gamma_x(n)&n\leq d(x,p)\,,\\x&n\geq d(x,p)\,.\end{cases}\]
\end{lem}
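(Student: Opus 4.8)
The plan is to verify the three axioms of Definition~\ref{def_combing} together with properness, the only substantial point being controlledness (Axiom~\ref{654terfd}), which encodes the fellow-traveling property of geodesics emanating from $p$.

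Axioms~\ref{4532r} and~\ref{453987ztwert} are immediate. Since each $\gamma_x$ starts at $p$, we have $H(x,0)=\gamma_x(0)=p$; and as $d(p,p)=0$ the path $\gamma_p$ is constant, so $H(p,n)=p$ for all $n$. For a bounded $K\subset X$ put $D:=\sup_{x\in K}d(x,p)<\infty$ and $N:=\lceil D\rceil$; then $n\geq N\geq d(x,p)$ forces $H(x,n)=x$ for all $x\in K$, as required.

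For controlledness I would separate the two assertions spelled out after Definition~\ref{def_combing}. The uniform step-size bound is trivial: $\gamma_x$ is parametrized by arc length, so $d(H(x,n),H(x,n+1))\leq 1$ for all $x,n$. The fellow-traveling property is the crux. Fix $r>0$ and points $x,y$ with $d(x,y)\leq r$; write $a:=d(x,p)$, $b:=d(y,p)$ and assume $a\leq b$, noting $b-a\leq d(x,y)\leq r$. For $n\geq b$ both paths are constant, so $d(H(x,n),H(y,n))=d(x,y)\leq r$; for $a\leq n\leq b$ we have $H(x,n)=x$ and $d(y,H(y,n))=b-n\leq r$, giving the bound $2r$. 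The remaining range $n\leq a$ is where hyperbolicity enters: apply $\delta$-slimness to the geodesic triangle with vertices $p,x,y$. The point $\gamma_x(n)$ lies within $\delta$ of a point $z$ on one of the other two sides. If $z$ lies on $\overline{xy}$, then $d(\gamma_x(n),x)\leq\delta+r$, so that $a-n\leq\delta+r$ and hence $d(y,\gamma_y(n))=b-n\leq 2r+\delta$; combining the three segments gives $d(\gamma_x(n),\gamma_y(n))\leq 2\delta+4r$. If instead $z=\gamma_y(m)$ lies on $\gamma_y$, then $|n-m|=|d(\gamma_x(n),p)-d(\gamma_y(m),p)|\leq d(\gamma_x(n),\gamma_y(m))\leq\delta$, whence $d(\gamma_x(n),\gamma_y(n))\leq d(\gamma_x(n),\gamma_y(m))+|m-n|\leq 2\delta$. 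Thus $d(H(x,n),H(y,n))\leq 2\delta+4r$ uniformly in $n$, which is exactly the required closeness bound, so $H$ is controlled.

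Finally, properness comes for free: each $\gamma_x$ is an honest geodesic, hence a $(1,0)$-quasi-geodesic segment, so $H$ is a quasi-geodesic combing and Lemma~\ref{lemretnmv34tr} applies directly. The main obstacle is the fellow-traveling estimate in the range $n\leq a$; everything else is bookkeeping. The one thing to double-check is the slim-triangle case analysis and that the resulting constant depends only on $\delta$ and $r$ (uniformly in $n$ and in the pair $x,y$), since that uniformity is precisely what makes the bound an entourage rather than merely a pointwise estimate.
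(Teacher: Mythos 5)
Your proof is correct and takes essentially the same route as the paper's: controlledness is established via $\delta$-slimness of the triangle with sides $\gamma_x$, $\gamma_y$, $\overline{xy}$, using the same dichotomy (the point $\gamma_x(n)$ is either near $\overline{xy}$ or near a point $\gamma_y(m)$, in which case comparing distances to $p$ along $\gamma_y$ gives $|n-m|\leq\delta$), and properness follows from Lemma~\ref{lemretnmv34tr} since the combing paths are geodesics. Your additional case split by the range of $n$ is handled implicitly in the paper's argument, but the mathematical substance is identical.
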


\begin{proof}
The map $H$ satisfies the first two of the properties in Definition \ref{def_combing}. Since the combing paths are geodesics, properness follows from Lemma~\ref{lemretnmv34tr}.

To show that $H$ is a controlled map, let $x,y\in X$ and denote by $\triangle$ the geodesic triangle consisting of the sides $\gamma_x$, $\gamma_y$ and some length minimizing geodesic $\overline{xy}$.

First, fix some $n\in\N$ and we shall abbreviate $x_n=H(x,n)$, $y_n=H(y,n)$. If $d(x_n,\overline{xy})<\delta$ and $d(y_n,\overline{xy})<\delta$, then obviously $d(x_n,y_n)< 2\delta+d(x,y)$.

If, on the other hand, $d(x_n,\overline{xy})\geq\delta$ (or analogously $d(y_n,\overline{xy})\geq\delta$), then there must be a point $z_n=\gamma_y(t_n)$ on $\gamma_y$ such that $d(x_n,z_n)<\delta$.
Now,
\begin{align*}
d(z_n,y_n)&=|d(y_n,p)-d(z_n,p)|\leq|n-d(z_n,p)|=|d(x_n,p)-d(z_n,p)|
\\&\leq d(x_n,z_n)<\delta\,,
\end{align*}
implying $d(x_n,y_n)< 2\delta$.

In any case, $d(x_n,y_n)< 2\delta+d(x,y)$ and consequently we have the estimate
$d(x_m,y_n)< 2\delta+d(x,y)+|m-n|$. This proves the controlledness condition.
\end{proof}

We note the following useful consequence of the previous two lemmas and their proofs.
\begin{cor}\label{cor:hyperbolic2delta}
For $n\leq(x|y)-\delta$ we have $d(H(x,n),H(y,n))< 2\delta$.\qed
\end{cor}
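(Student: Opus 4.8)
The plan is to exploit the second case in the proof of Lemma~\ref{lem:hyperboliccontraction}, which yields the sharp bound $2\delta$ without the additive term $d(x,y)$, and to show that the hypothesis $n\leq (x|y)-\delta$ forces us into precisely that case. Throughout I write $x_n=H(x,n)$ and $y_n=H(y,n)$ as in that proof, and let $\triangle$ be the geodesic triangle with sides $\gamma_x$, $\gamma_y$ and a minimizing geodesic $\overline{xy}$.

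First I would check that $n$ lies in the range where the combing paths are genuine geodesic points rather than the eventual constant value. Since $d(x,p)\geq (x|y)$ and $d(y,p)\geq (x|y)$ by the preceding lemma, the hypothesis $n\leq (x|y)-\delta\leq (x|y)$ gives both $n\leq d(x,p)$ and $n\leq d(y,p)$. Hence $x_n=\gamma_x(n)$ and $y_n=\gamma_y(n)$, and because these geodesics are parametrized by path length we have $d(x_n,p)=d(y_n,p)=n$.

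The key step is to show that $x_n$ is at distance at least $\delta$ from the third side $\overline{xy}$. For any point $w\in\overline{xy}$ the preceding lemma gives $d(w,p)\geq (x|y)$, so the triangle inequality yields $d(x_n,w)\geq d(w,p)-d(x_n,p)\geq (x|y)-n\geq \delta$. Taking the infimum over $w\in\overline{xy}$, we conclude $d(x_n,\overline{xy})\geq \delta$.

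This places us exactly in the second case treated in the proof of Lemma~\ref{lem:hyperboliccontraction}: since $x_n$ lies on the side $\gamma_x$ and is not within $\delta$ of $\overline{xy}$, the $\delta$-slimness of $\triangle$ forces $x_n$ to lie within $\delta$ of $\gamma_y$, and the computation carried out there then gives $d(x_n,y_n)<2\delta$, as claimed. I expect no genuine obstacle; the only point requiring care is the bookkeeping that keeps $n$ in the non-constant regime of the combing, so that the identity $d(x_n,p)=n$ is legitimately available for the estimate on $d(x_n,\overline{xy})$.
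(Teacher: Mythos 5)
Your proof is correct and is essentially the paper's intended argument: the paper leaves the corollary's proof implicit precisely because, as you show, the hypothesis $n\leq(x|y)-\delta$ together with the lemma $d(\overline{xy},p)\geq(x|y)$ forces $d(x_n,\overline{xy})\geq\delta$, landing in the second case of the proof of Lemma~\ref{lem:hyperboliccontraction}, whose computation yields $d(x_n,y_n)<2\delta$. Your preliminary check that $n$ stays in the non-constant regime (so $d(x_n,p)=d(y_n,p)=n$) is exactly the bookkeeping needed to make that case applicable.
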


\begin{lem}
The combings of hyperbolic metric spaces constructed in the above Lemma \ref{lem:hyperboliccontraction} are $2\delta$-coherent and $2\delta$-expanding.
\end{lem}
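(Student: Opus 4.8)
The plan is to treat the two assertions separately, in each case reducing to a statement about how two geodesics emanating from $p$ fellow-travel, and then invoking $\delta$-slimness---directly for coherence, and through Corollary~\ref{cor:hyperbolic2delta} for expandingness.

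For coherence I would fix $x\in X$ and $m\le n$ and show $d(H_m(H_n(x)),H_m(x))\le 2\delta$. If $n\ge d(x,p)$ then $H_n(x)=x$ and there is nothing to prove, so assume $n<d(x,p)$ and set $y:=H_n(x)=\gamma_x(n)$, which satisfies $d(y,p)=n\ge m$. The point is that $\gamma_x|_{[0,n]}$ is a minimizing geodesic from $p$ to $y$, whereas the combing evaluates $H_m(y)=\gamma_y(m)$ along the \emph{chosen} geodesic $\gamma_y$ from $p$ to $y$; both geodesics share the endpoints $p$ and $y$. Applying $\delta$-slimness to the (degenerate) geodesic triangle with vertices $p,y,y$ whose sides are $\gamma_y$, $\gamma_x|_{[0,n]}$ and the constant path at $y$, I get that $\gamma_y(m)$ lies within $\delta$ of some point $\gamma_x(t)$ on $\gamma_x|_{[0,n]}$. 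Comparing distances to $p$ yields $|m-t|=|d(\gamma_y(m),p)-d(\gamma_x(t),p)|<\delta$, hence $d(\gamma_x(t),\gamma_x(m))<\delta$, and the triangle inequality gives $d(H_m(y),H_m(x))=d(\gamma_y(m),\gamma_x(m))<2\delta$. Thus $E_{coh}\subset E_{2\delta}$.

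For expandingness I would take $E_{exp}:=E_{2\delta}$ and, given $E=E_r$ and $n\in\N$, set $K_{E,n}$ to be the closed ball of radius $n+r+\delta$ around $p$. For $x\notin K_{E,n}$ and any $y$ with $d(x,y)\le r$ I need $d(H_n(x),H_n(y))<2\delta$. Since $d(x,p)>n+r+\delta$ and $d(y,p)\ge d(x,p)-r>n+\delta$, both values $H_n(x)=\gamma_x(n)$ and $H_n(y)=\gamma_y(n)$ are genuine points on the combing geodesics. The Gromov product then satisfies
\[(x|y)=\tfrac12\big(d(x,p)+d(y,p)-d(x,y)\big)\ge d(x,p)-r> n+\delta\,,\]
so $n\le(x|y)-\delta$ and Corollary~\ref{cor:hyperbolic2delta} immediately yields $d(H(x,n),H(y,n))<2\delta$. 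This is exactly $H_n(E_r[x])\subset E_{2\delta}[H_n(x)]$ for all $x\notin K_{E,n}$, so $H$ is $2\delta$-expanding.

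The routine bookkeeping (distinguishing the cases in which a combing path has already become constant) is harmless, so I expect the only genuine subtlety to lie in the coherence step: one must remember that the combing uses a fixed choice of geodesic $\gamma_y$ to each point $y$, which need not coincide with the initial segment of $\gamma_x$ even though both join $p$ to $y$. The fellow-traveling of these two geodesics with common endpoints---i.e.\ the thinness of the bigon they form---is the crux, and $\delta$-slimness applied to the degenerate triangle is precisely what closes this gap and produces the constant $2\delta$.
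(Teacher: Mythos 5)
Your proof is correct. The expandingness half is essentially identical to the paper's: the same bounded set $K_{E,n}=B_{n+r+\delta}(p)$, the same Gromov-product estimate $(x|y)\geq d(x,p)-r>n+\delta$, and the same appeal to Corollary~\ref{cor:hyperbolic2delta}. Where you genuinely diverge is coherence. The paper also routes coherence through Corollary~\ref{cor:hyperbolic2delta}: for $d(x,p)>n$ it observes $(x|H_n(x))=n$, so the corollary gives the bound whenever $n\geq m+\delta$, and the leftover range $m\leq n<m+\delta$ is settled separately by the direct geodesic estimate $d(H_m(H_n(x)),H_m(x))\leq d(H_m(H_n(x)),H_n(x))+d(H_n(x),H_m(x))\leq 2(n-m)<2\delta$. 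You instead apply $\delta$-slimness once, to the degenerate triangle (bigon) with vertices $p,y,y$ for $y=\gamma_x(n)$, whose sides are the chosen geodesic $\gamma_y$ and the initial segment $\gamma_x|_{[0,n]}$; comparing distances to $p$ then yields the $2\delta$ bound uniformly in $m\leq n$, with no case split and no Gromov product. Both arguments ultimately rest on slimness, and your bigon is a legitimate geodesic triangle in the sense of the paper's definition (the constant path at $y$ is a length-minimizing segment between $y$ and itself), so the application is valid. Your route is a bit more self-contained and handles coherence in one stroke; the paper's buys economy of exposition by funneling both coherence and expandingness through the single Corollary~\ref{cor:hyperbolic2delta}, at the cost of the extra case $n<m+\delta$.
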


\begin{proof}
To show coherence, let $m\leq n$ and $x\in X$. If $d(x,p)\leq n$, then we have $H_n(x)=x$ and hence $H_m(H_n(x))=H_m(x)$.
If $n< m+\delta$, then 
\begin{align*}
d(H_m(H_n(x)),H_m(x))&\leq d(H_m(H_n(x)),H_n(x))+d(H_n(x),H_m(x))
\\&\leq 2(n-m)<2\delta\,,
\end{align*}
where the second inequality is because the combing is by geodesics.
What remains is the case $d(x,p)>n\geq m+\delta$. But then
$(x|H_n(x))=n>m+\delta$ and Corollary \ref{cor:hyperbolic2delta} implies
\[d(H_m(H_n(x)),H_m(x))< 2\delta\,.\]
Thus, $H$ is $2\delta$-coherent.

To show expandingness, define the bounded subset $K_{r,n}=B_{r+n+\delta}(p)$ for $r>0$, $n\in\N$. Then for all $x\in X\setminus K_{r,n}$ and $y\in B_r(x)$ we have 
\begin{align*}
(x|y)&=\frac12(d(x,p)+d(y,p)-d(x,y))
\\&>\frac12((r+n+\delta)+(r+n+\delta-r)-r)=n+\delta
\end{align*}
and again Corollary \ref{cor:hyperbolic2delta} implies that $d(H_n(x),H_n(y))<2\delta$.
This proves the $2\delta$-expandingness.
\end{proof}

\begin{lem}\label{lem:Gromovequalscombing}
For any hyperbolic metric space $X$, the Gromov compactification coincides with the combing compactification, i.e., $\overline{X}=\combcompb{X}{H}$.
\end{lem}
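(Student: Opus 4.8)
I want to show $\overline{X}^H = \overline{X}$ for a hyperbolic metric space $X$, where $\overline{X}^H$ is the combing compactification and $\overline{X}$ is the Gromov compactification. Both are maximal ideal spaces of commutative unital $C^*$-algebras — namely $C_H(X)$ and the algebra from Definition \ref{def:Gromovcompactification} — so it suffices to show these two $C^*$-algebras of functions coincide as subalgebras of $C_b(X)$. Let me call the Gromov algebra $C_G(X)$.

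**Strategy.** The proof should be a two-way inclusion $C_G(X) = C_H(X)$, exploiting the Gromov-product control condition \eqref{eq_gromovboundary} together with the $2\delta$-expandingness/coherence just established and Corollary \ref{cor:hyperbolic2delta}. The expandingness is the key lever: by Lemma \ref{lem:automaticVV}, since this combing is $R_{exp}$-expanding for one value $R_{exp}=2\delta$ — but that lemma needs expandingness for *every* $R_{exp}$, so I cannot quote it directly and must argue vanishing variation by hand. So let me think of the two inclusions as follows.

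**Inclusion $C_G(X) \subset C_H(X)$.** Take $f \in C_G(X)$. I must check two things: $f$ has vanishing variation, and $H_n^* f \to f$. For vanishing variation, fix an entourage $E_r$ and $\varepsilon>0$; choose $K$ from \eqref{eq_gromovboundary}. If $x,y$ are $r$-close and $x$ is far from $p$ (say $d(x,p) > 2K + r$), then $(x|y) \ge d(x,p) - d(x,y) \ge d(x,p)-r$ is large, forcing $|f(x)-f(y)|<\varepsilon$; this gives $\operatorname{Var}_{E_r}(f) \in D_0(X)$. For $H_n^* f \to f$: I want $|f(H_n(x)) - f(x)|$ small uniformly in $x$. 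When $n \ge d(x,p)$ this is zero since $H_n(x)=x$. When $n < d(x,p)$, set $z = H_n(x) = \gamma_x(n)$; then $z$ lies on the geodesic from $p$ to $x$ at distance $n$ from $p$, so $(z|x) = \frac12(d(z,p)+d(x,p)-d(z,x)) = \frac12(n + d(x,p) - (d(x,p)-n)) = n$. Hence for $n \ge K$ the Gromov-product condition gives $|f(x)-f(H_n(x))| < \varepsilon$. This handles both required properties.

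**Inclusion $C_H(X) \subset C_G(X)$.** This is where I expect the real work. Take $f \in C_H(X)$; I must verify the Gromov condition \eqref{eq_gromovboundary}. Suppose $(x|y)$ is large. By Corollary \ref{cor:hyperbolic2delta}, if I set $n = \lfloor (x|y)-\delta \rfloor$ then $d(H_n(x), H_n(y)) < 2\delta$, i.e.\ the combing points at time $n$ are $2\delta$-close. Now I use the chain
\[
|f(x)-f(y)| \le |f(x)-f(H_n(x))| + |f(H_n(x)) - f(H_n(y))| + |f(H_n(y)) - f(y)|.
\]
The outer two terms are controlled by $\|H_n^* f - f\|$, which is small for $n$ large (hence for $(x|y)$ large) because $f \in C_H(X)$. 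The middle term is a $2\delta$-variation, controlled by vanishing variation of $f$ — but only far out from $p$. Since $(x|y)$ large forces both $d(x,p)$ and $d(y,p)$ large (by the lemma preceding Definition \ref{def:Gromovcompactification}), and $H_n(x)$ is at distance $n \approx (x|y)$ from $p$, the points $H_n(x), H_n(y)$ lie outside any prescribed bounded set once $(x|y)$ is large, so vanishing variation applies. Assembling the three $\varepsilon/3$ bounds gives \eqref{eq_gromovboundary}.

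**Main obstacle.** The delicate point is the uniformity in the second inclusion: I need all three error terms to be small *simultaneously and uniformly in $x,y$*, governed by the single parameter $(x|y)$. The outer terms want $n$ large; the middle term wants $H_n(x)$ far from $p$; I must check that choosing $n$ as a fixed function of $(x|y)$ makes all three conditions fire at once, and that $H_n(x)$ genuinely escapes to infinity (which follows since $d(H_n(x),p)=n$ when $n \le d(x,p)$). Keeping track of these thresholds carefully — and confirming the combing points really do leave every bounded set as $(x|y)\to\infty$ — is the crux; once that bookkeeping is done, the $C^*$-algebraic conclusion $C_H(X)=C_G(X)$, and hence $\overline{X}^H = \overline{X}$, follows by Gelfand duality.
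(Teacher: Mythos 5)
Your proposal is correct and follows essentially the same route as the paper's own proof: for $C_G(X)\subset C_H(X)$ both arguments rest on the identity $(x|H_n(x))=n$ (for $n<d(x,p)$) together with the Gromov condition \eqref{eq_gromovboundary}, and for $C_H(X)\subset C_G(X)$ both use the same three-term triangle inequality with Corollary \ref{cor:hyperbolic2delta} controlling the middle term via vanishing variation and $H_n^*f\to f$ controlling the outer terms. The only cosmetic differences are that you verify vanishing variation of Gromov-algebra functions by a direct Gromov-product estimate where the paper simply cites Roe's result that $\overline{X}$ is Higson dominated, and that you let $n$ depend on $(x|y)$ whereas the paper fixes $n$ first and uses the threshold $(x|y)>n+\delta$; your bookkeeping for the simultaneous smallness of the three terms goes through exactly as you describe, since $d(H_n(x),p)=d(H_n(y),p)=n\to\infty$ as $(x|y)\to\infty$.
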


\begin{proof}
Roe showed that the Gromov compactification $\overline{X}$ is a Higson dominated compactification of $X$.

Now, consider $f\in C(\overline{X})$. For $\varepsilon>0$ we choose $R>0$ as in the definition of $\overline{X}$. 
Then for all $n>R$ and all $x\in X$ we have either $x=H(x,n)$ (in the case $d(x,p)\leq n$) or
\[(x|H(x,n))=d(p,H(x,n))=n> R\]
(if $d(p,x)>n$). In both cases $|f(x)-f(H(x,n))|<\varepsilon$, so $\|f|_X-H_n^*f|_X\|\leq\varepsilon$ for all $n>R$.
In other words, $H_n^*f|_X\xrightarrow{n\to\infty}f|_X$.

We have thus shown that restriction yields an inclusion
$C(\overline{X})\subset C_H(X)$. It remains to prove that this restriction map is also surjective.

To this end, let $f\in C_H(X)$. 
Given $\varepsilon>0$ we choose $R>0$ such that we have $|f(x)-f(y)|<\frac{\varepsilon}{3}$ whenever $d(x,p),d(y,p)\geq R$ and $d(x,y)\leq 2\delta$. This can be achieved because of the vanishing variation condition. Next we choose $n\geq R$ big enough such that $\|f|_X-H_n^*f|_X\|<\frac{\varepsilon}{3}$, which can be done by the other condition on $C_H(X)$. Then for all $x,y\in X$ with $(x|y)>n+\delta$ we have $d(H(x,n),H(y,n))\leq 2\delta$ and $d(H(x,n),p)=n\geq R$, $d(H(y,n),p)=n\geq R$. Thus we have
\begin{align*}
|f & (x) - f(y)|\\
& \leq |f(x)-f(H(x,n))|+|f(H(x,n))-f(H(y,n))|+|f(H(y,n))-f(y)|\\
& < \varepsilon\,.
\end{align*}
Therefore $f$ extends to a continuous function on $\overline{X}$.
\end{proof}

\subsection{Coarsely convex spaces}
\label{secknjb4rermnmnmn}

A common generalization of both the hyperbolic and the $\CAT(0)$ case was given by Fukaya--Oguni \cite{fukaya_oguni} in the form of coarsely convex spaces, and they also constructed a boundary for such spaces. These coarsely convex spaces come by definition with a combing. The goal of this section is to prove that this combing is coherent and expanding, and hence coarsely convex spaces fit naturally into the setting of the present paper.

Note that there are also other generalizations of the notions of hyperbolic and/or $\CAT(0)$ spaces (Descombes--Lang \cite{descombes_lang}, Hotchkiss \cite{hotchkiss} and Buckley--Falk \cite{buckley_falk}). But they are all subsumed in the notion of coarsely convex spaces.

\begin{defn}[Fukaya--Oguni~{\cite[Definition~3.1]{fukaya_oguni}}]
\label{defn23435erwqwnb}
A metric space $X$ is called coarsely convex, if there exist constants $\lambda \ge 1$, $k \ge 0$, $E \ge 1$ and $C \ge 0$, a non-decreasing function $\theta\colon \IR_{\ge 0} \to \IR_{\ge 0}$, and a family $\mathcal{L}$ consisting of $(\lambda,k)$-quasi-geodesic segments, such that
\begin{enumerate}
\item for any two points $v,w \in X$ there is a quasi-geodesic segment $\gamma \in \mathcal{L}$ with $\gamma\colon [0,a] \to X$, $\gamma(0) = v$ and $\gamma(a) = w$,
\item\label{43rew} for any two quasi-geodesic segments $\gamma, \eta \in \mathcal{L}$ with $\gamma\colon [0,a] \to X$ and $\eta\colon [0,b] \to X$ and for all $t \in [0,a]$, $s \in [0,b]$ and $0 \le c \le 1$ we have
\[d(\gamma(ct),\eta(cs)) \le c E d(\gamma(t),\eta(s)) + (1-c) E d(\gamma(0),\eta(0)) + C,\]
\item \label{def:coarslyconvex:enum:tminuss} and for all quasi-geodesic segments $\gamma, \eta \in \mathcal{L}$ with $\gamma\colon [0,a] \to X$ and $\eta\colon [0,b] \to X$ and all $t \in [0,a]$ and $s \in [0,b]$ we have
\[|t-s| \le \theta\big( d(\gamma(0), \eta(0)) + d(\gamma(t),\eta(s)) \big).\qedhere\]
\end{enumerate}
\end{defn}

\begin{rem}
\label{rem2345rte3q}
Note that without loss of generality we can assume that for any two points of $X$ there is only one quasi-geodesic in $\mathcal{L}$ connecting these two points, and that for any point $x \in X$ the quasi-geodesic in $\mathcal{L}$ connecting $x$ to itself is the constant path.
\end{rem}

\begin{lem}\label{lem:coarslyconvexcombing}
Let $X$ be coarsely convex and $\cL$ a choice of quasi-geodesics as in Remark \ref{rem2345rte3q}. Let $p\in X$ be an arbitrary basepoint and denote by $\gamma_x\in\cL$ the quasi-geodesic $\gamma_x\colon [0,t_x]\to X$ from $p$ to $x$. Then the formula
\begin{equation*}
H(x,n)\coloneqq \begin{cases}
\gamma_x(n) & \text{for } n\leq  t_x\,,\\
x & \text{for } n \geq t_x\,.
\end{cases}
\end{equation*}
defines a coherent and expanding combing on $X$.
\end{lem}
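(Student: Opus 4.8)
The plan is to verify the three combing axioms of Definition~\ref{def_combing} and then establish coherence and expandingness, using in each case the defining inequalities of coarse convexity (Definition~\ref{defn23435erwqwnb}) applied to the chosen family $\cL$. First I would check the combing axioms. Axiom~\ref{4532r} is immediate from the definition of $H$ together with the normalization in Remark~\ref{rem2345rte3q} (the quasi-geodesic from $p$ to itself is constant). Axiom~\ref{453987ztwert} follows because a $(\lambda,k)$-quasi-geodesic from $p$ to $x$ has parameter length $t_x$ bounded in terms of $d(p,x)$, so on a bounded set the paths stabilize after finitely many steps. For controlledness (Axiom~\ref{654terfd}) the key is condition~\ref{43rew}: given $x,y$ with $d(x,y)$ bounded, I would compare $\gamma_x$ and $\gamma_y$ at the same parameter by rescaling. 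Writing $c=n/t_x$ (and symmetrically for $y$), the convexity inequality bounds $d(\gamma_x(n),\gamma_y(s))$ linearly in $d(x,y)$, $d(p,p)=0$ and the constant $C$; combined with the $(\lambda,k)$-quasi-geodesic estimate relating parameter difference to distance, this yields a uniform bound on $d(H_n(x),H_n(y))$, which is exactly controlledness.

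Next I would prove coherence. Fix $m\leq n$ and $x\in X$. The point $H_n(x)=\gamma_x(n)$ lies on the quasi-geodesic $\gamma_x$, and I must compare $H_m(H_n(x))$ with $H_m(x)$. The natural idea is that the quasi-geodesic $\gamma_{H_n(x)}$ from $p$ to the intermediate point $\gamma_x(n)$ should fellow-travel the initial segment of $\gamma_x$ itself: both are quasi-geodesics in $\cL$ from $p$ to a common point on $\gamma_x$, so condition~\ref{43rew} (applied with the rescaling parameter sending the endpoints together, and using $\gamma(0)=\eta(0)=p$) forces them to stay within a bounded distance, controlled purely by $E$, $C$, $\lambda$ and $k$. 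Evaluating both at time $m$ then gives a uniform bound on $d(H_m(H_n(x)),H_m(x))$ independent of $x,m,n$, which is precisely the statement that $E_{coh}$ is an entourage.

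For expandingness I would take $E_{exp}$ to be the entourage coming from the uniform fellow-traveling constant produced in the coherence step. Given an $r$-ball entourage $E$ and $n\in\N$, I would use condition~\ref{def:coarslyconvex:enum:tminuss} together with the quasi-geodesic inequalities to show that for $x$ far enough from $p$ (say outside a ball $K_{E,n}$ whose radius grows with $r$ and $n$) and $y\in E[x]$, the parameter lengths $t_x,t_y$ both exceed $n$, so that $H_n(x)=\gamma_x(n)$ and $H_n(y)=\gamma_y(n)$ are genuine points on the quasi-geodesics rather than endpoints; then condition~\ref{43rew} with $c=n/t_x$ small bounds $d(H_n(x),H_n(y))$ uniformly, the point being that the $c$-factor suppresses the dependence on $d(x,y)$ and only the $(1-c)Ed(p,p)+C = C$ term and a vanishing contribution survive. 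I expect the main obstacle to be the bookkeeping in the expandingness estimate: one must ensure simultaneously that $n\leq t_x,t_y$ (so the reparametrization is legitimate) and that the coefficient $c=n/t_x$ is small enough that the $d(\gamma_x(t_x),\gamma_y(t_y))=d(x,y)\leq r$ term is absorbed into $E_{exp}$. This is exactly what forces $K_{E,n}$ to depend on both $r$ and $n$, and getting the inequalities from Definition~\ref{defn23435erwqwnb} to line up with the requirement $H_n(E[x])\subset E_{exp}[H_n(x)]$ is the delicate part of the argument.
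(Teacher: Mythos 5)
Your overall strategy --- verify the combing axioms via the quasi-geodesic bounds, get coherence by comparing $\gamma_{H_n(x)}$ with $\gamma_x$ through condition~\ref{43rew}, and get expandingness by letting the scaling factor $c=n/t_x$ suppress the $d(x,y)$-term --- is the same as the paper's, and your expandingness paragraph in particular matches the paper's proof (which takes $K_{r,n}$ to be a ball around $p$ of radius roughly $\lambda n(r+\lambda\theta(r)+k+1)$ and obtains the uniform constant $R_{exp}=E+C$). However, your coherence step has a genuine gap. Condition~\ref{43rew} compares the two quasi-geodesics $\gamma_x$ and $\gamma_y$ (where $y:=H_n(x)=\gamma_x(n)$) only at \emph{proportional} times: using $\gamma_x(n)=\gamma_y(t_y)$ it gives $d(\gamma_x(cn),\gamma_y(ct_y))\le C$ for all $c\in[0,1]$, not a bound on $d(\gamma_x(m),\gamma_y(m))$ at \emph{equal} times. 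To ``evaluate both at time $m$'' you must convert $\gamma_y(ct_y)$ into $\gamma_y(m)$, and the resulting parameter error is of size $(m/t_y)\,|n-t_y|$. For $\lambda>1$ the parameter lengths of two $(\lambda,k)$-quasi-geodesics with the same endpoints need not be close: the quasi-geodesic inequalities alone only give $t_y\in\big[(d(p,y)-k)/\lambda,\ \lambda(d(p,y)+k)\big]$, so $|n-t_y|$ can grow linearly in $n$, and your claimed bound ``controlled purely by $E$, $C$, $\lambda$ and $k$'' fails as stated.

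The missing ingredient is condition~\ref{def:coarslyconvex:enum:tminuss}, which you invoke for expandingness but not here: applied to $\gamma_x$ and $\gamma_y$ at the common point $y$ it yields $|n-t_y|\le\theta(d(y,y))=\theta(0)$, a uniform bound. This is exactly the first step of the paper's coherence argument, and the coherence constant then comes out as $E(\lambda\theta(0)+k)+C$, which necessarily involves $\theta(0)$. (The paper also treats separately the sub-case $m\ge t_y$, where $\gamma_y(m)$ is not defined and $H_m(y)=y$; there one argues with quasi-geodesicity of $\gamma_x$ alone --- another spot your ``evaluate at time $m$'' glosses over, though it is easy once $|n-t_y|\le\theta(0)$ is known.) The same issue touches your controlledness sketch: bounding $|t_x-t_y|$ by $\theta(d(x,y))$ also requires condition~\ref{def:coarslyconvex:enum:tminuss}, not just the $(\lambda,k)$-inequalities within a single path. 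With these invocations added, your argument becomes essentially the paper's proof.
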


\begin{proof}
Point \ref{4532r} of Definition~\ref{def_combing} is satisfied by construction of $H$, resp.~by assumption (see Remark~\ref{rem2345rte3q}).

Point~\ref{453987ztwert} is satisfied, because $\cL$ consists of quasi-geodesics with uniform constants $(\lambda,k)$. Concretely, for all $x\in X$ we have
\[d(x,p)=d(\gamma_x(t_x),\gamma_x(0))\geq \lambda^{-1}\cdot t_x-k\implies t_x\leq \lambda\cdot(k+d(x,p))\,,\]
so if $K\subset X$ is bounded, then $H(x,n)=x$ for all $x\in K$ and 
\[n\geq N\coloneqq  \left\lceil\sup_{x\in K}\lambda\cdot(k+d(x,p))\right\rceil\,.\]

A similar argument also shows that $H$ is proper: For $K\subset X$ bounded and $N\in\N$ as above we claim that $H_n^{-1}(K)\subset K$ for all $n\geq N+1$: If $x\notin K$ and $n\geq t_x$, then $\gamma_x(n)=x\notin K$, and if $x\notin K$ and $t_x\geq n\geq N+1$, then 
\[d(\gamma_x(n),p)\geq \lambda^{-1}\cdot n-k> \sup_{x\in K}d(x,p)\implies \gamma_x(n)\notin K\,.\]

It remains to show Point~\ref{654terfd}, i.\,e., that $H$ is a controlled map, to prove that $H$ is indeed a combing. 
For all $x,y\in X$ Point \ref{def:coarslyconvex:enum:tminuss} of Definition \ref{defn23435erwqwnb} yields
\[|t_x-t_y|\leq \theta(d(\gamma_x(t_x),\gamma_y(t_y)))=\theta(d(x,y))\,.\]
Assuming without loss of generality that $t_x\geq t_y$ and making use of the second inequality in~\eqref{eq24354t423ret}, we have for all $n\in [t_y,t_x]\cap\N$
\begin{align*}
d(H_n(x),H_n(y))&=d(\gamma_x(n),y)\leq d(\gamma_x(n),x)+d(x,y)
\\&\leq \lambda\cdot(t_x-n)+k+d(x,y)
\\&\leq \lambda \theta(d(x,y))+k+d(x,y)\,.
\end{align*}
For $n\in[0,t_y)\cap\N$ we use Property~\ref{43rew} of Definition~\ref{defn23435erwqwnb} to calculate
\begin{align*}
d(H_n(x),H_n(y))&=d(\gamma_x(n),\gamma_y(n))
\\&\leq \frac{n}{t_y}Ed(\gamma_x(t_y),\gamma_y(t_y)))+C
\\&\leq Ed(\gamma_x(t_y),y)+C
\\&\leq E\theta(d(x,y))+Ek+Ed(x,y)+C\,,
\end{align*}
where the last inequality sign is derived just as in the previous calculation.
Finally, we use the second inequality in~\eqref{eq24354t423ret} again and piece all together, obtaining the inequality
\[d(H_m(x),H_n(y))\leq E\theta(d(x,y))+Ek+Ed(x,y)+C+\lambda\cdot|m-n|+k\,.\]
This shows that $H$ is a controlled map.

Let us prove now coherence of $H$. Let $m\leq n$ and $x\in X$. If $t_x\geq n$, then $H_m(H_n(x))=H_m(x)$. So assume $n<t_x$ and denote $y\coloneqq H_n(x)$ out of convenience. From  Point \ref{def:coarslyconvex:enum:tminuss} of Definition \ref{defn23435erwqwnb} we obtain 
\[|n-t_y|\leq \theta(d(\gamma_x(n),\gamma_y(t_y)))=\theta(d(y,y))=\theta(0)\,.\]
Denote $s\coloneqq \min\{n,t_y\}$, such that either $\gamma_x(s)=y$ and $d(\gamma_y(s),y)\leq \lambda\theta(0)+k$ or $d(\gamma_x(s),y)\leq \lambda\theta(0)+k$ and $\gamma_y(s)=y$. Then in the case $m<s$ we have
\begin{align*}
d(H_m(H_n(x)),H_m(x))&=d(\gamma_y(m),\gamma_x(m))\\
& \leq \frac{m}{s}Ed(\gamma_y(s),\gamma_x(s))+C\\
&\leq E(\lambda\theta(0)+k)+C\,.
\end{align*}
The other case, $m\geq s$, implies $t_y=s\leq m\leq n$. Here we have
\begin{align*}
d(H_m(H_n(x)),H_m(x))&=d(y,\gamma_x(m))\\
&=d(\gamma_x(n),\gamma_x(m))\\
&\leq\lambda |n-m|+k\\
&\leq\lambda |n-t_y|+k\\
&\leq \lambda\theta(0)+k\,.
\end{align*}
In total, the maps $H_m\circ H_n$ and $H_m$ are $R_{coh}\coloneqq (E(\lambda\theta(0)+k)+C)$-close for all $m\leq n$.

It remains to prove expandingness. For $r>0$ and $n\in\N$ we define
\[K_{r,n}\coloneqq B_{\lambda n(r+\lambda\theta(r)+k+1)+\theta(r)+k}(p)\,.\]
Then for $x\in X\setminus K_{r,n}$ and $y\in B_r(x)$ we denote $t\coloneqq \min\{t_x,t_y\}$. Note that $|t_x-t_y|\leq \theta(r)$ and 
$t\geq n(r+\lambda\theta(r)+k+1)>n$
by choice of $K_{r,n}$ and the $(\lambda,k)$-quasi-geodicity of $\gamma_x$ and $\gamma_y$. Similar to previous calculations we have
\begin{align*}
d(H_n(x),H_n(y))&=d(\gamma_x(n),\gamma_y(n))\\
& \leq \frac{n}{t}Ed(\gamma_x(t),\gamma_y(t))+C\\
&\leq \frac{n}{t}E(r+\lambda\theta(r)+k)+C\\
&< E+C=:R_{exp}.
\end{align*}
This shows expandingness.
\end{proof}

For a coarsely convex, proper metric space $X$, Fukaya--Oguni constructed a compactification $\overline{X}$ and showed that $X$ is coarsely homotopy equivalent to the Euclidean open cone $\mathcal{O}(\partial X)$ over the boundary $\partial X \coloneqq  \overline{X} \setminus X$. After this article appeared on the arXiv, Fukaya--Oguni \cite[Corollary~8.9]{fukaya_oguni} proved that the combing compactification $\combcompb{X}{H}$ of a coarsely convex, proper metric space $X$ is homeomorphic to $\overline{X}$ by providing a functional analytic characterization of $\overline{X}$ analogous to \eqref{eq_gromovboundary} for the Gromov boundary. Note that in the special cases of hyperbolic and of $\CAT(0)$ spaces we have done this identification (i.e., that the boundaries one usually defines for these spaces are homeomorphic to the combing corona) in the corresponding sections above.

\begin{examples}\label{examples_coarsely_convex}
We have already mentioned that hyperbolic and $\CAT(0)$ spaces are coarsely convex. Let us collect some more examples.
\begin{enumerate}
\item Osajda--Przytycki \cite{osajda_przytycki} defined a boundary for systolic complexes by first constructing a system of (quasi-)geodesics satisfying coarsely a weak form of the $\CAT(0)$ condition and being coarsely closed under taking subsegments. This implies that systolic complexes are coarsely convex. Let us further mention that the system of (quasi-)geodesics is coarsely equivariant for the whole isometry group of the systolic complex.

Comparing definitions, we see that the boundary of Osajda--Przytycki coincides with the one constructed by Fukaya--Oguni.

\item Lang proved that every injective metric space $X$ admits a conical and reversible geodesic bicombing \cite[Prop.\ 3.8]{lang}. If the space is additionally proper and of finite topological dimension, then we even have a convex, consistent and reversible bicombing \cite[Thm.\ 1.1 \& 1.2]{descombes_lang}. This implies that proper, finite-dimensional injective metric spaces are coarsely convex. Further, this bicombing is equivariant for the whole isometry group of $X$.

Descombes--Lang construct a boundary for any complete metric space equipped with a convex and consistent geodesic bicombing \cite[Sec.\ 5]{descombes_lang}. Comparing definitions, we see that this boundary coincides with the one of Fukaya--Oguni.

\item\label{examples_coarsely_convex_iii} Haettel--Hoda--Petyt proved that hierarchically hyperbolic spaces are quasi-isometric to injective metric spaces \cite{HHP}. So, if they are additionally proper and finite-dimensional, then they admit coarsely convex structures which are coarsely equivariant for the respective automorphism groups.\footnote{Note that Durham--Minsky--Sisto also constructed a bicombing on hierarchically hyperbolic spaces \cite{DMS}, but which is in general not expanding.}

The construction of Fukaya--Oguni provides us a boundary for these spaces. But Durham--Hagen--Sisto \cite{HHS_boundaries} also constructed a boundary for hierarchically hyperbolic spaces and it is not clear whether these boundaries are homeomorphic to each other.\qedhere
\end{enumerate}
\end{examples}

\subsection{Automatic groups}
\label{secjkjbjnbbbm}

For simplicity we restrict ourselves to finitely generated groups only. We turn them into metric spaces by choosing a finite, symmetric generating set and using the induced word metric. Different choices of generating sets lead to quasi-isometric word metrics (in fact, the identity map of the underlying sets will be a quasi-isometry) and hence our results in this section are independent from the chosen finite generating set.

We choose the identity element of the group as the base point for combings on it. Note that a combing in our sense (i.e., in the sense of Definition~\ref{def_combing}) is what Gersten \cite{gersten} calls a synchronous combing and what Alonso \cite{alonso} calls a bounded combing.

We use \cite{epstein_et_al} as our reference for automatic groups. If $G$ is finitely generated and automatic, then by \cite[Theorem 2.5.1]{epstein_et_al} we can (and will) assume that the automatic structure of it has the uniqueness property. We then conclude from \cite[Lemma 2.3.2]{epstein_et_al} that $G$ is combable by a combing which is quasi-geodesic by \cite[Theorem 3.3.4]{epstein_et_al}. From Lemma~\ref{lemretnmv34tr} we then conclude that this combing is proper.

If the automatic structure of $G$ is prefix-closed (in addition to having the uniqueness property), then the combing will be $0$-coherent (i.\,e.~$H_m \circ H_n = H_m$ for all $m\leq n$). But it is still an open problem whether an automatic group can always be equipped with an automatic structure which simultaneously has the uniqueness property and is prefix-closed. Nevertheless we have the following:

\begin{lem}\label{lem234erw324}
Let $G$ be a finitely generated, automatic group.

Then the automatic structure induces a coherent combing on $G$.
\end{lem}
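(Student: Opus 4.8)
The plan is to realise the combing concretely through normal forms and to reduce coherence to a single \emph{prefix fellow-traveller estimate}, whose proof is the technical heart. By \cite[Theorem~2.5.1]{epstein_et_al} I fix an automatic structure with the uniqueness property, i.e.\ a regular language $L$ over the generating set mapping bijectively onto $G$; write $\sigma_g\in L$ for the accepted word (normal form) of $g$, set $\ell_g:=|\sigma_g|$, let $\overline w$ denote the group element spelled by a word $w$, and let $\sigma_g(n)$ be the length-$\min\{n,\ell_g\}$ prefix of $\sigma_g$. The combing is then $H_n(g)=\overline{\sigma_g(n)}$, with basepoint $e$. By \cite[Lemma~2.3.2 and Theorem~3.3.4]{epstein_et_al} this is a $(\lambda,k)$-quasi-geodesic combing, hence proper by Lemma~\ref{lemretnmv34tr}. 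I will also use the synchronous fellow-traveller property of the multiplier automata: there is a constant $C_0$ with $d(\overline{\sigma_g(t)},\overline{\sigma_{g'}(t)})\le C_0$ for all $t\in\N$ whenever $d(g,g')\le 1$ (shorter words padded by their endpoints).

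The first step is to reduce the statement. Fix $g$ and $m\le n$; we may assume $n\le\ell_g$, since otherwise $H_n(g)=g$ and $H_m\circ H_n(g)=H_m(g)$ trivially. Put $y:=H_n(g)=\overline{\sigma_g(n)}$, so that $H_m(g)=\overline{\sigma_g(m)}$ while $H_m\circ H_n(g)=\overline{\sigma_y(m)}$ (equal to $y$ once $m>\ell_y$). Everything then follows from a uniform comparison of the prefix $\sigma_g(n)$ with the normal form $\sigma_y$ of its endpoint, namely the existence of a constant $K$, independent of $g$ and $n$, with $|\ell_y-n|\le K$ and $d\big(\overline{\sigma_y(t)},\overline{\sigma_g(t)}\big)\le K$ for all $t\le n$. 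Granting this, coherence is immediate: for $m\le\min\{n,\ell_y\}$ the tracking gives $d(H_m\circ H_n(g),H_m(g))\le K$ directly, and for $\ell_y<m\le n$ (which forces $n-\ell_y\le K$) we have $H_m\circ H_n(g)=y$ and $d(y,\overline{\sigma_g(m)})=d(\overline{\sigma_g(n)},\overline{\sigma_g(m)})\le\lambda|n-m|+k\le\lambda K+k$ by quasi-geodesicity of $\sigma_g$. Hence $E_{coh}\subset E_{R_{coh}}$ with $R_{coh}:=\max\{K,\lambda K+k\}$, so $H$ is $R_{coh}$-coherent.

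To establish the prefix estimate I would work along the chain of prefix endpoints $y_j:=\overline{\sigma_g(j)}$ for $0\le j\le n$. Consecutive elements satisfy $d(y_j,y_{j+1})\le 1$, so the synchronous fellow-traveller property controls the consecutive normal forms $\sigma_{y_j},\sigma_{y_{j+1}}$ synchronously within $C_0$. Summed naively over $j$, however, this yields only a \emph{multiplicative} bound $\ell_y=O(n)$ and synchronous control degrading linearly in $n$ --- precisely what quasi-geodesicity already gives, and not enough for coherence. What must be forbidden is a steady drift of the lengths $\ell_{y_j}$ away from $j$. To rule it out --- equivalently, to obtain the synchronous tracking of $\sigma_y$ against $\sigma_g(n)$ and the additive bound $|\ell_y-n|\le K$ --- I would feed the fellow-traveller bounds into a pumping/uniqueness argument on the word acceptor, of the same kind used to prove that accepted words are quasi-geodesic in \cite[Theorem~3.3.4]{epstein_et_al}: a sufficiently large length excess would, within the finitely many states of the acceptor, force $\sigma_y$ to traverse a repeatable loop that, by the fellow-traveller estimates, alters the represented element only within a fixed radius, so that excising or inserting such a loop contradicts the uniqueness of normal forms once $K$ exceeds the threshold determined by the number of states and the quasi-geodesicity slack.

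The main obstacle is exactly this last step: upgrading the \emph{multiplicative} quasi-geodesic comparison of $\sigma_g(n)$ and $\sigma_y$ to an \emph{additive} one. In the prefix-closed case it is free, because there $\sigma_g(n)$ literally \emph{is} $\sigma_y$ and the combing is even $0$-coherent; the whole content of the lemma is to recover this additive, synchronous control from the automaton structure and the uniqueness property alone, when prefix-closedness is unavailable.
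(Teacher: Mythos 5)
Your setup and reduction are correct, and they isolate exactly the right statement: coherence follows once the prefix $\sigma_g(n)$ and the normal form $\sigma_y$ of its endpoint $y=H_n(g)$ synchronously fellow-travel with uniform constants. But the proof has a genuine gap precisely where you locate it, and the sketched pumping argument does not close it. Pumping against uniqueness is the mechanism behind quasi-geodesicity of normal forms (\cite[Theorem~3.3.4]{epstein_et_al}): it detects a long subword of an \emph{accepted} word trapped in a ball of bounded radius and excises a loop. A steady drift of $\ell_{y_j}-j$ at rate, say, $1/2$ per step never produces such a configuration --- each $\sigma_{y_j}$ remains honestly $(\lambda,k)$-quasi-geodesic, and the consecutive length differences stay bounded --- so there is no loop to excise and no conflict with uniqueness. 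To make pumping ``see'' the drift you would need an automaton that reads $\sigma_g$ while simultaneously tracking the normal forms of its prefixes, i.e.\ exactly the multiplier-type automaton for pairs (prefix, normal form of its endpoint) whose existence is the point at issue; as sketched, the argument is circular.

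The missing ingredient is available, and it is what the paper's proof uses: by \cite[Theorem~2.5.9]{epstein_et_al} the prefix closure $L'$ of $L$ is again an automatic structure for $G$ (without uniqueness), hence has the $k$-fellow-traveling property; since $\sigma_g(n)\in L'$ and $\sigma_y\in L\subset L'$ represent the same element $y$, they $k$-fellow-travel, which is your prefix estimate (the additive length bound then follows because prefixes of $L$-words are quasi-geodesic and the longer word's tail is trapped in a $k$-ball around $y$). If you prefer not to cite that theorem, the idea behind it is not drift-pumping but \emph{bounded completion}: the word acceptor has finitely many live states, so every prefix $u\in L'$ extends to an accepted word $uc\in L$ with $|c|\le C$ for a constant $C$ independent of $u$; then $\sigma_g(n)c$ and $\sigma_y$ are genuine $L$-words whose endpoints lie at distance at most $C$, and iterating the fellow-traveler property of $L$ along a path of length at most $C$ between these endpoints yields the synchronous estimate with constants depending only on $C$ and $k$. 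With either route supplied, your reduction finishes the proof exactly as you wrote it.
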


\begin{proof}
Let $L$ be the language of the automatic structure of $G$ and let $L^\prime$ be its prefix-closure (which in general does not have the uniqueness property). By \cite[Theorem 2.5.9]{epstein_et_al}, $L^\prime$ is also an automatic structure of $G$. Therefore $L^\prime$ has the $k$-fellow-traveling property for some $k$. This result is also valid for automatic structure which do not have the uniqueness property. This implies that the combing defined by $L$ is coherent.

To provide more details: we want a global bound on the distance between $H_m(H_n(g))$ and $H_m(g)$. Now $H_\bullet(H_n(g))$ is a word for $H_n(g)$ which is accepted by $L^\prime$, and so is $H_\bullet(g)$ (we use here that the combing line of $H_\bullet(g)$ passes through $H_n(g)$). Because $L^\prime$ has the $k$-fellow-traveling property the distance between $H_m(H_n(g))$ and $H_m(g)$ will be at most $k$.
\end{proof}

Let us turn our attention to expandingness. As it turns out, the automatic structures that we currently know are unfortunately in general not expanding. Let us start with the product automatic structure on a product of automatic groups.

\begin{example}\label{ex46rwertf}
The product automatic structure \cite[Theorem 4.1.1]{epstein_et_al} on $\IZ \times \IZ$, where we use on both $\IZ$-factors the usual automatic structure, is not expanding.

This even holds in full generality: the combing paths in the product $A \times B$ are given by first walking inside $A$ to its identity element $e_A$ and then walking inside $B$ to its identity $e_B$. So if $A$ and $B$ both contain a quasi-isometrically embedded copy of $\IZ$, then this is never expanding.
\end{example}

The following is a (non-exhaustive) list of groups known to be automatic (we give in parentheses references for the proofs of automaticity):
\begin{enumerate}
\item hyperbolic groups (see, e.g., \cite[Theorem 3.4.5]{epstein_et_al}),
\item central extensions of hyperbolic groups (Neumann--Reeves \cite{neumann_reeves}),
\item $\CAT(0)$ cube groups (Niblo--Reeves \cite[Theorem 5.3]{niblo_reeves} for torsion-free groups and {\'{S}}wi{\c{a}}tkowski \cite[Corollary 8.1]{swiatkowski} in the general case),
\item Coxeter groups (Brink--Howlett \cite{brink_howlett}),
\item systolic groups (Januszkiewicz--\'{S}wi{\c{a}}tkowski \cite[Theorem E]{janus_swia}),
\item Artin groups of finite type (Charney \cite{charney}),
\item Artin groups of almost large type (Huang--Osajda \cite{huang_osajda}),
\item groups acting geometrically and in an order preserving way on Euclidean buildings of type $\tilde A_n$, $\tilde B_n$ or $\tilde C_n$ (Noskov \cite{noskov} for the case of groups acting freely and {\'{S}}wi{\c{a}}tkowski \cite{swiatkowski} for the general case), and
\item mapping class groups (Mosher \cite{mosher}).
\end{enumerate}

The automatic structure on hyperbolic groups induces the combing discussed in Section~\ref{secnb234trz} and is hence expanding. But unfortunately, in many of the other cases this is not the case anymore:
\begin{enumerate}
\item The automatic structure on $\IZ \times \IZ$ viewed as a $\CAT(0)$ cube group in the usual way, induces the following combing paths: ``walk diagonally until you hit an axis and then walk along the axis towards the identity element.'' This is not expanding.
\item In Coxeter groups the automatic structure uses the shortlex word for each element of the group relative to a fixed ordering of the generating reflections. This is also in general not expanding. To give an example, let us consider the Coxeter group generated by reflections $a,b,c,d$ such that $ab$ and $cd$ have infinite order and $ac,ad,bc,bd$ have order two. This Coxeter group contains a free abelian subgroup generated by $ab$ and $cd$. The shortlex word for $(ab)^n (cd)^m$ for the ordering $a<b<c<d$ is $abab \ldots abcdcd \ldots cd$. So the combing paths we see are the same as in the product automatic structure of $\IZ^2$, and this is not expanding.
\item The combing paths induced by the automatic structure on $\IZ \times \IZ$ viewed as a systolic group in the usual way, is also not expanding.

Artin groups of almost large type were proven by Huang--Osajda \cite{huang_osajda} to be automatic by establishing that they are systolic. Hence their combing paths in this case are in general also not expanding.

Note that different kinds of Artin groups were shown to be automatic by different methods (e.g.,~\cite{peifer,brady_mccammond,holt_rees_1,holt_rees_biautomatic}). We have not investigated all of those and therefore can not say whether any of them have expanding combing paths.
\item If we consider the trivial central extension of a hyperbolic group by~$\IZ$, then the automatic structure constructed on it by Neumann and Reeves \cite{neumann_reeves} will be the product automatic structure, see \cite[Cor.~2.3]{neumann_reeves_2} where it is explicitly described. By Example~\ref{ex46rwertf} we know that this is not expanding.

But see also Question~\ref{question_central_extensions} whether one can equip central extensions of hyperbolic groups with coherent and expanding combings.
\end{enumerate}

\begin{rem}
Note that $\CAT(0)$ cube groups and systolic groups are examples of coarsely convex space in the sense of Fukaya--Oguni, see Section~\ref{secknjb4rermnmnmn}. Hence there exists on such groups an expanding and coherent combing. But this combing is not the one induced by the above discussed automatic structures on these groups.
\end{rem}

We have not investigated what happens in the case of Artin groups of finite type, in the case of mapping class groups, or in the case of groups acting nicely on buildings of certain types, see Question~\ref{quesnrt23er}.

We will see in later sections that expandingness is a crucial property of a combing as it implies that the combing compactification contains a lot of information about the space it compactifies.
If expandingness fails, one can still ask if there is some information stored in the combing compactification.
The following example shows that this is in general not the case.

\begin{example}\label{ex_onepointcompactification}
We have discussed above two different automatic structures on $\IZ \times \IZ$: the product automatic structure and the one arising from viewing it as a $\CAT(0)$ cube group in the usual way.
In both cases one can check that the combing compactification of the induced combings is the one-point compactification.
\end{example}

\section{Coronas and coarse (co-)homology theories}
\label{sec_coronas_and_coarse_cohomology_theories}

For the purpose of analyzing the coarse geometry, resp.~coarse (co-)homology theories of a space it is often advantageous to pass over to a coarsely equivalent topological coarse space whose topology is simple (Roe \cite[Pages~13--15]{roe_index_coarse} or Bunke--Engel \cite[Section~6.8]{buen}). A functorial way to do so is the Rips complex construction.

\begin{defn}\label{defn_Rips_complex_finite scale}
The \emph{Rips complex} $P_R(X)$ of a metric space $X$ at scale $R\geq0$ is defined as the simplicial complex where any finite collection of points of $X$ with mutual distance less than or equal to $R$ defines a simplex.
\end{defn}
It should be said that we do not distinguish between simplicial complexes and their geometric realization in this context.
We have allowed $R=0$ in the definition to include the case $P_0(X)=X$. Note that this is in general only an equality of sets and the topologies agree only if $X$ is discrete.

For every $R\geq0$ the Rips complex $P_R(X)$ carries a canonical coarse structure for which the inclusion $X\subset P_R(X)$ is a coarse equivalence. The coarse inverse to this inclusion is simply any map which maps a point of $P_R(X)$ to the vertex of a simplex containing it. With these coarse structures, the canonical inclusions $P_R(X) \subset P_S(X)$ as  closed subcomplexes are coarse equivalences for all $S \geq R \geq 0$.

If $X$ is a discrete, proper metric space, then for every $R > 0$ the simplicial complex $P_R(X)$ is locally finite and hence a locally compact Hausdorff space. If, however, $X$ is not discrete and proper, then $P_R(X)$ is locally too big for our purposes and it is more advisable to consider only the Rips complex $P_R(Z)$ of some subset $Z$ which is a so-called discretization in the following sense.

\begin{defn}
Let $X$ be a metric space. We say a subspace $Z \subset X$ is a \emph{discretization} of $X$ if it is proper and discrete, and the inclusion $Z \to X$ is a coarse equivalence.
\end{defn}

If $X$ is a proper metric space, then it admits a discretization: choose an $\varepsilon > 0$ and then any maximal, $\varepsilon$-separated subset $Z \subset X$ will do the job.

The Rips complex $P_R(X)$ is always locally contractible. If $X$ is a hyperbolic metric space and $Z\subset X$ is a discretization, then it is known that $P_R(Z)$ is even contractible for sufficiently large $R>0$ (see \cite[III.3.23]{bridson_haefliger}).

We cannot expect this strong form of contractibility in the set-up, but there are weaker results in this direction: We know from \cite[Theorem~10.6]{Wulff_CoassemblyRingHomomorphism} that if $(X,H)$ is a combed discrete proper metric space, then for each $R>0$ there is $S>0$ such that $P_R(X)$ is a contractible subspace of $P_S(X)$. Further, in Theorem~\ref{thm:RipsCompactificationContractible} we will show that if $H$ is coherent and expanding, then for each $R>0$ there is $S>0$ such that $\combcompb{P_R(X)}{H}\subset \combcompb{P_S(X)}{H}$ is contractible.

For this reason, we cannot investigate the $P_R(X)$ separately, but we have to consider the whole family $(P_R(X))_{R\geq 0}$ at once. This is best formalized by the notion of $\sigma$-locally compact spaces. This notion also provides a nice framework for introducing coarse (co-)homology theories and transgression maps. All of this is done in this Section.

\subsection{\texorpdfstring{$\sigma$}{sigma}-locally compact spaces and Rips complexes}

\begin{defn}[{cf.~\cite[Section 2]{EmeMey}, \cite[Definition 3.1]{Wulff_CoassemblyRingHomomorphism}}]
A \mbox{$\sigma$-locally} compact space $\cX$ is an increasing sequence $X_0\subset X_1\subset X_2\subset X_3\subset\ldots$ of locally compact Hausdorff spaces such that for all $m \le n$ the space $X_m$ is closed in $X_n$ and carries the subspace topology.

We call $\cX$ a \emph{$\sigma$-compact space}, if all $X_n$ are compact Hausdorff spaces.
\end{defn}

By an abuse of notation we will use the symbol $\cX$ for the set $\cX = \bigcup_{n \in \IN} X_n$ endowed with the final topology, i.\,e.~a subset $\cA \subset \cX$ is open/closed if and only if every intersection $A_n \coloneqq  \cA \cap X_n$ is open/closed.
The final topology has the property that a map from $\cX$ into another topological space is continuous if and only if it is continuous on every $X_n$.

Our main examples of $\sigma$-locally compact spaces originate from the Rips complex construction.
\begin{example}\label{ex_Rips_complex}
Let $X$ be a proper discrete metric space. Then the sequence of Rips complexes $P_0(X)\subset P_1(X)\subset P_2(X)\subset\dots$ is a $\sigma$-locally compact space denoted by $\cP(X)$. We call this space the \emph{Rips complex} of $X$.
\end{example}

Let us define what it means to $\sigma$-compactify a $\sigma$-locally compact space.

\begin{defn}
Let $\cX=\bigcup_{n\in\N}X_n$ be a $\sigma$-locally compact space. We will say that a $\sigma$-compact space $\overline{\cX}=\bigcup_{n\in\N}\overline{X_n}$ is a \emph{$\sigma$-compactification} of $\cX$ if each $\overline{X_n}$ is a compactification of $X_n$.
\end{defn}

It is straightforward to check that in this case $\cX$ is an open subspace of $\overline{\cX}$ in the sense of the following definition.

\begin{defn}
We say that $\cX=\bigcup_{n\in\N}X_n$ is a \emph{subspace} of $\cY=\bigcup_{n\in\N}Y_n$ if $\cX\subset \cY$, $\cX\cap Y_n = X_n$ and each $X_n$ carries the subspace topology of $Y_n$.
\end{defn}

\begin{example}\label{ex:Ripscomplexcoarsecompactifications}
Let $\overline{X}$ be a Higson dominated compactification of a proper metric space $X$ and denote its corona by $\partial X\coloneqq \overline{X}\setminus X$.
If $Z$ is a discretization of $X$, then $\overline{Z}\coloneqq Z\cup \partial X$ with the subspace topology of $\overline{X}$ is a Higson dominated compactification of $Z$ with the same corona $\partial X$.

Passing to the Rips complexes of $Z$, there is a canonical Higson dominated compactification $\overline{P_R(Z)}$ of $P_R(Z)$ with the same corona $\partial X$: One way to construct it is as the maximal ideal space of the sub-$C^*$-algebra of $C_h(P_R(Z))$ consisting of those functions whose restriction to $Z\subset P_R(Z)$ has a continuous extension to $\overline{Z}$.

These compactifications have the property that all the inclusions
\[X\supset Z\subset P_R(Z)\subset P_S(Z)\]
for all $0<R\leq S$ have continous extensions to the compactifications and that these extensions are the identity on the coronas $\partial X$.
In particular,
\[\overline{\cP(Z)}\coloneqq \bigcup_{R\in\N}\overline{P_R(Z)}\]
is a $\sigma$-compactification of $\cP(Z)$ with corona $\partial X\subset\overline{\cP(Z)}$.
\end{example}

\begin{example}\label{ex_sigmacombingcompactification}
As a special case of the previous example we can consider combing compactifications.

Let $(X,H)$ be a properly combed proper metric space and let $Z\subset X$ be a discretization. Then we obtain (by Remark~\ref{rem_combingdefinition}.(\ref{rem_enum_combedspacemorphisms}) and Lemma~\ref{lem_AdditionalPropertiesCoarselyInvariant}) proper combings on $Z$ and $P_R(Z)$, which we denote by the same letter $H$, and the combing coronas $\partial_HX$, $\partial_HZ$ and $\partial_H(P_R(Z))$ agree (by Corollary \ref{cor_coronafunctoriality}).

For every $0 < R \le S$ the inclusion $P_R(Z)\subset P_S(Z)$ induces a closed embedding $\combcompb{P_R(Z)}{H} \subset \combcompb{P_S(Z)}{H}$ of compact metrizable spaces. Hence we get a $\sigma$-compactification $\combcompb{\mathcal{P}(Z)}{H} \coloneqq  \bigcup_{R \in \IN}\combcompb{P_R(Z)}{H}$ with corona $\partial_HX$.
\end{example}

In order to study $\sigma$-compact and $\sigma$-locally compact spaces further, we first need to introduce adequate notions of continuous maps and homotopies between them.

\begin{defn}
Let $\cX$, $\cY$ be $\sigma$-locally compact spaces given by the filtrations $(X_m)_{m\in\N}$ and $(Y_n)_{n\in\N}$, respectively, and let $f\colon\cX\to\cY$ be a map.
\begin{itemize}
\item We call $f$ a \emph{$\sigma$-map} if for every $m\in\N$ there exists $n\in\N$ such that $f(X_m)\subset Y_n$.

Note that such an $f$ is continuous in the final topologies if and only if all the restricted maps $f|_{X_m}\colon X_m\to Y_n$ are continuous.
\item We call a continuous $\sigma$-map $f$ \emph{proper} if the preimages of all $\sigma$-compact subspaces of $\cY$ under $f$ are $\sigma$-compact subspaces of $\cX$, or equivalently, if the restricted maps $f|_{X_m}\colon X_m\to Y_n$ are proper continous maps.\qedhere
\end{itemize}
\end{defn}

\begin{example}
Let $X$, $Y$ be proper discrete metric spaces and $f\colon X\to Y$ a coarse map between them. Then affine linear extension over all simplices yields a proper continuous $\sigma$-map $\cP(f)\colon\cP(X)\to\cP(Y)$. If $f$ is compatible with proper combings $H_X$, $H_Y$ on $X$, $Y$, respectively, then $\cP(f)$ even extends to a continuous $\sigma$-map $\combcompb{\cP(X)}{H_X}\to\combcompb{\cP(Y)}{H_Y}$. These constructions are obviously functorial.
\end{example}

Given two $\sigma$-locally compact spaces $\cX$, $\cY$ with filtrations $(X_n)_{n\in\N}$ and $(Y_n)_{n\in\N}$, respectively, then $\cX\times\cY$ is also a $\sigma$-locally compact space defined by the filtration $(X_n\times Y_n)_{n\in\N}$. If both $\cX$ and $\cY$ are $\sigma$-compact, then so is $\cX\times\cY$.

The inclusions $\cX\to\cX\times\{y\}\subset \cX\times\cY$ and $\cY\to\{x\}\times \cY\subset\cX\times\cY$ as slices are $\sigma$-proper continuous $\sigma$-maps. 
If one of the two spaces, say $\cY$, is even $\sigma$-compact, then the corresponding projection map
$\cX\times\cY\to \cX$ is also a proper continuous $\sigma$-map. In particular, the latter applies when $\cY$ is simply the unit intervall $I\coloneqq [0,1]$, and hence we have a meaningful notion of homotopy of (proper) continous $\sigma$-maps.

This notion of homotopy is important in the context of the Rips complex, as the following example shows.

\begin{example}\label{ex:homotopyRipscomplex}
Let $f,g\colon X\to Y$ be two coarse maps between proper discrete metric spaces which are close to each other. Then the two proper continuous $\sigma$-maps $\cP(f),\cP(g)\colon \cP(X)\to \cP(Y)$ are canonically homotopic by affine linear interpolation.

Consequently, coarse equivalences induce homotopy equivalences. And in particular, if $Z_1$, $Z_2$ are two discretizations of the same proper (non-discrete) metric space $X$, then there is a canonical homotopy class of proper homotopy equivalences between $\cP(Z_1)$ and $\cP(Z_2)$.

If the two coarse maps $f$ and $g$ have continous extensions $\overline{f},\overline{g}\colon\overline{X}\to\overline{Y}$ between Higson dominated compactifications $\overline{X}$ of $X$ with corona $\partial X$ and $\overline{Y}$ of $Y$ with corona $\partial Y$, then $\cP(f)$ and $\cP(g)$ can be extended to continuous $\sigma$-maps between pairs of $\sigma$-compact spaces
\[\overline{\cP(f)},\overline{\cP(g)}\colon (\overline{\cP(X)},\partial X)\to(\overline{\cP(Y)},\partial Y)\,,\]
where $\overline{\cP(X)}$ and $\overline{\cP(Y)}$ are the $\sigma$-compactifications of Example \ref{ex:Ripscomplexcoarsecompactifications}.
\end{example}

\subsection{Steenrod (co-)homology theories for \texorpdfstring{$\sigma$}{sigma}-spaces}\label{sec:Steenrod}

Roe constructed in \cite{roe_coarse_cohomology} his \emph{transgression map}
\[H^p(\partial X)\to \HX^{p+1}(X)\]
for any proper metric space $X$ with Higson dominated compactification $\overline{X}$ whose corona $\partial X\coloneqq \overline{X}\setminus X$ is homeomorphic to a finite polyhedron.

In this section, we present more general transgression maps 
\[E^p(\partial X)\to \EX^{p+1}(X)\quad\text{and}\quad \EX_p(X)\to E_{p-1}(\partial X)\]
defined whenever $E_*$ (or $E^*$, resp.) is a \emph{generalized Steenrod (co-)homology theory for $\sigma$-locally compact spaces}. This notion will be introduced below in an axiomatic manner.

Also, our definition does not require the corona to be homeomorphic to a finite polyhedron. Any Higson dominated compactification will work.

After introducing our transgression maps, we will show that they are equivalent to Roe's in the case of Alexander--Spanier cohomology.

To get started, we need to introduce the following  categories of spaces.

\begin{defn}
\begin{itemize}
\item Let $\sigComp$ be the category whose objects are pairs $(\cX,\cA)$ of $\sigma$-compact spaces with $\cA\subset\cX$ a closed subspace and whose morphisms between $(\cX,\cA)$ and $(\cY,\cB)$ are continuous $\sigma$-maps $\cX\to\cY$ which restrict to maps $\cA\to\cB$.
\item The category $\sigLocComp$ has objects the $\sigma$-locally compact spaces and its morphisms between $\cX$ and $\cY$ are proper continuous $\sigma$-maps $\cU\to\cY$ where $\cU\subset\cX$ is an open subset.
\item The categories $\Comp$ and $\LocComp$ are the restrictions of the above categories to pairs of compact Hausdorff spaces and locally compact Hausdorff spaces, respectively. Equivalently, one can define them by removing all $\sigma$'s in the above two parts of the definition.\qedhere
\end{itemize}
\end{defn}

The definition of the morphisms in the categories $\LocComp$ and $\sigLocComp$ may appear a bit strange at first sight, but it turns out to be natural in the context of Steenrod (co-)homology theories, which we shall discuss in a minute.

For now, note that these morphisms of $\LocComp$ are exactly those which make $\LocComp$ equivalent to the opposite of the category of commutative $C^*$-algebras and $*$-homomorphisms via the Gelfand--Naimark theorem. So in particular, $K$-theory and $K$-homology of locally compact spaces are functors on $\LocComp$.

Similarily, the category $\sigLocComp$ is equivalent to the opposite of the category of commutative $\sigma$-$C^*$-algebras and one can define the $K$-theory of $\sigma$-locally compact spaces via Phillips' $K$-theory of $\sigma$-$C^*$-algebras (cf.~\cite{Phillips_KtheorysigmaCstar,phillips_Frechet}). This yields a contravariant $K$-theory functor defined on $\sigLocComp$.

Further, we note that there are canonical functors $\sigComp\to\sigLocComp$ and $\Comp\to\LocComp$ which map pairs $(\cX,\cA)$ of spaces to their difference $\cX\setminus\cA$ and morphisms $(\cX,\cA)\to(\cY,\cB)$ to their restrictions $\cX\setminus\cA\supset f^{-1}(\cY\setminus\cB)\to\cY\setminus\cB$.

\begin{defn}
A \emph{generalized Steenrod (co-)homology theory} $E_*$ (or $E^*$, respectively) for $\sigma$-compact spaces is a (co-)homology theory on the category $\sigComp$ which satisfies the homotopy, exactness and \emph{strong excision axiom}.
\end{defn}

Here, the strong excision axiom says that all morphisms $(\cX,\cA)\to(\cY,\cB)$ in $\sigComp$ which restrict to homeomorphisms (i.\,e.~bijective and continuous $\sigma$-maps whose inverses are also continuous $\sigma$-maps) $\cX\setminus\cA\to\cY\setminus\cB$, induce isomorphisms on the $E$-(co-)homology groups. In particular this applies for the canonical maps $(\cX,\cA)\to ((\cX\setminus\cA)^+,\{\infty\})$, where
\[(\cX\setminus\cA)^+=\bigcup_{n\in\N}(X_n\setminus A_n)^+=\bigcup_{n\in\N}(X_n\setminus A_n)\cup\{\infty\}\]
is the one-point-$\sigma$-compactification of $\cX\setminus\cA$. This construction shows that the groups $E_*(\cX,\cA)$ or $E^*(\cX,\cA)$, respectively, depend only on the difference $\cX\setminus\cA$. Even more, the functors $E_*$, $E^*$ factor through the category $\sigLocComp$ and it is easy to see that a (co-)homology theory on $\sigComp$ which satisfies the homotopy and exactness axioms is a generalized  Steenrod (co-)homology theory if and only if it factors through $\sigLocComp$.

\begin{defn}
Given a generalized Steenrod (co-)homology theory $E_*$ ($E^*$, respectively) for $\sigma$-compact spaces, we call the induced functor defined on the category $\sigLocComp$ a \emph{generalized Steenrod (co-)homology theory for $\sigma$-locally compact spaces} and denote it by $E_*^{\lf}$ ($E_c^*$, respectively).
\end{defn}

Of course, we understand a generalized Steenrod (co-)homology theory for compact Hausdorff spaces or for locally compact Hausdorff spaces to be defined in exactly the same way, just without all the $\sigma$'s.

There are some well known examples of generalized Steenrod (co-)homology theories for (locally) compact Hausdorff spaces, e.\,g.~$K$-theory, $K$-homology and Alexander--Spanier cohomology, but there exists also very general methods for constructing such theories from spectra (cf.~Carlsson--Pedersen \cite[Sections 5--7]{carlsson_pedersen}, or \cite[Theorem~A]{KahnKaminkerSchochet} although they construct such theories only on the category of compact metrizable spaces).
One can then go on and generalize them to $\sigma$-(locally) compact spaces in one of the following ways:
\begin{itemize}
\item If $E_*$ is a generalized Steenrod homology theory for compact Hausdorff spaces (e.\,g.~$K$-homology), then one can simply define
\[E_*(\cX,\cA)\coloneqq \varinjlim_{n\in\N}E_*(X_n,A_n)\,,\qquad E^{\lf}_*(\cX)\coloneqq \varinjlim_{n\in\N}E^{\lf}_*(X_n)\,,\]
respectively, where $\cX=\bigcup_{n\in\N}X_n$ and $\cA=\bigcup_{n\in\N}A_n$ are the filtrations. The axioms are readily verified. In the case of $K$-homology one usually omits the supersript $-^\lf$.
\item In the special case of $K$-theory, one can choose any $C^*$-algebra $D$ as coefficients and define $K^*(\cX;D)\coloneqq K_{-*}(C_0(\cX)\otimes D)$, where  $C_0(\cX)$ is a $\sigma$-$C^*$-algebra associated to $\cX$ and $K_*$ is $K$-theory of $\sigma$-$C^*$-algebras (cf.~\cite{Phillips_KtheorysigmaCstar,phillips_Frechet}). In the special case $D=\C$ we usually only write $K^*(\cX)\coloneqq K^*(\cX;\C)$.

These $K$-theory groups constitute a Steenrod cohomology theory for $\sigma$-locally compact spaces. In the case of $K$-theory, we omit the subscript $-_c$ just like we omit the superscript $-^\lf$ for $K$-homology.
\item If a generalized Steenrod cohomology theory $E^*$ for $\sigma$-compact spaces is given by spectra or cochain complexes, then one can simply take the derived inverse limit over the spectra or cochain complexes. In this case, the theory will satisfy a $\varprojlim^1$-sequence and the strong excision axiom can be verified using the five-lemma.
\end{itemize}

\subsection{Coarse theories and transgression maps}
\label{sec_coarse_theories_transgression_maps}

\begin{defn}
Let $X$ be a proper metric space and $E_*$ (or $E^*$, respectively) be a generalized Steenrod (co-)homology theory.

Then the \emph{coarse $E$-(co-)homology} of $X$ is defined as $\EX_*(X)\coloneqq E^{\lf}_*(\cP(Z))$ (or $\EX^*(X)\coloneqq E_c^*(\cP(Z))$, respectively), for $Z\subset X$ a discretization of $X$.
\end{defn}
By Example~\ref{ex:homotopyRipscomplex}, this definition is independent of the choice of discretization $Z$, and coarse maps $f\colon X\to Y$ induce corresponding homomorphisms $f_*\colon \EX_*(X)\to \EX_*(Y)$ and $f^*\colon \EX^*(Y)\to \EX^*(X)$, which depend only on the closeness class of $f$.

\begin{defn}\label{def:transgression}
Let $\overline{X}$ be a Higson dominated compactification of a proper metric space $X$ with corona $\partial X$ and consider a generalized Steenrod homology theory $E_*$ or a generalized Steenrod cohomology theory $E^*$. Then the associated \emph{transgression map} 
\[\EX_{p+1}(X)\to E_p(\partial X)\qquad\text{or}\qquad E^p(\partial X)\to \EX^{p+1}(X)\,,\]
respectively, is the connecting homomorphism in the long exact sequence associated to the pair $(\overline{\cP(Z)},\partial X)$, where $Z\subset X$ is a discretization and $\overline{\cP(Z)}$ is the $\sigma$-compactification constructed in Example \ref{ex:Ripscomplexcoarsecompactifications}.
\end{defn}

Exploiting functoriality under the map of pairs $(\overline{\cP(Z)},\partial X)\to (\{*\},\{*\})$, one sees that the homological version of transgression has image contained in 
\[\widetilde{E}_p(\partial X)\coloneqq \ker(E_p(\partial X)\to E_p(\{*\}))\]
and the cohomological version factors through
\[\widetilde{E}^p(\partial X)\coloneqq \operatorname{coker}(E^p(\{*\}\to E^p(\partial X))\,.\]
This leads to reduced versions of the transgression maps, which are much more relevant in the context of isomorphism conjectures. The following property is obvious:
\begin{lem}\label{lem_transgressionisoforcontractible}
If the $\sigma$-compact space $\overline{\cP(Z)}$ is contractible, then the \emph{reduced transgression maps}
\[\EX_{p+1}(X)\to \widetilde{E}_p(\partial X)\qquad\text{and}\qquad \widetilde{E}^p(\partial X)\to \EX^{p+1}(X)\]
are isomorphisms.\qed
\end{lem}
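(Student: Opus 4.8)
The plan is to obtain both isomorphisms at once from the \emph{reduced} long exact sequence of the pair $(\combcomp{\cP(Z)},\partial X)$, in which the contractibility hypothesis makes the ambient terms vanish. First I would record the identification underlying Definition~\ref{def:transgression}: by the strong excision axiom the relative groups of the pair depend only on the difference $\combcomp{\cP(Z)}\setminus\partial X=\cP(Z)$, so that
\[E_{p+1}(\combcomp{\cP(Z)},\partial X)\cong E^{\lf}_{p+1}(\cP(Z))=\EX_{p+1}(X)\,,\qquad E^{p+1}(\combcomp{\cP(Z)},\partial X)\cong E_c^{p+1}(\cP(Z))=\EX^{p+1}(X)\,,\]
and that under these identifications the connecting homomorphism of the pair is exactly the (unreduced) transgression map.

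Second, I would pass to reduced groups. Fixing a basepoint $*\in\partial X$ and applying functoriality to the collapse morphism $(\combcomp{\cP(Z)},\partial X)\to(\{*\},\{*\})$ of pairs in $\sigComp$ yields a map from the long exact sequence of the pair to that of $(\{*\},\{*\})$; since the relative terms of the latter vanish, the basepoint provides a section that splits off the $E$-theory of a point from each absolute term, producing the reduced long exact sequence
\[\cdots\to\widetilde E_{p+1}(\combcomp{\cP(Z)})\to \EX_{p+1}(X)\xrightarrow{\ \partial\ }\widetilde E_p(\partial X)\to\widetilde E_p(\combcomp{\cP(Z)})\to\cdots\]
together with its cohomological dual. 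The relative terms are unchanged here (the point contributes equally to both absolute groups and cancels), and by the discussion immediately preceding the lemma the connecting map $\partial$ is precisely the corestriction of the transgression to $\widetilde E_p(\partial X)$, i.e.\ the reduced transgression; dually the cohomological connecting map is the map $\widetilde E^p(\partial X)\to\EX^{p+1}(X)$.

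Third, I would invoke the hypothesis: contractibility of $\combcomp{\cP(Z)}$ means it is $\sigma$-homotopy equivalent to a point, so the homotopy axiom gives $\widetilde E_*(\combcomp{\cP(Z)})=0$ and $\widetilde E^*(\combcomp{\cP(Z)})=0$ in all degrees. Feeding this into the two reduced sequences, exactness forces every connecting homomorphism to be an isomorphism, which is exactly the assertion.

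The argument is entirely formal, and the only points demanding care are the two that make it non-automatic in the axiomatic $\sigma$-setting: (i) the reduced long exact sequence must be established from the homotopy and exactness axioms together with naturality under the collapse to a point — this is where one sees that no metrizability or polyhedrality of $\partial X$ is required, in contrast to Roe's original transgression; and (ii) ``contractible'' must be read in the $\sigma$-category, so that the $\sigma$-homotopy witnessing contractibility is the one fed into the homotopy axiom. The degenerate case $\partial X=\emptyset$, which would force $X$ bounded and leaves no basepoint to choose, is vacuous and may be excluded.
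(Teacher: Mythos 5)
Your proposal is correct and is exactly the argument the paper has in mind: the lemma is stated with a \qed as ``obvious'' precisely because the reduced long exact sequence of the pair $(\overline{\cP(Z)},\partial X)$ (with relative terms identified via strong excision, and the point split off via the collapse map $(\overline{\cP(Z)},\partial X)\to(\{*\},\{*\})$) degenerates under contractibility, forcing the connecting homomorphisms to be isomorphisms. Your two flagged care points (establishing the reduced sequence axiomatically, and reading contractibility in the $\sigma$-category) and the exclusion of the degenerate case $\partial X=\emptyset$ are sensible refinements of the same argument, not a different route.
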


An important tool of calculating coarse (co-)homology theories (although not in this paper, but we mention it anyway for the sake of completeness) are coarse Mayer--Vietoris sequences as introduced in \cite{higson_roe_yu}. Recall that a pair $(X,Y)$ of closed subspaces $X_1,X_2\subset X$ of a proper metric space $X$ is called \emph{coarsely excisive}, if there is a function $\rho\colon \N\to\N$ such that $\dist(x,X_1)\leq n$ and $\dist(x,X_2)\leq n$ implies $\dist(x,X_1\cap X_2)\leq \rho(n)$ for all $x\in X$ and $n\in\N$.

\begin{prop}
If $(X_1,X_2)$ is coarsely excisive, then there are coarse Mayer--Vietoris sequences
\begin{align*}
\dots &\to \EX_p(X_1\cap X_2)\to \EX_p(X_1)\oplus \EX_p(X_2)\to \EX_p(X_1\cup X_2)\to
\\&\to \EX_{p-1}(X_1\cap X_2)\to \EX_{p-1}(X_1)\oplus \EX_{p-1}(X_2) \to\dots
\\\text{and}&
\\\dots &\to \EX^p(X_1\cup X_2)\to \EX^p(X_1)\oplus \EX^p(X_2)\to \EX^p(X_1\cap X_2)\to
\\&\to \EX^{p+1}(X_1\cup X_2)\to \EX^{p+1}(X_1)\oplus \EX^{p+1}(X_2) \to\dots
\end{align*}
for any coarse homology theory $\EX_*$ and any coarse cohomology theory $\EX^*$, respectively.
\end{prop}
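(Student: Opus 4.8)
The plan is to exhibit the coarse Mayer--Vietoris sequence as the Mayer--Vietoris sequence of a closed decomposition of a single $\sigma$-locally compact Rips complex, and then to read it off from the long exact sequences of pairs together with strong excision. We may replace the ambient space by $W:=X_1\cup X_2$, which is itself a proper metric space, and fix a discretization $Z\subset W$; then $\EX_*(W)=E^{\lf}_*(\cP(Z))$ and $\EX^*(W)=E_c^*(\cP(Z))$. Writing $N_n(\cdot)$ for the closed $n$-penumbra in $W$, I would set $V_n:=Z\cap N_n(X_1)$ and $V'_n:=Z\cap N_n(X_2)$, and let $\cA_n,\cB_n\subset P_n(Z)$ be the full subcomplexes spanned by $V_n$ and $V'_n$, respectively (so that $\cA_n=P_n(V_n)$ and $\cB_n=P_n(V'_n)$). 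These assemble to closed $\sigma$-locally compact subspaces $\cA,\cB\subset\cP(Z)$ sharing the scale filtration of $\cP(Z)$.

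The first thing to verify is that $\cA\cup\cB=\cP(Z)$ on the nose. If a simplex of $P_n(Z)$ is not contained in $\cA_n$, then one of its vertices $v$ satisfies $\dist(v,X_1)>n$; since $v\in W=X_1\cup X_2$ this forces $v\in X_2$, and as all vertices of the simplex are pairwise at distance at most $n$, every vertex then lies in $N_n(X_2)$, so the simplex lies in $\cB_n$. Dually, $\cA_n\cap\cB_n$ is the full subcomplex on $Z\cap N_n(X_1)\cap N_n(X_2)$.

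Next I would identify the three subspaces coarsely. For each $n$ the vertex set $V_n$ is $\varepsilon$-dense in $X_1$ and contained in its $n$-penumbra, hence coarsely equivalent to $X_1$, compatibly with the inclusions $V_n\subset V_{n+1}$; using affine interpolation along nearest-point maps as in Example~\ref{ex:homotopyRipscomplex}, $\cA$ is properly homotopy equivalent to $\cP(Z_1)$ for a discretization $Z_1$ of $X_1$, so that $E^{\lf}_*(\cA)=\EX_*(X_1)$ and $E_c^*(\cA)=\EX^*(X_1)$, and likewise for $\cB$ and $X_2$. The decisive point is the intersection, and this is exactly where coarse excisiveness enters: the defining function $\rho$ yields
\[ Z\cap(X_1\cap X_2)\ \subset\ Z\cap N_n(X_1)\cap N_n(X_2)\ \subset\ Z\cap N_{\rho(n)}(X_1\cap X_2)\,, \]
so the vertex sets of $\cA_n\cap\cB_n$ are squeezed between $X_1\cap X_2$ and its $\rho(n)$-penumbra and are therefore coarsely equivalent to $X_1\cap X_2$. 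Consequently $E^{\lf}_*(\cA\cap\cB)=\EX_*(X_1\cap X_2)$, and dually in cohomology.

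Finally I would assemble the sequences formally. In $\sigLocComp$ the relative groups of a closed subspace are the absolute groups of the open complement, and since $\cA\cup\cB=\cP(Z)$ the open differences coincide, $\cP(Z)\setminus\cB=\cA\setminus(\cA\cap\cB)$, with identical topology; hence strong excision makes the comparison $E^{\lf}_*(\cA,\cA\cap\cB)\to E^{\lf}_*(\cP(Z),\cB)$ the identity. Feeding this into the standard Barratt--Whitehead argument comparing the long exact sequences of the pairs $(\cA,\cA\cap\cB)$ and $(\cP(Z),\cB)$ produces the homological Mayer--Vietoris sequence, and the dual argument, with all arrows reversed, produces the cohomological one; substituting the identifications above gives the two displayed sequences. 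I expect the main obstacle to be the coarse identifications of the previous paragraph---especially that $\cA\cap\cB$ computes $\EX_*(X_1\cap X_2)$, which is the one step genuinely using coarse excisiveness, together with the homotopy invariance needed to handle penumbra vertex sets that grow with the scale. Once these are in place, the Mayer--Vietoris assembly is purely formal.
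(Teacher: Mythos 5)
Your decomposition is genuinely different from the paper's, and the geometric observations driving it --- that the scale-growing penumbra subcomplexes satisfy $\cA_n\cup\cB_n=P_n(Z)$ on the nose, and that coarse excisiveness squeezes the vertex set of $\cA_n\cap\cB_n$ into the $\rho(n)$-penumbra of $X_1\cap X_2$ --- are both correct. The gap is in the sentence ``these assemble to closed $\sigma$-locally compact subspaces $\cA,\cB\subset\cP(Z)$'': they do not, in the sense the paper defines and in the sense the formal Mayer--Vietoris input requires. A subspace $\cX\subset\cY$ of $\sigma$-spaces must satisfy $\cX\cap Y_n=X_n$, and precisely because your vertex sets $V_n=Z\cap N_n(X_1)$ grow with the scale this fails: since every point of $Z\subset X_1\cup X_2$ lies at finite distance from $X_1$ (assuming $X_1\neq\emptyset$), every simplex of every $P_n(Z)$ lies in $\cA_m=P_m(V_m)$ once $m$ is large, so as \emph{subsets} of $\cP(Z)$ one has $\cA=\cB=\cA\cap\cB=\cP(Z)$; all of the content is in the filtrations, and these are not the induced ones. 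The same failure propagates to the pairs you need: $(\cA\cap\cB)\cap\cA_n$ is in general strictly larger than $\cA_n\cap\cB_n$ (take a vertex $z$ with $\dist(z,X_1)\leq n$ but $n<\dist(z,X_2)<\infty$). Hence $(\cA,\cA\cap\cB)$ and $(\cP(Z),\cB)$ are not pairs in $\sigComp$ or $\sigLocComp$, and the step you call purely formal --- long exact sequences of these pairs plus strong excision --- is not available: the proposition is asserted for \emph{every} theory satisfying the paper's axioms, so the proof must run on those axioms, and your objects simply do not belong to the category on which the axioms are stated.

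The gap can be repaired, but the repair is where the real work sits, and it pushes you toward one of two options. Either you enlarge the framework to filtered systems of level-wise closed pairs (dropping the induced-filtration requirement) and re-derive the long exact sequence, strong excision and the Mayer--Vietoris assembly there: for homology, defined as $\varinjlim_n E_*(X_n)$, this is routine (level-wise Mayer--Vietoris for the honest closed decompositions $\cA_n\cup\cB_n=P_n(Z)$ of locally compact spaces, then pass to the exact colimit), but for the cohomology theories, which are built from inverse limits of cochain complexes or $\sigma$-$C^*$-algebras and only satisfy $\varprojlim^1$-sequences, limits of level-wise Mayer--Vietoris sequences are not exact and the argument must be redone at the cochain or spectrum level. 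Or you keep scale-independent fattenings $Z\cap N_\varepsilon(X_i)$, which \emph{are} honest closed subspaces --- scale-independence of the vertex set is exactly what makes the induced-filtration condition hold --- but then the union is no longer all of $\cP(Z)$, and you must show that the inclusion of the union is a proper $\sigma$-homotopy equivalence. The latter is the paper's route: it takes $\cA=\cP(X_1)$ and $\cB=\cP(X_2)$ inside $\cP(X_1\cup X_2)$ (with $X_1,X_2$ discrete without loss of generality), gets $\cA\cap\cB=\cP(X_1\cap X_2)$ exactly, and uses the excisiveness function $\rho$ to construct an explicit proper $\sigma$-homotopy inverse $\tau$ to the inclusion $\cP(X_1)\cup\cP(X_2)\subset\cP(X_1\cup X_2)$. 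So your use of excisiveness (to identify the intersection) and the paper's (to handle the union) are dual to one another, but only the paper's pieces live in the category where the formal argument applies. A smaller instance of the same issue: your appeal to Example~\ref{ex:homotopyRipscomplex} to identify $\cA$ with $\cP(Z_1)$ is not literal, since that example concerns close coarse maps, whereas your nearest-point projections displace points by distances growing with $n$; the affine-interpolation technique does adapt, but again only after the scale-dependent objects have been given a rigorous home.
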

\begin{proof}
If $\cA$ and $\cB$ are closed subsets of a $\sigma$-locally compact space, then a standard argument from algebraic topology shows that the strong excision property and the long exact sequences of a generalized Steenrod homology theory yield a Mayer-Vietoris sequence
\begin{align*}
\dots &\to E^{\lf}_p(\cA\cap\cB)\to E^{\lf}_p(\cA)\oplus E^{\lf}_p(\cB)\to E^{\lf}_p(\cA\cup \cB)\to
\\&\to E^{\lf}_{p-1}(\cA\cap\cB)\to E^{\lf}_{p-1}(\cA)\oplus E^{\lf}_{p-1}(\cB) \to\dots
\end{align*}
and similarily for cohomology.

We may assume without loss of generality that $X_1$ and $X_2$ are discrete and we apply the Mayer--Vietoris sequence for $\cA=\cP(X_1)$ and $\cB=\cP(X_2)$. We have the equality $\cP(X_1)\cap \cP(X_2)=\cP(X_1\cap X_2)$, but only an includion $\iota\colon \cP(X_1)\cup \cP(X_2)\subset \cP(X_1\cup X_2)$. It remains to show that the latter inclusion is a homotopy equivalence.

The coarse excisiveness allows us to define a map $\pi\colon X_1\cup X_2\to X_1\cap X_2$ with the properties that $\pi$ is the identity on $X_1\cap X_2$ and $d(\pi(x),x)\leq\rho(n)$ whenever $x\in X_{1,2}\setminus X_{2,1}$ satisfies $\dist(x,X_{2,1})\leq n$. 

We can now define a homotopy inverse $\tau\colon \cP(X_1\cup X_2)\to \cP(X_1)\cup \cP(X_2)$ as follows: An arbitrary point $x\in\cP(X_1\cup X_2)$ can be written as an affine linear combination
\[x=\sum_{i=1}^Ir_ix_i+\sum_{j=1}^Js_jy_j+\sum_{k=1}^Kt_kz_k\]
with $r_i,s_j,t_k\in [0,1]$ and $x_i\in X_1\setminus X_2$, $y_j\in X_1\cap X_2$, $z_k\in X_2\setminus X_1$ for all $i=1,\dots,I$, $j=1,\dots,J$ and $k=1,\dots,K$  and 
\[1=\sum_{i=1}^Ir_i+\sum_{j=1}^Js_j+\sum_{k=1}^Kt_k\,.\]
With this notation, we can define the two continuous functions
\[R,T\colon \cP(X_1\cup X_2)\to[0,1]\,,\quad R(x)\coloneqq \sum_{i=1}^Ir_i\,,\quad T(x)\coloneqq \sum_{k=1}^Kt_i\]
and the homotopy inverse $\tau$ by
\[\tau(x)\coloneqq \begin{cases}
\sum_{i=1}^I\frac{r_i}{R(x)}((R(x)-T(x))x_i+T(x)\pi(x_i)) &
\\\qquad+\sum_{j=1}^Js_jy_j+\sum_{k=1}^Kt_k\pi(z_k) & \text{if } R(x)\geq T(x),
\\\sum_{i=1}^Ir_i\pi(x_i)+\sum_{j=1}^Js_jy_j &
\\\qquad+\sum_{k=1}^K\frac{t_k}{T(x)}((T(x)-R(x))z_k+R(x)\pi(z_k)) & \text{if } T(x)\geq R(x).
\end{cases}\]
Note that these maps $R,T,\tau$ are well defined and continuous, the image of $\tau$ is contained in $\cP(X_1)\cup\cP(X_2)$ and we trivially have $\rho\circ\iota=\id$. Furthermore, $x\in P_n(X_1\cup X_2)$ implies $\tau(x)\in P_{n+2\rho(n)}(X_1\cup X_2)$, so $\tau$ is a $\sigma$-map. Finally, $\iota\circ\tau$ is homotopic to the identity via the affine linear homotopy.
\end{proof}

\subsection{Roe's coarse cohomology}
\label{sec_roes_coarse_cohomology}

In this section we compare our transgression map with Roe's. Here is the definition of Roe's coarse cohomology, which we have reformulated for arbitrary coarse spaces instead of just metric spaces and arbitrary coefficients $M$ instead of just $\R$:

\begin{defn}
Let $X$ be a coarse space and $M$ an abelian group.
\begin{itemize}
\item A subset $Q\subset X^{q+1}$ is called \emph{multicontrolled} if every two of the coordinate projections $Q\to X$ are close to each other
\item Denote by $C\!X^*(X;M)$ the cochain complex whose groups $C\!X^q(X;M)$ consist of all those functions
\[\phi\colon X^{q+1}\to M\]
whose support intersects each multicontrolled subset $Q\subset X^{q+1}$ in a bounded subset, and with coboundary maps defined by 
\begin{equation}\label{eq:coboundaryformula}
\delta\phi(x_0,\dots,x_{q+1})\coloneqq \sum_{i=0}^{q+1}(-1)^i\phi(x_0,\dots,\widehat{x_i},\dots,x_{q+1})\,.
\end{equation}
\item The \emph{coarse cohomology} $\HX^*(X;M)$ of $X$ with coefficients in $M$ is the cohomology of the cochain complex $C\!X^*(X;M)$.\qedhere
\end{itemize}
\end{defn}

This definition of coarse cohomology is modeled after the definition of Alexander--Spanier cohomology with compact support, which we review next.

\begin{defn}
Let $X$ be a locally compact Hausdorff space and let $M$ be an abelian group.
\begin{itemize}
\item A function $\phi\colon X^{q+1}\to M$ is called \emph{locally zero} if it vanishes on a neighborhood of the multidiagonal. It is called locally zero outside of a compact subset $K$ if the restriction $\phi|_{(X\setminus K)^{q+1}}$ is locally zero.
\item Denote by $C\!A_c^*(X;M)$ the cochain complex whose $q$-th group $C\!A_c^q(X;M)$ is the quotient of the group of all functions $\phi\colon X^{q+1}\to M$ which are locally zero outside of a compact subset, modulo the group of all locally zero functions. The coboundary map of this complex is given by the same formula \eqref{eq:coboundaryformula}.
\item The \emph{Alexander--Spanier cohomology with compact support} of $X$ with coefficients in $M$ is the cohomology $\HA_c^*(X;M)$ of the cochain complex $C\!A_c^*(X;M)$.
\item If $X$ is compact and $A\subset X$ is closed, we define the \emph{Alexander--Spanier cohomology of the pair $(X,A)$} with coefficients in $M$ as the cohomology of the cochain complex
\[C\!A^*(X,A;M)\coloneqq \ker\left(C\!A_c^*(X;M)\xrightarrow{\text{restriction}} C\!A_c^*(A;M)\right)\,.\qedhere\]
\end{itemize}
\end{defn}

It is known that $\HA^*(-,-;M)$ is a generalized Steenrod cohomology theory for compact Hausdorff spaces and $\HA_c^*(-;M)$ is a generalized Steenrod cohomology theory for locally compact Hausdorff spaces. Indeed, for any pair of compact Hausdorff spaces $(X,A)$ there is a natural cochain map $\operatorname{exc}\colon C\!A_c^*(X\setminus A;M)\to C\!A^*(X,A;M)$ which induces a natural isomorphism $\HA_c^*(X\setminus A;M)\xrightarrow{\cong}\HA^*(X,A;M)$. 

This cochain map can be defined as follows: any cochain in $C\!A_c^q(X\setminus A;M)$ can be represented by functions $\phi\colon (X\setminus A)^{q+1}\to M$ which are not only locally zero outside of a compact subset $K\subset X\setminus A$, but whose extensions $\Phi\colon X^{q+1}\to M$ by zero are even locally zero outside of the compact subset $X\setminus K$. 
Note that this property does not hold for any representative, as the neighborhood of the multidiagonal in $(X\setminus (A\cup K))^{q+1}$, on which $\phi$ vanishes, may become arbitrary thin when approaching $A$. The cochain map is then defined by mapping $[\phi]$ to $[\Phi]$ for these special type of representatives $\phi$, and the additional assumption on the representatives ensures that we obtain a well-defined cochain map.

We refer to \cite[Section 6.6]{spanier} for details on the proof of the property of strong excision.

Now, for any topological coarse space $X$ the support condition on the cochains in $C\!X^q(X)$ are such that there is an obvious cochain map
\[C\!X^*(X;M)\to C\!A_c^*(X;M)\,.\]
\begin{defn}
For any topological coarse space $X$ the \emph{character map}
\[\HX^*(X;M)\to \HA_c^*(X;M)\]
is the homomorphism induced by the above cochain map.
\end{defn}

Roe showed that the character map is an isomorphism for some spaces, for example for uniformly contractible spaces. In the following we are going to show that it is always an isomorphism for the full Rips complex $\cP(Z)$ of a proper, discrete metric space $Z$. Of course, we have to generalize the above notions to $\sigma$-spaces first.

\begin{defn}
Let $Z$ be a proper discrete metric space and $M$ an abelian group. Then we define $\HX^*(\cP(Z);M)$ to be the cohomology of the cochain complex
\[C\!X^*(\cP(Z);M)\coloneqq \varprojlim_{R\in\N}C\!X^*(P_R(Z);M)\,.\qedhere\]
\end{defn}
\begin{lem}\label{lem_HXHPX}
Let $X$ be a proper metric space and $Z\subset X$ a discretization. Then $\HX^*(X;M)$ is canonically isomorphic to $\HX^*(\cP(Z);M)$.
\end{lem}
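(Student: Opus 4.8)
The plan is to factor the desired isomorphism through the vertex set $Z$ and through the individual Rips complexes $P_R(Z)$, using that Roe's coarse cohomology is a coarse invariant together with a Milnor-type $\varprojlim^1$-argument to pass from the tower of cochain complexes to its inverse limit.

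First I would record the two coarse-invariance facts. Since $Z\subset X$ is a coarse equivalence, pullback induces a canonical isomorphism $\HX^*(X;M)\xrightarrow{\cong}\HX^*(Z;M)$. Likewise, for every $R\geq 0$ the inclusion $Z=P_0(Z)\subset P_R(Z)$ is a coarse equivalence, so restriction of cochains induces canonical isomorphisms $\HX^*(P_R(Z);M)\xrightarrow{\cong}\HX^*(Z;M)$; by functoriality these are compatible with the restriction maps $\HX^*(P_S(Z);M)\to\HX^*(P_R(Z);M)$ coming from $P_R(Z)\subset P_S(Z)$ for $R\leq S$. Hence the inverse system $\big(\HX^q(P_R(Z);M)\big)_{R\in\N}$ consists entirely of isomorphisms, so it is pro-constant with value $\HX^q(Z;M)$; in particular it is Mittag--Leffler in every degree, whence $\varprojlim^1_R\HX^{q}(P_R(Z);M)=0$ for all $q$ and $\varprojlim_R\HX^q(P_R(Z);M)\cong\HX^q(Z;M)$.

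Next I would analyse the tower of cochain complexes $\big(C\!X^*(P_R(Z);M)\big)_{R\in\N}$ whose inverse limit defines $\HX^*(\cP(Z);M)$. The transition map $C\!X^q(P_S(Z);M)\to C\!X^q(P_R(Z);M)$ induced by $P_R(Z)\subset P_S(Z)$ is restriction of functions along $(P_R(Z))^{q+1}\subset (P_S(Z))^{q+1}$. I claim it is surjective: given $\phi\in C\!X^q(P_R(Z);M)$ one extends it by zero to $(P_S(Z))^{q+1}$, and the extension is again a valid coarse cochain because the intersection of any multicontrolled subset $Q\subset (P_S(Z))^{q+1}$ with $(P_R(Z))^{q+1}$ is multicontrolled in $(P_R(Z))^{q+1}$, so the support of the extension meets $Q$ in the bounded set $\supp\phi\cap\big(Q\cap(P_R(Z))^{q+1}\big)$. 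Degreewise surjectivity of the transition maps forces $\varprojlim^1_R C\!X^q(P_R(Z);M)=0$ for all $q$, so the standard Milnor sequence for a tower of cochain complexes of abelian groups yields a short exact sequence
\[0\to\varprojlim_R{}^{1}\,\HX^{q-1}(P_R(Z);M)\to\HX^q(\cP(Z);M)\to\varprojlim_R\HX^q(P_R(Z);M)\to 0\,.\]

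Combining the two steps finishes the proof: the $\varprojlim^1$-term on the left vanishes by the first step, and the right-hand term is $\HX^q(Z;M)$, so $\HX^q(\cP(Z);M)\cong\HX^q(Z;M)\cong\HX^q(X;M)$; tracing through the restriction and pullback maps shows that the composite isomorphism is canonical. I expect the main obstacle to be the verification that extension by zero stays inside $C\!X^q$, i.e.\ that it preserves the multicontrolled-support condition across the coarse equivalence $P_R(Z)\subset P_S(Z)$; once degreewise surjectivity is secured, the homological-algebra input (the Milnor sequence and the pro-constancy of the cohomology tower) is routine.
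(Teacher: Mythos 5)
Your proof is correct and follows essentially the same route as the paper's: the paper likewise deduces the lemma from the $\varprojlim^1$-sequence for $\HX^*(\cP(Z);M)$ together with the fact that all inclusions $X\supset Z\subset P_R(Z)\subset P_S(Z)$ induce isomorphisms in coarse cohomology, so the tower is pro-constant and the $\varprojlim^1$-term vanishes. The only difference is that you additionally verify the hypothesis behind the Milnor sequence (degreewise surjectivity of the cochain restriction maps via extension by zero), a step the paper takes as given.
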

\begin{proof}
This follows from the $\varprojlim^1$-sequence
\[0\to {\varprojlim_{R\in\N}}^1\HX^{p-1}(P_R(Z);M)\to \HX^p(\cP(Z);M)\to \varprojlim_{R\in\N}\HX^p(P_R(Z);M)\to 0\]
and the fact that all the inclusions
$X\supset Z\subset P_R(Z)\subset P_S(Z)$ for $0\leq R\leq S$ induce isomorphsism in coarse cohomology.
\end{proof}

\begin{defn}
For a $\sigma$-locally compact space $\cX=\bigcup_{n\in\N}X_n$ and an abelian group $M$ we define $\HA_c^*(\cX;M)$ to be the cohomology of the cochain complex
\[C\!A_c^*(\cX;M)\coloneqq \varprojlim_{n\in\N}C\!A_c^*(X_n;M)\,.\]
If $\cX$ is even $\sigma$-compact and $\cA=\bigcup_{n\in\N}A_n\subset\cX$ is closed, then we define $\HA^*(\cX,\cA;M)$ to be the cohomology of the cochain complex
\[C\!A^*(\cX,\cA;M)\coloneqq \varprojlim_{n\in\N}C\!A^*(X_n,A_n;M)\,.\qedhere\]
\end{defn}

\begin{lem}
The cohomology groups $\HA^*(\cX,\cA;M)$ and $\HA_c^*(\cX;M)$ defined above constitute generalized Steenrod cohomology theories for $\sigma$-compact spaces and $\sigma$-locally compact spaces, respectively.
\end{lem}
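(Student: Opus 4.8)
The plan is to deduce the three axioms (homotopy, exactness, strong excision) for the $\sigma$-versions from the already-known properties of Alexander--Spanier cohomology at the finite levels $(X_n,A_n)$, exploiting that both cochain complexes are \emph{inverse limits}, $\CA^*(\cX,\cA;M)=\varprojlim_n\CA^*(X_n,A_n;M)$ and $\CA_c^*(\cX;M)=\varprojlim_n\CA_c^*(X_n;M)$. The key structural input is a \emph{Mittag--Leffler} property: for $m\le n$ the restriction maps $\CA_c^q(X_n;M)\to\CA_c^q(X_m;M)$, and their pair analogues, are surjective. I would verify this by extension by zero: a cochain on $X_m$ that is locally zero outside a compact set $K$ extends by $0$ to $X_n^{q+1}$ so as to remain locally zero outside the same $K$, because $X_m$ is closed in $X_n$ and $X_n\setminus X_m$ is open. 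Consequently every inverse system of cochain \emph{groups} appearing below has vanishing ${\varprojlim_n}^1$, so $\varprojlim_n$ is exact on it and the Milnor sequence
\[0\to {\varprojlim_n}^1 \HA^{p-1}(X_n,A_n;M)\to \HA^p(\cX,\cA;M)\to \varprojlim_n\HA^p(X_n,A_n;M)\to 0\]
is available (and likewise for $\HA_c^*$).

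For the \textbf{exactness axiom} I would apply $\varprojlim_n$ to the level-wise short exact sequences of cochain complexes
\[0\to\CA^*(X_n,A_n;M)\to\CA_c^*(X_n;M)\to\CA_c^*(A_n;M)\to 0.\]
Since the kernel systems have surjective transition maps, $\varprojlim_n$ keeps the sequence exact, yielding $0\to\CA^*(\cX,\cA;M)\to\CA_c^*(\cX;M)\to\CA_c^*(\cA;M)\to 0$, whose long exact cohomology sequence is exactly the exact sequence of the pair. For \textbf{strong excision} I would assemble the natural level-wise excision cochain maps $\operatorname{exc}_n\colon \CA_c^*(X_n\setminus A_n;M)\to\CA^*(X_n,A_n;M)$ into a cochain map of the inverse limits. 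Each $\operatorname{exc}_n$ is an isomorphism on cohomology, so by functoriality of $\varprojlim_n$ and ${\varprojlim_n}^1$ together with the five lemma applied to the two Milnor sequences, the assembled map induces an isomorphism $\HA_c^*(\cX\setminus\cA;M)\cong\HA^*(\cX,\cA;M)$. This exhibits $\HA^*(-,-;M)$ as factoring through $\sigLocComp$, where it coincides with the functor $\HA_c^*$ defined by the analogous $\varprojlim$; by the criterion recorded earlier in the section, factoring through $\sigLocComp$ is equivalent to strong excision.

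The main obstacle is the \textbf{homotopy axiom}, and here the five lemma does \emph{not} suffice. Two $\sigma$-homotopic morphisms agree on cohomology at every finite level, which forces their induced maps to agree on both $\varprojlim_n\HA^p$ and ${\varprojlim_n}^1\HA^{p-1}$; but the Milnor sequence still leaves room for a ``phantom'' difference, namely a map $\varprojlim_n\HA^p(Y_n)\to{\varprojlim_n}^1\HA^{p-1}(X_n)$, which equality on the outer terms cannot exclude. To rule this out one must produce a chain homotopy on the \emph{limit} complex itself. I would do this by invoking naturality of the Alexander--Spanier prism operator: the finite-level chain homotopies witnessing homotopy invariance (cf.~\cite{spanier}) commute with the restriction maps $X_m\hookrightarrow X_n$, and hence, after passing to a cofinal monotone reindexing on the target, assemble into a single chain homotopy between the two induced cochain endomorphisms of the limit complex $\CA^*(\cX,\cA;M)$, respectively $\CA_c^*(\cX;M)$. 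Homotopy invariance on cohomology then follows directly, completing the verification of all three axioms.
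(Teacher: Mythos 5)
Your Mittag--Leffler setup, the exactness axiom, and strong excision are all correct, and your excision argument is essentially the paper's own proof (comparison of the two Milnor sequences plus the five lemma); your explicit verification that restriction to a closed subspace is surjective on compactly supported Alexander--Spanier cochains is the point the paper leaves implicit, and it is exactly what makes those Milnor sequences exact. The genuine gap is in your treatment of the homotopy axiom. There is no natural chain homotopy (``prism operator'') on Alexander--Spanier cochain complexes for you to assemble: the algebraic prism formula $(D\phi)(x_0,\dots,x_q)=\sum_{i}(-1)^i\phi\bigl(f(x_0),\dots,f(x_i),g(x_i),\dots,g(x_q)\bigr)$ does satisfy $\delta D+D\delta=g^\sharp-f^\sharp$ on the level of arbitrary functions, but it does \emph{not} descend to the quotient by locally zero functions: a function vanishing near the multidiagonal of $Y^{q+2}$ is being evaluated at tuples of the form $(f(x),\dots,f(x),g(x),\dots,g(x))$, which lie nowhere near the multidiagonal unless $f(x)$ and $g(x)$ happen to be close. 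This is precisely why the classical proof of homotopy invariance for Alexander--Spanier cohomology subdivides $[0,1]$ into steps whose fineness depends on the neighborhood of the diagonal on which the \emph{given} cochain vanishes; it yields equality of induced maps on cohomology class by class, not a chain homotopy operator defined on the whole cochain complex, and a fortiori not one compatible with the restriction maps of the tower. So the naturality you invoke is not available, and the assembly into a chain homotopy on $\varprojlim_n\CA^*(X_n,A_n;M)$ does not go through.

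The fix is cheap, and it explains why the paper may call this step straightforward: do not compare $f^*$ and $g^*$ directly (there, as you rightly observe, a phantom map $\varprojlim_n\HA^p(Y_n,B_n;M)\to{\varprojlim_n}^1\HA^{p-1}(X_n,A_n;M)$ obstructs the five lemma), but instead apply the five lemma to the projection $\pi\colon(\cX\times[0,1],\cA\times[0,1])\to(\cX,\cA)$. Each $\pi_n^*\colon\HA^p(X_n,A_n;M)\to\HA^p(X_n\times[0,1],A_n\times[0,1];M)$ is an isomorphism by homotopy invariance for compact Hausdorff pairs, these maps commute with the transition maps of the towers, hence induce isomorphisms on $\varprojlim_n$ and ${\varprojlim_n}^1$, and your Milnor sequences together with the five lemma show that $\pi^*$ is an isomorphism on the limit cohomology. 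Since $i_0^*\circ\pi^*=i_1^*\circ\pi^*=\id$ for the two slice inclusions $i_0,i_1\colon(\cX,\cA)\to(\cX\times[0,1],\cA\times[0,1])$, we get $i_0^*=(\pi^*)^{-1}=i_1^*$, and therefore $f^*=i_0^*\circ h^*=i_1^*\circ h^*=g^*$ for any homotopy $h$. The five lemma cannot prove that two given maps agree, but it can prove that a single map is an isomorphism, and that is all the cylinder trick needs.
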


\begin{proof}
Functoriality, the homotopy axiom and exactness of $\HA^*(-,-;M)$ are straightforward. For the strong excision property we note that there is a natural diagram with exact rows
\begin{align*}
\mathclap{\xymatrix{
0\ar[r]&{\varprojlim\limits_{n\in\N}}^1\HA_c^{p-1}(X_n\setminus A_n;M)\ar[r]\ar[d]^-{\cong}& \HA_c^p(\cX\setminus\cA;M)\ar[r]\ar[d]& \varprojlim\limits_{n\in\N}\HA_c^p(X_n\setminus A_n;M)\ar[r]\ar[d]^-{\cong}& 0
\\0\ar[r]&{\varprojlim\limits_{n\in\N}}^1\HA^{p-1}(X_n,A_n;M)\ar[r]& \HA^p(\cX,\cA;M)\ar[r]& \varprojlim\limits_{n\in\N}\HA^p(X_n,A_n;M)\ar[r]& 0
}}
\end{align*}
and the claim follows from the five-lemma.
\end{proof}

\begin{lem}\label{lem:charactermapisomorphism}
Let $Z$ be a proper discrete metric space and $M$ any abelian group.
Then the obvious character map $\HX^*(\cP(Z);M)\to \HA_c^*(\cP(Z);M)$ is an isomorphism.

Consequently, for any proper metric space $X$ there is a canonical isomorphism $\HX^*(X;M)\cong H\!A\!X^*(X;M)$.
\end{lem}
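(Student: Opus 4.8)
The consequence will follow formally: by Lemma~\ref{lem_HXHPX} the left-hand side is Roe's coarse cohomology $\HX^*(X;M)$, while by definition $H\!A\!X^*(X;M)=\HA_c^*(\cP(Z);M)$, so it suffices to establish the first assertion. The plan is to compare the two towers of cochain complexes $\{C\!X^*(P_R(Z);M)\}_R$ and $\{C\!A_c^*(P_R(Z);M)\}_R$ over the scale $R$. First I would record that in both towers the restriction maps induced by the inclusions $P_R(Z)\subset P_S(Z)$ are surjective in every cochain degree (extend a cochain by zero outside $P_R(Z)^{q+1}$; the support condition, resp.\ the local-vanishing-outside-a-compact-set condition, is preserved because $P_R(Z)$ is closed and the inclusion is a coarse equivalence). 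Hence both towers are Mittag--Leffler at the cochain level and give rise to Milnor $\varprojlim^1$-sequences computing $\HX^*(\cP(Z);M)$ and $\HA_c^*(\cP(Z);M)$, and the character map is a morphism between these two sequences.

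On the coarse side the situation degenerates: since each inclusion $P_R(Z)\subset P_S(Z)$ is a coarse equivalence, the transition maps $\HX^p(P_S(Z);M)\to\HX^p(P_R(Z);M)$ are isomorphisms, so ${\varprojlim_R}^1\HX^p=0$ and $\HX^p(\cP(Z);M)\cong\varprojlim_R\HX^p(P_R(Z);M)$ (this is exactly the mechanism of Lemma~\ref{lem_HXHPX}). Applying the five-lemma to the ladder of Milnor sequences therefore reduces the statement to the two assertions that the character map induces an isomorphism $\varprojlim_R\HX^p(P_R(Z);M)\to\varprojlim_R\HA_c^p(P_R(Z);M)$ and that ${\varprojlim_R}^1\HA_c^p(P_R(Z);M)=0$ for all $p$. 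Both of these follow at once once one knows that the character map is a \emph{pro-isomorphism} of towers $\{\HX^*(P_R(Z);M)\}_R\to\{\HA_c^*(P_R(Z);M)\}_R$, since a pro-isomorphism induces isomorphisms on $\varprojlim$ and on ${\varprojlim}^1$, and the coarse ${\varprojlim}^1$ already vanishes.

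It is worth emphasising that the individual maps $\HX^*(P_R(Z);M)\to\HA_c^*(P_R(Z);M)$ are \emph{not} isomorphisms: coarse cohomology is a coarse invariant, so $\HX^*(P_R(Z);M)\cong\HX^*(Z;M)$ for every $R$, whereas $\HA_c^*(P_R(Z);M)$ is the genuine compactly supported cohomology of the complex $P_R(Z)$ and depends heavily on $R$ (for instance it vanishes above the dimension of $P_R(Z)$). The sought isomorphism is thus a purely pro-categorical phenomenon: as $R$ grows, the additional simplices of $P_R(Z)$ fill in the homotopies that a fixed scale is missing. To make this precise I would verify the interleaving criterion for a pro-isomorphism, i.e.\ construct for each $R$ a scale $S=S(R)\ge R$ together with a homomorphism $\psi_R\colon\HA_c^*(P_S(Z);M)\to\HX^*(P_R(Z);M)$ whose composites with the character maps recover the transition homomorphisms of the two towers. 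The construction of $\psi_R$ is the heart of the matter and is the expected main obstacle: it amounts to the $\sigma$-analogue of Roe's proof that the character map is an isomorphism for uniformly contractible spaces, with the role of uniform contractibility now played by the passage to a larger Rips scale. Concretely, I would fix a Lebesgue number for the uniformly bounded cover of $P_R(Z)$ by open stars of vertices and use it to replace a compactly supported Alexander--Spanier cocycle on $P_S(Z)$, for $S$ large enough compared with $R$, by a cohomologous cocycle whose support is controlled at scale $R$; such a controlled cocycle is by definition a coarse cochain and hence defines the required class in $\HX^*(P_R(Z);M)$. Checking that the two composite conditions hold and that the assignment is well defined up to coboundary is the technical work that completes the proof; the remaining bookkeeping (the five-lemma, the vanishing of the Alexander--Spanier ${\varprojlim}^1$, and the final reduction to $H\!A\!X^*$ via Lemma~\ref{lem_HXHPX}) is then routine.
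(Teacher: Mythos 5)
Your reductions up to the five-lemma ladder are fine (surjectivity of the cochain restriction maps in both towers, the two Milnor sequences, and the degeneration on the coarse side), but the claim you identify as the heart of the matter is not merely the hard step --- it is false in the generality of the lemma, and so are both consequences you want to extract from it. Take $Z=\bigsqcup_{n\geq 4}C_n$ to be the coarse disjoint union of the cycle graphs $C_n$ on $n$ vertices, with $d(x,y)=\max\{10^m,10^n\}$ for $x\in C_m$, $y\in C_n$, $m\neq n$, and take $M=\IZ$. At scale $R$ the pieces with $10^n\leq R$ span a single simplex, the remaining pieces are pairwise disjoint subcomplexes, and $P_R(C_n)\simeq S^1$ whenever $n>3R$ (for $n\leq 3R$ one gets a higher sphere, a wedge of even spheres, or a simplex, so $H^1=0$); moreover for $R\leq S<n/3$ the inclusion $P_R(C_n)\subset P_S(C_n)$ induces an isomorphism on $H^1$. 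Consequently $\HA_c^1(P_R(Z);\IZ)\cong\bigoplus_{n>3R}\IZ$, and the image of the transition map $\HA_c^1(P_S(Z);\IZ)\to\HA_c^1(P_R(Z);\IZ)$ is exactly the subgroup $\bigoplus_{n>3S}\IZ$: these images decrease strictly and never stabilize, so this tower of countable groups is not Mittag--Leffler and hence has ${\varprojlim_R}^1\neq 0$ by Gray's theorem (Lemma~\ref{lem:invlimabvs}), while its $\varprojlim$ is $0$. The coarse tower is essentially constant, so its $\varprojlim^1$ vanishes; since pro-isomorphic towers have isomorphic $\varprojlim$ and $\varprojlim^1$, the character map of towers is not a pro-isomorphism for this $Z$, and your requirement (ii), the vanishing of ${\varprojlim_R}^1\HA_c^p(P_R(Z);M)$, fails outright. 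Requirement (i) fails as well: $\varprojlim_R\HA_c^1(P_R(Z);\IZ)=0$, while the lemma itself together with the Milnor sequence identifies $\HX^1(Z;\IZ)$ with ${\varprojlim_R}^1\HA_c^0(P_R(Z);\IZ)$, which is nonzero by the same kind of non-Mittag--Leffler argument applied to the tower of components.

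This example pinpoints why no degreewise tower argument can succeed: under the character map, the term ${\varprojlim_R}^1\HA_c^{p-1}(P_R(Z);M)$ of the Alexander--Spanier Milnor sequence is absorbed into coarse cohomology in degree $p$, so the outer terms of your ladder differ individually and only the middle extensions agree --- exactly the situation in which the five lemma gives nothing. Any correct proof has to compare the two cochain complexes before taking cohomology at finite scales, and this is what the paper does: it describes $C\!X^*(\cP(Z);M)$ and $C\!A_c^*(\cP(Z);M)$ concretely as functions on $(\cP(Z))^{q+1}$ with a coarse support condition, respectively such functions modulo those vanishing on a $\sigma$-neighborhood of the multidiagonal, observes that the character cochain map is surjective, and proves that its kernel is acyclic by an explicit contraction: the barycentric subdivision operator $S$ and its standard cochain homotopy $T$ preserve the kernel, and on the kernel the limit $T^\infty=\lim_{k\to\infty}(T+ST+\dots+S^{k-1}T)$ is well defined simplexwise, because sufficiently iterated subdivisions of any fixed simplex eventually lie in the $\sigma$-neighborhood on which the given cochain vanishes. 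Your intuition that larger Rips scales ``fill in the missing homotopies'' is correct in spirit --- it is what makes the subdivision operators act with only controlled loss of scale --- but it cannot be packaged as a pro-isomorphism of cohomology towers.
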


\begin{proof}
The cochain groups $C\!X^q(\cP(Z);M)$ can be described equivalently as the group of all functions $\phi\colon (\cP(Z))^{q+1}\to M$ such that for all $R\in\N$ and for all penumbras $Q\subset (P_R(Z))^{q+1}$ the intersection of the support of $\phi$ with $Q$ is bounded in $(P_R(Z))^{q+1}$. 

In order to give a similar alternative description of the cochain groups $C\!A_c^q(\cX;M)$ for $\cX=\bigcup_{n\in\N}X_n$ a $\sigma$-locally compact space, we introduce the following notion: A subset $\cU=\bigcup_{n\in\N}U_n\subset \cX$ is a \emph{$\sigma$-neighborhood} of a subset $\cA=\bigcup_{n\in\N}A_n\subset\cX$ if  each $U_n$ is a neighborhood of $A_n$ in $X_n$.
Note that any neighborhood in the final topology is a $\sigma$-neighborhood, but the converse is not true.

We now say that a function $\phi\colon \cX^{q+1}\to M$ is $\sigma$-locally zero if it vanishes on a $\sigma$-neighborhood of the multidiagonal, and we call it $\sigma$-locally zero outside of a $\sigma$-compact subset $\cK\subset\cX$ if the restriction $\phi|_{(\cX\setminus\cK)^{q+1}}$ is locally zero. One now checks that $C\!A_c^q(\cX;M)$ can be described  as the group of all functions $\phi\colon \cX^{q+1}\to M$ which are $\sigma$-locally zero outside of a $\sigma$-compact subset modulo the subgroup of all $\sigma$-locally zero functions.

The character map is again induced by the canonical quotient map 
\[c\colon C\!X^*(\cP(Z);M)\to C\!A_c^*(\cP(Z);M)\,.\]
It is straightforward to see that this cochain map is surjective, so it suffices to prove that its kernel $C_{\ker}^*\coloneqq \ker(c)$ is acyclic.
The groups $C_{\ker}^q$ consist exactly of all those functions $\phi\colon (\cP(Z))^{q+1}\to M$ which vanish on some $\sigma$-neighborhood of the multidiagonal and whose support intersects each penumbra $Q\subset (P_R(Z))^{q+1}$ in a bounded subset of $(P_R(Z))^{q+1}$.

The proof of the acyclicity will make use of iterated barycentric subdivisions. To this end, we introduce the following notation: given a $(q+1)$-tuple $x=(x_0,\dots,x_q)\in (\cP(Z))^{q+1}$, let $\sigma_x\subset \cP(Z)$ be the affine linear $q$-simplex spanned by $x_0,\dots, x_q$. Conversely, we denote by $x_\sigma$ the $(q+1)$-tuple of vertices of such a simplex $\sigma$.

For each affine linear $q$-simplex $\sigma$ we denote by $\Sigma(\sigma)$ the set of $q$-simplices in the barycentric subdivision of $\sigma$. Furthermore, let $C_{\operatorname{tot}}^*$ denote the cochain complex whose $q$-th group consists of \emph{all} functions $\phi\colon (\cP(Z))^{q+1}\to M$. Then the barycentric subdivision operator $S$ is a cochain map from $C_{\operatorname{tot}}^*$ to itself defined by the formula
\[S\phi(x)\coloneqq \sum_{\sigma\in\Sigma(\sigma_x)}\phi(x_\sigma)\,.\]

We first claim that $S$ maps the subcomplex $C_{\ker}^*\subset C_{\operatorname{tot}}^*$ to itself. Let $\phi\in C_{\ker}^q$.
The essential property which we need, and which is readily verified, is the following: for each $R\in\N$ and each multicontrolled subset $Q\subset(P_R(Z))^{q+1}$ there is an $R'\in\N$ such that
\[Q'\coloneqq \bigcup_{x\in Q}\sigma_x\]
is contained in $(P_{R'}(Z))^{q+1}$ and even is multicontrolled in it. Therefore the intersection $B'=\supp(\phi)\cap Q'$ is bounded in $(P_{R'}(Z))^{q+1}$. From this one obtains that the intersection $\supp(S\phi)\cap Q$ is contained in the bounded subset
\[B\coloneqq \{x\in (P_R(Z))^{q+1}\mid  \sigma_x\cap B'\not=\emptyset\}\]
of $(P_R(Z))^{q+1}$.
Further, if $\phi\colon (\cP(Z))^{q+1}\to M$ vanishes on a $\sigma$-neighborhood $\cU\subset(\cP(Z))^{q+1}$ of the multidiagonal, then
\[\cV\coloneqq \{x_\sigma\mid \sigma\subset \cU\}\subset(\cP(Z))^{q+1}\]
is a $\sigma$-neighborhood of the multidiagonal on which $S\phi$ vanishes. This finishes the proof that $S$ maps $C_{\ker}^*\subset C_{\operatorname{tot}}^*$ to itself.

There is also a well-known cochain homotopy between $S$ and the identity, which we denote by 
\[T\colon C_{\operatorname{tot}}^*\to C_{\operatorname{tot}}^{*-1}\,,\qquad \delta T+T\delta =\id-S\,.\]
It is no surprise that $T$ also restricts to a cochain homotopy $T\colon C_{\ker}^*\to C_{\ker}^{*-1}$ between the restricted operator $S$ and the identity. The proof of this fact is basically exactly the same proof as the one above, just with slightly more complicated formulas. Therefore, we omit it.

By iteration we obtain that
\[T^k\coloneqq T+ST+S^2T+\dots+S^{k-1}T\colon C_{\ker}^*\to C_{\ker}^{*-1}\]
is a cochain homotopy equivalence between $S^k$ and the identity.

Now, let $\phi\in C_{\ker}^q$ and $\cU\subset(\cP(Z))^{q+1}$ be a $\sigma$-neighborhood of the multidiagonal on which $\phi$ vanishes. For each $(q+1)$-tuple let $R\in\N$ be such that $\sigma_x\subset P_R(Z)$. Then there is $N\in\N$ such that each $q$-simplex of the iterated barycentric subdivision $\Sigma^k(\sigma_x)$ is contained in the open neighborhood $U_R=\cU\cap (P_R(Z))^{q+1}$ of the multidiagonal in $(P_R(Z))^{q+1}$ for all $k\geq N$. Hence, $(S^k\phi)(\sigma)=0$ for all $k\geq N$ and we can define $T^\infty\phi\in C_{\ker}^{q-1}$ simplexwise by the formula
\[T^\infty\phi(\sigma)\coloneqq \lim_{k\to\infty}T^k\phi(\sigma)\,.\]
This obviously defines a cochain contraction of $C_{\ker}^*$ and we are done.
\end{proof}

Finally we have the following theorem.
\begin{thm}
Assume that $\partial X$ is homeomorphic to a finite polyhedron, such that Roe's transgression map $\HA^p(\partial X;M)\to \HX^{p+1}(X;M)$ is defined (see \cite[Section 5.3]{roe_coarse_cohomology}). Then the latter can be identified with the transgression map $\HA^p(\partial X;M)\to H\!A\!X^{p+1}(X;M)$ of Definition \ref{def:transgression} under the isomorphism of Lemma \ref{lem:charactermapisomorphism}.
\end{thm}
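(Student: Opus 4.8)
The plan is to recognize both transgression maps as the connecting homomorphism of a short exact sequence of cochain complexes, and then to compare the two sequences by a single commuting ladder whose comparison map is the character cochain map on the coarse part and the identity on the corona part. Naturality of the connecting homomorphism will then immediately give the identification. I would carry out the comparison scalewise, on the genuine compactifications $\overline{P_R(Z)}$ and their coronas $\partial X$, and only afterwards pass to the $\sigma$-limit along the Rips inclusions; the resulting $\varprojlim^1$-bookkeeping is handled by the five-lemma exactly as in the proof of Lemma~\ref{lem:charactermapisomorphism} and Lemma~\ref{lem_HXHPX}, so I suppress it below and work as if $\overline{\cP(Z)}$ were honestly compact.

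On the Alexander--Spanier side, the transgression of Definition~\ref{def:transgression} (for $E^*=\HA^*$) is, by construction, the connecting homomorphism of the pair $(\overline{\cP(Z)},\partial X)$ followed by the excision isomorphism $\HA^{p+1}(\overline{\cP(Z)},\partial X)\cong H\!A\!X^{p+1}(X)$. At the cochain level it comes from the short exact sequence
\[0\to C\!A^*(\overline{\cP(Z)},\partial X)\to C\!A_c^*(\overline{\cP(Z)})\xrightarrow{\ \mathrm{res}\ }C\!A^*(\partial X)\to 0,\]
whose connecting map sends an Alexander--Spanier cocycle $\alpha$ on $\partial X$ to $[\delta\beta]$, where $\beta$ is any lift of $\alpha$ to $C\!A_c^*(\overline{\cP(Z)})$; here $\delta\beta$ is locally zero on $\partial X$ and hence, via the excision cochain map $\mathrm{exc}$, represents a class in $H\!A\!X^{p+1}(X)$. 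On the Roe side I would first recall from \cite[Section~5.3]{roe_coarse_cohomology} that Roe's transgression is given by the very same recipe ``lift a corona cocycle, apply $\delta$'', now landing in his coarse cochain complex, which via Lemma~\ref{lem_HXHPX} I identify with $C\!X^*(\cP(Z))$. I would package this as the connecting homomorphism of a short exact sequence
\[0\to C\!X^*(\cP(Z))\to \widehat{C\!X}^*(\overline{\cP(Z)})\xrightarrow{\ \mathrm{res}\ }C\!A^*(\partial X)\to 0,\]
where $\widehat{C\!X}^*(\overline{\cP(Z)})$ consists of functions on $(\overline{\cP(Z)})^{q+1}$ whose germ along the corona defines an Alexander--Spanier cochain and whose restriction to the interior satisfies Roe's coarse support condition, so that $\ker(\mathrm{res})=C\!X^*(\cP(Z))$.

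The character cochain map of Lemma~\ref{lem:charactermapisomorphism} then extends over the compactification to a commuting ladder
\[
\begin{array}{ccccc}
C\!X^*(\cP(Z)) & \to & \widehat{C\!X}^*(\overline{\cP(Z)}) & \to & C\!A^*(\partial X)\\
{\scriptstyle \mathrm{exc}\circ c}\downarrow & & \downarrow & & \|\\
C\!A^*(\overline{\cP(Z)},\partial X) & \to & C\!A_c^*(\overline{\cP(Z)}) & \to & C\!A^*(\partial X),
\end{array}
\]
in which the right-hand vertical is the identity (the corona is compact, so $C\!A_c^*(\partial X)=C\!A^*(\partial X)$ and neither sequence alters it) and the left-hand vertical is the character map $c$ followed by $\mathrm{exc}$. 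Applying naturality of the connecting homomorphism to this ladder yields $\delta_{\mathrm{bottom}}=\mathrm{exc}_*\circ c_*\circ\delta_{\mathrm{top}}$ on $\HA^p(\partial X)$; since $\delta_{\mathrm{top}}$ is Roe's transgression $T_{\mathrm{Roe}}$, since $\mathrm{exc}_*^{-1}\circ\delta_{\mathrm{bottom}}$ is the transgression $T_{\mathrm{paper}}$ of Definition~\ref{def:transgression}, and since $\mathrm{exc}_*$ cancels, this reads $T_{\mathrm{paper}}=c_*\circ T_{\mathrm{Roe}}$, which is precisely the asserted identification under the isomorphism $c_*\colon\HX^{p+1}(X)\cong H\!A\!X^{p+1}(X)$.

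The main obstacle, and where essentially all the work lives, is the honest construction of the intermediate complex $\widehat{C\!X}^*(\overline{\cP(Z)})$ together with the extended character map $\widehat{c}$: one must verify that restriction to the corona is surjective onto Alexander--Spanier cochains with kernel \emph{exactly} $C\!X^*(\cP(Z))$, and that $\widehat{c}$ restricts on this kernel to $\mathrm{exc}\circ c$ so that the left square commutes. This is the point at which the hypothesis that $\partial X$ is a finite polyhedron is used, precisely because it is what guarantees that Roe's extension of a corona cocycle over the compactification exists and is well behaved. A secondary, but genuinely necessary, step is to check that Roe's cochain-level transgression of \cite[Section~5.3]{roe_coarse_cohomology} \emph{literally} coincides with the connecting homomorphism of the top sequence rather than merely being a map of the same abstract shape. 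The compatibility with the passage between $X$ and $\cP(Z)$, and between finite scales $R$ and the $\sigma$-limit, is then routine given Lemma~\ref{lem_HXHPX} and the naturality of the character map under the Rips inclusions.
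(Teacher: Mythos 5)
Your high-level strategy---recognizing both maps as connecting homomorphisms of short exact cochain sequences and comparing them by a ladder built from the character map, then invoking naturality---is exactly the mechanism the paper relies on: its (omitted) proof simply cites Roe's Proposition 5.25(iii), which is the cochain-level commutativity of transgression, character map, strong excision and the coboundary of the pair, together with the observation that the isomorphism of Lemma \ref{lem:charactermapisomorphism} is itself a character map, just for $\cP(Z)$. So the route is the right one. The problem is that the implementation has a genuine gap, and it sits exactly where you park ``essentially all the work.''

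The intermediate complex $\widehat{\CX}^*(\overline{\cP(Z)})$ as you define it cannot have the properties you assign to it. First, the kernel claim is false: an element of $\ker(\mathrm{res})$ is a function on $(\overline{\cP(Z)})^{q+1}$ whose restriction to $(\partial X)^{q+1}$ is locally zero, but its values on \emph{mixed} tuples (some coordinates in $\cP(Z)$, some in $\partial X$) and on corona tuples are completely unconstrained, so $\ker(\mathrm{res})$ is not $\CX^*(\cP(Z))$; restriction to interior tuples is very far from injective. Second, and more fatally, your membership conditions (a support condition on the interior restriction, plus a condition on the corona germ that any function satisfies) admit the extension by zero $\hat\beta$ of an arbitrary Alexander--Spanier cocycle $\alpha$ on $\partial X$ as a legitimate lift: its interior restriction is the zero cochain, which trivially satisfies Roe's support condition, and $\mathrm{res}(\hat\beta)=[\alpha]$. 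But then $\delta\hat\beta$ restricts to zero on $(\cP(Z))^{q+2}$, so if $\ker(\mathrm{res})$ were identified with $\CX^*(\cP(Z))$ by restriction to interior tuples, the connecting homomorphism of your top sequence would vanish identically---contradicting the fact that the transgression is nonzero in general (indeed an isomorphism, e.g., for expandingly and coherently combable spaces by Lemma \ref{lem_transgressionisoforcontractible} and Theorem \ref{thm:RipsCompactificationContractible}). Hence, for the definitions as given, either your kernel identification or your ``secondary step'' (that Roe's transgression is the connecting homomorphism of the top sequence) must fail; the two deferred verifications cannot both be completed. Any repair must impose a genuine constraint on cochains of $\widehat{\CX}^*$ near the corona (a continuity or controlled-germ condition, in the spirit of vanishing variation), after which the surjectivity of $\mathrm{res}$---the existence of extensions of corona cocycles whose coboundaries still satisfy the coarse support condition on the interior---becomes the real content; that existence is precisely where Higson domination and the finite-polyhedron hypothesis are used, and establishing it amounts to (re)proving Roe's Proposition 5.25(iii), i.e., the step the paper cites rather than reproves.
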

We omit the proof, since it is essentially \cite[Proposition 5.25 (iii)]{roe_coarse_cohomology}, which says that the transgression map, character map, strong excision and coboundary map in Alexander--Spanier cohomology fit into the commutative diagram
\[\xymatrix{
\HA^p(\partial X;M)\ar[r]\ar[d]&\HX^{p+1}(X;M)\ar[d]
\\
\HA^{p+1}(\overline{X},\partial X;M)&\HA_c^{p+1}(X;M) \ar[l]_-{\cong}
}\]
Roe noted that this is true already on the level of cochains and that its proof is a straightforward consequence of the definitions. The same is true for the proof of our theorem, since the isomorphism of Lemma \ref{lem:charactermapisomorphism} is nothing else but a character map, just with $\cP(Z)$ instead of the proper metric space $X$.

\section{Contractible compactifications}
\label{sec_contr_compact}

We have already seen in Lemma~\ref{lem_transgressionisoforcontractible} the importance of finding $\sigma$-contractible $\sigma$-compactifications of the full Rips complex. For properly combable spaces, the obvious approach is of course to try to construct a $\sigma$-contraction of the combing $\sigma$-compactification of the Rips complex by following approximately the combing lines. For the un-$\sigma$-compactified Rips complex, this is already known to work in full generality:

\begin{thm}[{See \cite[Theorem 10.6]{Wulff_CoassemblyRingHomomorphism} and Remark \ref{rem_combingdefinition}.(\ref{rem_enum_combingvscoarsecontraction}}]
Let $X$ be a combable, discrete, proper metric space.
Then $\cP(X)$ is contractible by a non-proper but continuous $\sigma$-homotopy.
\end{thm}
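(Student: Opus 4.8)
The plan is to build an explicit contraction of $\cP(X)$ by pushing every point along its combing line towards the basepoint $p$, extended affine-linearly over simplices. First I would extract from controlledness the two uniform constants that make this work. Applying Point~\ref{654terfd} of Definition~\ref{def_combing} to the $\N$-entourage $\{(m,n)\mid |m-n|\le 1\}$ paired with the diagonal of $X$ produces a radius $S_0$ with $d(H_m(x),H_n(x))\le S_0$ whenever $|m-n|\le 1$; applying it to $E_R$ paired with the diagonal of $\N$ produces, for each $R$, a radius $S(R)$ with $d(H_n(x),H_n(y))\le S(R)$ whenever $d(x,y)\le R$, \emph{uniformly in $n$}. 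Setting $S'(R):=S(R)+S_0$, the triangle inequality shows that for any $R$-simplex with vertices $x_0,\dots,x_k$ and any $s\in[0,\infty)$ the vertices $H_{\lfloor s\rfloor}(x_i),H_{\lceil s\rceil}(x_i)$ are mutually $S'(R)$-close, hence span a simplex of $P_{S'(R)}(X)$.

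Next I would interpolate the combing in the time variable. For $s\in[0,\infty)$ define $\tilde H_s$ on vertices by the affine combination $\tilde H_s(x):=(\lceil s\rceil-s)\,H_{\lfloor s\rfloor}(x)+(s-\lfloor s\rfloor)\,H_{\lceil s\rceil}(x)$, a point of the edge joining the two (which are $S_0$-close), and extend over each simplex by $\tilde H_s(\sum_i\lambda_i x_i):=\sum_i\lambda_i\tilde H_s(x_i)$. The estimate above guarantees the right-hand side is a well-defined point of $P_{S'(R)}(X)$, that the extensions agree on common faces, and that $(\xi,s)\mapsto\tilde H_s(\xi)$ is jointly continuous (the defining formulas on $[m-1,m]$ and $[m,m+1]$ agree at the integer $s=m$). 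Thus $\tilde H_s$ is a $\sigma$-self-map of $\cP(X)$ carrying $P_R(X)$ into $P_{S'(R)}(X)$ uniformly in $s$, with $\tilde H_0\equiv p$ by Point~\ref{4532r}.

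Then I would assemble the contraction. Choose an increasing reparametrisation $\tau\colon[0,1]\to[0,\infty]$ with $\tau(0)=0$ and $\tau(1)=\infty$ and define $G\colon\cP(X)\times[0,1]\to\cP(X)$ by $G(\xi,t):=\tilde H_{\tau(t)}(\xi)$ for $t<1$ and $G(\xi,1):=\xi$. Then $G(\cdot,0)$ is the constant map $p$ and $G(\cdot,1)=\id$, so $G$ is a contraction once continuity is established, and it is automatically a $\sigma$-homotopy because $G(P_R(X)\times[0,1])\subset P_{S'(R)}(X)$. It is non-proper precisely because the slice $G(\cdot,0)$ is constant.

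The only delicate point — and the main obstacle — is continuity at the ``time-$\infty$'' slice $t=1$, which is exactly where $\sigma$-properness breaks down. Here I would use that $X$ is discrete and proper, so each $P_R(X)$ is locally finite: a given $\xi_0$ has a neighbourhood $V$ meeting only finitely many simplices, whose combined (finite, hence bounded) vertex set $K$ lets me invoke Point~\ref{453987ztwert} to find $N$ with $H_n|_K=\id_K$ for all $n\ge N$. For such $n$ the affine extension $\tilde H_s$ is the identity on $V$ whenever $s\ge N$; choosing $\delta$ with $\tau(t)\ge N$ on $(1-\delta,1]$ shows $G\equiv\id$ on $V\times(1-\delta,1]$, which gives continuity at $(\xi_0,1)$ and matches the value $G(\xi_0,1)=\xi_0$. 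Continuity for $t<1$ is immediate from joint continuity of $(\xi,s)\mapsto\tilde H_s(\xi)$ and of $\tau$ on $[0,1)$. This exhibits the desired contraction and completes the argument.
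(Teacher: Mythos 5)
Your proof is correct, and it is essentially the argument this statement rests on: the paper itself gives no proof, quoting the result from Wulff's earlier work (modulo the remark that the time direction of the coarse contraction there is reversed), and the natural proof of that cited theorem is exactly your construction --- push vertices along the combing lines, interpolate linearly in the time variable, extend affinely over simplices using the fellow-traveling and step-size bounds, and use local finiteness of $P_R(X)$ together with the eventually-identity property of the combing to obtain continuity at the time-infinity slice. The same affine-interpolation formula reappears (with partitions of unity) in the paper's own proof of Theorem~\ref{thm:RipsCompactificationContractible}, so your route matches the paper's methods as well; the only cosmetic slip is that your floor/ceiling formula degenerates at integer times $s=m$, but the piecewise reading on each interval $[m,m+1]$ that you indicate repairs this.
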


In the case $X$ is a finitely generated group $G$, the above theorem is well-known since in this case the Rips complex $\cP(G)$ is a model for the classifying space $\underline{EG}$ of proper $G$-actions and hence contractible.

Unfortunately, it is in general not possible to extend the $\sigma$-contraction of $\cP(X)$ continuously to the combing-$\sigma$-compactification. We will need the assumptions of expandingness and coherence of the combing to construct such a $\sigma$-contraction directly. This construction is carried out in the first part of this section. Afterwards, we take a look at implications of $\sigma$-contractibility to isomorphism conjectures.

\subsection{Construction of the contractions}
\label{sec_constructionofcontractions}

Analogously to the approach of Roe \cite{roe_hyperbolic} we will first construct a pseudo-continuous extension of $H$ to the compactification.

\begin{defn}
A map $f\colon Y\to X$ from a topological space $Y$ into a  metric space $X$ is defined to be \emph{$R_{pc}$-pseudo-continuous}, if for every $y\in Y$ the set $f^{-1}(B_{R_{pc}}(f(y)))$ is a neighborhood of $y$. 

We will call it \emph{pseudo-continuous} if it is $R_{pc}$-pseudo-continuous for an $R_{pc}>0$.
\end{defn}
Note that this is actually a property for maps from topological into coarse spaces, but we will only use it for maps into proper metric spaces: since our proofs below are already very technical we did not try to carry them out in the more general setting of proper topological coarse spaces.

Recall that for proper metric spaces $X$ the combing compactification $\combcompb{X}{H}$ is metrizable by Lemma \ref{lem:metrizability} and hence second-countable.

\begin{defn}
Let $(X,d)$ be a proper metric space equipped with a proper combing $H$.
Then a \emph{standard extension}
\[\overline{H}\colon\combcompb{X}{H} \times\N\to X\]
of $H$ is an extension which is constructed in the following way:

For $x\in\partial_HX$ we choose a sequence $(x_k)_{k\in\N}$ in $X$ converging to $x$. Then we define recursively $\overline{H}_n(x)$ and sub-sequences $(x_k^n)_{k\in\N}$ for all $n\in\N$ as follows:
\begin{itemize}
\item We define $\overline{H}_0(x)\coloneqq p$ and $x_k^0\coloneqq x_k$ for all $k$.
\item For $n>0$, the sequence $(H_n(x_k^{n-1}))_{k\in\N}$ is bounded by Lemma~\ref{lem453ewr43} and thus contains an accumulation point. We let $\overline{H}_n(x)$ be this accumulation point and let $(x_k^n)_{k\in\N}$ be a subsequence of $(x_k^{n-1})_{k\in\N}$ such that $H_n(x_k^n)\xrightarrow{k\to\infty}\overline{H}_n(x)$.\qedhere
\end{itemize}
\end{defn}

Here are some first properties of standard extensions.
\begin{lem}\label{lem:StandardExtensionBasicProperties}
With the notation of the preceding definition (in particular this means $x\in\partial_HX$) we have:
\begin{enumerate}
\item \label{lem:StandardExtension:Hnsubseqconv} For all pairs of integers $m$ and $n$ with $m\geq n$ we even have 
\[\overline{H}_n(x)=\lim_{k\to\infty}H_n(x_k^n)=\lim_{k\to\infty}H_n(x_k^{m})\,.\]
\item \label{lem:StandardExtension:Hnxconv} $\overline{H}_n(x)\xrightarrow{n\to\infty}x$.
\item \label{lem:StandardExtension:SliceCloseness}
Let $R_1>0$ be a constant such that $H$ maps points of distance of at most $1$ to points of distance of at most $R_1$. Then the maps $\overline{H}_{n+1}$ and $\overline{H}_n$ are $R_1$-close for all $n\in\N$.
\end{enumerate}
\end{lem}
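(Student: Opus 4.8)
The plan is to leverage the nestedness of the extracted subsequences $(x_k^n)_k$, which is what ties the three assertions together; once part~(\ref{lem:StandardExtension:Hnsubseqconv}) is in hand, parts~(\ref{lem:StandardExtension:Hnxconv}) and~(\ref{lem:StandardExtension:SliceCloseness}) become short applications. For part~(\ref{lem:StandardExtension:Hnsubseqconv}), I would first observe that, by construction, $(x_k^m)_k$ is a subsequence of $(x_k^n)_k$ whenever $m\geq n$, since each $(x_k^j)_k$ is extracted from $(x_k^{j-1})_k$. Applying $H_n$, the sequence $(H_n(x_k^m))_k$ is then a subsequence of $(H_n(x_k^n))_k$, which converges to $\overline{H}_n(x)$ by the definition of the standard extension, and hence converges to the same limit. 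This gives $\overline{H}_n(x)=\lim_{k}H_n(x_k^n)=\lim_k H_n(x_k^m)$ at once.

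For part~(\ref{lem:StandardExtension:Hnxconv}), I would test the convergence $\overline{H}_n(x)\to x$ against $C_H(X)=C(\combcomp{X}^H)$, using that $\combcomp{X}^H$ carries the weak-$*$ topology of characters: it suffices to show $f(\overline{H}_n(x))\to\hat f(x)$ for every $f\in C_H(X)$, where $\hat f$ denotes the continuous extension of $f$. Since $\overline{H}_n(x)=\lim_k H_n(x_k^n)$ and $\hat f$ is continuous, while $f(H_n(x_k^n))=(H_n^*f)(x_k^n)$ and $f(x_k^n)=\hat f(x_k^n)\to\hat f(x)$ because $x_k^n\to x$, the pointwise estimate $|(H_n^*f)(x_k^n)-f(x_k^n)|\leq\|H_n^*f-f\|$ passes to the limit in $k$ to yield $|f(\overline{H}_n(x))-\hat f(x)|\leq\|H_n^*f-f\|$. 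The defining property $H_n^*f\to f$ of $D_H(X)\supset C_H(X)$ then drives the right-hand side to $0$ as $n\to\infty$.

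For part~(\ref{lem:StandardExtension:SliceCloseness}), controlledness of $H$ (Point~\ref{654terfd} of Definition~\ref{def_combing}) makes $\{(H_{n+1}(y),H_n(y))\mid y\in X,\,n\in\N\}$ an entourage, so on a metric space there is a uniform $R_1>0$ with $d(H_{n+1}(y),H_n(y))\leq R_1$ for all $y,n$; for $x\in X$ this is already the claim since $\overline{H}$ extends $H$. For $x\in\partial_HX$ the decisive move is to evaluate both slices along the \emph{single} common subsequence $(x_k^{n+1})_k$: part~(\ref{lem:StandardExtension:Hnsubseqconv}) with $m=n+1$ gives $\overline{H}_n(x)=\lim_k H_n(x_k^{n+1})$, while $\overline{H}_{n+1}(x)=\lim_k H_{n+1}(x_k^{n+1})$ by definition, so letting $k\to\infty$ in $d(H_{n+1}(x_k^{n+1}),H_n(x_k^{n+1}))\leq R_1$ and invoking continuity of $d$ gives $d(\overline{H}_{n+1}(x),\overline{H}_n(x))\leq R_1$.

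The argument is elementary once part~(\ref{lem:StandardExtension:Hnsubseqconv}) is available, and I expect the only genuine subtlety to be the subsequence bookkeeping: specifically, the observation that $\overline{H}_n(x)$ may be recovered along the thinner, later subsequence $(x_k^{n+1})_k$ and not merely along its own $(x_k^n)_k$. This common-subsequence trick is exactly what removes the apparent index mismatch in part~(\ref{lem:StandardExtension:SliceCloseness}) and lets the uniform bound coming from controlledness survive the passage from $X$ to the corona.
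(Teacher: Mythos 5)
Your proposal is correct and follows essentially the same route as the paper's own proof: part~(1) via the observation that $(x_k^m)_k$ is a subsequence of $(x_k^n)_k$, part~(2) by testing convergence in $\combcomp{X}^H$ against functions $f\in C_H(X)$ through the estimate $|f(\overline{H}_n(x))-f(x)|\leq\|H_n^*f-f\|\to 0$, and part~(3) by combining the uniform bound from controlledness of $H$ with evaluation along the common subsequence $(x_k^{n+1})_k$ supplied by part~(1). The only differences are cosmetic (your explicit mention of the weak-$*$ topology versus the paper's appeal to $C(\combcomp{X}^H)$ separating points).
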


We cannot expect more continuity properties of $\overline{H}$ than in the second part of the lemma right now, for example since $X$ might even be discrete.

\begin{proof}
The first part holds trivially, because $(x_k^{m})_{k\in\N}$ is a subsequence of $(x_k^n)_{k\in\N}$.
For the second part, note that for all $f\in C(\combcompb{X}{H})\cong C_H(X)$ we have
\begin{align*}
\limsup_{n\to\infty}|f(\overline{H}_n(x))-f(x)|&=\limsup_{n\to\infty}\lim_{k\to\infty}|f(H_n(x_k^n))-f(x_k^n)|
\\&\leq\limsup_{n\to\infty}\|H_n^*f-f\|=0\,.
\end{align*}
Since $C(\combcompb{X}{H})$ separates points, this implies the claim.

Finally, for the third part
we use the first statement to show
\begin{align*}
d\left(\overline{H}_{n+1}(x), \overline{H}_n(x)\right)&=d\left(\lim_{k \to \infty} H_{n+1}(x_k^{n+1}),\lim_{k \to \infty} H_{n}(x_k^{n+1})\right)
\\&=\lim_{k \to \infty}d\left(H_{n+1}(x_k^{n+1}),H_{n}(x_k^{n+1})\right)\leq R_1\,.\qedhere
\end{align*}
\end{proof}

\begin{lem}\label{lem:ExtensionCoherence}
Assume that the proper combing $H$ on the proper metric space $(X,d)$ is $R_{coh}$-coherent for an $R_{coh}\ge 0$. Then the coherence passes over to the standard extension $\overline{H}$ in the sense that there is a constant $\overline{R}_{coh}\geq R_{coh}$ such that
\[d(H_m(\overline{H}_n(x)),\overline{H}_m(x))\leq \overline{R}_{coh}\]
for all $x\in \combcompb{X}{H}$ and natural numbers $m\leq n$.
\end{lem}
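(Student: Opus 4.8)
The plan is to reduce everything to the already-established $R_{coh}$-coherence in $X$ together with the uniform fellow-traveling property of $H$, the point being to circumvent the fact that the maps $H_m$ need not be continuous. For interior points $x\in X$ we have $\overline{H}=H$, so the claimed inequality is literally $R_{coh}$-coherence; the work is therefore entirely at the corona. So I would fix $x\in\partial_HX$ together with its approximating sequence $(x_k)_{k}$ and the associated subsequences $(x_k^j)_{k}$ from the definition of the standard extension, and fix a pair $m\leq n$.

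The main obstacle is that $H_m$ is merely controlled, not continuous, so one cannot simply write $H_m(\overline{H}_n(x))=\lim_{k}H_m(H_n(x_k^n))$. To get around this, I would invoke the uniform control constant $R_1$ from the proof of Lemma~\ref{lem:StandardExtensionBasicProperties}, which has the property that $d(a,b)\leq 1$ implies $d(H_j(a),H_j(b))\leq R_1$ for \emph{every} $j\in\N$. Since $\overline{H}_n(x)=\lim_{k}H_n(x_k^n)$ by part~\ref{lem:StandardExtension:Hnsubseqconv} of that lemma, for all sufficiently large $k$ we have $d(\overline{H}_n(x),H_n(x_k^n))\leq 1$, and hence $d(H_m(\overline{H}_n(x)),H_m(H_n(x_k^n)))\leq R_1$. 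This is the one place where continuity of $H_m$ would have been convenient, and where the uniform control does the job instead.

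Next I would insert the intermediate point $H_m(x_k^n)$ and chain three estimates. Because $(x_k^n)_k$ is a subsequence of $(x_k^m)_k$ when $n\geq m$, part~\ref{lem:StandardExtension:Hnsubseqconv} of Lemma~\ref{lem:StandardExtensionBasicProperties} gives $\overline{H}_m(x)=\lim_{k}H_m(x_k^n)$, so $d(H_m(x_k^n),\overline{H}_m(x))\leq 1$ for large $k$; and $R_{coh}$-coherence applied to the point $x_k^n\in X$ with $m\leq n$ gives $d(H_m(H_n(x_k^n)),H_m(x_k^n))\leq R_{coh}$. Combining these via the triangle inequality, for all sufficiently large $k$,
\begin{align*}
d(H_m(\overline{H}_n(x)),\overline{H}_m(x)) &\leq d(H_m(\overline{H}_n(x)),H_m(H_n(x_k^n)))+d(H_m(H_n(x_k^n)),H_m(x_k^n)) \\
&\quad +d(H_m(x_k^n),\overline{H}_m(x)) \\
&\leq R_1+R_{coh}+1\,.
\end{align*}
Since the left-hand side is independent of $k$, the bound persists, and setting $\overline{R}_{coh}:=R_{coh}+R_1+1\geq R_{coh}$ finishes the argument uniformly for all $x\in\combcomp{X}^H$ and $m\leq n$. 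The only subtlety to watch is making sure the two convergence statements use the same subsequence $(x_k^n)_k$ — which is why part~\ref{lem:StandardExtension:Hnsubseqconv} of Lemma~\ref{lem:StandardExtensionBasicProperties}, identifying $\overline{H}_m(x)$ as the limit along $(x_k^n)_k$ rather than only along $(x_k^m)_k$, is essential.
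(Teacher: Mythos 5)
Your proof is correct and follows essentially the same route as the paper's: reduce to corona points, pick $k$ large enough that $H_n(x_k^n)$ and $H_m(x_k^n)$ are within distance $1$ of $\overline{H}_n(x)$ and $\overline{H}_m(x)$ respectively (using part~\ref{lem:StandardExtension:Hnsubseqconv} of Lemma~\ref{lem:StandardExtensionBasicProperties} along the single subsequence $(x_k^n)_k$), then apply the triangle inequality with the controlledness constant $R_1$ and $R_{coh}$-coherence to get the bound $R_1+R_{coh}+1$. The paper's proof is the same argument with the same constant $\overline{R}_{coh}=R_1+R_{coh}+1$.
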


\begin{proof}
The inequality is of course true for $x\in X$ and arbitrary $\overline{R}_{coh}\geq R_{coh}$. 
For $x\in\partial_HX$ we use the notations from the previous definition and the lemma.
For each pair of integers $m$ and $n$ with $m \leq n$ we choose $k\in\N$ such that $d(\overline{H}_n(x),H_n(x_k^n))<1$ and $d(\overline{H}_m(x),H_m(x_k^n))<1$. This is possible by Part \ref{lem:StandardExtension:Hnsubseqconv} of the preceding lemma.
Then we have
\begin{align*}
d(H_m(\overline{H}_n(x)),\overline{H}_m(x))
&\leq d(H_m(\overline{H}_n(x)),H_m(H_n(x_{k}^n)))
\\&\qquad +d(H_m(H_n(x_{k}^n)),H_m(x_{k}^n)))
\\&\qquad +d(H_m(x_{k}^n)),\overline{H}_m(x))
\\&\leq R_1+R_{coh}+1=:\overline{R}_{coh}\,.\qedhere
\end{align*}
\end{proof}

\begin{lem}\label{lem:ExtensionPseudocontinuity}
Let $(X,d)$ be a proper metric space which is equipped with an expanding and coherent combing $H$.

Then the standard extension $\overline{H}$ is pseudo-continuous.
\end{lem}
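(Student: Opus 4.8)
The plan is to reduce the statement to a single, uniform-in-$n$ estimate on the corona. Since $\N$ carries the discrete topology, a neighbourhood of $(z,n)$ in $\combcomp{X}^H\times\N$ is of the form $U\times\{n\}$, so it suffices to produce one constant $R_{pc}$ (assembled from $R_{exp}$ and the coherence constants $\overline R_{coh},R_1$ of Lemmas~\ref{lem:ExtensionCoherence} and~\ref{lem:StandardExtensionBasicProperties}, all of which are uniform in $n$) together with, for every $z\in\combcomp{X}^H$ and every $n$, a neighbourhood $U$ of $z$ satisfying $\overline{H}_n(U)\subset B_{R_{pc}}(\overline{H}_n(z))$. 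For points $z\in X$ the extension agrees with $H$ and the estimate is immediate wherever $H$ is locally controlled, e.g. by expandingness once $z$ lies outside the bounded exceptional set $K_{1,n}$; thus the substance of the lemma lies at the corona, which I treat next.

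First I would record the one place where the topology of $\combcomp{X}^H$ enters. For $f\in C_H(X)\cong C(\combcomp{X}^H)$ and $y\in\partial_HX$ with defining sequence $(y_k^n)_k$ one has $f(\overline{H}_n(y))=\lim_k (H_n^\ast f)(y_k^n)$; since $\|H_n^\ast f-f\|$ bounds $|(H_n^\ast f)(y_k^n)-f(y_k^n)|$ while $f(y_k^n)\to f(y)$, this gives the uniform estimate
\[
|f(\overline{H}_n(y))-f(y)|\le\|H_n^\ast f-f\|\qquad(f\in C_H(X),\ y\in\partial_HX).
\]
Applying it to $y$ and to $z$ shows that when $y$ is close to $z$ in $\combcomp{X}^H$, the points $\overline{H}_n(y)$ and $\overline{H}_n(z)$ are close \emph{in the compactification topology}, with a residual governed by $\|H_n^\ast f-f\|$. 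Because $\overline{H}_n$ takes values in the compact set $\overline{\im(H_n)}\subset X$ (Lemma~\ref{lem453ewr43}), on which the subspace topology of $\combcomp{X}^H$ coincides with the metric topology, this compactification-closeness can in principle be converted into the metric statement that pseudo-continuity demands.

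The genuine work is to make the resulting metric bound \emph{uniform in $n$}, and this is where coherence and expandingness combine. The mechanism is that combing rays contract under $H_n$: by coherence $H_n\circ H_M$ is $R_{coh}$-close to $H_n$ for $M\ge n$, so $d(H_n(w),H_n(H_M(w)))\le R_{coh}$, and dually Lemma~\ref{lem:ExtensionCoherence} gives $d(H_n(\overline{H}_M(z)),\overline{H}_n(z))\le\overline R_{coh}$. Expandingness then controls the transverse spread: if $w\in X$ is far out and its ray meets that of $z$ to within metric distance $r$ at some time $M\ge n$, i.e. $d(H_M(w),\overline{H}_M(z))\le r$ with $H_M(w)\notin K_{r,n}$, then expandingness yields $d(H_n(H_M(w)),H_n(\overline{H}_M(z)))\le R_{exp}$, whence
\[
d(H_n(w),\overline{H}_n(z))\le R_{coh}+R_{exp}+\overline R_{coh}=:R_{pc},
\]
a bound independent of $n$. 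Feeding this into the defining sequences of any two points $y,z$ lying in a sufficiently small neighbourhood $V$ of $z$ then gives $d(\overline{H}_n(y),\overline{H}_n(z))\le R_{pc}$, as required.

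The main obstacle is exactly the hypothesis used in the last display: one must show that points $w$ in a small enough compactification-neighbourhood of $z$ do have their combing rays meet that of $z$ to within a \emph{fixed} metric distance $r$ at some far-out time $M\ge n$. Compactification-closeness does not by itself force metric closeness of points escaping to infinity, so this fellow-travelling-at-a-finite-scale has to be extracted from the functional description of $\combcomp{X}^H$ (vanishing variation together with $H_n^\ast f\to f$) established in the second paragraph; it is the single expandingness entourage $E_{exp}$ — the only ingredient available uniformly across all $n$ — that turns the compactification-closeness of $\overline{H}_n(y)$ and $\overline{H}_n(z)$ on $\overline{\im(H_n)}$ into the $n$-independent radius $R_{pc}$. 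Carrying out this conversion carefully, rather than the bookkeeping with the three triangle-inequality terms around it, is the technical core of the argument.
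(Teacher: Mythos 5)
Your closing chain of inequalities is fine: coherence, expandingness, and the extended coherence of Lemma~\ref{lem:ExtensionCoherence} do combine to give an $n$-independent radius $R_{pc}$, and this matches the bookkeeping in the paper's proof (which uses $R_{pc}\geq R_{exp}+2R_{coh}+2\overline{R}_{coh}$). The genuine gap is the hypothesis feeding that chain: you never establish that every $w$ in some sufficiently small neighborhood of $z$ in $\combcomp{X}^H$ has $d(H_M(w),\overline{H}_M(z))\le r$ for some far-out $M\ge n$ with the relevant point outside $K_{r,n}$. You explicitly defer this (``has to be extracted from the functional description \dots\ is the technical core of the argument''), but that deferred step \emph{is} the lemma. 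A corona point has no a priori neighborhood basis with metric meaning: by Lemma~\ref{lem:DHExtensions}, neighborhoods of $z\in\partial_HX$ are only accessible through functions in $D_H(X)$, and your second-paragraph estimate cannot produce the needed control, because the residual $2\|H_n^*f-f\|$ is not small for the \emph{fixed} $n$ at hand (it tends to $0$ only as $n\to\infty$ for fixed $f$, and not uniformly in $f$). So shrinking the neighborhood of $z$ never forces $f(\overline{H}_n(y))$ close to $f(\overline{H}_n(z))$ by that route, and the asserted ``compactification-closeness \dots\ can in principle be converted'' into metric closeness on $\overline{\im(H_n)}$ is unsupported.

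The paper closes exactly this gap by arguing in the opposite direction: instead of deriving metric fellow-travelling from topological closeness, it manufactures the topology. Fixing $x\in\partial_HX$, $n$, and $U:=\overline{H}_n^{-1}(B_{R_{pc}}(\overline{H}_n(x)))$, it constructs Urysohn-style a function $f\in D_H(X)$ with $f(x)=1$ and $f\equiv 0$ off $U$: a recursion produces nested subsets $A^\pm_{k,\lambda}$ indexed by dyadic $\lambda\in I_k$ together with scales $n_k$ and the sets $C^\pm_{k,\lambda}=H_{n_k}^{-1}(A^\pm_{k,\lambda})$, where expandingness supplies the bounded exceptional sets $K_k$ making the nesting relations hold and coherence makes the different scales compatible; one then verifies that $f(y)=\sup\{\lambda\mid y\in C^-_{k,\lambda}\}$ has vanishing variation and satisfies $H_m^*f\to f$, so that $f$ extends continuously at the corona and its non-vanishing locus is an open neighborhood of $x$ contained in $U$. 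This multi-page construction is precisely what your phrase ``carrying out this conversion carefully'' stands in for; without it (or some substitute for it) the proposal is a correct reduction plus a statement of the difficulty, not a proof.
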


\begin{proof}
We let $R_{exp}>0$ and $R_{coh}>0$ be the expandingness- and coherence-constants. Furthermore, we again use all the notation introduced so far in this section. Clearly, $H$ itself is $R_{pc}$-pseudo-continuous for all $R_{pc}\geq R_1$ for the $R_1$ defined in Lemma~\ref{lem:StandardExtensionBasicProperties}.\ref{lem:StandardExtension:SliceCloseness}.
Thus it remains to verify pseudo-continuity at the boundary.

Let $x\in\partial_HX$ and $n\in\N$. In the following we are going to show that 
\begin{equation}
\label{eq12343rwe32er}
U\coloneqq \overline{H}_n^{-1}(B_{R_{pc}}(\overline{H}_n(x)))
\end{equation}
is a neighbourhood of $x$ for all $R_{pc}\geq R_{exp}+2R_{coh}+2\overline{R}_{coh}$. To this end, we need a function $f\in D_H(X)$ whose extension to $\combcompb{X}{H}$ as in Lemma \ref{lem:DHExtensions} vanishes outside of $U$ and takes the value $1$ at $x$. 
The function $f$ will  be constructed in Urysohn-style by a sequence of nested subsets of $U\cap X$.

\proofsubdivisor{Preliminary step to the construction of the function $f$.}
To simplify our exposition, let us introduce the following notation. For two subsets $A,B\subset X$ and an $R>0$ we will write $A\neighbsub{R}B$ if the $R$-neighborhood $B_R(A)$ of $A$ is contained in~$B$. We remark that one should be cautious when using this notion, since we always have
\[A\neighbsub{R}B\neighbsub{S}C\implies A\neighbsub{\max\{R,S\}}C\,,\]
but the implication $A\neighbsub{R}B\neighbsub{S}C\implies A\neighbsub{R+S}C$ is in general only true in path metric spaces. 

Furthermore, 
let $S_0\coloneqq R_{exp}+2R_{coh}\leq S_1\leq S_2\leq\ldots$ be a sequence of real numbers such that 
for each $k\in\N$ and $(y,m)\in X\times\N$ the map
$H$ maps the ball of radius $k$ around $(y,m)$ into the ball of radius $S_k$ around $H_{m}(y)$.
We also define $I_k\coloneqq (2^{-k}\cdot \Z)\cap[0,1]$ for all $k\in\N$.

We are now going to construct an increasing sequence of natural numbers $n=n_0\leq n_1\leq n_2\leq\ldots$, subsets $A^+_{k,\lambda} \subset X$ for all $k\in\N$, $\lambda\in I_k\setminus\{1\}$ and subsets $A^-_{k,\lambda} \subset X$ for all $k\in\N$, $\lambda\in I_k\setminus\{0\}$ with the  properties listed below, where we have set $C^\pm_{k,\lambda}\coloneqq H_{n_k}^{-1}(A^\pm_{k,\lambda})$:
\begin{enumerate}[(i)]
\item\label{enum:nestA} For each fixed $k\in\N$, the sets $A^\pm_{k,\lambda}$ are nested by the inclusions
\begin{itemize}
\item $A^+_{k,\lambda}\subset A^-_{k,\lambda}$ for $\lambda\in I_k\setminus\{0,1\}$, and
\item $A^-_{k,\lambda+2^{-k}}\neighbsub{S_k} A^+_{k,\lambda}$ for $\lambda\in I_k\setminus\{1\}$.
\end{itemize}
\item\label{enum:nestCa} A direct consequence of (\ref{enum:nestA}) is that we have the inclusions $C^+_{k,\lambda}\subset C^-_{k,\lambda}$  and $C^-_{k,\lambda+2^{-k}}\neighbsub{k} C^+_{k,\lambda}$ for all $k,\lambda$ as above.
\item\label{enum:nestCb}
\begin{itemize}
\item For all $k\in\N\setminus\{0\}$, $\lambda\in I_{k-1}\setminus\{1\}$ we have $C^+_{k,\lambda}\subset C^+_{k-1,\lambda}$.
\item For each $k\in\N\setminus\{0\}$ there is a bounded subset $L_k\subset X$ such that $C^-_{k-1,\lambda}\setminus L_k\subset C^-_{k,\lambda}$ for all $\lambda\in I_{k-1}\setminus\{0\}$.
\end{itemize}
\item Finally, $C_{0,0}^+\subset U\cap X$
and $C_{0,1}^-$ contains all $\overline{H}_m(x)$ with $m\geq n$. Note that $n\in\N$ and $U\subset\combcompb{X}{H}$ have been fixed at the beginning of the proof.\label{enum:nest0}
\end{enumerate}
For $k=0$ we can set $A^-_{0,1}\coloneqq B_{\overline{R}_{coh}}(\overline{H}_n(x))$ and $A^+_{0,0}\coloneqq B_{\overline{R}_{coh}+S_0}(\overline{H}_n(x))$. We obtain $C^-_{0,1}=H_n^{-1}(A^-_{0,1})$ and $C^+_{0,0}=H_n^{-1}(A^+_{0,0})$. 
Properties (\ref{enum:nestA}), (\ref{enum:nestCa}) and (\ref{enum:nest0}) are then clear and (\ref{enum:nestCb}) not yet applicable.

Assume now that the sets $A^\pm_{k',\lambda}$ and $ C^\pm_{k',\lambda}$ have already been constructed for all $k'<k$.
The $R_{exp}$-expandingness condition on $H$ then gives us a bounded subset $K_k\coloneqq K_{2S_k,n_{k-1}}$ such that $H_{n_{k-1}}(B_{2S_k}(y))\subset B_{R_{exp}}(H_{n_{k-1}}(y))$ for all $y\in X\setminus K_k$. We then define for each $l=1,\dots,2^{k-1}$
\begin{align}
A^-_{k,2l\cdot 2^{-k}}&\coloneqq H_{n_{k-1}}^{-1}(B_{R_{coh}}(A^-_{k-1,l\cdot 2^{1-k}}))\setminus B_{2l\cdot S_k}(K_k),\label{eq43trzretz}\\
A^+_{k,(2l-1)\cdot 2^{-k}}&\coloneqq A^-_{k,(2l-1)\cdot 2^{-k}}\coloneqq  B_{S_k}(A^-_{k,2l\cdot 2^{-k}}),\notag\\
A^+_{k,2(l-1)\cdot 2^{-k}}&\coloneqq B_{2S_k}(A^-_{k,2l\cdot 2^{-k}}).\notag
\end{align}
The number $n_k$ will be chosen below in the proof of \eqref{enum:nestCa}. We now have to show the Points~(\ref{enum:nestA})--(\ref{enum:nestCb}) from the above list of properties for these subsets. Point~(\ref{enum:nest0}) holds by construction, see above the case $k=0$.
\begin{enumerate}[(i):]
\item We clearly have
\[A^-_{k,2l\cdot 2^{-k}}\neighbsub{S_k}A^+_{k,(2l-1)\cdot 2^{-k}}=A^-_{k,(2l-1)\cdot 2^{-k}}\neighbsub{S_k}A^+_{k,2(l-1)\cdot 2^{-k}}\,.\]
These shows three quarter of the inclusions in (\ref{enum:nestA}). Now, if
\[y\in A^+_{k,2l\cdot 2^{-k}}=B_{2S_{k}}(  H_{n_{k-1}}^{-1}(B_{R_{coh}}(A^-_{k-1,(l+1)\cdot 2^{1-k}}))\setminus B_{2(l+1)\cdot S_k}(K_k)  )\,,\] then we clearly have $y\notin B_{2l\cdot S_k}(K_k)$, so in particular $y\notin K_k$ and hence the expansion condition on $H$ together with (\ref{enum:nestA}) for $k-1$ instead of $k$ implies
\[H_{n_{k-1}}(y)\in B_{R_{exp}+R_{coh}}(A^-_{k-1,(l+1)\cdot 2^{1-k}})\subset A^-_{k-1,l\cdot 2^{1-k}}\,.\]
Thus $y\in  A^-_{k,2l\cdot 2^{-k}}$, and this shows the remaining part of (\ref{enum:nestA}).
\item\label{345tr234} Define the bounded subset $L_k\coloneqq B_{2^k\cdot S_k}(K_k)$.
According to Lemma \ref{lem453ewr43} there is an $n_k\geq n_{k-1}$ such that $H_{n_k}(X\setminus L_k)\subset X\setminus L_k$.
With this $n_k$ we define $C^\pm_{k,\lambda}\coloneqq H_{n_k}^{-1}(A^\pm_{k,\lambda})$ and then (\ref{enum:nestCa}) is automatic.
\item It remains to show (\ref{enum:nestCb}). For $\lambda\in I_{k-1}\setminus\{0\}$ we have
\begin{align*}
C^-_{k-1,\lambda}\setminus L_k&\subset H_{n_{k-1}}^{-1}(A^-_{k-1,\lambda})\setminus H_{n_k}^{-1}(L_k)
\\&\subset H_{n_k}^{-1}(H_{n_{k-1}}^{-1}(B_{R_{coh}}( A^-_{k-1,\lambda} ))\setminus L_k )
\\&\subset H_{n_k}^{-1}(A^-_{k,\lambda})=C^-_{k,\lambda}\,.
\end{align*}
For $\lambda\in I_{k-1}\setminus\{1\}$ we exploit $A^-_{k,\lambda+2^{1-k}}\subset X\setminus K_k$, which follows from \eqref{eq43trzretz}, and \eqref{eq43trzretz} itself to get the fourth and fifth subset relation in the following chain
\begin{align*}
H_{n_{k-1}}(C^+_{k,\lambda}) & \subset B_{R_{coh}}(H_{n_{k-1}}(H_{n_{k}}(C^+_{k,\lambda})))
\\& \subset B_{R_{coh}}(H_{n_{k-1}}(A^+_{k,\lambda}))
\\& \subset B_{R_{coh}}(H_{n_{k-1}}(B_{2S_k}(A^-_{k,\lambda+2^{1-k}})))
\\& \subset B_{R_{coh}}(B_{R_{exp}} (H_{n_{k-1}} (A^-_{k,\lambda+2^{1-k}})))
\\& \subset B_{R_{coh}}( B_{R_{exp}} (B_{R_{coh}}( A^-_{k-1,\lambda+2^{1-k}} )))
\\& \subset B_{S_{k-1}}( A^-_{k-1,\lambda+2^{1-k}} )
\\& \subset A^+_{k-1,\lambda}
\end{align*}
and this implies $C^+_{k,\lambda}\subset C^+_{k-1,\lambda}$.
\end{enumerate}

\proofsubdivisor{The function $f$.}
Now that we know that a suitable collection of subsets exists, we can finally define the function 
\[f\colon X\to [0,1]\,,\quad y\mapsto \sup\{\lambda\mid y\in C^-_{k,\lambda},\,k\in\N,\,\lambda\in I_k\setminus\{0\}\}\]
(here, we understand the supremum of the empty set to be zero)
and we want to show that it is an element of $D_H(X)$. 

To this end, we also define the functions
\[f_k\colon X\to [0,1]\,,\quad y\mapsto \sup\{\lambda\mid y\in C^-_{k,\lambda},\,\lambda\in I_k\setminus\{0\}\}\]
and note that (\ref{enum:nestCa}) and (\ref{enum:nestCb}) together imply
\begin{equation}
\label{eq3243etw24}
f_k\leq f \qquad \text{and} \qquad f|_{X\setminus \tilde L_k}\leq f_k|_{X\setminus \tilde L_k}+2^{-k}\, ,
\end{equation}
where $\tilde L_k\coloneqq L_1\cup\dots\cup L_k$. Indeed, the first inequality is clear and for the second we use
\[C^-_{j,\lambda}\setminus\tilde L_k\subset C^-_{\max\{j,k\},\lambda}\subset C^+_{\max\{j,k\},\lambda-2^{-k}}\subset C^+_{k,\lambda-2^{-k}}\subset C^-_{k,\lambda-2^{-k}}\,.\]

Next, we note that the $R$-variation of $f_k$ is norm bounded by $2^{-k}$ for all $R<k$ by property (\ref{enum:nestCa}). Thus, given $R>0$ and $\varepsilon>0$, we can choose $k>R$ with $2^{-k}<\frac{\varepsilon}3$ to see that 
\[\Var_Rf(y)\leq \Var_Rf_k(y)+\frac{2\varepsilon}{3}<\varepsilon\]
for all $y$ outside of the bounded subset $B_R(\tilde L_k)$.

In order to show the other property, we first take a look at the functions $H_m^*f_k$ for $m\geq n_k$. For all $y\in X$ the points $H_{n_k}(H_m(y))$ and $H_{n_k}(y)$ are at most $R_{coh}$ appart, so if one of them is contained in a set $A^-_{k,\lambda}$, then the other is contained in $A^-_{k,\lambda-2^{-k}}$by property (\ref{enum:nestA}). This implies
\begin{equation}
\label{eq1243retwq23r}
|f_k(H_m(y))-f_k(y)|\leq 2^{-k}
\end{equation}
and so $\|H_m^*f_k-f_k\|\leq 2^{-k}$.

To transfer this property to $f$, choose $k\in\N$ with $2^{-k}<\frac{\varepsilon}3$.
By Lemma~\ref{lem453ewr43} we can find $N\geq n_k$ such that $H_m(X\setminus \tilde L_k)\subset X\setminus \tilde L_k$ and $H_m|_{\tilde L_k}=\id_{\tilde L_k}$ for all $m\geq N$.
Then, for all $y\in \tilde L_k$ and $m\geq N$ we have $f(H_m(y))=f(y)$. If, on the other hand, $y\in X\setminus \tilde L_k$, then also $H_m(y)\in X\setminus \tilde L_k$ and we can conclude
\begin{align*}
|f(H_m(y & ))-f(y)|\\
& \leq |f(H_m(y))-f_k(H_m(y))|+|f_k(H_m(y))-f_k(y)|+|f_k(y)-f(y)|\\
& \leq 3\cdot 2^{-k}<\varepsilon
\end{align*}
by \eqref{eq3243etw24}, \eqref{eq1243retwq23r} and again \eqref{eq3243etw24}. Therefore we conclude $\|H_m^*f-f\|<\varepsilon$ for all $m\geq N$, hence $\|H_m^*f-f\|\xrightarrow{m\to\infty}0$ and so $f\in D_H(X)$.

We denote the extension of $f$ to $\combcompb{X}{H}$, which is continuous at all points of the corona, by the same letter. If $f$ does not vanish on $y\in\combcompb{X}{H}$, then $f(\overline{H}_m(y))\not=0$ for some $m\geq n$, because $\overline{H}_m(y)\xrightarrow{m\to\infty}y$. This implies that $\overline{H}_m(y)\in C^-_{k,\lambda}$ for some $k \in \IN$ and $\lambda \in I_k$ with $\lambda \not= 0$ by our construction of~$f$. By the properties of the sets $C^\pm_{-,-}$ therefore $\overline{H}_m(y)\in C^+_{0,0}$ and hence
\begin{align*}
& \ H_n(\overline{H}_m(y))\in A^+_{0,0}=B_{R_{exp}+2R_{coh}+\overline{R}_{coh}}(\overline{H}_n(x))\\
\implies & \ \overline{H}_n(y)\in B_{R_{exp}+2R_{coh}+2\overline{R}_{coh}}(\overline{H}_n(x))\subset B_{R_{pc}}(\overline{H}_n(x))\\
\implies & \ y\in U\, .
\end{align*}
By construction $f(x)=\lim_{m\to\infty}f(\overline{H}_m(x))=1$, hence $x\in f^{-1}(\C\setminus 0)\subset U$ and $U$ is indeed a neighborhood of $x$.
\end{proof}

\begin{thm}\label{thm:RipsCompactificationContractible}
Let $(X,d)$ be a proper discrete metric space equipped with an expanding and coherent combing $H$.

Then for every $R>0$ exists $S>R$ such that $\combcompb{P_R(X)}{H}$ is contractible in $\combcompb{P_S(X)}{H}$. Even more, the $\sigma$-compact space $\combcompb{\cP(X)}{H}$ itself is contractible.
Further, contractions can be chosen which fix the starting point $p\in X$ of the combing.
\end{thm}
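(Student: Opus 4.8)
The plan is to construct, for each scale $R$, an explicit homotopy
\[
F\colon\combcomp{P_R(X)}^H\times[0,1]\to\combcomp{P_S(X)}^H
\]
with $F(\,\cdot\,,1)$ the inclusion and $F(\,\cdot\,,0)\equiv p$, by flowing every point along its combing line as the combing ``level'' runs from $\infty$ down to $0$. I would work throughout on $P_R(X)$ equipped with the inherited combing $H$ and its standard extension $\overline{H}$ to $\combcomp{P_R(X)}^H$, using the facts already at our disposal: $\overline{H}$ is pseudo-continuous with a constant $R_{pc}$ which, by the proof of Lemma~\ref{lem:ExtensionPseudocontinuity}, may be taken \emph{independent of $n$}; the extension is coherent (Lemma~\ref{lem:ExtensionCoherence}); and $\overline{H}_0\equiv p$, $\overline{H}_{n+1}$ is $R_1$-close to $\overline{H}_n$, and $\overline{H}_n(y)\xrightarrow{n\to\infty}y$ (Lemma~\ref{lem:StandardExtensionBasicProperties}).

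The first step is to remedy the fact that $\overline{H}_n$ is only pseudo-continuous. Since $\combcomp{P_R(X)}^H$ is compact, for each $n$ the cover by the neighbourhoods $\overline{H}_n^{-1}\big(B_{R_{pc}}(\overline{H}_n(y))\big)$ admits a finite subcover, and a subordinate partition of unity lets me define a genuinely continuous map $\widetilde{H}_n$ into a Rips complex at a scale controlled by $R_{pc}$ (hence independent of $n$): for each point, $\widetilde{H}_n(y)$ is a convex combination of finitely many values $\overline{H}_n(y_i)$ which are pairwise within $2R_{pc}$, so their vertex sets span a simplex of bounded diameter. By construction $\widetilde{H}_n$ stays within a uniform distance of $\overline{H}_n$; consequently $\widetilde{H}_{n+1}$ and $\widetilde{H}_n$ remain boundedly close, so their affine interpolation never leaves a Rips complex of some fixed larger scale, and I may take $\widetilde{H}_0\equiv p$.

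I would then assemble the homotopy. Fix an increasing homeomorphism $\tau\colon[0,1)\to[0,\infty)$ with $\tau(0)=0$, write $n(t)=\lfloor\tau(t)\rfloor$ and $s(t)=\tau(t)-n(t)$, and set $F(y,t)$ to be the affine interpolation $(1-s(t))\widetilde{H}_{n(t)}(y)+s(t)\widetilde{H}_{n(t)+1}(y)$ for $t<1$ and $F(y,1):=y$. For $t<1$ this is manifestly continuous and, by the scale bookkeeping of the previous step, takes values in a single $\combcomp{P_S(X)}^H$ with $S=S(R)$ depending only on $R,R_{pc},R_1$ and $\overline{R}_{coh}$; at $t=0$ it equals $p$. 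The delicate point---and the main obstacle---is joint continuity at $t=1$, in particular at the corona points $y_0\in\partial_HX$, where $F(y_0,1)=y_0$ lies on the boundary while every $F(y_0,t)$ with $t<1$ lies in the interior. For an interior $y_0\in X$ this is routine, because $H_n$ is eventually the identity on bounded sets and hence $F(\,\cdot\,,t)$ is close to the identity near $y_0$ for $t$ near $1$. For $y_0$ in the corona I would test continuity against the functions $f\in D_H(P_R(X))$ that generate the topology of the compactification (Lemma~\ref{lem:DHExtensions}): using $H_m^*f\to f$, the uniform pseudo-continuity estimate, and coherence to compare $f(F(y,t))$ with $f(\overline{H}_{n(t)}(y))$ and ultimately with $f(y_0)$, the convergence $\overline{H}_n(y)\to y$ upgrades to the required statement that $f(F(y,t))\to f(y_0)$ as $(y,t)\to(y_0,1)$. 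This is exactly the continuity that the pseudo-continuity and coherence of $\overline{H}$ were set up to provide, and it is where essentially all the work lies.

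Finally, the first assertion is the existence of such $F$ for the scale $S=S(R)$. For the second, I would observe that the construction is natural in $R$---the combing and its standard extension are literally the same maps on each $\combcomp{P_R(X)}^H$, and the choices can be made compatibly with the inclusions $P_R(X)\subset P_{R'}(X)$---so the level-wise homotopies glue to a single $\sigma$-homotopy on $\combcomp{\cP(X)}^H=\bigcup_R\combcomp{P_R(X)}^H$ from the identity $\sigma$-map to the constant map $p$. That $F$ is a $\sigma$-map is precisely the bound $F(\combcomp{P_R(X)}^H\times[0,1])\subset\combcomp{P_{S(R)}(X)}^H$ just established, so $\combcomp{\cP(X)}^H$ is contractible as a $\sigma$-compact space.
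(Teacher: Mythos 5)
Your overall strategy is the paper's own (standard extension $\overline{H}$, discretization by a partition of unity, affine interpolation in a larger Rips complex, then continuity at the end of the homotopy), but there is a genuine gap at the discretization step, and it is fatal exactly in the case you dismiss as routine. You build the continuous maps $\widetilde{H}_n$ from a partition of unity subordinate to a \emph{finite} subcover of $\combcomp{P_R(X)}^H$ by the pseudo-continuity neighborhoods. With a finite subcover the only thing you can guarantee is that $\widetilde{H}_n$ stays within a fixed distance of order $R_{pc}$ of $\overline{H}_n$; you cannot guarantee that $\widetilde{H}_n$ eventually \emph{equals} the identity on bounded sets. Concretely, a vertex $x\in X$ will in general lie, for every $n$, in a cover element attached to some point $y^{(n)}\neq x$ of the finite subcover with $\overline{H}_n(y^{(n)})$ at distance up to $R_{pc}$ from $x$ but different from $x$; then $\widetilde{H}_n(x)$ is a genuine convex combination displaced from $x$ by an amount that has no reason to tend to $0$ as $n\to\infty$ (the subcover, and hence the displacing points, change with $n$). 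Consequently $F(x,t)$ need not converge to $x$ as $t\to 1$: being ``close to the identity'' within a fixed positive constant is not continuity at $t=1$, so your homotopy fails to end at the inclusion already at interior points, and a fortiori the joint continuity at corona points cannot be salvaged.

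This is precisely where the paper uses the hypothesis that $X$ is discrete. Since bounded subsets of the proper discrete space $X$ are finite, the points of $X$ are isolated in $\combcomp{X}^H$, so one can cover $\combcomp{X}^H$ (not $\combcomp{P_R(X)}^H$) by sets $U_{y,n}$ with $U_{y,n}=\{y\}$ for every $y\in X$, take a \emph{locally finite} (necessarily infinite) partition of unity subordinate to this cover, and extend it affinely over the simplices of $P_R(X)$. Combined with pseudo-continuity and properness of $H$, the singleton condition forces that for each $x\in X$ and all large $n$ the only nonvanishing partition function at $x$ is $\varphi_{x,n}$, so the resulting $H^R$ is eventually \emph{exactly} the identity on every bounded subcomplex, and $H^R|_{X\times\N}$ is close to $H$. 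This exactness is not cosmetic: it is what allows the paper to handle continuity at the corona, by extending $H^R|_{P_R(X)\times\N}$ to a proper combing $\tilde H$ of $P_S(X)$ (eventual stationarity on bounded sets is part of the definition of a combing) which is close to $i_S\circ H$ for the inclusion $i_S\colon X\to P_S(X)$; hence $\tilde H$ induces the same compactification $\combcomp{P_S(X)}^H$, so $\tilde H_n^*f\to f$ in sup norm for every $f\in C(\combcomp{P_S(X)}^H)$, which is exactly the uniform convergence that your sketch at corona points needs but does not establish (pointwise convergence $\overline{H}_n(y)\to y$ is not enough, as you yourself note this is ``where essentially all the work lies''). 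If you replace your finite subcover by the paper's cover of $\combcomp{X}^H$ with singletons at interior points, both problems disappear, and the rest of your outline (scale bookkeeping, reparametrization of $[0,\infty]$ to $[0,1]$, and gluing the homotopies over $R$ by choosing the cover and partition of unity independently of $R$ for the $\sigma$-statement) agrees with the paper's proof.
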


\begin{proof}
Let $\overline{H}$ be a standard extension of $H$ and choose the constants $R_{exp}$, $R_{coh}$, $\overline{R}_{coh}$, $R_1$, $R_{pc}$  as in the previous lemmas of this section. Exploiting the pseudocontinuity of $\overline{H}$ and discreteness of $X$ we obtain an open cover $\mathcal{U}_n=\{U_{y,n}\}_{y\in\combcompb{X}{H}}$ of $\combcompb{X}{H}$ for each $n\in\N$ such that for all $y\in \combcompb{X}{H}$
\begin{enumerate}[(i)]
\item $U_{y,n}$ is an open neighborhood of $(y,n)$, \label{enum:neighb}
\item $U_{y,n}=\{y\}$ if $y\in X$, \label{enum:singleton}
\item $\overline{H}_n(U_{y,n})\subset B_{R_{pc}}(\overline{H}_n(y))$, \label{enum:pseudocont} \item the starting point $p$ of the combing is contained in no $U_{y,n}$ with $y\not=p$.\label{enum:pfixpoint}
\end{enumerate}
For each $n\in\N$ let $\{\varphi_{y,n}\}_{y\in\combcompb{X}{H}}$ be a locally finite partition of unity which is subordinate to the cover $\mathcal{U}_n$.
We can extend it to a locally finite partition of unity on $\combcompb{P_R(X)}{H}$ for arbitrary $R>0$ by the formula
\[\varphi_{y,n}\left(\sum_{i=0}^k\lambda_ix_i\right)\coloneqq \sum_{i=0}^k\lambda_i \varphi_{y,n}(x_i)\,.\]

Let $R>0$ be arbitrary, let $S_R>0$ a constant such that $H$ maps points of distance at most $R$ to points of distance at most $S_R$.
The properties on $\overline{H}$ imply that if $x_0,\dots, x_k\in X$ span a $k$-simplex of $P_R(X)$,
i.\,e.~have mutual distance at most $R$, then for each $n\in\N$ all of the points of the form $\overline{H}_n(y)$ or $\overline{H}_{n+1}(y)$ with $x_i\in U_{y,n}$ for some $i\in\{0,\dots,k\}$ have mutual distance at most $S\coloneqq 2R_{pc}+S_R+R_1$. 

Furthermore, if $x\in\partial_HX$, then for each $n\in\N$ all of the points of the form $\overline{H}_n(y)$ or $\overline{H}_{n+1}(y)$ with $x\in U_{y,n}$ have mutual distance at most $2R_{pc}+R_1\leq S$.

Hence we can define a continuous map
$H^R\colon \combcompb{P_R(X)}{H} \times[0,\infty)\to P_S(X)$ by the formula
\begin{align*}
H^R(x,t) \coloneqq  (n+1-t) & \cdot \sum_{y\in\combcompb{X}{H}}\varphi_{y,n}(x)\overline{H}_n(y)\\
+(t-n) & \cdot \sum_{y\in\combcompb{X}{H}}\varphi_{y,n+1}(x)\overline{H}_{n+1}(y)
\end{align*}
whenever $t\in[n,n+1]$. We note the following three properties of $H^R$:
\begin{itemize}
\item $H^R_0$ is constantly equal to the starting point $p$ of the combing $H$, because $\overline{H}_0$ has this property. Furthermore, Property (\ref{enum:pfixpoint}) implies that $H^R(p,t)=p$ for all $t\in[0,\infty)$. \item Properties (\ref{enum:singleton}) and (\ref{enum:pseudocont}) imply that for each $x\in X$ there is an $N\in\N$ such that for every $n\geq N$ the point $x$ is contained in no  other set $U_{y,n}$ but $U_{x,n}=\{x\}$. Consequently, for every bounded subset $K\subset P_R(X)$ there is a $T>0$ such that $H^R(x,t)=x$ for all $x\in K$ and $t\geq T$.
\item Denote the inclusions of $X$ into $P_R(X)$ and $P_S(X)$ by $i_R$ and $i_S$, respectively, and the inclusion of $\N$ into $[0,\infty)$ by $i_\N$. Then we see immediately from the construction of $H^R$ that $H^R|_{P_R(X)\times[0,\infty)}$ is a controlled map and $H^R\circ(i_R\times i_\N)$ is close to $i_S\circ H$.
\end{itemize}

We have to extend $H^R$ continuously to a map $\combcompb{P_R(X)}{H}\times[0,\infty]\to \combcompb{P_S(X)}{H}$ with  $H^R|_{\combcompb{P_R(X)}{H}\times\{\infty\}}$ being the inclusion $\combcompb{P_R(X)}{H} \subset \combcompb{P_S(X)}{H}$. Using the above three properties we do it as described as follows.

We can extend $H^R|_{P_R(X)\times\N}$ to a proper combing $\tilde H$ of $P_S(X)$. For this combing, $\tilde H\circ(i_S\times \id_\N)$ is then close to $i_S\times H$, i.\,e.~$\tilde H$ is a proper combing which induces the compactification $\combcompb{P_S(X)}{H}$, as discussed in Example~\ref{ex_sigmacombingcompactification}. 
In particular, for every $f\in C(\combcompb{P_S(X)}{H})$ we have that $\tilde H_n^*(f|_{P_S(X)})\xrightarrow{n\to\infty} f|_{P_S(X)}$ and we immediately obtain $(H_t^R)^*f\xrightarrow{t\to\infty}f|_{\combcompb{P_R(X)}{H}}\in C(\combcompb{P_R(X)}{H})$.
This tells us that $H^R$ can be extended continuously to $\combcompb{P_R(X)}{H} \times[0,\infty]\to \combcompb{P_S(X)}{H}$ as needed, finishing the proof of the first part.

A priori, the contractibility of $\combcompb{\cP(X)}{H}$ as a $\sigma$-compact space is a stronger property: The task here is to construct \emph{one single} continuous contraction $H^\infty\colon \combcompb{\cP(X)}{H}\times [0,\infty]\to \combcompb{\cP(X)}{H}$ which respects the filtration in the sense that for all $R>0$ there is $S\geq R$ such that $H^\infty$ maps the subspace $\combcompb{P_R(X)}{H}\times[0,\infty]$ to $\combcompb{P_S(X)}{H}$.

But this contraction of $\combcompb{\cP(X)}{H}$ is obtained by simply combining all the $H^R$ that we constructed above: Both the open cover $\mathcal{U}_n=\{U_{y,n}\}_{y\in\combcompb{X}{H}}$ and the to it subordinate partition of unity $\{\varphi_{y,n}\}_{y\in\combcompb{X}{H}}$ can be chosen independently of $R$. If we perform the above construction for this fixed partition of unity, then the contractions will have the property that $H^{R'}$ restricts to $H^R$ whenever $R'\geq R$. Thus, all the $H^R$ combine to one single contraction $H^\infty$ of the $\sigma$-compact space $\combcompb{\cP(X)}{H}$ with the desired property.
\end{proof}

\subsection{Injectivity of the coarse assembly map and surjectivity of the coarse co-assembly map}
\label{sec_coarse_assembly_coassembly}

From Lemma~\ref{lem_transgressionisoforcontractible} and Theorem~\ref{thm:RipsCompactificationContractible} we get the following:
\begin{cor}\label{corjhgfdnbv}
Let $X$ be a proper metric space equipped with an expanding and coherent combing $H$.

Then the transgression maps
\[\KX_{\ast}(X) \to \tilde{K}_{\ast-1}(\partial_H X)\quad\text{and}\quad \tilde{K}^{*-1}(\partial_HX;D)\to\KX^*(X;D)\]
are isomorphisms, where $D$ is any coefficient $C^*$-algebra.
\end{cor}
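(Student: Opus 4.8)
The plan is to deduce the statement directly from the abstract isomorphism criterion of Lemma~\ref{lem_transgressionisoforcontractible} together with the contractibility result of Theorem~\ref{thm:RipsCompactificationContractible}, after first reducing from an arbitrary proper metric space to a discrete one. First I would fix a discretization $Z\subset X$, which exists since $X$ is proper, and transport the combing $H$ to $Z$ so that the inclusion $Z\hookrightarrow X$ becomes a morphism of combed coarse spaces in the sense of Remark~\ref{rem_combingdefinition}.(\ref{rem_enum_combedspacemorphisms}). As this inclusion is a coarse equivalence, Lemma~\ref{lem_AdditionalPropertiesCoarselyInvariant} guarantees that the induced combing on $Z$ inherits expandingness and coherence from $H$, and Corollary~\ref{cor_coronafunctoriality} identifies $\partial_H Z$ with $\partial_H X$. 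Since by definition $\KX_*(X)=K_*(\cP(Z))$ and $\KX^*(X;D)=K^*(\cP(Z);D)$, it suffices to work entirely on the discrete space $Z$.

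Next I would check that the combing $\sigma$-compactification $\combcomp{\cP(Z)}^H$ of Example~\ref{ex_sigmacombingcompactification} is an admissible input for the transgression machinery. It is a $\sigma$-compactification of $\cP(Z)$ with corona $\partial_H X$, and since its defining functions have vanishing variation it is Higson dominated; hence it plays the role of $\overline{\cP(Z)}$ in Definition~\ref{def:transgression}. Because $Z$ is discrete and proper and carries an expanding, coherent combing, Theorem~\ref{thm:RipsCompactificationContractible} applies and shows that $\combcomp{\cP(Z)}^H$ is contractible as a $\sigma$-compact space.

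Finally I would invoke Lemma~\ref{lem_transgressionisoforcontractible}. The only point to verify is that $K$-homology and $K$-theory with coefficients in $D$ qualify as generalized Steenrod (co-)homology theories for $\sigma$-(locally) compact spaces, which was arranged in Section~\ref{sec:Steenrod}: $K$-homology is obtained as a colimit over the filtration, while $K^*(-;D)=K_{-*}(C_0(-)\otimes D)$ is Phillips' $K$-theory of the associated $\sigma$-$C^*$-algebra. With the contractibility of $\combcomp{\cP(Z)}^H$ in hand, the lemma immediately yields that the reduced transgression maps $\KX_{\ast}(X)\to\tilde K_{\ast-1}(\partial_H X)$ and $\tilde K^{*-1}(\partial_H X;D)\to\KX^*(X;D)$ are isomorphisms, which is the assertion.

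I expect no serious obstacle here, since the corollary is essentially a formal consequence of the two cited results; the only genuine content is the reduction to the discrete case and the observation that the combing compactification is precisely the Higson dominated $\sigma$-compactification feeding into the transgression maps. The care required is thus purely bookkeeping: ensuring that expandingness and coherence survive the passage to the discretization and that the coronas $\partial_H Z$ and $\partial_H X$ are correctly identified.
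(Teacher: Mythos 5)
Your proposal is correct and follows exactly the paper's own route: the corollary is deduced from Theorem~\ref{thm:RipsCompactificationContractible} (contractibility of $\combcomp{\cP(Z)}^H$) together with Lemma~\ref{lem_transgressionisoforcontractible}, with the passage to a discretization and the identification of coronas handled by Lemma~\ref{lem_AdditionalPropertiesCoarselyInvariant}, Corollary~\ref{cor_coronafunctoriality} and Example~\ref{ex_sigmacombingcompactification}, just as the paper does implicitly. The bookkeeping you spell out (transfer of expandingness and coherence to $Z$, and that $K$-homology and $K^*(-;D)$ are generalized Steenrod theories as arranged in Section~\ref{sec:Steenrod}) is precisely what the paper's terse citation of those two results relies on.
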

Recall from the constructions of transgression maps in Section \ref{sec:Steenrod} that in the above corollary $\tilde{K}$ denotes versions of reduced $K$-homology and $K$-theory which satisfy the strong excision axiom, and that $\KX$ are the corresponding coarse versions.

\begin{thm}\label{thm243reds}
Let $X$ be an expandingly and coherently combable proper metric space. Then the analytic coarse  assembly map
\[\mu\colon \KX_*(X)\to K_*(C^*X)\]
is injective.
\end{thm}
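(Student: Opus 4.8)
The plan is to deduce injectivity of the analytic coarse assembly map from the isomorphism statement in Corollary~\ref{corjhgfdnbv} by fitting it into the standard commuting diagram relating the transgression map, the coarse assembly map, and the boundary (or Higson corona) assembly map. The crucial input is that the reduced transgression map $\KX_*(X)\to\tilde K_{*-1}(\partial_HX)$ is an isomorphism; since an isomorphism is in particular injective, it suffices to factor $\mu$ through this transgression map, or more precisely to realize the transgression map as $\mu$ composed with further maps, so that injectivity of the composite forces injectivity of $\mu$.

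First I would recall that the analytic coarse assembly map $\mu\colon\KX_*(X)\to K_*(C^*X)$ admits the well-known description via the boundary of the long exact sequence of the pair $(\overline{X},\partial X)$ in $K$-homology, or equivalently via the Higson corona. The key structural fact, due to Higson--Roe, is that there is a commutative diagram in which the transgression map $\KX_*(X)\to\tilde K_{*-1}(\partial_HX)$ is the composite of $\mu$ (landing in $K_*(C^*X)$) with the map to $K$-theory of the corona algebra, using that $C^*X$ sits in the short exact sequence whose quotient computes the corona. Concretely, the combing corona $\partial_HX$ is a quotient of the Higson corona (since $C_H(X)\subset C_h(X)$ induces a surjection of coronas), so there is a natural map $\tilde K_{*-1}(\partial_HX)\to\tilde K_{*-1}(\nu X)$ to the Higson corona, and the Higson corona transgression factors through the coarse assembly map. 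I would assemble the diagram
\begin{equation*}
\xymatrix{
\KX_*(X)\ar[r]^-{\mu}\ar[dr]_-{\cong}&K_*(C^*X)\ar[d]\\
&\tilde K_{*-1}(\partial_HX)
}
\end{equation*}
where the diagonal arrow is the transgression map, which Corollary~\ref{corjhgfdnbv} tells us is an isomorphism, and the vertical arrow is the natural map induced by the quotient of coronas together with the boundary homomorphism. Commutativity of this triangle is the essential point: it identifies the transgression map with a factorization through $\mu$.

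Given commutativity, the argument concludes immediately: if $\alpha\in\KX_*(X)$ satisfies $\mu(\alpha)=0$, then its image under the diagonal transgression map is also zero, but that map is an isomorphism by Corollary~\ref{corjhgfdnbv}, forcing $\alpha=0$. Hence $\mu$ is injective.

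The main obstacle I expect is establishing the commutativity of the triangle, i.e.\ verifying that the transgression map of Definition~\ref{def:transgression}, which is defined via the connecting homomorphism for the pair $(\overline{\cP(Z)},\partial_HX)$ in the generalized Steenrod $K$-homology, genuinely coincides with the composite of $\mu$ with the corona map. This requires matching two a priori different boundary constructions: the coarse assembly map is classically built from the Paschke duality / the exact sequence $0\to C^*X\to D^*X\to D^*X/C^*X\to0$ relating the Roe algebra to the structure algebra, whereas the transgression here comes from the topological long exact sequence of the $\sigma$-compactification $\combcomp{\cP(X)}^H$. One must show these fit into one commuting ladder, using that the $K$-homology of $\overline{\cP(Z)}$ computes $K_*(D^*X)$ and that the quotient by $\partial_HX$ recovers the relevant piece; this is where the identification $\HX_*(X)=\KX_*(X)$ via the Rips complex and the compatibility of the combing compactification with the Higson compactification (via $C_H(X)\subset C_h(X)$) must be invoked carefully. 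I would cite the relevant compatibility from the coarse index theory literature rather than reprove it.
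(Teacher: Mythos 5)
Your proposal is correct and takes essentially the same route as the paper: the paper's proof cites Higson--Roe (Remark 12.3.8) for precisely the homomorphism $t\colon K_*(C^*X)\to\widetilde{K}_{*-1}(\partial_HX)$ with $t\circ\mu$ equal to the transgression map, and then concludes injectivity from Corollary~\ref{corjhgfdnbv} exactly as you do. One minor slip in your sketch of the vertical arrow: by covariance of $K$-homology, the quotient map of coronas $\nu X\to\partial_HX$ induces $\widetilde{K}_{*-1}(\nu X)\to\widetilde{K}_{*-1}(\partial_HX)$, not the direction you wrote, but this does not affect the argument since the needed factorization is in any case taken from the literature.
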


\begin{proof}
From \cite[Remark 12.3.8]{higson_roe} we know that there is a homomorphism $t\colon K_*(C^*X)\to \widetilde{K}_{*-1}(\partial_HX)$ such that the composition
\[t\circ\mu\colon \KX_*(X)\to \tilde{K}_{*-1}(\partial_HX)\]
is the transgression map. Since the transgression map is an isomorphism by the above Corollary~\ref{corjhgfdnbv}, we conclude that $\mu$ must be injective.
\end{proof}

In \cite{EmeMey}, a coarse co-assembly map  
\[\mu^*\colon \tilde{K}_{1-*}(\mathfrak{c}(X;D))\to \KX^{*}(X;D)\]
with coefficients in a $C^*$-algebra $D$ was introduced. In the case $X$ has bounded geometry and $D=\C$, the co-assembly map is dual to the coarse assembly map in the sense that there are pairings compatible with the assembly and co-assembly maps. Similar to the above theorem we can prove:

\begin{thm}\label{thm_coasssemblysurjective}
Let $X$ be an expandingly and coherently combable proper metric space. Then the coarse co-assembly map
\[\mu^*\colon \tilde K_{1-*}(\mathfrak{c}(X;D))\to\KX^*(X;D)\]
with an arbitrary coefficient $C^*$-algebra $D$
is surjective.
\end{thm}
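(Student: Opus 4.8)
The plan is to mirror the proof of Theorem~\ref{thm243reds}, but now on the cohomological side, exploiting once more the isomorphism furnished by Corollary~\ref{corjhgfdnbv}. Recall from that corollary that the reduced cohomological transgression
\[\tau\colon \tilde K^{*-1}(\partial_H X;D)\to \KX^*(X;D)\]
is an isomorphism. The idea is to factor $\tau$ through the coarse co-assembly map $\mu^*$: if I can produce a homomorphism $s\colon \tilde K^{*-1}(\partial_H X;D)\to \tilde K_{1-*}(\mathfrak{c}(X;D))$ satisfying $\mu^*\circ s=\tau$, then surjectivity of $\tau$ forces $\mu^*$ to be surjective as well. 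This is the dual bookkeeping to the assembly case, where injectivity of $\mu$ was deduced from the \emph{left} factorization $t\circ\mu=\tau$; here I need a \emph{right} factorization.

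To build $s$ I would use that $\combcomp{X}^H$ is a Higson dominated compactification, so that $C_H(X)\subset C_h(X)$ and every $f\in C(\partial_H X)$ lifts to a function of vanishing variation on $X$. Tensoring with $D\otimes\mathbb{K}$ (respectively passing to the relevant multiplier model) and then quotienting by $C_0$ yields a natural $\ast$-homomorphism from $C(\partial_H X)\otimes D\otimes\mathbb{K}$ into the reduced stable Higson corona defining $\mathfrak{c}(X;D)$. Applying K-theory and using the identification $\tilde K^{-j}(\partial_H X;D)\cong \tilde K_{j}\big(C(\partial_H X)\otimes D\otimes\mathbb{K}\big)$ (with the tilde accounting for the reduction by constants), the case $j=1-*$ produces $s$ in exactly the degree demanded by the target $\tilde K_{1-*}(\mathfrak{c}(X;D))$ of $\mu^*$.

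The crucial point, and what I expect to be the main obstacle, is the compatibility $\mu^*\circ s=\tau$. This is the exact dual of the relation used in Theorem~\ref{thm243reds}, where the composite $t\circ\mu$ equalled the transgression by \cite[Remark~12.3.8]{higson_roe}; here the analogous statement should be extracted from the Emerson--Meyer construction of the co-assembly map \cite{EmeMey}. Concretely, both $\tau$ and $\mu^*$ arise as boundary maps in long exact K-theory sequences of C*-algebra extensions (the extension coming from the compactification $\combcomp{X}^H$ on the one hand, and the coarse extension defining $\mu^*$ on the other), and the $\ast$-homomorphism underlying $s$ should assemble into a morphism between these two extensions. Naturality of the connecting homomorphism with respect to such a morphism would then yield $\mu^*\circ s=\tau$ on the nose.

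Once this comparison diagram is in place the conclusion is immediate: $\tau$ is an isomorphism by Corollary~\ref{corjhgfdnbv}, hence surjective, and since it factors as $\mu^*\circ s$, the map $\mu^*$ is surjective. I expect the verification of the factorization to be the only genuinely technical ingredient, the remaining steps being formal consequences of the contractibility result Theorem~\ref{thm:RipsCompactificationContractible} already exploited through Corollary~\ref{corjhgfdnbv}.
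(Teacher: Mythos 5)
Your proposal is correct and takes essentially the same route as the paper: the paper realizes your map $s$ and the compatibility $\mu^*\circ s=\tau$ simultaneously, by exhibiting a morphism of extensions from $0\to C_0(\cP(Z))\otimes D\to C(\combcomp{\cP(Z)}^H)\otimes D\to C(\partial_H X)\otimes D\to 0$ to the stable Higson corona extension defining $\mathfrak{c}(\cP(Z);D)$ (the left vertical maps given by a rank-one inclusion $\C\to\mathfrak{K}$, the right one induced on quotients), and then invoking naturality of the $K$-theory connecting homomorphism exactly as you predict. The only cosmetic difference is that the paper works throughout with the Rips complex $\cP(Z)$ rather than $X$ itself, identifying $\tilde K_{1-*}(\mathfrak{c}(X;D))\cong\tilde K_{1-*}(\mathfrak{c}(\cP(Z);D))$ at the end, since the co-assembly map and the transgression isomorphism are both defined at the level of $\cP(Z)$.
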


\begin{proof}
Consider the commutative diagram
\[\xymatrix{
0\ar[r]&C_0(\cP(Z))\otimes D\ar[d]\ar[r]& C(\combcompb{\cP(Z)}{H})\otimes D\ar[d]\ar[r]& C(\partial_HX)\otimes D\ar[d]\ar[r]&0
\\0\ar[r]& C_0(\cP(Z))\otimes D\otimes\mathfrak{K}\ar[r]&\overline{\mathfrak{c}}(\cP(Z);D)\ar[r]& \mathfrak{c}(\cP(Z);D)\ar[r]&0
}\]
with exact rows, whose left two vertical arrows are induced by the inclusion $\C\to\mathfrak{K}$ as some rank $1$-projection and the right vertical arrow is the unique $*$-homomorphism which makes this diagram commute.
By naturality of the connecting homomorphism in $K$-theory and passing to reduced $K$-theory we obtain the commutative diagram
\[\mathclap{\xymatrix{
\tilde{K}^{*-1}(\partial_HX; D)\ar[r]^-{\cong}\ar[d]
&\KX^*(X;D)\ar@{=}[d]
\\\tilde{K}_{1-*}(\mathfrak{c}(\cP(Z);D))\ar[r]
&\KX^*(X;D)
}}\]
where the upper horizontal arrow is an isomorphism by the corollary.
The claim now follows by recalling that the co-assembly map is by definition the composition of the lower horizontal map with a canonical isomorphism $\tilde{K}_{1-*}(\mathfrak{c}(X;D))\cong \tilde{K}_{1-*}(\mathfrak{c}(\cP(Z);D))$.
\end{proof}

\subsection{Implications for groups}

Let $G$ be a finitely generated group. Choosing a finite, symmetric generating set, we can equip $G$ with the resulting word metric to turn it into a discrete, proper metric space which we will also denote by $G$.

In Lemma~\ref{lem:charactermapisomorphism} we have shown that $\HX^\ast(G;M) \cong \HA_c^\ast(\cP(G);M)$ for any abelian group $M$. Now Roe \cite[Example 5.21]{roe_lectures_coarse_geometry} has shown the isomorphism
\begin{equation}
\label{eq_HX_Gruppenring}
\HX^\ast(G;R) \cong H^\ast(G;R[G])\, ,
\end{equation}
where $R[G]$ is the group ring for a ring~$R$. Combining these two isomorphisms we get
\begin{equation}\label{eq243rwewqe}
H^\ast(G;R[G]) \cong \HA_c^\ast(\cP(G);R)\, .
\end{equation}
In the case that $G$ is such that a Rips complex $P_n(G)$ for a fixed scale $n > 0$ is contractible, the isomorphism $H^\ast(G;R[G]) \cong H_c^\ast(P_n(G);R)$ is well-known; see, e.g., Bestvina--Mess \cite[Page 470]{bestvina_mess}. Note that we have an isomorphism $H_c^\ast(P_n(G);R) = \HA_c^\ast(P_n(G);R)$ and hence Formula \eqref{eq243rwewqe} is the corresponding generalization to all groups.

Let $G$ be equipped with a coherent and expanding combing $H$. Then by Lemma~\ref{lem_transgressionisoforcontractible} and Theorem~\ref{thm:RipsCompactificationContractible} we know that the transgression map for coarse cohomology is an isomorphism. Combining this with the above discussion we get the following:

\begin{cor}\label{cor23452342}
Let $(G,H)$ be a finitely generated group equipped with a coherent and expanding combing.

Then we have for any ring $R$ the isomorphism
\[H^{\ast+1}(G;R[G]) \cong \widetilde{\HA}^\ast(\partial_H G;R).\qedhere\]
\end{cor}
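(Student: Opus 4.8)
The plan is to assemble the desired isomorphism by chaining together three identifications that have all been established earlier, so that at the level of this corollary there is no new analytic or geometric content. First I would note that a finitely generated group $G$, equipped with a word metric, is a discrete, proper metric space; hence $Z = G$ is its own discretization and the coarse-cohomology constructions of Section~\ref{sec_coronas_and_coarse_cohomology_theories} apply directly, with $\cP(G)$ playing the role of $\cP(Z)$ and no auxiliary separated subset required.

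The first ingredient is Roe's identification \eqref{eq_HX_Gruppenring}, namely $\HX^{\ast+1}(G;R) \cong H^{\ast+1}(G;R[G])$, which translates the group cohomology with group-ring coefficients into Roe's coarse cohomology. The second is the character-map isomorphism of Lemma~\ref{lem:charactermapisomorphism}, giving $\HX^{\ast+1}(G;R) \cong H\!A\!X^{\ast+1}(G;R)$, where $H\!A\!X^{\ast+1}(G;R) = \HA_c^{\ast+1}(\cP(G);R)$ is the coarse Alexander--Spanier cohomology. Combining these two is precisely equation \eqref{eq243rwewqe}.

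The decisive third ingredient is the transgression isomorphism. Here I would invoke Theorem~\ref{thm:RipsCompactificationContractible}: since $H$ is coherent and expanding, the $\sigma$-compactification $\combcomp{\cP(G)}^H$ is contractible as a $\sigma$-compact space. This is exactly the hypothesis of Lemma~\ref{lem_transgressionisoforcontractible}, which I would apply to the generalized Steenrod cohomology theory $E^\ast = \HA^\ast(-;R)$ (verified earlier to be such a theory). The lemma then yields that the reduced transgression map
\[\widetilde{\HA}^\ast(\partial_H G;R) \to H\!A\!X^{\ast+1}(G;R)\]
is an isomorphism. Composing this with the two isomorphisms of the previous paragraph produces the asserted isomorphism $H^{\ast+1}(G;R[G]) \cong \widetilde{\HA}^\ast(\partial_H G;R)$.

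The work is essentially bookkeeping; the only points requiring care are keeping the coefficients consistent — one uses the module $R[G]$ on the group-cohomology side, whereas $\HX^\ast$ and $\HA^\ast$ take the underlying abelian group of $R$ as coefficients — and feeding the contractibility output of Theorem~\ref{thm:RipsCompactificationContractible} into Lemma~\ref{lem_transgressionisoforcontractible} with the correct reduced-cohomology convention on the corona. I do not expect a genuine obstacle at this stage: all the mathematical difficulty is concentrated in Theorem~\ref{thm:RipsCompactificationContractible}, which has already been proved, so the corollary reduces to citing and composing the established isomorphisms.
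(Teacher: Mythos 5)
Your proposal is correct and follows exactly the paper's own route: Roe's isomorphism \eqref{eq_HX_Gruppenring}, the character-map isomorphism of Lemma~\ref{lem:charactermapisomorphism} (their combination being \eqref{eq243rwewqe}), and the reduced transgression isomorphism obtained by feeding the contractibility of $\combcomp{\cP(G)}^H$ from Theorem~\ref{thm:RipsCompactificationContractible} into Lemma~\ref{lem_transgressionisoforcontractible}. Your side remarks on coefficients and on $G$ being its own discretization are also consistent with the paper's setup, so there is nothing to add.
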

On the right hand side we could have equally well used \Cech cohomology as usually used in the literature for the boundary, because it coincides with Alexander--Spanier cohomology for compact Hausdorff spaces like $\partial_HG$.

For hyperbolic groups the above formula was obtained by Bestvina--Mess \cite[Corollary 1.3(b)]{bestvina_mess}, and more generally for groups admitting a so-called $Z$-structure (cf.~Section \ref{secnm0987678bmvmvmv}) by Bestvina \cite{bestvina}. 

Recall that the cohomological dimension $\cd_R(G)$ of $G$ over $R$ is defined as
\[\cd_R(G) \coloneqq  \sup\{n \mid H^n(G;M) \not= 0 \text{ for some } R[G]\text{-module }M\}.\]
Recall that a group $G$ is called of type $F\! P_\infty$ if there exists a (possibly infinite) projective resolution of $\IZ$ over $\IZ G$ by finitely generated modules. If $\cd_R(G)$ is finite and the group $G$ is of type $F\! P_\infty$, then we can show that we have $\cd_R(G) \coloneqq  \max\{n\mid H^n(G;R[G]) \not= 0\}$ (combine Propositions VIII.2.3 and VIII.5.2 in Brown \cite{brown}). Note that this fails in general if $\cd_R(G)$ is infinite (as can be quickly seen by taking $R \coloneqq  \IZ$ and $G$ a finite group). 
Now if $G$ is combable, it is of type $F\! P_\infty$ (Alonso \cite[Theorem 2]{alonso}), and if the combing is moreover expanding and coherent, we can use the above Corollary~\ref{cor23452342} to show the following.

\begin{cor}\label{cor345243}
Let $G$ be a finitely generated group equipped with a coherent and expanding combing $H$, and let $R$ be a ring.

Then we have
\[\cd_R(G) < \infty  \implies  \cd_R(G) - 1 = \max\{n\mid \widetilde{\HA}^n(\partial_H G;R) \not= 0\}.\]
\end{cor}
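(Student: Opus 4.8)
The plan is to chain together three ingredients that are already in place: the finiteness property of combable groups, Brown's characterization of the cohomological dimension under that finiteness hypothesis, and the transgression isomorphism of Corollary~\ref{cor23452342}. No new geometry is required; the entire argument is bookkeeping of cohomological degrees.

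First I would record that $G$, carrying a combing, is combable and hence of type $F\!P_\infty$ by Alonso \cite[Theorem~2]{alonso}. Since by hypothesis $\cd_R(G) < \infty$, the combination of Propositions VIII.2.3 and VIII.5.2 in Brown \cite{brown} (quoted in the text preceding the statement) yields the sharp identity
\[\cd_R(G) = \max\{n \colon H^n(G;R[G]) \neq 0\}\,.\]
In particular $H^{\cd_R(G)}(G;R[G]) \neq 0$, while $H^n(G;R[G]) = 0$ for all $n > \cd_R(G)$.

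Next I would transport this through the transgression isomorphism $H^{m+1}(G;R[G]) \cong \widetilde{\HA}^m(\partial_H G;R)$ furnished by Corollary~\ref{cor23452342}, which applies precisely because $H$ is coherent and expanding. This isomorphism turns the condition ``$H^{m+1}(G;R[G]) \neq 0$'' into ``$\widetilde{\HA}^m(\partial_H G;R) \neq 0$'', so that a reindexing by $k = m+1$ gives
\[\max\{n \colon H^n(G;R[G]) \neq 0\} - 1 = \max\{m \colon \widetilde{\HA}^m(\partial_H G;R) \neq 0\}\,.\]
Substituting the displayed identity for the left-hand maximum then produces exactly $\cd_R(G) - 1 = \max\{m \colon \widetilde{\HA}^m(\partial_H G;R) \neq 0\}$, which is the claim.

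There is essentially no obstacle beyond careful indexing: the only two points requiring attention are the degree shift by one built into the transgression isomorphism, and the boundary convention $\max\emptyset = -1$ needed to cover the degenerate case $\cd_R(G)=0$ (where $\partial_H G$ has vanishing reduced cohomology in every degree). Everything else is a direct invocation of the stated results.
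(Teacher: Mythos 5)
Your proposal is correct and follows exactly the route the paper itself takes: Alonso's result that combable groups are of type $F\!P_\infty$, Brown's identity $\cd_R(G) = \max\{n \colon H^n(G;R[G]) \neq 0\}$ under the finiteness hypothesis, and the degree-shifting transgression isomorphism of Corollary~\ref{cor23452342}. Your extra remark about the convention $\max\emptyset = -1$ in the degenerate case $\cd_R(G)=0$ is a sensible point of care that the paper glosses over, but it does not change the argument.
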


The asymptotic dimension $\asdim(X)$ of a metric space $X$ will be defined in Definition~\ref{def_dimensions}.\ref{defn_asdim}.
Because $\HX^k(G;R)$ vanishes for $k > \asdim(G) + 1$, we get, by using \eqref{eq_HX_Gruppenring}, the following result:
\begin{cor}\label{cor_cdRG_asdim}
Let $G$ be a finitely generated group admitting a coherent and expanding combing, and let $R$ be a ring.

If $\cd_R(G)$ is finite, then $\cd_R(G) \le \asdim(G) + 1$.
\end{cor}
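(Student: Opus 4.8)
The plan is to chain together three facts, two of which are already established in the paper and one of which is a standard vanishing result for coarse cohomology. First I would invoke the isomorphism \eqref{eq_HX_Gruppenring}, namely $\HX^\ast(G;R) \cong H^\ast(G;R[G])$, which holds for any finitely generated group $G$ and any ring $R$ by Roe's computation. Since $G$ is combable (it carries a coherent and expanding combing), it is of type $F\!P_\infty$ by Alonso, and the hypothesis $\cd_R(G) < \infty$ lets me apply the characterization $\cd_R(G) = \max\{n \colon H^n(G;R[G]) \neq 0\}$ coming from Brown (Propositions VIII.2.3 and VIII.5.2). Combining these, I get
\[
\cd_R(G) = \max\{n \colon \HX^n(G;R) \neq 0\}.
\]

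The second ingredient is the vanishing statement asserted in the paragraph preceding the corollary: $\HX^k(G;R)$ vanishes for $k > \asdim(G) + 1$. This is the key geometric input. I would treat it as a known fact about coarse cohomology and asymptotic dimension—the intuition being that a metric space of asymptotic dimension $d$ admits uniformly bounded covers with multiplicity $d+1$, so its coarse cohomology is concentrated in degrees $\le d+1$ (one degree shift arising from the coarsening/suspension in the definition of $\HX$). If this were not citable as stated, the honest route would be via the character-map isomorphism of Lemma~\ref{lem:charactermapisomorphism}, identifying $\HX^\ast(G;R) \cong \HA_c^\ast(\cP(G);R)$, together with the fact that finite asymptotic dimension bounds the dimension in which the Rips complex contributes to compactly supported Alexander--Spanier cohomology.

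Putting these together is then immediate: if $\cd_R(G)$ is finite, then there is some $n = \cd_R(G)$ with $\HX^n(G;R) \neq 0$ (using the first display), and the vanishing result forces $n \le \asdim(G)+1$, giving $\cd_R(G) \le \asdim(G) + 1$. Note that neither coherence nor expandingness of the combing is needed for this particular corollary beyond guaranteeing combability (hence type $F\!P_\infty$); those hypotheses are genuinely used only in the companion corollaries where the corona enters.

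The main obstacle I anticipate is pinning down the vanishing $\HX^k(G;R) = 0$ for $k > \asdim(G)+1$ with the correct degree shift and coefficient ring, since the cited literature on coarse cohomology and asymptotic dimension is usually phrased for real or integer coefficients and for uniformly contractible spaces rather than for the $\sigma$-space $\cP(G)$ directly. I would resolve this by passing through the Rips-complex description and the character-map isomorphism so that the asymptotic-dimension bound can be applied on the nerve/cover level, where multiplicity $\le \asdim(G)+1$ of uniformly bounded covers translates directly into vanishing of the relevant cochain groups above dimension $\asdim(G)+1$.
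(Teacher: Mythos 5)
Your proposal is correct and follows essentially the same route as the paper: the paper's entire ``proof'' is the sentence preceding the corollary, which chains the vanishing $\HX^k(G;R)=0$ for $k>\asdim(G)+1$ with the isomorphism \eqref{eq_HX_Gruppenring} and the earlier observation that combability gives type $F\!P_\infty$ (Alonso), so that $\cd_R(G)=\max\{n\colon H^n(G;R[G])\neq 0\}$ when finite. Your remark that coherence and expandingness are not actually used beyond combability here is also accurate for this particular corollary.
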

Similar results were obtained by Dranishnikov \cite[Prop.\ 5.10 \& Cor.\ 5.11]{dranish_approach} for groups admitting a finite classifying space.\footnote{He actually gets the stronger estimate $\cd_R(G) \le \asdim(G)$.}

Let us now quickly discuss an important implication if we additionally assume a certain equivariance of the combing.

\begin{defn}
Let $H$ be a combing on the group $G$. We say that $H$ is \emph{coarsely equivariant} if for all $g \in G$ the maps $\lambda_g \circ H$ and $H \circ (\lambda_g \times \id_\IN)$ are close, where $\lambda_g$ denotes left multiplication on $G$ by $g$.
\end{defn}

If $H$ is coarsely equivariant, then by Corollary~\ref{cor_coronafunctoriality} the action of $\lambda_g$ on $G$ extends to a continuous action on $\combcompb{\cP(G)}{H}$ by $\sigma$-homeomorphisms.

\begin{thm}\label{thm_SNCF}
Let $G$ be a group which admits a finite classifying space $BG$ and an expanding, coherent and coarsely equivariant combing $H$.

Then the integral strong Novikov conjecture holds for $G$, i.e., the analytic assembly map $RK_*(BG) \to K_*(C_r^* G)$ is injective.
\end{thm}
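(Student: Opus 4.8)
The plan is to exhibit $\big(\combcomp{\cP(G)}^H,\partial_H G\big)$ as an equivariant analogue of a $Z$-structure and to feed it into the Carlsson--Pedersen descent machinery \cite{carlsson_pedersen,carlsson_pedersen_2}, which converts such a structure into split injectivity of the analytic assembly map. First I would unpack the hypothesis that $BG$ is finite: it forces $G$ to be torsion-free and of finite cohomological dimension, so that $\cP(G)$ serves as a $\sigma$-model for $\underline{EG}=EG$, on each scale $P_R(G)$ of which $G$ acts freely, properly and cocompactly; in particular, by the Morita equivalence attached to such an action, $K_*(C^*(\cP(G))^G)\cong K_*(C_r^* G)$. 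Because $H$ is coarsely equivariant, Corollary~\ref{cor_coronafunctoriality} shows that each $\lambda_g$ extends to a $\sigma$-homeomorphism of $\combcomp{\cP(G)}^H$ preserving the closed invariant subspace $\partial_H G$, so the whole compactification carries a continuous $G$-action extending the free cocompact action on $\cP(G)$.

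Next I would verify the two structural hypotheses of the descent. Contractibility of $\combcomp{\cP(G)}^H$ together with the fact that $\partial_H G$ sits inside it as a $Z$-set is exactly Theorem~\ref{thm:RipsCompactificationContractible}. The remaining, and essential, hypothesis is that the compactification be \emph{small at infinity}, i.e.\ that $G$-translates of a fixed bounded set shrink into every neighbourhood of the corona; this is precisely the statement that the defining functions have vanishing variation, which holds by the inclusion $C_H(X)\subset C_h(X)$ built into Definition~\ref{def_corona_compactification}, and coarse equivariance guarantees that this behaviour is compatible with the $G$-action. With these inputs in place, the Carlsson--Pedersen argument produces a splitting of $RK_*(BG)\to K_*(C_r^* G)$: one uses the equivariant contraction of $\combcomp{\cP(G)}^H$ to build, $G$-equivariantly, the boundary/dual map whose non-equivariant incarnation is the transgression isomorphism of Corollary~\ref{corjhgfdnbv}, and the vanishing-variation condition is exactly what makes this dual map land in $C_r^* G$ rather than merely in the uniform Roe algebra. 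Split injectivity of course implies the claimed injectivity, which is the strong Novikov conjecture.

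The main obstacle is that the classical Carlsson--Pedersen theorem is formulated for \emph{metrizable, finite-dimensional} compactifications, whereas $\cP(G)$ is infinite-dimensional and $\combcomp{\cP(G)}^H$ is only $\sigma$-compact, not compact or metrizable, so one cannot cite the theorem verbatim. I would circumvent this by carrying out the descent intrinsically in the $\sigma$-compact framework of Section~\ref{sec_coronas_and_coarse_cohomology_theories}, using surjectivity of the coarse co-assembly map (Theorem~\ref{thm_coasssemblysurjective}) and injectivity of the coarse assembly map (Theorem~\ref{thm243reds}) as the analytic substitutes for the point-set hypotheses of Carlsson--Pedersen. The delicate point is the equivariant bookkeeping: one must check that the Morita equivalence for the free cocompact action, the identification of the equivariant coarse assembly map with the analytic assembly map, and the $G$-equivariance of the transgression splitting are all genuinely equivariant and mutually compatible, since it is exactly this equivariance that upgrades the non-equivariant coarse statements of Theorems~\ref{thm243reds} and~\ref{thm_coasssemblysurjective} to injectivity of $RK_*(BG)\to K_*(C_r^* G)$.
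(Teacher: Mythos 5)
There is a genuine gap, and it sits exactly at the point you flagged as ``the main obstacle.'' Your proposed workaround --- carrying out the descent ``intrinsically in the $\sigma$-compact framework,'' using injectivity of the coarse assembly map (Theorem~\ref{thm243reds}) and surjectivity of the coarse co-assembly map (Theorem~\ref{thm_coasssemblysurjective}) as substitutes for the point-set hypotheses of Carlsson--Pedersen --- does not work as stated. Those two theorems are \emph{non-equivariant} statements about $K\!X_*(G)\to K_*(C^*G)$ and about the stable Higson corona; injectivity and surjectivity of non-equivariant coarse (co-)assembly maps do not descend to injectivity of the equivariant assembly map $RK_*(BG)\to K_*(C_r^*G)$. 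The classical descent principle needs either an \emph{isomorphism} of the coarse assembly map (homotopy fixed points preserve equivalences, not injections), or genuine equivariant input: a contractible, metrizable, finite-dimensional (or at least ``honestly'' compact) compactification, small at infinity, to which the $G$-action extends continuously. Since $\combcomp{\cP(G)}^H$ is only $\sigma$-compact and $\cP(G)$ is infinite-dimensional, you cannot feed it into Carlsson--Pedersen or Roe, and no mechanism is given for replacing that machinery; ``the delicate point is the equivariant bookkeeping'' names the problem but does not solve it.

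The paper resolves this with a tool you did not invoke: the boundary swapping results of Section~\ref{secnm0987678bmvmvmv}. Finiteness of $BG$ forces $G$ to be torsion-free with a $G$-finite ER model for $EG=\EG$, so Corollary~\ref{cor243terwe} puts the corona on $EG$ instead of on $\cP(G)$: the pair $(\combcomp{EG}^H,\partial_H G)$ is a $Z_\ER^\free$-structure, i.e.\ $\combcomp{EG}^H$ is a \emph{compact, metrizable, finite-dimensional}, contractible space in which $\partial_H G$ is a $Z$-set and which is Higson dominated (hence small at infinity, by Remark~\ref{rem2345ztrwe}). Coarse equivariance of $H$ then upgrades this to an $\EZ$-structure via Remark~\ref{rem24356545435459}: the $G$-action on $EG$ extends continuously to the compactification, because the combing pulled back to $EG$ along the orbit map remains coarsely equivariant. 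At that point the hypotheses of Roe's theorem \cite[Theorem 10.8]{roe_index_coarse} (equivalently, the Carlsson--Pedersen setup) are met verbatim, and injectivity of $RK_*(BG)\to K_*(C_r^*G)$ follows. So the correct repair of your argument is not an intrinsic $\sigma$-compact descent but the transfer of the corona from $\cP(G)$ to $EG$, which is precisely what Section~\ref{secnm0987678bmvmvmv} was built for.
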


\begin{proof}
Using Corollary~\ref{cor243terwe} and Remark~\ref{rem24356545435459} we get a contractible, metrizable compactification $\combcompb{EG}{H}$ of $EG$ such that $\partial_H G$ is a $Z$-set in it and such that the $G$-action on $EG$ is cocompact and extends continuously to $\combcompb{EG}{H}$. The claim now follows from \cite[Theorem 10.8]{roe_index_coarse}.
\end{proof}

\begin{rem}\label{rem_split_injectivity}
Under the same assumptions as in the above theorem we get also split injectivity results for the assembly maps for the non-connective versions of algebraic $K$-theory, $L$-theory and $A$-theory by using the results of Carlsson--Pedersen \cite{carlsson_pedersen,carlsson_pedersen_2} and Carlsson--Pedersen--Vogell \cite{carlsson_pedersen_vogell}.

That under these assumptions the group satisfies the classical Novikov conjecture was also proven by Farrell--Lafont \cite{farrell_lafont}.
\end{rem}

\begin{example}
Groups satisfying the assumptions of the above Theorem~\ref{thm_SNCF} are torsion-free hyperbolic, $\CAT(0)$ cubical and systolic groups.
\end{example}

Using results of Emerson--Meyer we can actually strengthen Theorem~\ref{thm_SNCF}:
\begin{thm}\label{thm_dual_Dirac}
Let $G$ be a group which admits a finite classifying space $BG$ and an expanding, coherent and coarsely equivariant combing $H$.

Then $G$ admits a $\gamma$-element. Consequently, the analytic assembly map for $G$ is split-injective, and the co-assembly and coarse co-assembly maps for $G$ are isomorphisms.
\end{thm}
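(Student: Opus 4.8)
The plan is to combine the contractible $Z$-structure produced above with the duality theory of Emerson--Meyer~\cite{EmeMey}, the key intermediate goal being to upgrade the surjectivity of the coarse co-assembly map (Theorem~\ref{thm_coasssemblysurjective}) to an isomorphism and then to extract a dual-Dirac morphism from it. First I would set the stage: since $BG$ is finite, $G$ has finite cohomological dimension, hence is torsion-free, so that $\EG=EG$ admits a $G$-compact (cocompact), finite-dimensional model. By Corollary~\ref{cor243terwe} together with coarse equivariance, this yields a $G$-equivariant, contractible, Higson-dominated compactification $\combcomp{EG}^H$ of $EG$ whose corona $\partial_H G$ sits as a $Z$-set and on which $G$ acts continuously. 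This is precisely the geometric input which the Emerson--Meyer machinery requires, and it is the same datum already used to prove Theorem~\ref{thm_SNCF}.

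Next I would prove that the coarse co-assembly map is an isomorphism. Surjectivity is exactly Theorem~\ref{thm_coasssemblysurjective}; for injectivity I would reuse the commuting diagram from its proof, in which the co-assembly map is factored through the transgression isomorphism $\tilde K^{*-1}(\partial_H X;D)\xrightarrow{\cong}\KX^*(X;D)$ of Corollary~\ref{corjhgfdnbv} and the vertical comparison map induced by the $*$-homomorphism $C(\partial_H X)\otimes D\to\mathfrak{c}(\cP(Z);D)$ between the two short exact sequences appearing there. Applying $K$-theory to both rows and comparing via the five-lemma, the $C_0$-kernels correspond under a stability isomorphism, while the contractibility of $\combcomp{\cP(Z)}^H$ from Theorem~\ref{thm:RipsCompactificationContractible} makes the reduced $K$-theory of the middle term of the first row trivial; the analogous vanishing for the unreduced corona $\overline{\mathfrak{c}}(\cP(Z);D)$ then forces the two middle terms to agree after passing to reduced theories, so that the boundary comparison map, and with it the coarse co-assembly map $\mu^*$, is an isomorphism.

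Finally I would invoke Emerson--Meyer. For a group with $G$-compact $\EG$ they characterise the existence of a dual-Dirac morphism by the coarse co-assembly map being an isomorphism; applying this, $G$ acquires a dual-Dirac morphism $\eta$, and composing it with the Dirac morphism $D$ produces a $\gamma$-element $\gamma=D\circ\eta\in KK^G(\C,\C)$. Split-injectivity of the analytic assembly map $RK_*(BG)\to K_*(C_r^*G)$ then follows from the standard Dirac--dual-Dirac argument (the presence of $\gamma$ splits the assembly map, strengthening the mere injectivity of Theorem~\ref{thm_SNCF}), and the equivariant co-assembly map is an isomorphism by the same equivalence, with the coarse co-assembly statement descending to it. The main obstacle I expect lies in the injectivity step of the preceding paragraph: carefully justifying the five-lemma comparison in the $\sigma$-$C^*$-algebra setting, in particular the vanishing of the reduced $K$-theory of $\overline{\mathfrak{c}}(\cP(Z);D)$, and matching the equivariant compactification $\combcomp{EG}^H$ exactly to the hypotheses under which the Emerson--Meyer duality theorem delivers the dual-Dirac morphism.
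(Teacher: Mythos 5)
Your setup paragraph is correct, and it identifies exactly the geometric input the paper uses: by Corollary~\ref{cor243terwe} and Remark~\ref{rem24356545435459}, finiteness of $BG$ (hence torsion-freeness of $G$) together with the coarsely equivariant combing yields a contractible, metrizable, Higson dominated compactification $\combcomp{EG}^H$ in which $\partial_H G$ is a $Z$-set and to which the $G$-action extends continuously. But from there the paper argues in the opposite direction to yours: this compactification is precisely the Carlsson--Pedersen-type condition under which \cite[Thm.~14]{EM_gamma} directly produces the $\gamma$-element; split-injectivity of the assembly map is then standard, the co-assembly isomorphism is \cite[Thm.~13(d)]{EM_gamma}, and the coarse co-assembly isomorphism is \cite[Cor.~34]{EM_descent}. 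You instead want to prove the coarse co-assembly isomorphism first and then descend to a dual-Dirac morphism, and that route breaks down at your injectivity step.

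Concretely, your five-lemma argument is circular. From the long exact $K$-theory sequence of the extension $0\to C_0(\cP(Z))\otimes D\otimes\mathfrak{K}\to\overline{\mathfrak{c}}(\cP(Z);D)\to\mathfrak{c}(\cP(Z);D)\to 0$, the coarse co-assembly map $\mu^*$ (the connecting homomorphism of this row, restricted to reduced theories) is an isomorphism in all degrees if and only if the reduced $K$-theory of the stable Higson compactification $\overline{\mathfrak{c}}(\cP(Z);D)$ vanishes in all degrees. So the ``analogous vanishing for $\overline{\mathfrak{c}}(\cP(Z);D)$'' that you invoke is not analogous to anything already proved: it is literally equivalent to the statement you are trying to establish, and it does not follow from contractibility of $\combcomp{\cP(Z)}^H$. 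The algebra $C(\combcomp{\cP(Z)}^H)\otimes D$ is in general a very small subalgebra of $\overline{\mathfrak{c}}(\cP(Z);D)$, since the combing compactification is a strongly collapsed quotient of the Higson compactification; hence there is no reason for the middle vertical map of your diagram to induce a $K$-theory isomorphism, equivalently no reason for the left vertical map $\tilde{K}^{*-1}(\partial_H X;D)\to\tilde{K}_{1-*}(\mathfrak{c}(\cP(Z);D))$ to be surjective, which is what injectivity of $\mu^*$ would require from the diagram. Indeed, if injectivity followed this way it would hold for every expandingly and coherently combable proper metric space, and the paper would not have stated Theorem~\ref{thm_coasssemblysurjective} as surjectivity only; the group structure, the finite $BG$ and the $\gamma$-element are exactly what is needed to close this gap. (A secondary point: even granted the isomorphism, your final step uses the converse of \cite[Cor.~34]{EM_descent}, i.e.\ the nontrivial direction of the descent principle, whose hypotheses on coefficients and equivariance you would still have to match carefully.)
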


\begin{proof}
By \cite[Thm.~14]{EM_gamma} the group $G$ admits a $\gamma$-element. It is well-known that this implies that the analytic assembly map for $G$ is split-injective.

By \cite[Thm.~13(d)]{EM_gamma} this also implies that the co-assembly map is an isomorphism, and moreover, by \cite[Cor.~34]{EM_descent} it also implies that the coarse co-assembly map is an isomorphism.
\end{proof}

\section{Cohomological dimension of the corona}
\label{sec45342345}

In this section we prove upper bounds for the cohomological dimension of the combing corona of coherently and expandingly combable spaces of finite asymptotic dimension. These notions of dimension are defined as follows.
\begin{defn}\label{def_dimensions}
\mbox{}
\begin{enumerate}
\item The \emph{cohomological dimension} of a compact Hausdorff space $Z$ with respect to an abelian group $M$ is
\[\cd_M(Z) \coloneqq  \sup\{n\mid \HA^n(Z,A;M) \not= 0 \text{ for some closed } A \subset Z\}.\]
\item\label{defn_asdim} The \emph{asymptotic dimension} of a metric space $X$ is the infimum of all those $n\in\N$ for which there exists an anti-\Cech system $(U_j)_{j \in \IN}$ for $X$ with $\dim(U_j) \le n$ for all $j$. (cf.~\cite[Section 1.E]{Gromov_asymptotic_invariants})
\end{enumerate}
Both of these dimensions are either natural numbers or $\infty$.
\end{defn}

Our upper bounds for the cohomological dimension will culminate in the following theorem, which unfortunately we were only able to prove for groups and not for more general spaces.
\begin{thm}\label{thm_cdleqasdim}
Let $G$ be a finitely generated group equipped with a coherent, expanding combing $H$ and $R$ be  either a unital ring whose underlying additive groups structure is finitely generated or a field.
Then 
\[\cd_R(\partial_H G) \leq \asdim(G)\,.\]
\end{thm}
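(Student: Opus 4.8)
The plan is to deduce the stated inequality from the sharper equality
\[\cd_R(\partial_H G)+1 = \max\{k \mid \HX^k(G;R)\neq 0\}\]
(the equality recorded as Corollary~\ref{corrttrtrrt}), which is the real content of the section, together with the already-known vanishing $\HX^k(G;R)=0$ for $k>\asdim(G)+1$ that was used for Corollary~\ref{cor_cdRG_asdim}. Granting the equality, the top degree in which $\HX^\ast(G;R)$ is nonzero is at most $\asdim(G)+1$, so $\cd_R(\partial_H G)+1\le \asdim(G)+1$ and the theorem follows at once. Thus the work is entirely in establishing the equality, and I would treat its two inequalities separately.

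For the lower bound $\cd_R(\partial_H G)\ge \max\{k\mid \HX^k(G;R)\neq 0\}-1$ I would use the transgression machinery. By Theorem~\ref{thm:RipsCompactificationContractible} the space $\combcomp{\cP(G)}^H$ is contractible with $\partial_H G$ sitting inside it as a $Z$-set, so Lemma~\ref{lem_transgressionisoforcontractible}, applied to Alexander--Spanier cohomology and combined with the character isomorphism of Lemma~\ref{lem:charactermapisomorphism} and with \eqref{eq_HX_Gruppenring}, yields a natural isomorphism $\widetilde{\HA}^p(\partial_H G;R)\cong \HX^{p+1}(G;R)$. If $m+1$ is the top degree with $\HX^{m+1}(G;R)\neq 0$, then $\widetilde{\HA}^m(\partial_H G;R)\neq 0$; since nonvanishing reduced cohomology forces $\HA^m(\partial_H G,A;R)\neq 0$ for a suitable closed $A$ (taking $A=\emptyset$, and a point if one must handle the degree-zero case), this gives $\cd_R(\partial_H G)\ge m$.

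The upper bound $\cd_R(\partial_H G)\le \max\{k\mid \HX^k(G;R)\neq 0\}-1$ is the main obstacle, precisely because $\cd_R$ is governed by the \emph{relative} groups $\HA^n(\partial_H G,A;R)$ for all closed $A$, whereas transgression only controls the absolute (reduced) cohomology. My plan here is: fix a closed $A$ and, by strong excision, identify $\HA^n(\partial_H G,A;R)\cong \HA_c^n(U;R)$ with $U:=\partial_H G\setminus A$ open; realise $U$ as the corona-trace of a coarse subspace $Y\subseteq G$ inside the contractible compactification $\combcomp{\cP(G)}^H$, and run the long exact sequence of the corresponding pair so as to bound $\HA_c^n(U;R)$ by the coarse cohomology $\HX^{n+1}(Y;R)$. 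Since $\asdim(Y)\le\asdim(G)$, the vanishing of coarse cohomology above $\asdim(Y)+1\le\asdim(G)+1$ then annihilates $\HA^n(\partial_H G,A;R)$ for all $n$ above the threshold, uniformly in $A$. The delicate points I expect are, first, the precise subspace-to-open-set correspondence and the naturality required to push the excision/transgression argument through in the $\sigma$-compact setting, and second, the role of the hypothesis on $R$ (a field, or a ring that is finitely generated as an abelian group): I expect this is exactly what guarantees that cohomological dimension is detected by these groups and behaves well under the inverse limits and coefficient comparisons involved. This is where I would concentrate the technical effort.
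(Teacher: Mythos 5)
Your global structure matches the paper's: the theorem is deduced from the equality of Corollary~\ref{corrttrtrrt} together with the vanishing $\HX^k(G;R)=0$ for $k\ge\asdim(G)+2$ (indeed only the upper-bound half of that equality is needed), and your lower bound via the transgression isomorphism (Theorem~\ref{thm:RipsCompactificationContractible} plus Lemmas~\ref{lem_transgressionisoforcontractible} and~\ref{lem:charactermapisomorphism}) is exactly the paper's. The gap is in your upper bound, which is where all the content lies. After excising to $\HA_c^n(U;R)$, $U=\partial_HG\setminus A$, you propose to bound this group by $\HX^{n+1}(Y;R)$ via the long exact sequence of a pair of the form $(\cU(Y)\cup U,\,U)$. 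For that sequence to bound anything, you need the connecting map $\HA_c^n(U;R)\to\HA_c^{n+1}(\cU(Y);R)$ to be injective, equivalently the restriction $\HA_c^n(\cU(Y)\cup U;R)\to\HA_c^n(U;R)$ to vanish, and nothing in your setup provides this: $\cU(Y)\cup U$ is not a contractible $\sigma$-compact space (it is not compact, and the subspace $Y$ carries no combing), so Lemma~\ref{lem_transgressionisoforcontractible} does not apply, and the $Z$-set homotopy of Lemma~\ref{lem23430987rew} does not preserve this subspace. In the paper's proof of Theorem~\ref{thm345zrtewqd}, the homotopy argument (the sets $Y_1$, $Y_2$ there) produces injectivity of a connecting map only into $\HA_c^{l+1}(V;R)$ for $V$ an open subset of a \emph{single finite-scale} complex $P_n(X)$, never into the cohomology of the full Rips complex of a subspace.

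This is not a patchable technicality, because the second half of your plan short-circuits the actual difficulty. The vanishing $\HX^{n+1}(Y;R)=\HA_c^{n+1}(\cP(Y);R)=0$ for $n+1\ge\asdim(Y)+2$ is a statement about the cohomology of the $\sigma$-complex, i.e.\ about an inverse limit over scales; by $\varprojlim^1$-phenomena it does not imply that any given class dies at a finite scale, i.e.\ that the maps to $\HA_c^{n+1}(P_m(Y);R)$, let alone to $\HA_c^{n+1}(V;R)$, vanish. Converting ``vanishing of coarse cohomology'' into ``vanishing of scale-restriction maps, uniformly over basepoints'' is exactly the paper's notion of uniform triviality (Lemma~\ref{lem_unifloctrivimplieszero}), and proving uniform triviality from mere vanishing (Lemma~\ref{lemwfd4532345453}) consumes the Mittag--Leffler Lemmas~\ref{lem:invlimabvs} and~\ref{lem:mlpluslimzero}, the hypothesis that $R$ is a field or finitely generated as an abelian group (to make the cohomology of finite subcomplexes finitely generated and the finite-scale cohomology countable), and, crucially, the homogeneity of the group, which reduces the uniformity over all $x\in G$ to the single point $e$. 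Your plan would need such a statement for arbitrary subspaces $Y\subseteq G$, which are not groups, so the equivariance argument is unavailable; this is precisely why the paper proves Theorem~\ref{thm345zrtewqd} for general spaces but Lemma~\ref{lemwfd4532345453}, and hence Corollary~\ref{corrttrtrrt}, only for groups. So the hypothesis on $R$ is not a bookkeeping condition about ``coefficient comparisons'': together with the group structure it is the engine of the upper bound, and your proposal has no substitute for that engine.
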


A related result of Dranishnikov--Keesling--Uspenskij \cite{dku} is the inequality $\dim(\nu X) \le \asdim(X)$ between the asymptotic dimension of a proper metric space $X$ and the covering dimension of its Higson corona $\nu X$.

Our upper bounds for the cohomological dimension are inspired by the papers \cite{bestvina_mess,dranish_BM}, and is based on the fact that the combing compactification of the Rips complex has properties very similar to those of $Z$-structures (see the beginning of Section~\ref{secnm0987678bmvmvmv} for a thorough discussion of $Z$-structures).

Our main computational tool for this task is simplicial cohomology.  For a locally finite simplicial complex $K$ with closed subcomplex $L\subset K$ we denote the simplicial cochain complex of $(K,L)$ with finite supports and coefficients $M$ by $C\!\Delta^*_c(K,L;M)$ and its cohomology by $H\!\Delta^*_c(K,L;M)$. Furthermore, these groups only depend on the difference $K\setminus L$, so we define the simplicial cochain complex and cohomology of the open subcomplex $O\coloneqq K\setminus L$ by $C\!\Delta^*_c(O;M)\coloneqq C\!\Delta^*_c(K,L;M)$ and $H\!\Delta^*_c(O;M)\coloneqq H\!\Delta^*_c(K,L;M)$, respectively. It is well known that these cohomology groups are canonically isomorphic to the Alexander--Spanier cohomology groups with compact supports.

We can also introduce the notion of a $\sigma$-simplical complex and its simplicial cohomology: by a $\sigma$-simplical complex we mean a simplicial complex $\cK$ which is the union of an increasing sequence of locally finite, closed subcomplexes $K_0\subset K_1\subset K_2\subset\dots$ and we define its simplicial cohomology $H\!\Delta_c^*(\cK;M)$ to be the cohomology of the cochain complex 
\[C\!\Delta_c^*(\cK;M)\coloneqq \varprojlim_{n\in\N}C\!\Delta_c^*(K_n;M)\,.\]

The only  $\sigma$-simplical complex which will be relevant to us is the full Rips complex $\cP(X)$ of a proper discrete metric space $X$, which is the union of the increasing sequence of closed subcomplexes $P_n(X)$.
Note that in this case one can identify $(k+1)$-tuples of points in $X$ with $k$-simplices of $\cP(X)$ and this identification yields a canonical isomorphism between the coarse cochain complex $\CX^*(X;M)$ and the simplicial cochain complex $C\!\Delta_c^*(\cP(X);M)$.
Note that in this light, Lemma \ref{lem:charactermapisomorphism} can be seen as an isomorphism between Alexander--Spanier and simplicial cohomology of the $\sigma$-simplicial complex $\cP(X)$.
Because of this isomorphism we immediately obtain a $\varprojlim^1$-sequence
\[0\to {\varprojlim_{n\in\N}}^1H\!\Delta_c^{k-1}(P_n(X);M)\to \HX^k(X;M)\to \varprojlim_{n\in\N}H\!\Delta_c^k(P_n(X);M)\to 0\,.\]

After having related the coarse cohomology to the simplicial cohomology of the finite-scale Rips complexes, we want to go a step further and relate the latter to the simplicial cohomology of finite open subcomplexes. To this end, we make the following definition.

\begin{defn}
Let $X$ be a discrete, proper metric space and let $A\subset X$ be a subset. 
The \emph{simplicial neighborhood} of $A$ is defined to be the open subset
\[\cU(A)\coloneqq \cP(X)\setminus \cP(X\setminus A)\subset \cP(X)\,.\]
It is the union of the interiors of all those simplices which have at least one vertex in $A$.
Further, for $r\geq 0$ we define the \emph{simplicial $r$-neighborhood} as
\[\cU(A,r)\coloneqq \cU(\{x\in X\mid \dist(x,A)\leq r\})\]
and we denote the intersections of $\cU(A)$ and $\cU(A,r)$ with $P_n(X)$ by $U_n(A)$ and $U_n(A,r)$, respectively.
\end{defn}

Because of the canonical isomorphism between Alexander--Spanier and simplicial cohomology, continuity of Alexander--Spanier cohomology applied to the increasing sequence of subcomplexes $U_n(A,r)$ passes over to simplicial cohomology, yielding
\[\varinjlim_{r\in\N}H\!\Delta_c^k(U_n(A,r);M)\cong H\!\Delta_c^k(P_n(X);M)\]
whenever $A$ is non-empty.

The above definition also allows us to adapt the notion of uniform triviality of \cite[Page 6]{Geoghegan_Ontaneda} (see also \cite[Page 5]{dranish_BM}) to our setup, where we have to take the $\sigma$-structure of the Rips complex into account. Of course, the following definition can also be formulated using Alexander--Spanier cohomology.

\begin{defn}
Let $X$ be a discrete, proper metric space and $k\in\N$. We say that $\HX^k(X;M)$ is \emph{uniformly trivial} if for every $n\in\N$ there exists $N\geq n$ and a function $s\colon\R_{\geq 0}\to\R_{\geq 0}$ satisfying $\forall r\geq 0\colon s(r)\geq r$ and such that the canonical homomorphism
\begin{equation}\label{eq:def:unifloctriv}
H\!\Delta_c^k(\cU_N(\{x\},r);M)\to H\!\Delta_c^k(\cU_n(\{x\},s(r));M)
\end{equation}
vanishes for all $x\in X$ and all $r\geq0$.
\end{defn}

Our first upper bound for the cohomological dimension works not only for groups and with arbitrary unital rings as coefficients.
\begin{thm}\label{thm345zrtewqd}
Let $X$ be a discrete, proper metric space of finite asymptotic dimension equipped with a coherent and expanding combing $H$, and let $R$ be a unital ring. Then 
\[\operatorname{cd}_R(\partial_HX)+2\leq \min\{k\mid \HX^i(X;R)\text{ is uniformly trivial for all }i\geq k\}\,.\]
\end{thm}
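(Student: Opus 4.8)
The plan is to bound $\cd_R(\partial_H X)$ one degree at a time and to reduce every statement about the corona to the simplicial cohomology of the Rips complex, where uniform triviality can be applied. Write $m:=\min\{k\mid \HX^i(X;R)\text{ is uniformly trivial for all }i\ge k\}$. By the definition of cohomological dimension it suffices to prove that $\HA^n(\partial_H X,A;R)=0$ for every closed $A\subset\partial_H X$ and every $n\ge m-1$, since this is exactly the assertion $\cd_R(\partial_H X)\le m-2$. The bridge to coarse cohomology is the contractible $\sigma$-compactification: by Theorem~\ref{thm:RipsCompactificationContractible} the space $\combcomp{\cP(X)}^H$ is contractible and $\partial_H X$ sits in it as a $Z$-set, while Lemma~\ref{lem:charactermapisomorphism} together with the simplicial identification lets me compute $\HX^*(X;R)$ as the simplicial cohomology $H\!\Delta_c^*(\cP(X);R)$. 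Under the transgression correspondence this matches corona-degree $p$ with coarse-degree $p+1$, so for a target degree $n\ge m-1$ the relevant coarse degree is $n+1\ge m$, precisely the range in which the hypothesis supplies uniform triviality. This explains why the statement requires triviality for the whole tail $i\ge k$ rather than in a single degree.

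Next I would render $\HA^n(\partial_H X,A;R)=\HA_c^n(\partial_H X\setminus A;R)$ accessible through the Rips complex. Fixing a metric on the metrizable space $\combcomp{\cP(X)}^H$ (Lemma~\ref{lem:metrizability}), I trace a closed set $A$ by the vertex sets $B_\delta:=\{x\in X\mid\dist(x,A)<\delta\}$ and pass to the simplicial neighborhoods $U_N(B_\delta,r)\subset P_N(X)$ and their complements. Using strong excision, the $Z$-set property of $\partial_H X$, and the continuity of Alexander--Spanier cohomology along the increasing families indexed by $r$ and by the scale $N$, the aim is to identify $\HA^n(\partial_H X,A;R)$ with a limit of the groups $H\!\Delta_c^{n}\big(U_N(B_\delta,r)\big)$ as $\delta\to0$, $r\to\infty$ and $N$ varies. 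Concretely, a nonzero class in $\HA^n(\partial_H X,A;R)$ should manifest, for each fixed base scale, as a nonzero scale-compatible family of simplicial classes on these neighborhoods; vanishing of the corona group is then equivalent to the eventual death of every such family.

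The heart of the argument is to kill these families, and this is where $\asdim(X)=:d<\infty$ is genuinely used. Uniform triviality is only asserted for neighborhoods $\cU_N(\{x\},r)$ of single vertices, so to treat a neighborhood of a whole trace $B_\delta$ I would take an anti-\Cech system of dimension $\le d$, which at each scale produces a cover of the relevant region of $X$ of multiplicity $\le d+1$ by uniformly bounded pieces; this writes $U_N(B_\delta,r)$ as a union of point-type neighborhoods whose nerve has dimension $\le d$. A Mayer--Vietoris spectral sequence of length $\le d+1$ then lets me feed the point-wise vanishing maps of uniform triviality, applied in the degrees $i\ge m$, into the columns of the spectral sequence; after the finitely many steps dictated by $d$ the connecting maps of the scale-compatible system are forced to vanish. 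Threading the scale increase $N\ge n$ and the radius penalty $s(r)\ge r$ coherently through this finite induction, the single compatible family dies, whence $\HA^n(\partial_H X,A;R)=0$ for all $n\ge m-1$.

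The main obstacle I anticipate is precisely the coordinated bookkeeping of Step~3: each invocation of uniform triviality simultaneously enlarges the radius and coarsens the scale, and stacking $d+1$ such invocations across a $d$-dimensional nerve requires choosing the cover, the scales and the radius functions in a single compatible order so that the \emph{whole} family (not merely each Mayer--Vietoris piece in isolation) is annihilated. The second delicate point is making the identification of Step~2 rigorous, namely that the compactly supported cohomology of the open corona piece $\partial_H X\setminus A$ is genuinely computed by the simplicial neighborhoods of its trace; here the contractibility of $\combcomp{\cP(X)}^H$, the $Z$-set property controlling the behaviour at infinity, and the careful handling of the $\sigma$-structure (including the attendant $\varprojlim^1$-terms) are all essential.
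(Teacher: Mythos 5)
Your overall skeleton (contrapositive via $\cd_R(\partial_HX)\le m-2$, reduce a relative corona class to the Rips complex, then kill it using uniform triviality plus finite asymptotic dimension) points in the same direction as the paper, but both of your middle steps have genuine gaps. First, Step~2: the proposed identification of $\HA^n(\partial_HX,A;R)$ with a limit of the groups $H\!\Delta_c^{n}(U_N(B_\delta,r);R)$ is not established and, as stated, is false. It is degree-preserving, whereas your own bookkeeping in Step~1 demands a shift by one; and it fails in the simplest example $X=\IZ$, $A=\{+\infty\}$, where $\HA^0(\partial_HX,A;R)=R$ while every $U_N(B_\delta,r)$ (and every complement of such a neighborhood) is an infinite connected complex with $H\!\Delta_c^0=0$. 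The paper's mechanism here is not a limit identification but an \emph{injection with a degree shift}: Lemma~\ref{lem23430987rew} provides a homotopy of $\combcomp{\cP(X)}^H$ which fixes $A$ pointwise and pushes everything outside $A$ into $\cP(X)$ for $t>0$; feeding this into the long exact sequence of a triple and strong excision yields an injective connecting homomorphism $\HA^l(\partial_HX,A;R)\to\HA_c^{l+1}(V;R)$ with $V$ an open subset of a single finite-scale complex $P_n(X)$. Note that the absolute $Z$-set property of Theorem~\ref{thm:RipsCompactificationContractible}, which is what you invoke, moves \emph{all} of the corona including $A$, so it cannot control the pair $(\partial_HX,A)$; the relative homotopy (built from the contraction $H^\infty$ reparametrized by a Urysohn-type function vanishing exactly on $A$) is an extra ingredient you are missing.

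Second, and more seriously, Step~3: the Mayer--Vietoris/spectral-sequence filtration trick does not work in the required degree range. For a cover with nerve of dimension $\le d$, the graded pieces of $H_c^i$ of the union are subquotients of the groups $\bigoplus H_c^{i-p}(\text{$(p{+}1)$-fold intersections})$ with $0\le p\le d$, so the columns involve cohomology in degrees $i-p$ down to $i-d$. Your descent argument needs each scale-change map to vanish on \emph{every} graded piece, but uniform triviality is available only in degrees $\ge m$; the pieces with $p>i-m$ live in degrees below $m$ and nothing kills them. The composite of scale jumps can therefore be pushed down the filtration only $i-m+1$ times, which forces vanishing only when $i-m+1>d$, i.e.\ in degrees $i\ge m+d$ roughly; that would prove $\cd_R(\partial_HX)+2\le m+\asdim(X)$, not the theorem. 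The paper's Lemma~\ref{lem_unifloctrivimplieszero} avoids this by working at the cochain level with a \emph{downward} induction on the degree $k$: the base case $k\ge d+1$ uses the anti-\Cech nerve to build a support-controlled cochain homotopy between restriction and a cochain map factoring through a $d$-dimensional complex (no uniform triviality needed there), and each step from $k+1$ to $k$ uses uniform triviality only in degree $k$, applied to the basis cochains $\phi_\sigma$ concentrated at single vertices, together with the already-constructed higher-degree homotopy $h^{k+1}$, to produce $h^k$ with controlled supports. This consumes the hypothesis ``uniformly trivial for all $i\ge k$'' exactly one degree per step and never needs any information about low-degree cohomology of intersections. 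To repair your proof you would have to replace the spectral-sequence step by such a cochain-level induction (or an equivalent device), in addition to supplying the relative homotopy of the first paragraph.
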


The proof needs two lemmata.
\begin{lem}\label{lem23430987rew}
Let $X$ be a discrete, proper metric space equipped with a coherent and expanding combing $H$.

Then for every closed $A \subset \partial_H X$ there is a homotopy (of $\sigma$-compact spaces)
\[h \colon \combcompb{\cP(X)}{H} \times [0,1] \to \combcompb{\cP(X)}{H}\]
such that $h_0 \equiv \mathrm{identity}$, and for all times $t > 0$ we have $h_t|_A \equiv \mathrm{inclusion}$ and $h_t(\combcompb{\cP(X)}{H} \setminus A) \subset \cP(X)$.
\end{lem}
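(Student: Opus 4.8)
The plan is to build $h$ by reparametrising the single $\sigma$-contraction
\[H^\infty\colon \combcomp{\cP(X)}^H\times[0,\infty]\to\combcomp{\cP(X)}^H\]
produced in the proof of Theorem~\ref{thm:RipsCompactificationContractible}. Recall from that proof that $H^\infty$ is a continuous $\sigma$-map satisfying $H^\infty(z,\infty)=z$ for all $z$, that it carries each $\combcomp{P_R(X)}^H\times[0,\infty]$ into some $\combcomp{P_S(X)}^H$, and -- crucially -- that $H^\infty(z,s)$ lies in the interior $\cP(X)$ for every \emph{finite} $s$, since by construction its values are affine combinations of the points $\overline{H}_n(y)\in X$. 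Thus travelling a finite amount along the combing lines always lands in $\cP(X)$, while $s=\infty$ returns the point unchanged. The idea is therefore to let each point outside $A$ flow a finite (hence interior) amount at positive times, while points of $A$ are pinned at $s=\infty$, i.e.\ kept fixed.

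Concretely, I would first produce a function $v\colon\combcomp{\cP(X)}^H\to[0,1]$, continuous in the final topology, with $v^{-1}(0)=A$. Since $A\subset\partial_HX$ is closed and the corona is contained in every level $\combcomp{P_n(X)}^H$ (which is compact and metrisable, see Example~\ref{ex_sigmacombingcompactification} and Lemma~\ref{lem:metrizability}), such a $v$ can be assembled level by level along the filtration $\partial_HX\subset\combcomp{P_0(X)}^H\subset\combcomp{P_1(X)}^H\subset\cdots$. Start with $v_\partial:=\min\{1,\dist(\cdot,A)\}$ on the metrisable corona. At each inductive step one has a closed metrisable subspace $Z$ of a metrisable space $Y$ and a continuous $f\colon Z\to[0,1]$ with $f^{-1}(0)=A\subset Z$; choosing a Tietze extension $g\colon Y\to[0,1]$ of $f$ and a Urysohn function $w\colon Y\to[0,1]$ with $w^{-1}(0)=Z$ (available because $Z$ is a zero set in the metric space $Y$), the function $\min\{g+w,1\}$ extends $f$, stays in $[0,1]$, and has zero set exactly $Z\cap f^{-1}(0)=A$. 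These compatible extensions glue to the desired $v$.

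With $v$ at hand I would set
\[h(z,t):=H^\infty\!\left(z,\tfrac{1}{t\,v(z)}\right),\]
with the convention that $\tfrac{1}{t\,v(z)}=\infty$ whenever $t\,v(z)=0$. The map $(z,t)\mapsto t\,v(z)\in[0,1]$ is continuous and $s\mapsto 1/s$ is continuous $[0,1]\to[0,\infty]$ sending $0$ to $\infty$, so the reparametrisation is continuous on each level; post-composing with $H^\infty$ shows $h$ is a continuous $\sigma$-map. The three required properties then fall out: at $t=0$ the second argument is $\infty$, so $h_0(z)=H^\infty(z,\infty)=z$; for $t>0$ and $z\in A$ we have $v(z)=0$, whence again $h_t(z)=H^\infty(z,\infty)=z$; and for $t>0$ with $z\notin A$ we have $v(z)>0$, so the second argument is finite and $h_t(z)=H^\infty(z,\text{finite})\in\cP(X)$. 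The compatibility of the instantaneous pushing-off near $A$ with the pinning of $A$ is exactly guaranteed by $v^{-1}(0)=A$ together with the joint continuity of $H^\infty$: if $z_k\to a\in A$ then $v(z_k)\to0$, so the flow time tends to $\infty$ and $h_t(z_k)\to H^\infty(a,\infty)=a=h_t(a)$.

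I expect the only genuine obstacle to be the construction of $v$, namely exhibiting a single function on the $\sigma$-compact space $\combcomp{\cP(X)}^H$ whose zero set is \emph{exactly} $A$ and which respects the filtration; once the levelwise Tietze/Urysohn bookkeeping above is organised this is routine, and the remainder of the argument is a formal verification using only the properties of $H^\infty$ recorded in Theorem~\ref{thm:RipsCompactificationContractible}.
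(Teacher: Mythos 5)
Your proof is correct and follows essentially the same route as the paper: a continuous $[0,1]$-valued function vanishing on $A$, built by levelwise Tietze extension starting from a distance function on the metrizable corona, is used to reparametrize the $\sigma$-contraction $H^\infty$ of Theorem~\ref{thm:RipsCompactificationContractible} so that $A$ stays pinned at the identity end while every other point flows instantly into $\cP(X)$. The only differences are cosmetic: the paper uses the affine reparametrization $h(x,t)=H^\infty(x,1-t\cdot\phi(x))$ (with $H^\infty$ rescaled to $[0,1]$) and only requires $\phi$ to vanish on $A$ and be strictly positive on $\partial_H X\setminus A$ --- interior points remain in $\cP(X)$ under $H^\infty$ at any parameter, so your additional Urysohn bookkeeping forcing $v^{-1}(0)=A$ on all of $\combcomp{\cP(X)}^H$ is harmless but unnecessary.
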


\begin{proof}
As $\partial_HX$ is metrizable by Lemma \ref{lem:metrizability}, we can choose any metric to define a continuous function 
\[\partial_HX\to[0,1]\,,\quad x\mapsto\min\{\dist(x,A),1\}\,.\]
Using Tietze's extension theorem, we can successively extend it to continuous functions $\combcompb{P_n(X)}{H}\to [0,1]$ for all $n\in\N$, and combining all of them we obtain a continuous function 
$\phi\colon \combcompb{\cP(X)}{H}\to [0,1]$ with the properties that $\phi(A)\subset \{0\}$ and $\phi(\partial_HX\setminus A)\subset (0,1]$.

Let $H^\infty\colon \combcompb{\cP(X)}{H}\times[0,1]\to \combcompb{\cP(X)}{H}$ be the contraction of the $\sigma$-compact space $\combcompb{\cP(X)}{H}$ which we constructed in the proof of Theorem \ref{thm:RipsCompactificationContractible}, but with contraction parameter reparametrized to $[0,1]$, i.e., such that we have $H^\infty_0\equiv p$ and $H^\infty_1=\id$. This contraction has the property that it maps the subspace $\combcompb{\cP(X)}{H} \times[0,1)\cup \cP(X)\times[0,1]$ into $\cP(X)$.

It is now easy to verify that the homotopy
\[h\colon (x,t)\mapsto H^\infty(x,1-t\cdot \phi(x))\]
has the desired properties.
\end{proof}

\begin{lem}\label{lem_unifloctrivimplieszero}
Let $X$ be a discrete, proper metric space of finite asymptotic dimension $d$ and $R$ be a unital ring. 
Suppose that there is $k\in\N$ such that $\HX^i(X;R)$ is uniformly trivial for all $i\geq k$. Then for every $n\in\N$ there exists an $r\geq 0$ such that for all subsets $A\subset X$ the canonical morphism $U_n(A,r)\supset U_n(A)\to \cU(A)$ induces the zero homomorphism
\[\HA_c^i(\cU(A);R)\to \HA_c^i(U_n(A,r);R)\]
for all $i\geq k$.
\end{lem}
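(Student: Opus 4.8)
The plan is to unwind the stated morphism into elementary pieces, to reduce the general subset $A$ to the single-point situation governed by uniform triviality by means of the finiteness of $\asdim(X)$, and then to recombine the resulting colour-wise vanishing through an iterated excision argument that stays in a fixed cohomological degree.

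First I would record what the induced map actually is. Since $P_n(X)$ is a closed subcomplex of $\cP(X)$, the subspace $U_n(A)=\cU(A)\cap P_n(X)$ is closed in $\cU(A)$ and open in $U_n(A,r)=\cU(A,r)\cap P_n(X)$. Hence the morphism $U_n(A,r)\supset U_n(A)\to\cU(A)$ induces on cohomology the composite
\[ \HA_c^i(\cU(A);R)\xrightarrow{\ \mathrm{res}\ }\HA_c^i(U_n(A);R)\xrightarrow{\ \mathrm{ext}_0\ }\HA_c^i(U_n(A,r);R), \]
namely restriction to the closed subspace $U_n(A)$ followed by extension-by-zero along the open inclusion $U_n(A)\subset U_n(A,r)$. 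Using the identification of Alexander--Spanier with simplicial cohomology (Lemma~\ref{lem:charactermapisomorphism}) and the continuity isomorphism $\varinjlim_{r}\HA_c^i(U_n(A,r);M)\cong\HA_c^i(P_n(X);M)$ recorded above, I am free to enlarge the scale a bounded number of times. The objective is to manufacture $r$, depending only on $n$, $d=\asdim(X)$ and the uniform-triviality data, for which this composite is zero.

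Next I would bring in $\asdim(X)=d$. For a prescribed large scale $\rho$, the anti-\Cech definition supplies a cover of multiplicity $\le d+1$, i.e.\ a family $\mathcal B=\mathcal B_0\cup\dots\cup\mathcal B_d$ of uniformly bounded subsets of $X$ covering the shell $\{x\mid\dist(x,A)\le r\}$, with the members of each colour $\mathcal B_j$ pairwise $\rho$-separated. The separation forces the simplicial neighbourhoods $\cU(B,\cdot)$, $B\in\mathcal B_j$, to be pairwise disjoint at scales below $\rho/2$, so the neighbourhood $O_j$ of a single colour splits as a disjoint union over $B\in\mathcal B_j$ and its compactly supported cohomology is the corresponding product. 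For the \emph{single-colour vanishing} I would use, for each bounded $B\in\mathcal B_j$ with $\diam B\le D$ and a chosen $x_B\in B$, the containment $\cU(B,\rho')\subset\cU(\{x_B\},\rho'+D)$, which dominates a scale enlargement on $B$ by one on the point $x_B$. Feeding this into the uniform triviality of $\HX^i(X;R)$ (available for every $i\ge k$) produces $N\ge n$ and a gauge $s$, both independent of $x_B$, killing the relevant level/scale map in degree $i$; taking the product over $B\in\mathcal B_j$, the enlargement that kills colour $O_j$ can thus be performed simultaneously and with constants uniform in $A$.

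Finally I would compose $d+1$ such colour-killing enlargements, one per colour, interpolating between $\cU(A)$ (first restricted to a suitable level $N$) and $U_n(A,r)$ through a nested chain of neighbourhoods with scales $\rho_0<\dots<\rho_{d+1}=:r$ and levels $n=n_0\le\dots\le n_{d+1}=N$. At the $j$-th step the long exact excision sequence attached to $O_j$ lets me push a degree-$i$ class off colour $j$: the single-colour vanishing annihilates its restriction to $O_j$, and exactness then represents the class on the complement of $\overline{O_j}$. Because the $O_0,\dots,O_d$ cover the shell with multiplicity $\le d+1$, after $d+1$ peelings the class is supported away from the entire shell and therefore dies under extension-by-zero into the large-scale $U_n(A,r)$; since every step preserves the degree $i$, only uniform triviality in degrees $i\ge k$ is ever invoked. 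The step I expect to be the main obstacle is exactly this iterated excision: arranging the nested level/scale neighbourhoods so that each peeling has a well-defined relative sequence compatible with the next, checking that the multiplicity bound really empties the shell after $d+1$ stages, and verifying that all the $\rho_\ell$ and $n_\ell$ are chosen uniformly in $A$ (which they can be, as $\asdim$ furnishes uniform diameter and separation bounds and uniform triviality furnishes $x$-independent $N$ and $s$). Setting $r:=\rho_{d+1}$ then completes the proof.
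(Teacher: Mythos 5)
Your reduction of the single-colour case to uniform triviality is sound (modulo replacing ``product'' by direct sum: compactly supported cohomology of a disjoint union is $\bigoplus$, not $\prod$), and for $\asdim(X)=0$ it gives a clean argument: the cochain-level decomposition over the separated pieces commutes with level-restriction and extension-by-zero, so the map in question factors componentwise through the uniform-triviality maps. The trouble is the step you yourself flag, the ``iterated excision''. In compactly supported Alexander--Spanier cohomology there is no restriction map to an \emph{open} subset, so ``the single-colour vanishing annihilates its restriction to $O_j$'' is not a defined operation; restriction exists only to \emph{closed} subsets, while the vanishing provided by uniform triviality concerns extension-by-zero maps out of \emph{open} point-neighbourhoods into larger open neighbourhoods at a lower level. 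These two shapes do not match: from a class restricted to a closed neighbourhood of a colour there is no natural map into the enlarged open neighbourhood to which uniform triviality applies. The honest way to combine the colours is a Mayer--Vietoris induction over them, but its connecting homomorphisms raise the degree by one at each of the $d$ peeling stages, so your claim that ``every step preserves the degree $i$'' fails; that part of the argument survives only because the shifts go upward and uniform triviality is assumed in all degrees $\geq k$.

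Even after this repair a second, more serious gap remains: the lemma asserts a \emph{single} $r$ valid for all $i\geq k$ at once, whereas in your scheme the scale needed to kill degree-$i$ classes is assembled from the uniform-triviality data $(N,s)$ of degree $i$ (and, after the Mayer--Vietoris repair, of degrees $i,\dots,i+d$). These data are completely independent from one degree to the next, so $\sup_{i\geq k}r_i$ may well be infinite. You use $\asdim(X)\leq d$ only to produce the coloured cover, so you have no mechanism that kills classes of large degree with degree-independent constants. This is exactly what the paper's proof supplies and your proposal lacks: its base case factors the restriction $C\!\Delta_c^*(P_{N}(X);R)\to C\!\Delta_c^*(P_n(X);R)$ through the nerve of an anti-\Cech cover of dimension $\leq d$, producing a support-controlled cochain homotopy that kills \emph{all} degrees $\geq d+1$ simultaneously, with constants independent of the degree and of $A$; only then does it descend through the finitely many degrees $d,d-1,\dots,k$ by a downward induction that feeds uniform triviality (applied to the indicator cochains $\phi_\sigma$) into the cochain homotopy one degree at a time. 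Working on the cochain level with support control is what makes both universal quantifiers --- over all $A$ and over all $i\geq k$ --- come out with one uniform $r$; to salvage your class-level approach you would at minimum have to import such a nerve argument for the degrees above $d$.
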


\begin{proof}
Given any $k\in\N$ with the property that $\HX^i(X;R)$ is uniformly trivial for all $i\geq k$, and given any $n\in\N$, we will show that there are $N_{k,n}\geq n$, $r_{k,n}\geq 0$ and a cochain homotopy
\[h^{*}_{k,n}\colon C\!\Delta_c^{*}(P_{N_{k,n}}(X);R)\to C\!\Delta_c^{*-1}(P_n(X);R)\]
between the restriction map $\pi_{k,n}^*$ and some cochain map $f_{k,n}^*$ which vanishes in degrees $i\geq k$ such that $h^{*}_{k,n}$ restricts to 
\[C\!\Delta_c^{*}(U_{N_{k,n}}(A);R)\to C\!\Delta_c^{*-1}(U_n(A,r_{k,n});R)\]
for all $A\subset X$. Because of the isomorphism between simplicial and Alexander--Spanier cohomology of locally finite simplicial complexes, this set of data would obviously immediately imply that the restriction map
\[\HA_c^i(\cU(A);R)\to \HA_c^i(U_{N_{k,n}}(A);R)\to \HA_c^i(U_n(A,r_{k,n});R)\]
vanishes for all $i\geq k$ and all $A\subset X$.

We first prove the claim for $k\geq d+1$.
Choose an anti-Cech system $(\cU_l)_{l\in\N}$ for $X$ with $\dim(\cU_l)\leq d$ for all $l$.
Then for any $n\in\N$ there is $l\in\N$ such that $\cU_l$ has Lebesgue number bigger than $n$ and one obtains a simplicial map $P_n(X)\to \|\cU_l\|$ which maps each point $x\in X$ to the vertex corresponding to an open set $U\in\cU_l$ which contains $x$. Conversely, if the open sets in $\cU_l$ have diameter bounded by some $r_{k,n}\geq 0$, then one obtains a simplicial map $\|\cU_l\|\to P_{r_{k,n}}(X)$ which maps the vertex given by $U\in \cU_l$ to any point $x\in U\subset X$.
We choose $N_{k,n}\geq n+2r_{k,n}$. The composition of simplicial maps
\[f_{k,n}\colon P_n(X)\to \|\cU_l\|\to P_{r_{k,n}}(X)\to P_{N_{k,n}}(X)\]
now has the following properties:
\begin{itemize}
\item The induced map $f_{k,n}^*\colon C\!\Delta_c^*(P_{N_{k,n}}(X);R)\to C\!\Delta_c^*(P_n(X);R)$ is zero in degrees $i\geq d+1$, because $\|\cU_l\|$ has no $i$-simplices.
\item The map $f_{k,n}$ restricts to a map $X\to X$ which is $r_{k,n}$-close to the identity. Therefore, if $x_0,\dots,x_i$ span an $i$-simplex in $P_n(X)$, then any $i+2$ of the points $x_0,\dots,x_i,f_{k,n}(x_0),\dots,f_{k,n}(x_i)$ span an $(i+1)$-simplex in $P_{N_{k,n}}(X)$ and so the usual formula
\[(h^{i+1}_{k,n}\phi)(\langle x_0,\dots, x_i\rangle)\coloneqq \sum_{\nu=0}^i(-1)^\nu\phi(\langle x_0,\dots,x_\nu,f_{k,n}(x_\nu),\dots,f_{k,n}(x_i)\rangle)\]
gives a cochain homotopy $h_{k,n}^*\colon C\!\Delta_c^{*}(P_{N_{k,n}}(X);R)\to C\!\Delta_c^{*-1}(P_n(X);R)$ between $f_{k,n}^*$ and the restriction cochain map $\pi_{k,n}^*$. 
\item For a subset $A\subset X$ the cochain homotopy $h^*_{k,n}$ maps the subcomplex $C\!\Delta_c^{*}(U_{N_{k,n}}(A);R)$ to the subcomplex $C\!\Delta_c^{*-1}(U_{n}(A;r_{k,n});R)$, because if $\phi\in C\!\Delta_c^{i+1}(U_{N_{k,n}}(A);R)$ and $h^{i+1}_{k,n}\phi$ is non-zero on a simplex $\langle x_0,\dots, x_i\rangle$ of $P_n(X)$, then one of the points $x_0,\dots,x_i,f_{k,n}(x_0),\dots,f_{k,n}(x_i)$ is contained in $A$, and thus one of the points $x_0,\dots,x_i$ has distance at most $r_{k,n}$ of $A$, implying $\langle x_0,\dots, x_i\rangle\subset U_{N_{k,n}}(A,r_{k,n})$.
\end{itemize}
This finishes the proof of the claim in the case $k\geq d+1$.

For the other cases $k\leq d$ we now perform a downward induction on~$k$. We assume that $h_{k+1,n}$, $N_{k+1,n}$ and $r_{k+1,n}$ with the properties listed above have already been constructed for all $n\in\N$ and we assume that $\HX^k(X;R)$ is uniformly trivial.

For fixed $n\in\N$ let $N\geq n$ and $s\colon\R_{\geq 0}\to \R_{\geq 0}$ be the data given to us by the definition of uniform local triviality of $\HX^k(X;R)$. Using this we define $N_{k,n}\coloneqq N_{k+1,N}$ and $r_{k,n}\coloneqq s(r_{k+1,N})+N_{k,n}$. 
We now have
\begin{align*}
\pi^{k+1}_{k+1,N}&=\pi^{k+1}_{k+1,N}-f^{k+1}_{k+1,N}=\delta h^{k+1}_{k+1,N}+h^{k+2}_{k+1,N}\delta
\\\Rightarrow \delta\pi^k_{k+1,N}&=\pi^{k+1}_{k+1,N}\delta=\delta h^{k+1}_{k+1,N}\delta
\end{align*}
which tells us that the map 
\[\pi^k_{k+1,N}-h^{k+1}_{k+1,N}\delta\colon C\!\Delta_c^k(P_{N_{k,n}}(X);R)\to C\!\Delta_c^k(P_N(X);R)\]
maps any cochain to a cocycle. 
Furthermore, each subgroup $C\!\Delta_c^k(U_{N_{k,n}}(A);R)$ is mapped into the subgroup $C\!\Delta_c^k(U_N(A,r_{k+1,N});R)$.

For any $k$-simplex $\sigma=\langle x_0,\dots,x_k\rangle\subset P_{N_{k,n}}$ we define the cochain
\[\phi_{\sigma}\in C\!\Delta_c^k(U_{N_{k,n}}(\{x_0\});R)\subset C\!\Delta_c^k(P_{N_{k,n}}(X);R)\,,\qquad \phi_\sigma(\sigma')=\begin{cases}1&\sigma=\sigma'\\0&\text{else.}\end{cases}\]
Then, $(\pi^k_{k+1,N}-h^{k+1}_{k+1,N}\delta)(\phi_\sigma)$ defines a cocycle in $C\!\Delta_c^k(U_N(\{x_0\},r_{k+1,N});R)$ and the uniform local triviality of $\HX^k(X)$ implies that there is 
\[\psi_\sigma\in C\!\Delta_c^{k-1}(U_n(\{x_0\},s(r_{k+1,n}));R)\subset C\!\Delta_c^k(P_n(X);R)\]
such that 
\[\delta\psi_\sigma=\tau^k(\pi^k_{k+1,N}-h^{k+1}_{k+1,N}\delta)(\phi_\sigma)\,,\]
where $\tau^*\colon C\!\Delta_c^*(P_N(X);R)\to C\!\Delta_c^*(P_n(X);R)$ denotes the canonical restriction map.

As the $\phi_\sigma$ constitute a basis of $C\!\Delta_c^k(P_n(X);R)$, one can define the cochain homotopy $h^*_{k,n}$ by $h^k_{k,n}(\phi_\sigma)=\psi_\sigma$, $h^i_{k,n}=\tau^ih^i_{k+1,N}$ in degrees $i>k$ and $h^i_{k,n}=0$ for $i<k$.
For $i>k$ we clearly have 
\[\delta h^{i}_{k,n}+h^{i+1}_{k,n}\delta=\delta \tau^{i-1}h^i_{k+1,N}+\tau^{i}h^{i+1}_{k+1,N}\delta=\tau^i\pi^i_{k+1,N}=\pi^i_{k,n}\]
and in degree $k$ one calculates
\[(\delta h^{k}_{k,n}+h^{k+1}_{k,n}\delta)(\phi_\sigma)=\delta\psi_\sigma+\tau^kh^{k+1}_{k+1,N}(\phi_\sigma)=\tau^k\pi^k_{k+1,N}(\phi_\sigma)=\pi^k_{k,n}(\phi_\sigma)\,.\]
Therefore, $h^*_{k,n}$ is a cochain homotopy between $\pi^*_{k,n}$ and a cochain map $f^*_{k,n}$ which vanishes in degrees $i\geq k$.

It remains to check that $h^*_{k,n}$ maps the subcomplex $C\!\Delta_c^*(U_{N_{k,n}}(A);R)$ into $C\!\Delta_c^{*-1}(U_n(A,r_{k,n});R)$ for each subset $A\subset X$. In  degrees $i<k$ this is trivial and in degrees $i>k$ this follows from the fact that $r_{k,n}\geq s(r_{k+1,n})\geq r_{k+1,n}$ and the corresponding property of the maps $h^*_{k+1,N}$. 
In degree $k$, if we have $\phi_\sigma\in C\!\Delta_c^k(U_{N_{k,n}}(A);R)$, i.\,e.~one of the vertices $x_\nu$ of $\sigma$ is contained in $A$, then $\dist(x_0,A)\leq N_{k,n}$ and therefore $\psi_\sigma\in C\!\Delta_c^k(U_n(A,r_{k,n});R)$. 

This finishes the proof of the induction step and therefore the proof of the lemma.
\end{proof}

\begin{proof}[Proof of Theorem~\ref{thm345zrtewqd}]
We denote $l\coloneqq \operatorname{cd}_R(\partial_HX)$ and we let $A\subset \partial_HX$ be a closed subset such that $\HA^l(\partial_HX,A;R)\not=0$. We now choose a homotopy $h\colon\combcompb{\cP(X)}{H}\times [0,1]\to\combcompb{\cP(X)}{H}$ as in Lemma \ref{lem23430987rew}.
Using that $h$ is a continuous $\sigma$-map, we see that $Y_1\coloneqq h_1(\partial_HX)$ is a closed subset of some $\combcompb{P_m(X)}{H}$. Further, we choose $n$ so large that we have $h(\combcompb{P_m(X)}{H}\times[0,1])\subset \combcompb{P_n(X)}{H}$ and define the closed subset $Y_2\coloneqq h(Y_1\times [0,1])\subset \combcompb{P_n(X)}{H}$. Then $Y_1 \subset Y_2$ and $Y_1\cap \partial_HX=Y_2\cap\partial_HX=A$. The homotopy $h$ now restricts to a homotopy of continuous maps between pairs of spaces $(\combcompb{P_m(X)}{H},Y_1)\to (\combcompb{P_n(X)}{H},Y_2)$.
At the one end of the homotopy, $h_0$ is simply the inclusion, while on the other side $h_1$ factors through the pair $(Y_2,Y_2)$. Thus, these maps induce the zero-map on homology.

Using the naturality of the long exact sequence of Alexander--Spanier cohomology with respect to the inclusion of triples
\[(\combcompb{P_m(X)}{H},Y_1\cup\partial_HX,Y_1)\to (\combcompb{P_n(X)}{H},Y_2\cup\partial_HX,Y_2)\]
and observing that the identities $Y_1\cup\partial_HX\setminus Y_1=Y_2\cup\partial_HX\setminus Y_2=\partial_HX\setminus A$ allows us to apply strong excision, we obain the commutative diagram
\[\mathclap{\xymatrix{
\HA^l(\combcompb{P_n(X)}{H},Y_2;R)\ar[r]\ar[d]^{(h_t)_*=0}
&\HA^l(\partial_HX,A;R)\ar[r]^-{\delta}\ar@{=}[d]
&\HA^{l+1}(\combcompb{P_n(X)}{H},Y_2\cup\partial_HX;R)
\\\HA^l(\combcompb{P_m(X)}{H},Y_1;R)\ar[r]
&\HA^l(\partial_HX,A;R)&
}}\]
which shows that $\delta$ is injective. Again by strong excision, we can identify $\delta$ with the connecting homomorphism
$\HA_c^l(\partial_HX\setminus A;R)\to \HA_c^{l+1}(V;R)$ of the pair $(\combcompb{P_n(X)}{H}\setminus Y_2,\partial_HX\setminus A)$, where $V\coloneqq (\combcompb{P_n(X)}{H}\setminus Y_2)\setminus (\partial_HX\setminus A)=P_n(X)\setminus Y_2$.

Now, if all $\HX^i(X;R)$ with $i\geq l+1$ were uniformly trivial, Lemma \ref{lem_unifloctrivimplieszero} could be applied with $k=l+1$ and let $r\geq 0$ be the number provided by this lemma for exacly the $n$ which we consider here. Let $B\subset X$ be the largest subset such that $U_n(B,r)\subset V$. From the fact that the contraction corona $\partial_HX$ is Higson dominated, it is straightforward to see that the intersection of the closure of each $P_j(X)\setminus U_j(B)$ in $\combcompb{P_j(X)}{H}$ with the corona $\partial_HX$ is exactly $A$, and therefore $\cU(B)\cup (\partial_HX\setminus A)$ is open in $\combcompb{\cP(X)}{H}$. Naturality of the connecting homomorphisms of Alexander--Spanier cohomology now yields the commutative diagram
\[\xymatrix{
\HA_c^l(\partial_HX\setminus A;R)\ar@{=}[d]\ar[r]&\HA_c^{l+1}(\cU(B);R)\ar[d]^0
\\\HA_c^l(\partial_HX\setminus A;R)\ar@{^(->}[r]&\HA_c^{l+1}(V;R)
}\]
where the right vertical arrow vanishes by Lemma \ref{lem_unifloctrivimplieszero} and the lower horizontal arrow is injective as seen above. This is a contradiction, because we assumed at the beginning $\HA^l(\partial_HX,A;R)\not=0$.

Hence, $\HX^i(X;R)$ cannot be uniformly trivial for all $i\geq l+1$, proving the claim.
\end{proof}

We now turn to proving sufficient conditions for uniform triviality, because we want to obtain upper bounds for the cohomological dimension in terms of less mysterious quantities. The big issue is to treat the $\varprojlim^1$-term in the $\varprojlim^1$-sequence adequately. This is a rather technical algebraic exercise.

\begin{lem}[cf.~{Gray \cite{gray}}]\label{lem:invlimabvs}
Let 
\[\ldots \xrightarrow{f_5} M_4 \xrightarrow{f_4} M_3\xrightarrow{f_3} M_2\xrightarrow{f_2} M_1\xrightarrow{f_1} M_0\]
be an inverse system of either countable abelian groups or countably dimensional vector spaces over a fixed field and assume that $\varprojlim^1M_n$ vanishes. Then the Mittag-Leffler condition is satisfied, i.\,e.~for each $n\in\N$ there is $m\in\N$ such that $\im(M_{n+k}\to M_n)=\im(M_{n+m}\to M_n)$ for all $k\geq m$.
\end{lem}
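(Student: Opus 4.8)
The plan is to prove the contrapositive: if the tower fails the Mittag--Leffler condition, then $\varprojlim^1 M_n\neq 0$. First I would fix the standard description of the derived limit. Recall that $\varprojlim^1 M_n=\operatorname{coker}(\partial)$, where $\partial\colon\prod_n M_n\to\prod_n M_n$ is given by $\partial(x)_n=x_n-f_{n+1}(x_{n+1})$ (using $f_{n+1}\colon M_{n+1}\to M_n$ as in the statement). Thus $\varprojlim^1 M_n=0$ is equivalent to surjectivity of $\partial$, and it suffices to exhibit a single $b=(b_n)\in\prod_n M_n$ that is not in the image of $\partial$.

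The next step is to translate the equation $\partial(x)=b$ into a level-wise obstruction. Let $n_0$ be an index at which Mittag--Leffler fails, so the descending chain of images $G^m:=\im(M_m\to M_{n_0})\subseteq M_{n_0}$ (for $m\geq n_0$) does not stabilize; passing to the cofinal subsequence of indices at which it drops strictly (and setting the omitted $b_i$ to $0$), I may assume $G^{m+1}\subsetneq G^m$ for all $m$. Writing $s^{(m)}$ for the telescoped partial sums $\sum_i (f_{n_0+1}\cdots f_i)b_i$ of the $n_0$-component, one checks that any solution $x$ of $\partial(x)=b$ must satisfy $x_{n_0}-s^{(m)}\in G^{m+1}$ for every $m$. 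Hence, setting $L:=\varprojlim_m\bigl(M_{n_0}/G^{m+1}\bigr)$ and observing that the classes of the $s^{(m)}$ assemble into a coherent element $t\in L$, solvability forces $t$ to lie in the image of the canonical map $M_{n_0}\to L$. Conversely, every coherent $t\in L$ is realized by some $b$ (lift $t$ to representatives $s^{(m)}$, and solve for $b_i$ using that the increments lie in $G^i=\im(f_{n_0+1}\cdots f_i)$). So everything reduces to finding a single $t\in L$ outside $\im(M_{n_0}\to L)$.

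The heart of the argument, and the only place the hypothesis is used, is a counting estimate showing that this image is a proper subset of $L$. The transition maps of the system defining $L$ are surjective, and each step has fibre a coset of $W_m:=G^m/G^{m+1}\neq 0$; branching over these nonzero fibres produces at least $2^{\aleph_0}$ distinct coherent sequences, so $|L|\geq 2^{\aleph_0}$ (respectively $\dim_{\mathbb{F}}L\geq 2^{\aleph_0}$, where after factoring out $\bigcap_m G^m$ one uses vector-space splittings $G^m=C_m\oplus G^{m+1}$ to identify $L\cong\prod_m W_m$ and invokes $\dim_{\mathbb{F}}(\mathbb{F}^{\mathbb{N}})\geq 2^{\aleph_0}$). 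On the other hand $\im(M_{n_0}\to L)$ has cardinality at most $|M_{n_0}|\leq\aleph_0$ (respectively dimension at most $\dim_{\mathbb{F}}M_{n_0}\leq\aleph_0$). These two estimates are incompatible, so the image is proper; any $t$ outside it yields, via the realization step, an element $b\notin\im(\partial)$, whence $\varprojlim^1 M_n\neq 0$.

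I expect the main obstacle to be the bookkeeping in the level-$n_0$ reduction, namely verifying that solvability of $\partial(x)=b$ forces the coherent class $t$ into $\im(M_{n_0}\to L)$ and that every $t\in L$ is realized by some $b$, together with making the cardinality/dimension count of $L$ fully rigorous (the identification $L\cong\prod_m W_m$ over a field, and the lower bound $2^{\aleph_0}$ for the branching). The countability assumption enters solely to bound the image side: without it a non-Mittag--Leffler tower can have vanishing $\varprojlim^1$, so this comparison of sizes is exactly the crux of the lemma.
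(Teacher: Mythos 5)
Your proof is correct and is essentially the paper's argument in contrapositive form: both reduce the hypothesis $\varprojlim^1 M_n=0$ to surjectivity of the canonical map $M_{n_0}\to\varprojlim_m M_{n_0}/G^{m+1}$ onto the inverse limit of quotients by the images $G^{m+1}=\im(M_{m+1}\to M_{n_0})$ (you do this directly by telescoping the equation $\partial(x)=b$, the paper by passing to the subsystem of images $A_k$ and using that $\varprojlim^1 A_k$ is a quotient of $\varprojlim^1 M_{n+k}=0$), and both finish with the identical size comparison, namely that this limit contains a copy of $\prod_m G^m/G^{m+1}$ and hence has cardinality (resp.\ dimension) at least $2^{\aleph_0}$ if infinitely many of these quotients are nonzero, contradicting its being a quotient of the countable (resp.\ countably dimensional) $M_{n_0}$. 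The organizational differences (contrapositive phrasing, bypassing the auxiliary tower $\{A_k\}$) do not change the substance.
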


\begin{proof}We make extensive use of the explicit formulas for $\varprojlim$ and $\varprojlim^1$ in this proof, namely that  $\varprojlim M_n$ is the kernel and $\varprojlim^1M_n$ the cokernel of the map
\[F_{\{M_n,f_n\}}\colon \prod_{n\in\N}M_n\to \prod_{n\in\N}M_n\,,\quad (x_n)_{n\in\N}\mapsto (x_n-f_{n+1}(x_{n+1}))_{n\in\N}\,.\]

For each $n\in\N$ consider the inverse system
\[\dots \subset A_4\subset A_3\subset A_2\subset A_1\subset A_0\]
consisting of the subgroups, resp.\ submodules $A_k\coloneqq \im(M_{n+k}\to M_n)$.  The canonical map of inverse systems $\{M_{n+k}\}\to \{A_k\}$ is surjective, and this easily implies that the induced map $\varprojlim^1M_{n+k}\to \varprojlim^1A_k$ must also be surjective. But $\varprojlim^1M_{n+k}=\varprojlim^1M_k=0$ and so $\varprojlim^1A_k=0$, which is equivalent to saying that $F_{\{A_k\}}$ is surjective.

We claim now that the above implies that the canonical homomorphism $A_0\to\varprojlim A_0/A_k$ is surjective: Let 
\[(x_k+A_k)_{k\in\N}\in\varprojlim A_0/A_k=\ker\left(F_{\{A_0/A_k\}}\right)\,,\]
i.\,e.~$x_k\in A_0$ and $x_{k+1}-x_k\in A_k$ for all $k\in\N$. Without loss of generality we may assume $x_0=0$, since $A_0/A_0=0$. Let $(a_k)_{k\in\N}\in\prod_{k\in\N}A_k$ be a preimage of $(x_{k+1}-x_k)_{k\in\N}\in\prod_{k\in\N}A_k$ under $F_{\{A_k\}}$, i.\,e.~$a_{k+1}-a_k=x_{k+1}-x_k$. So we obtain $x_k=x_k-x_0=a_k-a_0$ and thus $x_k+A_k=-a_0+A_k$. Therefore, $-a_0\in A_0$ is the preimage we were looking for.

Now, we see from 
\[M_n\supset A_0\twoheadrightarrow \varprojlim A_0/A_k\supset \prod_{k\in\N \setminus\{0\}}A_{k-1}/A_k\]
that if $M_n$ is countable, resp.\ countably dimensional, then the same is true for $\prod_{k\in\N \setminus\{0\}}A_{k-1}/A_k$. But the latter can only be true if $A_{k-1}/A_k$ are nontrivial for only finitely many $k$, proving the claim.
\end{proof}

\begin{lem}\label{lem:mlpluslimzero}
If an inverse system of abelian groups
\[\ldots \xrightarrow{f_5} M_4 \xrightarrow{f_4} M_3\xrightarrow{f_3} M_2\xrightarrow{f_2} M_1\xrightarrow{f_1} M_0\]
satisfies the Mittag-Leffler condition and $\varprojlim_mM_m=0$, then for each $n\in\N$ there is $N\geq n$ such that the homomorphism $M_N\to M_n$ vanishes.
\end{lem}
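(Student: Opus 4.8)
The plan is to reformulate the conclusion in terms of \emph{stable images}. For each $n\in\N$ write $\phi_{j,i}\colon M_j\to M_i$ for the composite transition map when $j\ge i$, and set
\[
A_n:=\bigcap_{j\ge n}\im(\phi_{j,n})\subset M_n\,.
\]
The Mittag-Leffler condition says precisely that for each $n$ there is $m$ with $A_n=\im(\phi_{n+m,n})$, and the assertion ``there is $N\ge n$ with $\phi_{N,n}=0$'' is then equivalent to $A_n=0$. So the whole lemma reduces to showing that the stable images all vanish. The transition maps $\phi_{n+1,n}$ restrict to maps $A_{n+1}\to A_n$, so the $A_n$ form an inverse subsystem of $(M_n)$.

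The first real step is to prove that these restricted maps $A_{n+1}\to A_n$ are \emph{surjective}. Given $a\in A_n$, for every $j\ge n+1$ there is $x_j\in M_j$ with $\phi_{j,n}(x_j)=a$, and then $\phi_{j,n+1}(x_j)$ lies in $\phi_{n+1,n}^{-1}(a)\cap\im(\phi_{j,n+1})$. Thus each set $\phi_{n+1,n}^{-1}(a)\cap\im(\phi_{j,n+1})$ is nonempty; being the intersection of a fixed coset of $\ker(\phi_{n+1,n})$ with a subgroup, it is itself a coset, and these cosets are nested as $j$ increases. By Mittag-Leffler at level $n+1$ the subgroups $\im(\phi_{j,n+1})$ stabilize to $A_{n+1}$, so the nonempty intersection persists in the limit, giving a point of $\phi_{n+1,n}^{-1}(a)\cap A_{n+1}$; this is the desired preimage. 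I expect this coset/stabilization bookkeeping to be the main obstacle, since one must take care that the nonempty nested family actually has nonempty intersection with the stable image rather than merely with each finite stage.

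Once surjectivity is established, I would finish as follows. A straightforward compatible-lifting argument shows that for a tower with surjective transition maps the inverse limit surjects onto every term, so $\varprojlim_n A_n\twoheadrightarrow A_n$ for each $n$. On the other hand, any compatible sequence $(x_n)\in\varprojlim_n M_n$ automatically has $x_n=\phi_{j,n}(x_j)\in\im(\phi_{j,n})$ for all $j\ge n$, hence $x_n\in A_n$; therefore $\varprojlim_n A_n=\varprojlim_n M_n$, which is $0$ by hypothesis. Combining these two facts forces $A_n=0$ for every $n$, and translating back through the Mittag-Leffler equality $A_n=\im(\phi_{n+m,n})$ yields an $N:=n+m\ge n$ with $\phi_{N,n}=0$, as required. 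Note that, unlike Lemma~\ref{lem:invlimabvs}, no countability hypothesis is needed here.
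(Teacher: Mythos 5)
Your proposal is correct and takes essentially the same route as the paper: both pass to the stable images (the paper's $M_m^s$, your $A_n$), prove that the induced maps between consecutive stable images are surjective via the Mittag-Leffler condition, and then use $\varprojlim_m M_m = 0$ to force every stable image to vanish, which by Mittag-Leffler gives the desired vanishing of some $M_N \to M_n$. Your invocation of ``towers with surjective maps have surjective projections from the inverse limit'' is just a repackaging of the paper's recursive lifting of an element $x_n \in M_n^s$ to a compatible sequence.
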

\begin{proof}
For each $m\in\N$ let $N_m\geq m$ be the natural number such that
\[M_m^s\coloneqq \im(M_{N_m}\to M_m)=\im(M_k\to M_m)\]
for all $k\geq N_m$. We may assume that the sequence $(N_m)_m$ is monotonously increasing.
Then each of the maps $M_{m+1}\to M_m$ maps $M^s_{m+1}$ surjectively onto $M^s_m$: if $x_m\in M^s_m$ is an arbitrary element, then it has a preimage $y_m$ under $M_{N_{m+1}}\to M_m$ and the map $M_{N_{m+1}}\to M^s_{m+1}$ maps $y_m$ to a preimage $x_{m+1}$ of $x_m$.

Therefore, starting with some $x_n\in M^s_n$, this procedure yields a sequence $(x_m)_{m\geq n}$ of preimages which constitute an element of $\varprojlim_mM_m$. But this limit is null by assumption, so in particular $x_n=0$.
This shows that
\[M_n^s=\im(M_{N_n}\to M_n)=0,\]
i.\,e.~the lemma holds with $N=N_n$.
\end{proof}

\begin{lem}\label{lemwfd4532345453}
Let $G$ be a finitely  generated group and $M$ a finitely generated abelian group or a finite-dimensional vector space. Then $\HX^k(G;M)=0$ and $\HX^{k+1}(G;M)=0$ together imply that $\HX^k(G;M)$ is uniformly trivial.
\end{lem}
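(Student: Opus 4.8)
The plan is to run the purely algebraic machinery of Lemmas \ref{lem:invlimabvs} and \ref{lem:mlpluslimzero} on the inverse system $M_n:=H\!\Delta_c^k(P_n(G);M)$ (with the restriction maps $M_{n+1}\to M_n$), and then to localize the resulting cohomological vanishing using the finiteness of the group together with the finiteness of its finite-scale local neighborhoods. First I would record that each $M_n$ is a countable abelian group, respectively a countably-dimensional vector space: since $G$ is finitely generated, $P_n(G)$ is locally finite with countably many simplices, so $C\!\Delta_c^k(P_n(G);M)$ is a countable direct sum of copies of $M$ and $M_n$ is a subquotient of it. The $\varprojlim^1$-sequence relating $\HX^*(G;M)$ to the $H\!\Delta_c^*(P_n(G);M)$ then converts the two hypotheses into exactly the vanishing I need: $\HX^k(G;M)=0$ forces $\varprojlim_n M_n=0$, while $\HX^{k+1}(G;M)=0$ forces $\varprojlim_n^1 M_n=0$.

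Feeding $\varprojlim_n^1 M_n=0$ into Lemma \ref{lem:invlimabvs} yields the Mittag--Leffler condition, and combining Mittag--Leffler with $\varprojlim_n M_n=0$ in Lemma \ref{lem:mlpluslimzero} produces, for each $n$, an $N\geq n$ such that the restriction map $\tau\colon H\!\Delta_c^k(P_N(G);M)\to H\!\Delta_c^k(P_n(G);M)$ is zero. This $N$ depends only on $n$, so it is the integer required by the definition of uniform triviality; it remains to manufacture the function $s$ and verify the local vanishing. Here I would first reduce to the base vertex $e\in G$: left translation acts on all the $P_m(G)$ by simplicial isometries and satisfies $g\cdot U_m(\{x\},r)=U_m(\{gx\},r)$, so it intertwines the canonical maps for the vertices $x$ and $gx$; since $G$ acts transitively on its vertices, a single $s$ obtained for $x=e$ will work uniformly for all $x$.

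For the base vertex the crucial observation is that $U_N(\{e\},r)$ is a \emph{finite} complex — every simplex of $P_N(G)$ meeting it has all of its vertices within $r+N$ of $e$, and balls in $G$ are finite — so $H\!\Delta_c^k(U_N(\{e\},r);M)$ is finitely generated, respectively finite-dimensional, because $M$ is. I would then use the commuting square whose vertical arrows are the extension-by-zero maps into the direct limits $\varinjlim_r H\!\Delta_c^k(U_N(\{e\},r))\cong M_N$ and $\varinjlim_s H\!\Delta_c^k(U_n(\{e\},s))\cong M_n$ furnished by continuity of Alexander--Spanier cohomology, and whose bottom arrow is $\tau=0$. Chasing a class $\phi$ around this square (starting at stage $s=r$) shows that the image of $\phi$ in $H\!\Delta_c^k(U_n(\{e\},r);M)$ maps to zero in the direct limit $M_n$, hence already dies at some finite stage $H\!\Delta_c^k(U_n(\{e\},s_\phi);M)$. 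Running this over a finite generating set $\phi_1,\dots,\phi_m$ of $H\!\Delta_c^k(U_N(\{e\},r);M)$ and setting $s(r):=\max\{r,s_{\phi_1},\dots,s_{\phi_m}\}$ makes the canonical homomorphism $H\!\Delta_c^k(U_N(\{e\},r);M)\to H\!\Delta_c^k(U_n(\{e\},s(r));M)$ vanish, which together with the equivariance reduction gives uniform triviality.

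The main obstacle is precisely this last passage from the single cohomological identity ``$\tau=0$ on $P_N\to P_n$'' to a statement uniform both in the radius $r$ and in the basepoint $x$. The two features that make it succeed are the finiteness of the local complexes $U_N(\{e\},r)$ combined with the finite-generation hypothesis on $M$ — which let one $s(r)$ annihilate \emph{all} classes at once rather than one class at a time, since a finitely generated subgroup that maps to zero in a direct limit already maps to zero at a finite stage — and the cocompactness of the $G$-action, which upgrades the single basepoint $e$ to all vertices with the same $s$. I would take care to verify that the square above commutes already on cochains by tracking supports: the extension by zero of $\phi$ to $P_N(G)$, restricted to $P_n(G)$, is supported on simplices that still possess a vertex within $r$ of $e$, hence lands in $U_n(\{e\},r)$.
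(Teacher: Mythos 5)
Your proposal is correct and follows essentially the same route as the paper's proof: the $\varprojlim^1$-sequence plus Lemmas \ref{lem:invlimabvs} and \ref{lem:mlpluslimzero} to kill the restriction $H\!\Delta_c^k(P_N(G);M)\to H\!\Delta_c^k(P_n(G);M)$, equivariance to reduce to the basepoint $e$, and finiteness of the local complexes $U_N(\{e\},r)$ together with finite generation of $M$ to extract a single $s(r)$ from a finite generating set of classes dying at finite stages of the direct limit. The only (immaterial) deviation is that you certify countability of the groups $H\!\Delta_c^k(P_n(G);M)$ directly from the cochain groups being countable direct sums of copies of $M$, whereas the paper deduces it via continuity from the finitely generated local cohomology groups.
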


\begin{proof}
As our space is a group, it obviously suffices to find for each $n\in\N$ an $N\geq n$ and construct the function $s$ such that \eqref{eq:def:unifloctriv} holds for the unit element $x=e\in G$. The other $x\in G\setminus\{e\}$ then follow by equivariance of the problem.

From our assumption on $M$, for each $m\in\N$ and $r\geq 0$, the simplicial cochain groups $C\!\Delta_c^k(U_m(\{e\},r);M)$ will be either finitely generated abelian groups or finite dimensional vector spaces, and hence the cohomology groups $H\!\Delta_c^k(U_m(\{e\},r);M)$ are also of this type. By continuity of the cohomology theory we have 
\[H\!\Delta_c^k(P_m(G);M)\cong \varinjlim_{r\in\N}H\!\Delta_c^k(U_m(\{e\},r);M)\,,\]
so these are countable abelian groups or countably dimensional vector spaces, respectively.

The assumptions $\HX^k(G;M) = \HX^{k+1}(G;M) = 0$ with the $\varprojlim^1$-sequence yield
$\varprojlim_m H\!\Delta_c^k(P_m(G);M)=0$ and $\varprojlim^1_m H\!\Delta_c^k(P_m(G);M)=0$.
Therefore we can apply Lemma \ref{lem:invlimabvs} and Lemma \ref{lem:mlpluslimzero} to obtain for each $n\in\N$ an $N\geq n$ such that the homomorphism $H\!\Delta_c^k(P_N(G);M)\to H\!\Delta_c^k(P_n(G);M)$ vanishes.

For $r\geq 0$ we choose a finite generating set $\{\phi_1,\dots,\phi_d\}$ for the cohomology $H\!\Delta_c^k(U_N(\{e\},r);M)$. From the diagram
\[\xymatrix{
H\!\Delta_c^k(U_N(\{e\},r);M)\ar[r]\ar[d]
&\varinjlim_{s\geq r}H\!\Delta_c^k(U_n(\{e\},s);M)\ar[d]^\cong
\\H\!\Delta_c^k(P_N(G);M)\ar[r]^{0}
&H\!\Delta_c^k(P_n(G);M)
}\]
we see that each $\phi_i$ is mapped to zero in some $H\!\Delta_c^k(U_n(\{e\},s_i);M)$, where $s_i\geq r$. Thus, the canonical map 
\[H\!\Delta_c^k(\cU_N(\{e\},r);M)\to H\!\Delta_c^k(\cU_n(\{e\},s(r));M)\]
vanishes for $s(r)\coloneqq \max\{s_1,\dots,s_d\}$.
\end{proof}

\begin{cor}\label{corrttrtrrt}
Let $G$ be a finitely generated group equipped with a coherent, expanding combing $H$ and let $G$ be of finite asymptotic dimension. Let $R$ be  either a unital ring whose underlying additive group structure is finitely generated or a field.
Then 
\[\cd_R(\partial_H G) + 1 = \max\{k\mid \HX^k(G;R) \not= 0\}.\]
\end{cor}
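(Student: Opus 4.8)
The plan is to prove the two inequalities
\[\cd_R(\partial_H G)+1\le \max\{k\mid \HX^k(G;R)\neq 0\}\quad\text{and}\quad \cd_R(\partial_H G)+1\ge \max\{k\mid \HX^k(G;R)\neq 0\}\]
separately. Write $m:=\max\{k\mid \HX^k(G;R)\neq 0\}$; this is finite because $\HX^k(G;R)$ vanishes for $k>\asdim(G)+1$, and for infinite $G$ one has $m\ge 1$ (the case of finite $G$ being degenerate). Note that $R$, viewed as a module over itself, is a finitely generated abelian group when $R$ has finitely generated additive group and a finite-dimensional vector space when $R$ is a field, so that Lemma~\ref{lemwfd4532345453} is applicable with $M=R$.

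For the upper bound I would invoke Theorem~\ref{thm345zrtewqd}, which gives
\[\cd_R(\partial_H G)+2\le \min\{k\mid \HX^i(G;R)\text{ is uniformly trivial for all }i\ge k\}\,.\]
It therefore suffices to show that $\HX^i(G;R)$ is uniformly trivial for every $i\ge m+1$. But for such $i$ both $\HX^i(G;R)$ and $\HX^{i+1}(G;R)$ vanish by maximality of $m$, so Lemma~\ref{lemwfd4532345453} (applied with $k=i$ and $M=R$) yields uniform triviality of $\HX^i(G;R)$. Hence the minimum above is at most $m+1$, and we conclude $\cd_R(\partial_H G)+2\le m+1$, that is, $\cd_R(\partial_H G)+1\le m$.

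For the lower bound I would exploit the transgression isomorphism. Combining Corollary~\ref{cor23452342} with the identification \eqref{eq_HX_Gruppenring} gives
\[\HX^m(G;R)\cong H^m(G;R[G])\cong \widetilde{\HA}^{m-1}(\partial_H G;R)\,.\]
Since $\HX^m(G;R)\neq 0$, the reduced group $\widetilde{\HA}^{m-1}(\partial_H G;R)$ is nonzero, and as it is a quotient of $\HA^{m-1}(\partial_H G;R)$ this forces $\HA^{m-1}(\partial_H G;R)\neq 0$. Taking the closed subset $A=\emptyset$ in the definition of cohomological dimension, so that $\HA^{m-1}(\partial_H G,\emptyset;R)=\HA^{m-1}(\partial_H G;R)$, we obtain $\cd_R(\partial_H G)\ge m-1$, i.e.~$\cd_R(\partial_H G)+1\ge m$. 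Together with the upper bound this establishes the claimed equality.

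The genuinely hard work has already been carried out in Theorem~\ref{thm345zrtewqd} and the lemmata preceding it; the remaining task is essentially bookkeeping. The two points to watch are (a) that the coefficient hypotheses on $R$ are precisely what is needed to feed $M=R$ into Lemma~\ref{lemwfd4532345453}, and (b) the passage between reduced and unreduced Alexander--Spanier cohomology in the lower bound, which is harmless here because a nonzero reduced group detects a nonzero unreduced group and the empty closed subset realizes the absolute cohomology inside the definition of $\cd_R$.
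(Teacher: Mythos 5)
Your proposal is correct and follows essentially the same route as the paper: the upper bound via Lemma~\ref{lemwfd4532345453} (applicable to $M=R$ under the stated coefficient hypotheses) feeding into Theorem~\ref{thm345zrtewqd}, and the lower bound via the transgression isomorphism, which you access through Corollary~\ref{cor23452342} and \eqref{eq_HX_Gruppenring} while the paper cites Lemma~\ref{lem_transgressionisoforcontractible} directly — the same underlying fact. Your extra bookkeeping (reduced versus unreduced cohomology, taking $A=\emptyset$ in the definition of $\cd_R$) is a correct unpacking of what the paper leaves implicit.
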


\begin{proof}
From Lemma~\ref{lemwfd4532345453} it follows that $\HX^i(G;R)$ is uniformly trivial for all $i > \max\{k\mid \HX^k(G;R) \not= 0\}$.  Therefore Theorem~\ref{thm345zrtewqd} then implies the inequality $\cd_R(\partial_H G) + 1 \le \max\{k\mid \HX^k(G;R) \not= 0\}$.

By Lemma~\ref{lem_transgressionisoforcontractible} the transgression map $T_H\colon \widetilde{\HA}^\ast(\partial_H G;R) \to \HX^{\ast+1}(G;R)$ is an isomorphism, which implies the reverse inequality.
\end{proof}

Theorem~\ref{thm_cdleqasdim}, which we stated at the beginning of this section, is now an easy consequence of the above corollary and the basic fact that for a space $Y$ we have $\HX^k(Y;R) = 0$ for all $k \ge \asdim(Y) + 2$.

\section{Constructing \texorpdfstring{$\boldsymbol{Z}$}{Z}-structures on groups}
\label{secnm0987678bmvmvmv}

Let a finitely generated, discrete $G$ admit a coherent and expanding combing. Our goal in this section is to show that we can put the combing corona on any suitable contractible space $X$ to get a compactification $\overline{X}$, which is a so-called $Z$-structure for $G$ (see Definition \ref{defn_Zstructures}). The reason why we want to do this is the following: up to now we have $\sigma$-compactified the Rips complex $\cP(G)$ of $G$ in order to get a contractible $\sigma$-compactification. But the space $\cP(G)$ is not finite-dimensional and not even metrizable. Putting the combing corona on a suitable finite-dimensional and metrizable space $X$ in order to get a contractible compactification has implications for the structure of the corona $\partial_H G$ (see, e.g., Corollary \ref{cor_coronadimension}).

\begin{defn}
A closed subset $Z$ of a topological space $\overline{X}$ is called a $Z$-set, if there is a homotopy $H\colon \overline{X}\times[0,1]\to \overline{X}$ such that $H_0$ is the identity and $H_t(\overline{X})\subset X\coloneqq \overline{X}\setminus Z$ for all $t>0$.
\end{defn}

In the proof of Theorem \ref{thm:RipsCompactificationContractible} we have actually shown that $\partial_HX$ is a $Z$-set in $\combcompb{\cP(Z)}{H}$ if $H$ is a coherent and expanding combing on the metric space $X$ and $Z\subset X$ is a discretization. 
For this particular example it might seem wise to demand that the homotopy in the definition of $Z$-sets is even a $\sigma$-map. However, we are only interested in using this notion for non-$\sigma$-spaces, so the above definition suffices.

Let us now introduce the necessary technical background for this section.

\begin{defn}
Let $G$ be a finitely generated, discrete group acting properly and cocompactly on a locally compact Hausdorff space $X$ from the left. Then the \emph{action coarse structure} on $X$ is the coarse structure generated by all the entourages $G\cdot (K\times K)$ with $K\subset X$ compact. 
\end{defn}
In particular, on can consider the action coarse structure on $G$ coming from the left action of $X$ onto itself. This coarse structure agrees with the coarse structure induced by the word metric associated to any choice of finite, symmetric generating set.

The proofs of the following two lemmas are elementary and will therefore be omitted.

\begin{lem}
The action coarse structure on $X$ turns $X$ into a proper topological coarse space.\qed
\end{lem}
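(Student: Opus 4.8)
The plan is to verify the two defining conditions of Definition~\ref{defn2435r23we} directly, the engine being that a properly discontinuous action of a discrete group translates compact sets with only finite multiplicity. First I would record a convenient description of the entourages: each generator is symmetric, since $G\cdot(K\times K)=\bigcup_{g\in G}gK\times gK$, and the generated coarse structure is exactly the collection of all subsets of finite unions of finite compositions of such generators together with the diagonal. With this in hand I would isolate the key property $(\ast)$: an entourage $E$ \emph{satisfies $(\ast)$} if $E[L]$ is precompact for every precompact $L\subset X$. The crucial input is properness of the action, which for discrete $G$ says that $\{g\in G\mid gK\cap L\neq\emptyset\}$ is finite whenever $K,L$ are compact. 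Hence, for compact $L$,
\[
\big(G\cdot(K\times K)\big)[L]=\bigcup_{g:\,gK\cap L\neq\emptyset}gK
\]
is a finite union of compact sets, so every generator satisfies $(\ast)$. Since $(\ast)$ obviously passes to subsets and finite unions, and also to compositions because $(E_1\circ E_2)[L]=E_1[E_2[L]]$, a straightforward induction over the description above shows that \emph{every} entourage satisfies $(\ast)$.

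For the topological coarse space condition I would use cocompactness together with local compactness to produce a compact $K$ with $G\cdot\mathring K=X$ (cover a compact fundamental domain by relatively compact opens and take the closure of their union). Then $\bigcup_{g\in G}g\mathring K\times g\mathring K$ is open, contains the diagonal (each $x$ lies in some $g\mathring K$, whence $(x,x)\in g\mathring K\times g\mathring K$), and is contained in the entourage $G\cdot(K\times K)$; closure under subsets makes it itself an entourage. As $X$ is Hausdorff, this is the required open entourage neighborhood of the diagonal.

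For properness I would take a bounded subset $B$, so that $B\times B$ is an entourage, pick any $x_0\in B$, and observe $(B\times B)[x_0]=B$. Applying $(\ast)$ to the entourage $B\times B$ and the compact set $\{x_0\}$ immediately gives that $B$ is precompact, which is exactly the properness requirement.

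I do not expect a genuine obstacle here—this is one of the ``elementary'' lemmas the authors omit. The only point demanding a little care is organizing the induction over the generated coarse structure so that the finiteness coming from proper discontinuity propagates correctly; once $(\ast)$ is seen to be stable under composition via $(E_1\circ E_2)[L]=E_1[E_2[L]]$, the remaining verifications are pure bookkeeping.
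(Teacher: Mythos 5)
Your proof is correct. The paper gives no argument for this lemma (it is explicitly declared elementary and omitted), and your verification is exactly the standard one that fills that gap: properness of the action shows each generator, and hence inductively every entourage $E$, has $E[L]$ precompact for precompact $L$; cocompactness plus local compactness produces the open entourage $\bigcup_{g\in G} g\mathring{K}\times g\mathring{K}$ around the diagonal; and evaluating $(B\times B)[x_0]=B$ at a point $x_0\in B$ then yields precompactness of every bounded set.
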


\begin{lem}\label{lem_coarse_equiv_action}
For any $x_0\in X$, the map $\alpha\colon G\to X, g\mapsto g\cdot x_0$ is a coarse equivalence with respect to the action coarse structures on $G$ and $X$.

Any map $\beta\colon X\to G$ obtained in the following way is a coarse inverse to~$\alpha$: choose a bounded subset $B\subset X$ with $G\cdot B=X$ and set $\beta(x)\coloneqq g$ for a $g\in G$ with $x\in g\cdot B$. Any two such choices for $\beta$ are close to each other.\qed
\end{lem}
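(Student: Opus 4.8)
The plan is to base the entire argument on two structural facts about the action and to reduce every claim to the generating entourages $G\cdot(K\times K)$. Cocompactness provides a compact (hence, by the preceding lemma, bounded) set $B\subset X$ with $G\cdot B=X$, and properness provides the finiteness fact that for any two compact sets $K,L\subset X$ the set $\{g\in G\mid gK\cap L\neq\emptyset\}$ is finite. Since the action coarse structure on $X$ is generated by the entourages $G\cdot(K\times K)$ with $K$ compact, and since $(f\times f)$ respects unions, subsets, inverses and compositions of relations, every controlledness claim below may be verified on these generators alone. Because establishing that $\alpha$ is a coarse equivalence is most naturally done by exhibiting $\beta$ as a two-sided coarse inverse, I would treat both assertions of the lemma at once.

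First I would handle $\alpha$. For controlledness, take a generating entourage $G\cdot(F\times F)$ of $G$ with $F\subset G$ finite; putting $K:=F\cdot x_0\subset X$, which is finite and hence compact, a direct computation gives $(\alpha\times\alpha)(G\cdot(F\times F))=G\cdot(K\times K)$, a generating entourage of $X$. For properness, if $B'\subset X$ is bounded then $\overline{B'}$ is compact, so $\alpha^{-1}(B')\subset\{g\in G\mid g\cdot x_0\in\overline{B'}\}$ is finite by the finiteness fact applied with $K=\{x_0\}$ and $L=\overline{B'}$, hence bounded.

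Next I would define $\beta$ from the cocompactness set $B$ and verify it is a coarse inverse; controlledness of $\beta$ is the \emph{main technical point}. Given $(x,y)\in G\cdot(K\times K)$, write $x=gk_1$, $y=gk_2$ with $k_i\in K$, and set $g_1=\beta(x)$, $g_2=\beta(y)$, so that $g_1^{-1}x,\,g_2^{-1}y\in B\subset\overline B$. The finiteness fact applied to the compact sets $K$ and $\overline B$ shows that both $g_1^{-1}g$ and $g_2^{-1}g$ lie in the single finite set $S:=\{h\in G\mid hK\cap\overline B\neq\emptyset\}$, and then the identity $g_1^{-1}g_2=(g_1^{-1}g)(g^{-1}g_2)$ confines $\beta(x)^{-1}\beta(y)$ to the finite set $S\,S^{-1}$, so $(\beta\times\beta)(G\cdot(K\times K))$ is an entourage of $G$. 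Properness of $\beta$ follows because $\beta^{-1}(\Phi)\subset\Phi\cdot B$ for finite $\Phi\subset G$, a finite union of translates of the precompact set $B$ whose closure $\bigcup_{g\in\Phi}g\overline B$ is compact, so $\Phi\cdot B$ is precompact, hence bounded.

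Finally I would check the two closeness relations and uniqueness up to closeness. For $\beta\circ\alpha\sim\id_G$, the finiteness fact with $K=\{x_0\}$ bounds $g^{-1}\beta(\alpha(g))$ uniformly in $g$; for $\alpha\circ\beta\sim\id_X$, setting $K':=\overline B\cup\{x_0\}$ exhibits $(\alpha\beta(x),x)\in G\cdot(K'\times K')$ uniformly in $x$. The same finiteness fact, applied to $\overline B$, shows that for any two choices $\beta,\beta'$ the difference $\beta(x)^{-1}\beta'(x)$ lies in the finite set $\{h\in G\mid h\overline B\cap\overline B\neq\emptyset\}$, so $\beta$ and $\beta'$ are close. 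The only point demanding care throughout is the translation between closeness in $X$ and bounded difference in $G$, which is exactly where properness of the action is invoked each time; beyond this, no step requires more than the finiteness fact together with the identification, valid in the proper topological coarse space $X$, of bounded sets with precompact ones.
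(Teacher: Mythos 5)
The paper offers no proof to compare against --- it declares this lemma elementary and omits the argument --- so the only question is whether your proof stands on its own, and it does: verifying controlledness on the generating entourages $G\cdot(K\times K)$, extracting properness of $\alpha$ and $\beta$ from the finiteness property of proper actions, and checking the two closeness relations are exactly the right (and standard) steps, and each is carried out correctly. One small imprecision in the final step: two admissible choices of $\beta$ may be built from \emph{different} bounded sets $B_1$ and $B_2$, in which case the relevant finite set is $\{h\in G\mid h\overline{B_2}\cap\overline{B_1}\neq\emptyset\}$ rather than $\{h\in G\mid h\overline{B}\cap\overline{B}\neq\emptyset\}$; alternatively, observe that $\beta$ and $\beta'$ are both admissible choices for the common bounded set $B_1\cup B_2$, so your same-$B$ argument already covers the general case.
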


As a consequence of the previous lemma, if $H$ is a proper combing on $G$, then $H$ can be pulled back via $\alpha$ to a proper combing on $X$ and we obtain the combing compactification $\combcompb{X}{H}$ of $X$ with corona $\partial_HG$. It has the property that $\alpha$ extends to a continuous map $\overline{\alpha}\colon\combcompb{G}{H} \to\combcompb{X}{H}$ which is the identity on the corona $\partial_HG$.

\begin{lem}\label{lem_boundaryswapping}
Let $G$ be a finitely generated, discrete group acting properly and cocompactly on a locally compact Hausdorff space $X$ and let $H$ be a coherent and expanding combing on $G$.

If there exists a $G$-equivariant homotopy equivalence $\phi\colon X\to \cP(G)$, then there exists a continuous contraction $\tilde H\colon \combcompb{X}{H}\times[0,1]\to\combcompb{X}{H}$ with $\tilde H_0=\id$ and $\tilde H_t(\combcompb{X}{H})\subset X$ for all $t\in(0,1]$. Especially, $\partial_H X$ is a $Z$-set in $\combcompb{X}{H}$.
\end{lem}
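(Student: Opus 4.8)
The plan is to prove this by a boundary--swapping argument: I want to transport the instant push--off of $\partial_H G$ inside $\combcomp{\cP(G)}^H$, which was already produced in the proof of Theorem~\ref{thm:RipsCompactificationContractible}, across the homotopy equivalence $\phi$ to the space $X$. First I would fix a $G$-homotopy inverse $\psi\colon\cP(G)\to X$ of $\phi$ together with $G$-homotopies $\psi\phi\simeq_G\id_X$ and $\phi\psi\simeq_G\id_{\cP(G)}$ (these are part of the data of a $G$-equivariant homotopy equivalence). Using Lemma~\ref{lem_coarse_equiv_action}, the orbit map $\alpha\colon G\to X$ and the vertex map $\cP(G)\to G$ are coarse equivalences compatible with the pulled-back combing, and equivariance shows that $\phi\circ\alpha$ is close to the inclusion $G\hookrightarrow\cP(G)$. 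Hence $\phi$ and $\psi$ are coarse maps compatible with the combings, so by Corollary~\ref{cor_coronafunctoriality} they extend to continuous maps $\overline{\phi}\colon\combcomp{X}^H\to\combcomp{\cP(G)}^H$ and $\overline{\psi}\colon\combcomp{\cP(G)}^H\to\combcomp{X}^H$, each restricting to the identity on the common corona $\partial_H G$, with $\overline{\psi}|_{\cP(G)}=\psi$ mapping $\cP(G)$ into $X$. Coarse invariance of the two key properties (Lemma~\ref{lem_AdditionalPropertiesCoarselyInvariant}) guarantees that the combing on $X$ is again coherent and expanding.

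Next, since the $G$-action is proper and cocompact, the $G$-homotopy realising $\psi\phi\simeq\id_X$ has uniformly bounded tracks; a vanishing-variation argument as in Lemma~\ref{lem24354terwr} then shows that it extends to a homotopy $K\colon\combcomp{X}^H\times[0,1]\to\combcomp{X}^H$ with $K_0=\id$, $K_1=\overline{\psi}\,\overline{\phi}$, each $K_s$ fixing $\partial_H G$ pointwise and mapping $X$ into $X$. Let $\hat H$ denote the time-reversed contraction from Theorem~\ref{thm:RipsCompactificationContractible}, so that $\hat H_0=\id_{\combcomp{\cP(G)}^H}$, $\hat H_1\equiv p$, and $\hat H_t(\combcomp{\cP(G)}^H)\subset\cP(G)$ for every $t\in(0,1]$. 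Setting $A_t:=\overline{\psi}\circ\hat H_t\circ\overline{\phi}$ yields a homotopy with $A_0=\overline{\psi}\,\overline{\phi}$, $A_1\equiv\psi(p)$, and $A_t(\combcomp{X}^H)\subset\overline{\psi}(\cP(G))\subset X$ for all $t\in(0,1]$.

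Finally I would assemble $\tilde H$ out of $K$ and $A$. The naive concatenation ``$K$ then $A$'' already satisfies $\tilde H_0=\id$ with $\tilde H_1$ constant, but it fails the $Z$-set requirement during the $K$-phase, where each $K_s$ fixes the corona and therefore does not push into $X$. Repairing this is the main obstacle: one must produce a single homotopy that equals $\id$ only at $t=0$ yet lands in $X$ for every $t>0$. The point is that a one-sided composition with $A$ can never recover $\id$ at $t=0$, since $A_0=\overline{\psi}\,\overline{\phi}$ is not invertible, so a genuine gluing is unavoidable. I expect to carry it out by the standard $Z$-set absorption device: interleave the correction $K$ with an arbitrarily small amount of the push $A$, which is legitimate precisely because $A_\varepsilon$ maps all of $\combcomp{X}^H$ into $X$ for $\varepsilon>0$ while each $K_s$ preserves $X$, so the correction can be performed entirely inside $X$ after an initial infinitesimal push.

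The delicate verification is continuity of the resulting $\tilde H$ at $t=0$. At corona points it is immediate, since $\overline{\phi}$ and $\overline{\psi}$ fix $\partial_H G$ and $\hat H_t\to\id$, so $A_t(x)\to x$ as $x$ approaches the corona and $t\to0$; at interior points it is governed by $K$, which keeps them within $X$. Once continuity is checked, $\tilde H$ is the desired contraction of $\combcomp{X}^H$ with $\tilde H_0=\id$ and $\tilde H_t(\combcomp{X}^H)\subset X$ for all $t\in(0,1]$, and its instant push-off exhibits $\partial_H X=\partial_H G$ as a $Z$-set in $\combcomp{X}^H$, as claimed.
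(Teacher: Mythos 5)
Your setup coincides with the paper's: you extend $\phi$, $\psi$ and the $G$-homotopy $\psi\phi\simeq\id_X$ to maps $\overline{\phi}$, $\overline{\psi}$, $K$ on the compactifications fixing the corona pointwise, and you transport the contraction of $\combcomp{\cP(G)}^H$ to $A_t=\overline{\psi}\circ\hat H_t\circ\overline{\phi}$. You also isolate the crux correctly: $K$ keeps corona points on the corona, $A$ does not start at the identity, so neither naive concatenation nor a one-sided composition works. But at exactly this point your argument stops being a proof. ``Interleave the correction with an arbitrarily small amount of the push'' is not a construction, and the one concrete hint you give --- that the correction is performed ``after an initial infinitesimal push'' --- has the phases in an order that cannot work: any candidate homotopy must \emph{begin} with (a reparametrization of) $K$, since $K$ is the only available datum equal to the identity at time $0$, and any formula in which $A$ enters as a composition factor near $t=0$ converges to $A_0=\overline{\psi}\,\overline{\phi}\neq\id$ at interior points, by your own observation. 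So the interleaving cannot be achieved by composing maps at all; a genuinely new ingredient is needed.

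That missing ingredient, which is how the paper finishes, is a \emph{point-dependent time warp}. Choose a continuous function $\rho\colon\combcomp{X}^H\to[0,1)$ with $\rho\equiv 0$ on $\partial_H G$ and $\rho>0$ on $X$ (it exists because $X$ is an open $\sigma$-compact subset, so $\partial_H G$ is a closed $G_\delta$ in the compact Hausdorff space $\combcomp{X}^H$; the paper's parallel Lemma~\ref{lem23430987rew} produces such a function via metrizability and Tietze). Writing $\overline{H'}:=K$ and letting $H^\infty$ be the contraction of $\combcomp{\cP(G)}^H$ normalized so that $H^\infty_1=\id$, $H^\infty_0\equiv p$ and $H^\infty(\,\cdot\,,s)\in\cP(G)$ for $s<1$, one sets
\[
\tilde H(x,t):=\begin{cases}\overline{H'}\bigl(x,t/\rho(x)\bigr)& t\leq\rho(x)\,,\\
\overline{\psi}\Bigl(H^\infty\Bigl(\overline{\phi}(x),\tfrac{1-t}{1-\rho(x)}\Bigr)\Bigr)& t\geq\rho(x)\,.\end{cases}
\]
Each point runs the correction for a duration $\rho(x)$ that shrinks to zero exactly at the corona and then runs the push; the branches agree at $t=\rho(x)$, one gets $\tilde H_0=\id$, $\tilde H_1\equiv\psi(p)$, and $\tilde H_t(\combcomp{X}^H)\subset X$ for $t>0$, since corona points enter the push phase instantly while interior points remain in $X$ throughout the correction phase. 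Note also that the continuity issue is subtler than the two cases you check: the dangerous points are interior points approaching the corona while in the correction phase, where the playback speed $1/\rho(x)$ blows up. Continuity there holds only because $\overline{H'}$ fixes each corona point at \emph{all} times, so by compactness of $\combcomp{X}^H\times[0,1]$ one has $\overline{H'}(y,s)\to x$ uniformly in $s$ as $y\to x\in\partial_H G$. Without the function $\rho$ and this uniformity argument the lemma is not proved.
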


Before proving the above lemma, let us remark two things: First, by a \emph{$G$-equivariant homotopy equivalence} we mean that also the homotopy inverse and the two homotopies to the identities are $G$-equivariant. 

Second, we note that it is irrelavant for the lemma whether we consider $\cP(G)$ as a $\sigma$-locally compact space or simply as a topological space, because the $G$-equivariance implies that all maps involved are automatically $\sigma$-maps. More precisely, if $K\subset X$ is compact and such that $G\cdot K=X$, then $\phi(K)$ is compact and therefore contained in a finite subcomplex of $\cP(G)$, implying that $\phi(X)=G\cdot \phi(K)$ is contained in some $P_n(G)$. Conversely, any $G$-equivariant homotopy inverse $\psi\colon\cP(G)\to X$ is trivially a $\sigma$-map, as is also any homotopy between $\psi\circ\phi$ and the identity. Last, but not least, a $G$-equivariant homotopy equivalence $\cP(G)\times[0,1]\to\cP(G)$ between $\phi\circ\psi$ and the respective identity maps each $P_m(G)\times[0,1]$ into some $P_{m'}(G)$, since $G$ acts cocompactly on $P_m(G)\times [0,1]$ and we argue now analogously as for $\phi$.

\begin{proof}
We choose a $G$-equivariant homotopy inverse $\psi\colon\cP(G)\to X$ and let $H'\colon X\times[0,1]\to X$ be any $G$-equivariant homotopy between $H'_0=\id$ and $H'_1=\psi\circ\phi$. Cocompactness of the $G$-actions together with $G$-equivariance of the maps implies that for all $n$ sufficiently large the diagrams of coarse maps, where $\alpha$ is the coarse equivalence from Lemma~\ref{lem_coarse_equiv_action},
\[\xymatrix{
G\ar[r]^-{\alpha}\ar[dr]_-{\mathrm{incl.}}&X\ar[d]^-{\phi}
&G\ar[dr]_-{\alpha}\ar[r]^-{\mathrm{incl.}}&P_n(G)\ar[d]^-{\psi}
&G\ar[rr]^-{\alpha\times\{t\}}\ar[drr]_-{\alpha}&&X\times[0,1]\ar[d]^-{H'}
\\&P_n(G)&&X&&&X
}\]
commute up to closeness. Therefore, the maps $\phi,\psi$ and $H'$ can be extended to continuous ($\sigma$-)maps
\[\overline{\phi}\colon\combcompb{X}{H}\to\combcompb{\cP(G)}{H}\,,\quad \overline{\psi}\colon\combcompb{\cP(G)}{H}\to\combcompb{X}{H}\,,\quad \overline{H'}\colon\combcompb{X}{H}\times[0,1]\to \combcompb{X}{H}\]
such that $\overline{\phi}$, $\overline{\psi}$ and $\overline{H'}_t$ for all $t\in[0,1]$ restrict to the identity on $\partial_HG$.

Let $H^\infty\colon \combcompb{\cP(G)}{H}\times[0,\infty]\to \combcompb{\cP(G)}{H}$ be the contraction of the $\sigma$-compact space $\combcompb{\cP(G)}{H}$ which we constructed in the proof of Theorem \ref{thm:RipsCompactificationContractible}, but with contraction parameter reparametrized to $[0,1]$, i.e., such that $H^\infty_0\equiv p$ and $H^\infty_1=\id$. Recall that this contraction maps $\combcompb{\cP(G)}{H} \times[0,1)$ into $\cP(G)$.

Finally, we choose a continous function $\rho\colon \combcompb{X}{H}\to [0,1)$ which vanishes on $\partial_H G$ and is strictly positive on $X$. Then we can define the desired contraction $\tilde H$ by the formula
\[\tilde H(x,t)\coloneqq \begin{cases}\overline{H'}(x,t/\rho(x))&t\leq \rho(x)\\\overline{\psi}\left(H^\infty\left(\overline{\phi}(x),\frac{1-t}{1-\rho(x)}\right)\right)&t\geq \rho(x)\,.\end{cases}\]
Note that despite the fraction $t/\rho(x)$ the homotopy is well defined, because $\overline{H'}(x,-)$ is constant for each $x\in\partial_HX$, and thanks to the fraction it has the property that $\tilde H_t(\combcompb{X}{H})\subset X$ for all $t\in(0,1]$. Furthermore, $\tilde H_1 \equiv \psi(p)$.
\end{proof}

The investigation of the following notion was started by Borsuk \cite{borsuk}. A thorough treatment of it can be found in, e.g., Hu \cite{hu}.

\begin{defn}
A separable metrizable space $X$ is called an \emph{absolute retract} (AR) if for all separable metrizable spaces $Y$ with closed subset $A\subset Y$ any continuous map $A\to X$ can be extended to a continuous map $Y\to X$.
\end{defn}

Note that one can define the notion of absolute retracts for a different class of spaces than separable, metrizable ones. We have chosen the class of separable, metrizable spaces since the next lemma needs results of Hanner \cite{hanner}, who works with this class of spaces for ARs.

Furthermore, with the choice of the class of separable, metrizable spaces for the definition of ARs, a compact space is a Euclidean retract if and only if it is a finite-dimensional absolute retract (use Hurewicz--Wallman \cite[Theorem V.2]{hurewicz_wallman}). Here a space $X$ is being called a \emph{Euclidean retract} (ER) if it is homeomorphic to a retract of Euclidean space $\IR^n$ for some $n \in \IN$.

\begin{lem}[Guilbault--Moran {\cite[Lemma 3.3]{guilbault_moran_Z}}]
\label{lem_compactificationAR}
If $Z\subset \overline{X}$ is a $Z$-set in a compact, metrizable space $\overline{X}$ and $X\coloneqq \overline{X}\setminus Z$ is an AR, then $\overline{X}$ is also an AR.
\end{lem}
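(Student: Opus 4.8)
The plan is to use the standard characterization, valid in the category of separable metrizable spaces, that a space is an AR if and only if it is a contractible ANR (see Hu \cite{hu}). Accordingly I would split the proof into two independent tasks: showing that $\overline{X}$ is contractible, and showing that $\overline{X}$ is an ANR. Recall that $X$ is an AR, hence in particular both contractible and an ANR, and that open subsets of ANRs are again ANRs.

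Contractibility is the easy half, and it is where the $Z$-set hypothesis is used most directly. Let $H\colon \overline{X}\times[0,1]\to\overline{X}$ be the $Z$-set homotopy, so that $H_0=\id_{\overline{X}}$ and $H_1(\overline{X})\subset X$. Writing $H_1'\colon\overline{X}\to X$ for $H_1$ regarded as a map into the open subset $X$, the homotopy $H$ exhibits $\id_{\overline{X}}$ as homotopic to $\iota\circ H_1'$, where $\iota\colon X\hookrightarrow\overline{X}$ is the inclusion. Since $X$ is an AR it is contractible, so $H_1'$ is null-homotopic inside $X$; pushing this null-homotopy forward by $\iota$ and concatenating with $H$ shows that $\id_{\overline{X}}$ is null-homotopic, i.e.\ that $\overline{X}$ is contractible.

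For the ANR property I would invoke Hanner's theorem that being an ANR is a local property for metrizable spaces \cite{hanner}: it suffices to produce, for every point of $\overline{X}$, a neighborhood which is an ANR. Points lying in the open set $X$ cause no trouble, since $X$ is an ANR and open subsets of ANRs are ANRs. The real content — and the step I expect to be the main obstacle — is to produce ANR neighborhoods of the points $z\in Z$. Here the $Z$-set homotopy is indispensable: for a small neighborhood $U$ of $z$ one uses $H$ to deform $U$ into $U\cap X$, an open piece of the ambient ANR $X$, while keeping the deformation inside a slightly larger neighborhood of $z$. The delicate part is to organize these pushes so that the associated local homotopies are sufficiently fine — controlled with respect to a fixed metric on $\overline{X}$ and mutually compatible as the neighborhoods shrink toward $z$ — so that the standard ANR machinery lets one promote each such neighborhood to an ANR.

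Once every point of $\overline{X}$ admits an ANR neighborhood, Hanner's local theorem yields that $\overline{X}$ is an ANR, and combined with the contractibility established in the second paragraph the characterization AR $=$ contractible ANR gives that $\overline{X}$ is an AR, completing the proof.
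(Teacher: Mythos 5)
Your proposal cannot be compared with a proof in the paper itself, because the paper gives none: it quotes this lemma directly from Guilbault--Moran \cite[Lemma 3.3]{guilbault_moran_Z}. Measured against that source's (standard) argument, your frame is right --- for separable metrizable spaces, AR $=$ contractible ANR --- and your contractibility half is correct and complete: $\id_{\overline{X}}\simeq H_1$, and $H_1$ factors through the contractible space $X$, hence is null-homotopic. The gap is in the ANR half. You reduce to producing ANR neighborhoods of points $z\in Z$ via Hanner's local criterion and then defer the entire content of the lemma to unnamed ``standard ANR machinery'' that should ``promote'' a neighborhood deformable into $X$ to an ANR. No such general machinery exists at this level of vagueness: local contractibility (or deformability into an ANR piece) does \emph{not} imply the ANR property for compact metric spaces --- there are classical examples of locally contractible compacta that are not ANRs --- so a quantitative criterion must be invoked and verified. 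Moreover, the natural candidate does not localize the way you want: the $Z$-set homotopy $H$ restricted to a neighborhood $U$ of $z$ does not keep $U$ inside $U$, only inside a slightly larger neighborhood $U'$, so it does not exhibit $U$ as homotopy dominated by an ANR (domination requires the homotopy $g\circ f\simeq \id_U$ to take place \emph{in} $U$), and a closed neighborhood of $z$ need not inherit any $Z$-set structure. This is precisely the ``delicate part'' you flag, and it is not a technicality but the whole theorem.

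The argument that works --- essentially Guilbault--Moran's --- is global rather than local, and uses the hypothesis exactly where it is available, namely on all of $\overline{X}$. Since $\overline{X}$ is compact, $H$ is uniformly continuous, so for every $\varepsilon>0$ there is $t>0$ such that the tracks of $H|_{\overline{X}\times[0,t]}$ have diameter less than $\varepsilon$. Then $f:=H_t\colon\overline{X}\to X$ and the inclusion $g\colon X\hookrightarrow\overline{X}$ satisfy $g\circ f\simeq\id_{\overline{X}}$ via an $\varepsilon$-small homotopy; that is, $\overline{X}$ is $\varepsilon$-dominated by the ANR $X$ for every $\varepsilon>0$. Hanner's domination theorem \cite{hanner} (a metrizable space that is $\mathcal{U}$-dominated by an ANR for every open cover $\mathcal{U}$ is an ANR) then gives that $\overline{X}$ is an ANR, and with your contractibility argument the proof is complete. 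So you should replace the local step by this domination argument; as written, your proof has a genuine hole at its central step.
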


The following definition is essentially due to Bestvina \cite{bestvina}:

\begin{defn}\label{defn_Zstructures}
Let $G$ be a finitely generated, discrete group.

A pair of spaces $(\overline{X},Z)$ such that $\overline{X}$ is a compact, metrizable space is called a \emph{$Z_\ER^\free$-structure} for $G$ if
\begin{enumerate}
\item $\overline{X}$ is a Euclidean retract,
\item $Z$ is a $Z$-set in $\overline{X}$,
\item $G$ acts properly, cocompactly and freely on $X \coloneqq  \overline{X} \setminus Z$, and
\item\label{wdfe54} $\overline{X}$ is a Higson dominated compactification of $X$, where we equip $X$ with the action coarse structure.
\end{enumerate}
We do not demand that the action of $G$ extends continuously to $\overline{X}$.

Dranishnikov \cite{dranish_BM} considered a variation: a \emph{$Z_\AR$-structure} for $G$ is a pair $(\overline{X},Z)$ as above but $\overline{X}$ is just demanded to be an absolute retract and $G$ acts only properly and cocompactly on $X$.

Analogously we define the variations \emph{$Z_\ER$-structures} and \emph{$Z_\AR^\free$-structures}.
These variations were also considered by Moran \cite[Page 6]{moran}.
\end{defn}

\begin{rem}\label{rem2345ztrwe}
Since $G$ acts cocompactly and properly on $X$, we may equivalently replace Condition~\ref{wdfe54} of the above definition by the so-called \emph{nullity condition}: for every open cover $\cU$ of $\overline{X}$ and every compact subset $K \subset X$ all but finitely many $G$-translates of $K$ are $\cU$-small.
\end{rem}

In the following, in order not to burden the text with complicated phrases, if we just say ``$Z$-structure'' then we mean one of the versions defined above. Which one should be clear from either the context or the cited source.

For $Z$-structures on finitely generated groups Bestvina \cite[Lemma~1.4]{bestvina} proved the following boundary swapping result: if the classifying space $BG$ is finite, then we can ``$Z$-compactify'' $EG$ by the $Z$-boundary of a $Z$-structure for $G$. This was generalized by Moran \cite[Appendix~A]{moran} to the infinite-dimensional case, and further generalized by Guilbault--Moran \cite{guilbault_moran_Z}.

\begin{thm}\label{thm13243wer}
Let $G$ be a finitely generated discrete group acting properly and cocompactly on an AR (resp.\ ER) $X$ and let $H$ be a coherent and expanding combing on $G$. Suppose that we have a $G$-equivariant homotopy equivalence $f\colon X \to \cP(G)$.

Then $(\combcompb{X}{H},\partial_H G)$ is a $Z_{\AR}$-structure (resp.\ $Z_{\ER}$-structure) for $G$.
\end{thm}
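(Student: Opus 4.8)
The plan is to verify, for the pair $(\combcomp{X}^H,\partial_H G)$, the four defining conditions of a $Z_\AR$-structure (resp.\ $Z_\ER$-structure) from Definition~\ref{defn_Zstructures}. First I would set up the combing compactification on $X$ itself: by Lemma~\ref{lem_coarse_equiv_action} the orbit map $\alpha\colon G\to X$ is a coarse equivalence for the action coarse structure, so the combing $H$ pulls back to a proper combing on $X$, and by Corollary~\ref{cor_coronafunctoriality} the resulting combing compactification $\combcomp{X}^H$ has corona $\partial_H X=\partial_H G$. Taking $\overline{X}:=\combcomp{X}^H$ and $Z:=\partial_H G$, it then remains to check the four conditions.

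Three of the four conditions are essentially immediate applications of earlier results. The action condition holds by hypothesis, since $G$ acts properly and cocompactly on $X=\combcomp{X}^H\setminus\partial_H G$. The Higson domination condition holds because $C_H(X)=D_H(X)\cap C_b(X)\subset C_h(X)$ consists of functions of vanishing variation with respect to the action coarse structure, so $\combcomp{X}^H$ is a Higson dominated compactification of $X$. Compactness of $\combcomp{X}^H$ is automatic from its description as the maximal ideal space of the unital algebra $C_H(X)$, and metrizability follows exactly as in Lemma~\ref{lem:metrizability}: since $X$ is separable metrizable (being an AR) each image $\overline{\im(H_n)}\subset X$ is compact metrizable by Lemma~\ref{lem453ewr43}, whence $C_H(X)$ is separable. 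Finally, the $Z$-set condition is precisely the content of Lemma~\ref{lem_boundaryswapping}, applied with $\phi=f$: the hypothesized $G$-equivariant homotopy equivalence $f\colon X\to\cP(G)$ produces a contraction $\tilde H$ of $\combcomp{X}^H$ with $\tilde H_0=\id$ and $\tilde H_t(\combcomp{X}^H)\subset X$ for all $t>0$, exhibiting $\partial_H G$ as a $Z$-set.

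It remains to establish the retract condition, which is where I expect the real work to lie. In the AR case this is quick: since $\partial_H G$ is a $Z$-set in the compact metrizable space $\combcomp{X}^H$ and the complement $X$ is an AR, Lemma~\ref{lem_compactificationAR} gives immediately that $\combcomp{X}^H$ is an AR, completing the $Z_\AR$-structure. For the $Z_\ER$-case one must instead show that $\combcomp{X}^H$ is a \emph{Euclidean} retract, i.e.\ additionally that $\dim\combcomp{X}^H<\infty$, and this is the step I expect to be the main obstacle. Here I would invoke the principle that a $Z$-compactification does not raise covering dimension: using the $Z$-set homotopy $\tilde H$ to push $\partial_H G$ slightly into $X$ and refining covers inside the finite-dimensional space $X$ yields $\dim\combcomp{X}^H\le\dim X<\infty$ (the reverse inequality being clear since $X$ is open in $\combcomp{X}^H$). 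A finite-dimensional compact AR is a Euclidean retract by Hurewicz--Wallman \cite[Theorem~V.2]{hurewicz_wallman}, as recorded before the statement, which completes the $Z_\ER$-structure. The delicate point throughout is the finite-dimensionality argument, since AR-ness alone does not control dimension; everything else reduces to bookkeeping with the previously established lemmas.
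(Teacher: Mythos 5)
Your proposal is correct and follows essentially the same route as the paper: Lemma~\ref{lem_boundaryswapping} for contractibility and the $Z$-set property, Lemma~\ref{lem_compactificationAR} for the AR case, and, for the ER case, exactly the cover-refinement argument (via uniform continuity of the homotopy $\tilde H$ on the compact metrizable space $\combcomp{X}^H$) that the paper recites from Ivanov to show $\dim\combcomp{X}^H\le\dim X$. Your additional explicit verification of the action, Higson-domination and metrizability conditions is sound and merely makes explicit what the paper leaves implicit.
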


\begin{proof}
We apply Lemma \ref{lem_boundaryswapping} to conclude that the combing compactification $\combcompb{X}{H}$ of $X$ is contractible and such that $\partial_H G$ is a $Z$-set in it. It remains to show that $\combcompb{X}{H}$ is an AR (resp.\ an ER).

That $\combcompb{X}{H}$ will be an AR, if $X$ is an AR, is Lemma \ref{lem_compactificationAR}. Let $X$ be an ER and we have to show that $\combcompb{X}{H}$ is an ER, too. We know already that $\combcompb{X}{H}$ is an AR, and therefore we have to show that $\combcompb{X}{H}$ is finite-dimensional. A short proof of the fact that the Lebesgue dimension of $\combcompb{X}{H}$ is equal to the Lebesgue dimension of $X$ was given by Ivanov \cite{ivanov_MO} on MathOverflow. However, since the latter is not a peer reviewed journal, we decided to recite the relevant part of Ivanov's posting here.

Let $\{U_i\}$ be an open covering of $\combcompb{X}{H}$ and denote by $\rho$ its Lebesgue number with respect to some metric $\bar d$ on $\combcompb{X}{H}$. As  $\combcompb{X}{H}$ is compact, the homotopy $\tilde H$ from Lemma \ref{lem_boundaryswapping} is uniformly continuous with respect to $\bar d$ and in particular there exists $t>0$ such that $\bar d(x,\tilde H_t(x))<\frac{\rho}{3}$ for all $x\in\combcompb{X}{H}$. Hence, if $\{V_j\}$ is an open covering of $X$ by sets $V_j$ of $\bar d$-diameter less than $\frac{\rho}3$, then each of the sets $\tilde H_t^{-1}(V_j)$ has diameter less than $\rho$ and is thus contained in one of the sets $U_i$. Assuming that $X$ has Lebesgue dimension $N$, there exists a refinement $\{W_k\}$ of $\{V_j\}$ of multiplicity at most $N+1$. It is now easy to see that $\{\tilde H_t^{-1}(W_k)\}$ is a refinement of the covering $\{U_i\}$ of $\combcompb{X}{H}$ which also has multiplicity at most $N+1$.
\end{proof}

\begin{rem}
In order to prove the previous theorem, we could have also followed the arguments given by Moran in her proof of a related boundary swapping result \cite[Theorem~A.1]{moran}. In this case we would have defined the topology on $X \cup \partial_H G$ ad hoc in the following way:

We define
\[\bar f \coloneqq  f \cup \id_Z \colon X \cup \partial_H G \to \combcompb{\cP(G)}{H}\]
to be the identity on $\partial_G H$ and to be $f$ on $X$. Then we define the topology of $X \cup \partial_H G$ to be the one generated by the open subsets of $X$ and all the sets of the form ${\bar f}^{-1}(U)$ for open subsets $U \subset \combcompb{\cP(G)}{H}$.

One can check that this topology on $\combcompb{X}{H}$ coincides with the one resulting from our Lemma \ref{lem_boundaryswapping}.
\end{rem}

As a first corollary from Theorem~\ref{thm13243wer} we get the following construction theorem for $Z$-structures for $G$. Recall that $\EG$ denotes the classifying space for proper actions, and that $\EG = EG$ if $G$ is torsion-free.

\begin{cor}\label{cor243terwe}
Let $G$ be equipped with an expanding and coherent combing~$H$ and let $G$ admit a $G$-finite model for $\EG$.

\begin{enumerate}
\item Then $\big(\combcompb{\,\EG\;}{H},\partial_H G \big)$ is a $Z_\ER$-structure for $G$.
\item If $G$ is torsion-free, then $(\combcompb{EG}{H},\partial_H G)$ is a $Z_\ER^\free$-structure for $G$.
\end{enumerate}
\end{cor}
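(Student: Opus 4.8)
The plan is to deduce both statements directly from Theorem~\ref{thm13243wer} by taking $X$ to be the given $G$-finite model for $\EG$ and verifying its three hypotheses: that $G$ acts properly and cocompactly on $X$, that $X$ is an ER, and that there is a $G$-equivariant homotopy equivalence $X\to\cP(G)$. The first hypothesis is immediate, since a $G$-finite model for $\EG$ is by definition a space on which $G$ acts properly with compact quotient. The combing compactification $\combcomp{X}^H$ and its corona $\partial_H G$ arise by pulling back $H$ along the orbit map $\alpha\colon G\to X$ of Lemma~\ref{lem_coarse_equiv_action}, exactly as required by the theorem.

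First I would establish that $X$ is an ER. As the model is $G$-finite and the action proper, $X$ may be taken to be a locally finite, finite-dimensional simplicial complex; since $G$ is finitely generated (hence countable) and the quotient is finite, $X$ is a countable, locally finite complex and therefore a separable metrizable ANR. The fixed-point set $X^{\{e\}}=X$ of the trivial subgroup is contractible by the defining property of $\EG$, so $X$ is a contractible ANR and hence an AR. Being in addition finite-dimensional, $X$ is an ER: a finite-dimensional separable metrizable AR admits a closed embedding into some $\IR^n$ by the embedding theorem and is then a retract of $\IR^n$ via its AR property. In fact only the combination \emph{AR plus finite-dimensional} is actually invoked in the proof of Theorem~\ref{thm13243wer} (the AR conclusion via Lemma~\ref{lem_compactificationAR}, the finite-dimensionality via the dimension-preservation argument), so this suffices regardless.

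Next I would produce the $G$-equivariant homotopy equivalence. The crucial observation is that $\cP(G)$ is itself a model for $\EG$, as recorded in the discussion preceding this section. Since any two $G$-CW models for the classifying space of proper actions are $G$-homotopy equivalent, there is a $G$-equivariant homotopy equivalence $f\colon X\to\cP(G)$ whose homotopy inverse and whose homotopies to the identities are also $G$-equivariant, which is precisely the form of the hypothesis in Theorem~\ref{thm13243wer}. Applying that theorem in its ER version yields that $\big(\combcomp{X}^H,\partial_H G\big)$ is a $Z_\ER$-structure for $G$, proving~(1).

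Finally, for~(2) I would observe that if $G$ is torsion-free then proper actions are automatically free and $\EG=EG$, so the given model is a $G$-finite model for $EG$ on which $G$ acts freely, properly and cocompactly. The $Z_\ER$-structure produced above then automatically satisfies the additional freeness clause of Definition~\ref{defn_Zstructures}, making it a $Z_\ER^\free$-structure. The only genuinely delicate point is the verification that the $G$-finite model is an ER rather than merely an AR, which rests entirely on its finite-dimensionality; everything else is packaged into Theorem~\ref{thm13243wer} and the uniqueness of $\underline{EG}$ up to $G$-homotopy.
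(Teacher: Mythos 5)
Your proof is correct and follows essentially the same route as the paper: both apply Theorem~\ref{thm13243wer} to $X=\EG$, using that $\cP(G)$ is itself a model for $\EG$ to obtain the required $G$-equivariant homotopy equivalence, and both settle part (2) via $\EG=EG$ with the action being free for torsion-free $G$. The only difference is that you spell out why a $G$-finite model is an ER (locally finite, finite-dimensional, contractible complex, hence a finite-dimensional AR admitting a closed embedding in some $\IR^n$), a point the paper simply absorbs into the assumption.
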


\begin{proof}
By assumption, $\EG$ is a Euclidean retract on which $G$ acts properly and cocompactly. Its Rips complex $\cP(G)$ is a model for $\EG$ and hence we get a $G$-equivariant homotopy equivalence $\EG \to \cP(G)$. Theorem~\ref{thm13243wer} now implies that $\big(\combcompb{\,\EG\;}{H},\partial_H G \big)$ is a $Z_\ER$-structure for $G$. If $G$ is torsion-free, then we just use the fact that in this case $\EG = EG$ and $G$ acts freely on it.
\end{proof}

Bestvina--Mess \cite[Proposition 2.6]{bestvina_mess} proved that if the pair $(\overline{X},Z)$ is a $Z_\ER^\free$-structure for $G$, then $\dim(Z) < \dim(X)$ and therefore the boundary $Z$ is finite-dimensional (see also Guilbault--Tirel \cite[Theorem 1.1]{guilbault_tirel}). Due to the boundary swapping result \cite[Lemma 1.4]{bestvina}, Bestvina \cite[Theorem 1.7]{bestvina} then proved that if $(\overline{X},Z)$ is a $Z_\ER^\free$-structure for $G$, then $\dim(Z) + 1 = \cd(G)$, where $\cd(G)$ is the cohomological dimension of~$G$.

Moran \cite[Corollary 3.2]{moran_published} generalized the finite-dimensionality statement to $Z_\AR$-structures, i.e., if $(\overline{X},Z)$ is a $Z_\AR$-structure, then $\dim(Z) < \infty$. She also noticed \cite[Remark~1 on Page~8]{moran} that the inequality $\dim(Z) < \dim(X)$ holds whenever $Z$ is a $Z$-set in a compact metric space $\overline{X}$.

Combining the above with Corollary~\ref{cor243terwe} we therefore get the following:

\begin{cor}\label{cor_coronadimension}
Let $H$ be an expanding and coherent combing on a finitely generated, discrete group $G$.
\begin{enumerate}
\item If $G$ admits a finite model for its classifying space $BG$, then
\[\dim(\partial_H G) + 1 = \cd(G).\]
\item If $G$ admits a $G$-finite model for $\EG$, then
\[\dim(\partial_H G) < \underline{\smash{\mathrm{gd}}}(G),\]
where the $\underline{\smash{\mathrm{gd}}}(G)$ denotes the least possible dimension of a $G$-finite model for the classifying space for proper actions $\EG$.
\end{enumerate}
\end{cor}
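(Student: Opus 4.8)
The plan is to derive both statements as essentially immediate consequences of the $Z$-structure construction of Corollary~\ref{cor243terwe}, feeding its output into the dimension-theoretic results about $Z$-structures recalled just above (Bestvina's computation of the boundary dimension and Moran's/Bestvina--Mess's inequality for $Z$-sets in compact metric spaces). The only genuine work is translating the two hypotheses into the precise form demanded by Corollary~\ref{cor243terwe} and controlling the covering dimension of the compactification by the geometric dimension of the model; all the hard topology is imported.

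For part~(1), I would first observe that a finite model for $BG$ forces $G$ to be torsion-free (a group with torsion has infinite cohomological dimension and hence no finite $K(G,1)$) and of type $F$, so that $\cd(G)<\infty$ and $EG=\EG$ admits a $G$-finite model, namely the universal cover of the finite $BG$. Thus the hypotheses of Corollary~\ref{cor243terwe}(2) are met, and $(\combcomp{EG}^H,\partial_H G)$ is a $Z_\ER^\free$-structure for $G$. Applying Bestvina's \cite[Theorem~1.7]{bestvina}, which gives $\dim(Z)+1=\cd(G)$ for any $Z_\ER^\free$-structure $(\overline{X},Z)$, yields $\dim(\partial_H G)+1=\cd(G)$.

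For part~(2), I would fix a $G$-finite model of $\EG$ of minimal dimension, so that $\dim(\EG)=\underline{\smash{\mathrm{gd}}}(G)$. Corollary~\ref{cor243terwe}(1) then shows that $(\combcomp{\,\EG\;}^H,\partial_H G)$ is a $Z_\ER$-structure; in particular $\combcomp{\,\EG\;}^H$ is a compact metrizable ER and $\partial_H G$ is a $Z$-set in it. Here it is important that $\partial_H G$ is intrinsic to the combed group $G$ and independent of the chosen model: for any model $X$ the coarse equivalence $\alpha\colon G\to X$ induces a homeomorphism of coronas $\partial_H G\cong\partial_H X$ (Corollary~\ref{cor_coronafunctoriality}), so one is free to optimise over models. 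Next I would invoke the dimension equality $\dim(\combcomp{X}^H)=\dim(X)$ established inside the proof of Theorem~\ref{thm13243wer} (Ivanov's argument), giving $\dim(\combcomp{\,\EG\;}^H)=\dim(\EG)=\underline{\smash{\mathrm{gd}}}(G)$. Finally Moran's observation \cite[Remark~1]{moran}, that $\dim(Z)<\dim(\overline{X})$ whenever $Z$ is a $Z$-set in a compact metric space $\overline{X}$, produces the desired strict inequality $\dim(\partial_H G)<\dim(\combcomp{\,\EG\;}^H)=\underline{\smash{\mathrm{gd}}}(G)$.

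The step most in need of care---rather than of difficulty---is the bookkeeping in part~(2): one must justify that the corona is model-independent so that the minimal model may legitimately be used, and then combine the two imported facts ($\dim$ of the compactification equals $\dim$ of $X$, and $\dim(Z)<\dim(\overline{X})$) in the correct order. Since no topological input beyond the cited theorems and the already-proven Corollary~\ref{cor243terwe} is required, I expect no serious obstacle.
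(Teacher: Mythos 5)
Your proof is correct and follows essentially the same route the paper intends: both parts feed the $Z$-structure produced by Corollary~\ref{cor243terwe} (after noting, as you do, that a finite $BG$ forces torsion-freeness and yields a $G$-finite $EG=\EG$) into Bestvina's theorem $\dim(Z)+1=\cd(G)$ for $Z_\ER^\free$-structures, respectively into Moran's $Z$-set dimension inequality applied to a minimal-dimensional model of $\EG$. The only cosmetic difference is in part~(2), where you route the estimate through $\dim\big(\combcomp{\,\EG\;}^H\big)$ using Ivanov's equality $\dim\big(\combcomp{X}^H\big)=\dim(X)$ from the proof of Theorem~\ref{thm13243wer}, whereas the paper applies Moran's remark in the form $\dim(Z)<\dim(\overline{X}\setminus Z)$ directly to $\EG$; the two chains are equivalent.
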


As another corollary from Theorem~\ref{thm13243wer} we get the following boundary swapping result:

\begin{cor}\label{cor_boundaryswap}
let $G$ be equipped with an expanding and coherent combing~$H$, let $G$ be torsion-free, and let $(\overline{X},Z)$ be a $Z_\AR^\free$-, resp.\ $Z_\ER^\free$-structure for $G$.

Then $(\combcompb{X}{H},\partial_H G)$ is also a $Z_\AR^\free$-, resp.\ $Z_\ER^\free$-structure for $G$.
\end{cor}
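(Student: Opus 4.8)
The plan is to deduce this directly from Theorem~\ref{thm13243wer} by verifying that space's hypotheses for $X:=\overline{X}\setminus Z$. Unwinding the definition of a $Z_\AR^\free$- (resp.\ $Z_\ER^\free$-) structure, we are handed a compact AR (resp.\ ER) $\overline{X}$ containing $Z$ as a $Z$-set, together with a free, proper, cocompact action of $G$ on $X$ for which $\overline{X}$ is a Higson dominated compactification. In particular $X$ already carries exactly the free, proper, cocompact $G$-action demanded by Theorem~\ref{thm13243wer}, so it remains to check the two remaining inputs of that theorem: that $X$ is itself an AR (resp.\ ER), and that there is a $G$-equivariant homotopy equivalence $f\colon X\to\cP(G)$.

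First I would show that $X$ is an AR. Since $Z$ is closed, $X$ is open in the ANR $\overline{X}$ and is therefore itself an ANR. The $Z$-set homotopy pushes $\overline{X}$ instantly into $X$, whence the inclusion $X\hookrightarrow\overline{X}$ is a homotopy equivalence and $X$ is contractible; a contractible ANR is an AR. In the ER case one observes in addition that $\dim X\le\dim\overline{X}<\infty$, so that $X$ is a finite-dimensional AR. Being locally compact, separable and metrizable, such a space is an ER; and in any event finite-dimensionality of $X$ is precisely what the dimension-counting step in the proof of Theorem~\ref{thm13243wer} uses.

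The key step is the construction of the $G$-equivariant homotopy equivalence $f\colon X\to\cP(G)$. Here the torsion-freeness of $G$ enters decisively: it guarantees that $G$ acts freely on $\cP(G)$, so that $\cP(G)$, being a contractible free $G$-CW complex, is a model for $\EG=EG$. On the other hand $X$ is a contractible AR on which $G$ acts freely, properly and cocompactly; such a space has the $G$-homotopy type of a free $G$-CW complex and is thus likewise a model for $EG$. By the uniqueness of $EG$ up to $G$-homotopy equivalence (the equivariant Whitehead theorem) one obtains the required $G$-equivariant homotopy equivalence $f\colon X\to\cP(G)$. This existence statement—more precisely, the verification that a free proper cocompact action on the AR $X$ endows $X$ with the $G$-homotopy type of a $G$-CW complex—is the main obstacle; the rest of the argument is formal.

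Finally, Theorem~\ref{thm13243wer} applied to $X$ and $f$ yields that $(\combcomp{X}^H,\partial_H G)$ is a $Z_\AR$- (resp.\ $Z_\ER$-) structure for $G$: the compactification $\combcomp{X}^H$ is an AR (resp.\ ER), the corona $\partial_H G$ is a $Z$-set in it, and the combing compactification is Higson dominated. Since $\combcomp{X}^H\setminus\partial_H G=X$ carries precisely the original free $G$-action, freeness is inherited, upgrading the conclusion to a $Z_\AR^\free$- (resp.\ $Z_\ER^\free$-) structure, as claimed.
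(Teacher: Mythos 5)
Your proposal is correct and takes essentially the same approach as the paper: both invoke Theorem~\ref{thm13243wer} after observing that, by torsion-freeness, $X=\overline{X}\setminus Z$ and $\cP(G)$ are both models for $EG$, which yields the required $G$-equivariant homotopy equivalence. Your additional verifications --- that $X$ is itself an AR (resp.\ ER) and that freeness of the action passes to the conclusion --- merely fill in details the paper leaves implicit.
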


\begin{proof}
Follows directly from Theorem~\ref{thm13243wer} by noting that under the stated assumptions both $X$ and $\cP(G)$ are models for $EG$ which provides the needed $G$-equivariant homotopy equivalence $X \to \cP(G)$.
\end{proof}

\begin{rem}\label{rem24356545435459}
Assume that additionally to the assumptions in the above statements we require that the combing $H$ on $G$ is coarsely equivariant. Then the $Z$-structures we construct are, in fact, even $\EZ$-structures, meaning that the $G$-action extends continuously to the corona. The reason for this is that in Lemma \ref{lem_boundaryswapping} we pull-back the combing from $G$ to $X$ by the action of $G$ on $X$ and hence the pulled-back combing is still coarsely equivariant.
\end{rem}

The above boundary swapping results enable us to prove a certain uniqueness result for the combing corona.

\begin{lem}
Let $G$ be a finitely generated and discrete group admitting a $G$-finite model for $\EG$. Then the combing coronas of any two expanding and coherent combings on $G$ are shape equivalent.

If additionally $G$ is torsion-free, then the combing coronas are also shape equivalent to the $Z$-boundary of any $Z_\ER^\free$-structure for $G$.
\end{lem}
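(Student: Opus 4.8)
The plan is to reduce both assertions to a single shape-theoretic fact: the shape of the $Z$-boundary of a $Z$-structure equals the pro-homotopy type of the end of its interior, so that all the boundaries in question acquire the same shape as soon as their interiors have the same end.

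First I would treat the two combings $H_1,H_2$. By Corollary~\ref{cor243terwe} each pair $(\combcomp{\,\EG\;}^{H_i},\partial_{H_i}G)$ is a $Z_\ER$-structure for $G$, so $\combcomp{\,\EG\;}^{H_i}$ is a compact metrizable Euclidean retract (hence a compact ANR), the corona $\partial_{H_i}G$ is a $Z$-set in it, and its complement is exactly $\EG$. Using the neighbourhood description of shape for a compactum in a compact ANR, I would compute $\mathrm{Sh}(\partial_{H_i}G)$ as the pro-homotopy type of the inverse system $\{U\}$ of open neighbourhoods of $\partial_{H_i}G$ in $\combcomp{\,\EG\;}^{H_i}$, bonded by inclusions. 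Since $\partial_{H_i}G$ is a $Z$-set and $Z$-sets are local, each inclusion $U\cap\EG\hookrightarrow U$ is a homotopy equivalence, yielding a pro-homotopy equivalence $\{U\cap\EG\}\to\{U\}$; and $U$ is a neighbourhood of $\partial_{H_i}G$ exactly when $\combcomp{\,\EG\;}^{H_i}\setminus U$ is a compact subset of $\EG$, so $\{U\cap\EG\}$ is cofinal in the system $\{\EG\setminus K\mid K\subset\EG\text{ compact}\}$ of complements of compacta, i.e.\ the end of $\EG$. This identifies $\mathrm{Sh}(\partial_{H_i}G)$ with the pro-homotopy type of the end of $\EG$, independently of $i$, whence $\partial_{H_1}G$ and $\partial_{H_2}G$ are shape equivalent.

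For the torsion-free case I would compare with an arbitrary $Z_\ER^\free$-structure $(\overline X,Z)$, setting $X:=\overline X\setminus Z$. Here $X$ is a contractible Euclidean retract carrying a free, proper, cocompact $G$-action, hence a cocompact model of $EG=\EG$, just like $\EG$ itself. The key observation is that any $G$-equivariant map between cocompact proper $G$-spaces is automatically proper (preimages of compacta meet only finitely many translates of a fundamental compactum), so a $G$-equivariant homotopy equivalence $X\to\EG$ is a \emph{proper} homotopy equivalence; since the end is a proper-homotopy invariant, $X$ and $\EG$ have the same end. Running the neighbourhood computation of the previous paragraph for $(\overline X,Z)$ then gives $\mathrm{Sh}(Z)=(\text{end of }X)=(\text{end of }\EG)=\mathrm{Sh}(\partial_H G)$ for every expanding and coherent combing $H$, as desired.

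The hard part is purely shape-theoretic bookkeeping rather than anything specific to combings: I must justify, for the notion of $Z$-set in force (the Definition preceding this lemma), that the $Z$-set property localises to neighbourhoods so that $U\cap X\hookrightarrow U$ is a homotopy equivalence, and that passing to complements of compacta is both cofinal and proper-homotopy invariant. All of these are classical (see \cite{hu} and the $Z$-compactification literature, e.g.\ \cite{guilbault_moran_Z}), but they must be invoked with the present definition of $Z$-set. Alternatively, I could bypass the direct computation and simply invoke the shape-uniqueness of $Z$-boundaries of $Z$-structures of a fixed group: by Corollaries~\ref{cor243terwe} and~\ref{cor_boundaryswap} every corona $\partial_{H_i}G$ and every such $Z$ arises as the boundary of a $Z$-structure for $G$, and these boundaries are all shape equivalent.
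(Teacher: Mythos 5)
Your proposal is correct, but its main route is genuinely different from the paper's. The paper's own proof is a two-line citation: having established in Corollary~\ref{cor243terwe} that each $(\combcomp{\,\EG\;}^{H_i},\partial_{H_i}G)$ is a $Z_\ER$-structure, it quotes Guilbault's shape-uniqueness theorem \cite[Corollary~8.5]{guilbault} for the first statement; for the addendum it first swaps the corona onto the given interior $X$ via Corollary~\ref{cor_boundaryswap}, so that $(\overline{X},Z)$ and $(\combcomp{X}^H,\partial_HG)$ compactify the same space, and then quotes Bestvina's shape-uniqueness result for $Z_\ER^\free$-structures \cite[Proposition~1.6]{bestvina}. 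What you do instead is inline the proofs of precisely those cited results: the identification of the shape of a $Z$-boundary with the pro-homotopy type of the end of the interior (neighborhood system in a compact ANR, localization of the $Z$-set property, cofinality of complements of compacta) is exactly the mechanism behind both theorems, and your observation that $G$-equivariant maps and homotopies between cocompact proper $G$-spaces are automatically proper is the standard reason the interiors are properly homotopy equivalent. Your route buys self-containedness, and it makes transparent that the first statement requires no comparison of interiors at all, since both compactifications have literally the same interior $\EG$; it also bypasses Corollary~\ref{cor_boundaryswap} in the addendum. The cost is the shape-theoretic bookkeeping you rightly flag, which is nontrivial and is precisely where the ER hypothesis enters: the homotopy definition of $Z$-set used in this paper localizes to open neighborhoods only because $\combcomp{\,\EG\;}^{H}$ and $\overline{X}$ are compact metric ANRs, so your appeal to Corollary~\ref{cor243terwe} (rather than merely to Theorem~\ref{thm:RipsCompactificationContractible}) is essential and should be stressed. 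Finally, note that your closing ``alternative'' --- realize all boundaries in question as $Z$-boundaries of $Z$-structures via Corollaries~\ref{cor243terwe} and~\ref{cor_boundaryswap} and invoke the known shape uniqueness --- is the paper's proof verbatim.
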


\begin{proof}
The first statement follows from combining Corollary \ref{cor243terwe} with a result of Guilbault \cite[Corollary 3.8.15]{guilbault}. The addendum follows from Corollary \ref{cor_boundaryswap} in conjunction with the corresponding shape equivalence result of Bestvina \cite[Proposition 1.6]{bestvina}.
\end{proof}

\section{Final remarks and open questions}

In this final section we compile some questions that arise out of the present paper, and some of these questions are combined with some thoughts of us.

\subsection*{Examples of expandingly combable spaces and groups}

We have seen in Section~\ref{sec_examples} that hyperbolic and $\CAT(0)$ spaces admit expanding and coherent combings. But there is another class of spaces that one might count as a basic class of non-positively curved spaces: the hierarchically hyperbolic spaces introduced by Behrstock--Hagen--Sisto \cite{HHS_1,HHS_2}.

\begin{question}\label{Q_HHS}
Do hierarchically hyperbolic spaces admit coherent and expanding combings?
\end{question}

Note that hierarchically hyperbolic groups admit nice boundaries \cite{HHS_boundaries} which generalize the Gromov boundary of hyperbolic groups, giving evidence for a positive answer to the above question.\footnote{After we uploaded our preprint to the arXiv Haettel--Hoda--Petyt \cite{HHP} solved Question \ref{Q_HHS} affirmatively: a hierarchically hyperbolic spaces (if it is proper and finite-dimensional) is coarsely convex and hence admits a coherent and expanding combing. But it is open whether the boundary constructed by Durham--Hagen--Sisto \cite{HHS_boundaries} is homeomorphic to the resulting combing corona.}

We have argued in Section~\ref{secjkjbjnbbbm} that the automatic structure on central extensions of hyperbolic groups, which was constructed by Neumann--Reeves \cite{neumann_reeves_2,neumann_reeves}, is not expanding. But maybe one can construct a different combing on central extensions of hyperbolic groups, which will be expanding:

\begin{question}\label{question_central_extensions}
Do central extensions of hyperbolic groups admit coherent and expanding combings?
\end{question}

Note that the above question is non-trivial by the following related example: the $3$-dimensional integral Heisenberg group is a central extension of $\IZ \times \IZ$ by~$\IZ$. But it is known that it has a cubic Dehn function (\cite[Sec.~8.1]{epstein_et_al} or \cite[Rem.~5.9]{gersten_heisenberg_group}) and therefore can not admit any quasi-geodesic combing. Hence a positive answer to the above Question~\ref{question_central_extensions} must either strongly exploit the hyperbolicity or it must be such that the constructed combings are not quasi-geodesic (even though the ones we start with are quasi-geodesic).\footnote{Elia Fioravanti solved this question affirmatively. His argument is as follows. Central extensions of a group $G$ by an abelian group $A$ are classified by elements of $H^2(G;A)$ \cite[Ch.~4]{brown}. Now if $G$ is hyperbolic, then any cohomology class of $G$ may be represented by a bounded cocycle \cite{mineyev}. But if an extension $A \to E \to G$ is classified by a bounded cocycle, then $E$ is quasi-isometric to the product $A \times G$ \cite[Thm.\ 3.1]{gersten_bounded_coho}. Therefore, if $A$ and $G$ admit coherent and expanding combings, then so does $E$.}

Bestvina \cite[Examples 1.2(iv)]{bestvina} shows that the Baumslag--Solitar group $BS(1,2)$ admits a $Z$-structure and argues how one can get it as the visual boundary of $\CAT(0)$-spaces. For the other Baumslag--Solitar groups $BS(m,n)$ a construction of $Z$-boundaries was recently provided by Guilbault--Moran--Tirel \cite{boundaries_BS_groups}.
It is a natural question if these $Z$-boundaries come from  expanding and coherent combings on the Baumslag--Solitar groups.
\begin{question}
Can one adjust the construction of Guilbault--Moran--Tirel \cite{boundaries_BS_groups} to produce coherent and expanding combings on Baumslag--Solitar groups?
\end{question}

Note that since Baumslag--Solitar groups have exponential Dehn functions, any potential combing on them must have exponentially long combing paths.

\subsection*{Cones over the combing corona}

Fukaya--Oguni \cite{fukaya_oguni} showed that a coarsely convex space is coarsely homotopy equivalent to the Euclidean cone over its boundary. This implies the coarse Baum--Connes conjecture for the space, which is of course a stronger statement than our injectivity result (Theorem \ref{thm243reds}) for expandingly and coherently combable spaces. The natural question is now if one can generalize the result of Fukaya--Oguni to our situation.

\begin{question}
Under which additional conditions (preferably more general than coarse convexity and in the spirit of the present paper) on a coherent and expanding combing $H$ on $X$ can one prove that $X$ is coarsely homotopy equivalent to the Euclidean cone over the combing corona $\partial_H X$?
\end{question}

\subsection*{Proper and coherent combings}

In this paper we introduced the properness condition for combings allowing us to construct the combing compactification. But currently we do not know much about the difference between combable spaces and properly combable spaces. In Example \ref{ex344wret} we constructed a combable group, where the combing is not proper. But the group we used (i.e., $\IZ$) does of course admit a proper combing. So the question here is the following one:

\begin{question}\label{ques3434erteew}
Is there a combable space not admitting a proper combing?
\end{question}

We have seen in Lemma \ref{lem_coherent_is_proper} that a coherent combing is always proper, and in Example~\ref{example_non_coherent} we have given an example of a proper combing which is not coherent. But a reparametrization of the combing paths in this example makes the combing coherent. We actually do not know how to obstruct the existence of coherent combings if we want to allow proper ones to exist.

\begin{question}
Does there exist a space admitting a proper combing, but not a coherent one?
\end{question}

\subsection*{Automatic groups}

An interesting and large class of groups with coherent combings are automatic groups. But many of these automatic structures are not expanding. The question is now if this is a general feature of automatic structures (i.e., that they are usually not exanding) or if it just happened that many of the currently known automatic structures are not expanding but that there are actually also many expanding automatic structures out there. If we do find many more automatic structures that are expanding, it would be nice to have a criterion for expandingness for automatic structures that is akin to the combinatorial/algorithmic nature of automaticity:

\begin{question}\label{ques354erweer}
Is there a combinatorial or algorithmic condition that one can impose on automatic structures such that they will be expanding?
\end{question}

Of course, there is also the question if there is an automatic group that can not admit any expanding combing at all:

\begin{question}\label{ques354erweer2}
Does there exists an automatic group which does not admit an expanding (and coherent) combing?
\end{question}

In Section~\ref{secjkjbjnbbbm} we argued that many of the examples of automatic structures that we know induce in general non-expanding combings. From the list of automatic groups we have given, we have not discussed Artin groups of finite type, not mapping class groups, and not groups acting nicely on buildings of certain types. So the following questions are immediate:

\begin{question}\label{quesnrt23er}
Are the automatic structures on
\begin{enumerate}
\item Artin groups of finite type,
\item mapping class groups, or
\item groups acting geometrically and in an order preserving way on Euclidean buildings of type $\tilde A_n$, $\tilde B_n$ or $\tilde C_n$
\end{enumerate}
expanding? If yes, how do the boundaries look like?
\end{question}

\subsection*{Existence of $Z$-structures}

Bestvina \cite[Section 3.1]{bestvina} asked whether every group admitting a finite classifying space admits a $Z$-structure. This question is still open (see also Guilbault \cite{guilbault_weak_Z} for some recent results about weak $Z$-structures). In the Corollary~\ref{cor243terwe} we have seen that a coherent and expanding combing on the group can be used to construct a $Z$-structure and almost all of the  currently known examples of $Z$-structures arise in this way.

\begin{question}
Do there exist groups that
\begin{enumerate}
\item admit $Z$-structures which are not coronas of proper combings, or
\item admit $Z$-structures but do not admit any proper combings?
\end{enumerate}
\end{question}

A possible way to get groups with $Z$-structures but not proper combings could be to exploit the fact that the existence of a combing on a group implies that its Dehn function is at most exponential. Now groups with super-exponentially growing Dehn functions are known and hence such groups are not combable. The question is whether such groups admit $Z$-structures.

\subsection*{Dimension of the combing corona}

For groups with $Z$-structures, Bestvina--Mess \cite{bestvina_mess} proved Corollary~\ref{corrttrtrrt} for the $Z$-boundary without assuming that $G$ has finite asymptotic dimension. Moreover, Guilbault--Moran \cite{guilbault_moran} proved the estimate of Theorem~\ref{thm_cdleqasdim} for the covering dimension of the $Z$-boundary, which is a much stronger statement (because P.~S.~Alexandrov proved that if the covering dimension of a compact space is finite, then it coincides with the cohomological dimension). Hence our main results from Section~\ref{sec45342345} are weakenings of the results of Bestvina--Mess and of Guilbault--Moran, but they apply to the combing corona and even if the group does not admit any $Z$-structure.

The arguments of Bestvina--Mess need that the contractible space which is $Z$-compactified has finite dimension, whereas the arguments of Guilbault--Moran use a metric on the space. The Rips complex $\cP(G)$ that we have to use in our arguments lacks both these properties, which explains why we only got weaker results. But if $G$ admits a $G$-finite model for $\EG$, then the boundary swapping results from Section~\ref{secnm0987678bmvmvmv} show that the results of Bestvina--Mess and Guilbault--Moran are applicable to the combing compactification in this case. So the question is now of course whether the assumption of having a $G$-finite model for $\EG$ is really necessary:

\begin{question}\label{quesdght453}
Let $G$ be equipped with an expanding and coherent combing.
\begin{enumerate}
\item Is Corollary~\ref{corrttrtrrt} true without assuming that $G$ has finite asymptotic dimension?
\item Can we strengthen the estimate of Theorem~\ref{thm_cdleqasdim} to the covering dimension of the combing corona?
\end{enumerate}
\end{question}

We can of course ask the same questions in the general case of spaces instead of just groups. But here we do not even have a result like Corollary~\ref{corrttrtrrt} available. Although Theorem~\ref{thm345zrtewqd} holds for spaces (and not only for groups), it is Lemma~\ref{lemwfd4532345453} which we could prove only for groups. To prove a corresponding statement for spaces one would probably need additional assumptions. Maybe being coarsely convex in the sense of Fukaya--Oguni would suffice. Evidence for this comes from the fact that combings can be used to give quantitative bounds on Dehn functions and therefore on the vanishing of homology classes; see, e.g., \cite{ji_ramsey,meyer,ogle_pol,engel_BSNC}. But one should keep in mind that there are examples due to Swenson \cite{swenson} of hyperbolic spaces which have an infinite-dimensional boundary, i.e., one should be cautious when trying to answer the following question:

\begin{question}
Under which conditions on a metric space $X$ can we prove an analogue of Lemma~\ref{lemwfd4532345453} for $X$ and hence get Corollary~\ref{corrttrtrrt} for $X$?
\end{question}

\subsection*{Dynamic properties of the combing corona}

Assume we are given a space $X$ which comes equipped with a proper combing $H$ and a group $G$ acting on $X$ such that $H$ is coarsely equivariant. The latter means that Lemma~\ref{lem24354terwr} is applicable and therefore the action of $G$ extends to a continuous action by homeomorphisms on the combing corona $\partial_H X$.

This enables us now to study the dynamics of this action on the combing corona, and provides us therefore with a plethora of new questions.

A possible question is whether such boundary actions are amenable. A positive answer would mean that the groups are exact since they act amenably on compact Hausdorff spaces. But in this generality the answer is negative: the set $P(n,\IR)$ of all positive-definite, symmetric $(n\times n)$-matrices with real coefficients is a non-positively curved Riemannian manifold, i.e., a $\CAT(0)$-space. The group $\mathit{GL}(n,\IR)$ acts transitively by isometries on it and $P(n,\IR)$ can be identified as the homogeneous space $\mathit{GL}(n,\IR) / O(n,\IR)$. Now there exist points at infinity of $P(n,\IR)$ which are stabilized by the whole group $\mathit{GL}(n,\IR)$, see \cite[Ex.~II.10.6.3]{bridson_haefliger}. Hence the action is not amenable since it has non-amenable point-stabilizers.

So the question is whether there are any sufficient conditions such that we get an amenable action:

\begin{question}
Under which conditions on $X$, on a combing $H$ on it, and on the group $G$ can we prove that the action of $G$ on $\partial_H X$ is amenable?
\end{question}

Amenable actions arising as above have nice properties, like the following one: suppose that $G$ comes with an expanding, coherent and coarsely equivariant combing $H$ on it. Since the corona $\partial_H G$ is small at infinity and since the left-action of $G$ on itself extends to it, one can prove that the right-action of $G$ on itself extends trivially to $\partial_H G$. Assuming now that the left-action of $G$ on $\partial_H G$ is amenable, we get that the group is bi-exact \cite[Sec.~3]{houdayer_isono} and hence its group von Neumann algebra is solid \cite{ozawa_solid,ozawa_kurosh}.

By \cite[Thm.\ 5.8]{anantharaman_delaroche} we know that $G$ acts amenably on $\partial_H X$ if and only if the $C^\ast$-algebra $C(\partial_H G) \rtimes_r G$ is nuclear (note that the 'Property (W)' occuring in \cite[Thm.\ 5.8]{anantharaman_delaroche} is always satisfied for discrete groups $G$ by \cite[Ex.\ 4.4]{anantharaman_delaroche}).
In the case of a hyperbolic group $G$ acting on its Gromov boundary $\partial G$, the $C^\ast$-algebra $C(\partial_H G) \rtimes_r G$ is already known to be nuclear: by Anantharaman-Delaroche \cite{anantharaman_delaroche} and Laca--Spielberg \cite{laca_spielberg} we know that $C(\partial_H G) \rtimes_r G$ is a Kirchberg algebra, and such algebras are known to have finite nuclear dimension (actually, they have nuclear dimension exactly $1$; Ruis--Sims--S{\o}rensen \cite{ruis_sims_sorensen} and Bosa et al.\ \cite{BBSTWW}) which implies nuclearity.

\newcommand{\etalchar}[1]{$^{#1}$}
\providecommand{\bysame}{\leavevmode\hbox to3em{\hrulefill}\thinspace}
\providecommand{\MR}{\relax\ifhmode\unskip\space\fi MR }
\providecommand{\MRhref}[2]{\href{http://www.ams.org/mathscinet-getitem?mr=#1}{#2}
}
\providecommand{\href}[2]{#2}

\bibliographystyle{amsalpha}

\end{document}